\newtheorem{construction}{Construction}[section]
\newtheorem{theorem}[construction]{Theorem}
\newtheorem{corollary}[construction]{Corollary}
\newtheorem{lemma}[construction]{Lemma}
\newtheorem{remark}[construction]{Remark}
\theoremstyle{definition}
\newtheorem{definition}[construction]{Definition}
\newtheorem{conditions}[construction]{Conditions}
\newcounter{example}
\newenvironment{example}{\begin{description}\refstepcounter{example}%
\item[Example \theexample:]}{\end{description}}
\newenvironment{packed_item}{
\begin{itemize}
  \setlength{\itemsep}{1pt}
  \setlength{\parskip}{0pt}
  \setlength{\parsep}{0pt}
}{\end{itemize}}
\definecolor{Red}{rgb}{1,0.4,0.4}   
\definecolor{Green}{rgb}{.1,.5,.1}
\definecolor{Blue}{rgb}{.1,.1,.5}
\definecolor{blue}{RGB}{0,0,255}
\definecolor{Yellow}{rgb}{.8,.4, 0}
\definecolor{X}{rgb}{.8,.4, 0}
\definecolor{H}{rgb}{0,0,1}
\definecolor{light}{rgb}{.67,.84,.90}%
\definecolor{Cyan}{rgb}{0,1,1}%
\definecolor{Purple}{rgb}{.5,0,.5}%
\definecolor{Purple2}{rgb}{.5,.2,.5}%
\definecolor{white}{rgb}{1.0,1.0,1.0}%
\definecolor{Purple2}{rgb}{.8,.4, 0}
\definecolor{Amarillo}{RGB}{225,191,73}
\definecolor{Celeste}{RGB}{117,170,219}
\definecolor{Castano}{RGB}{132,53,17}
\definecolor{Black}{RGB}{0,0,0}
\definecolor{White}{RGB}{255,255,255}
\DeclareMathOperator{\lcm}{lcm}
\newcommand{\RGDD}{\ensuremath{\mbox{\sf RGDD}}}
\newcommand{\sgcd}[1]{\ensuremath{s\left(\rule[-1ex]{0pt}{3ex}#1\right)}}
 \renewcommand{\-}{\ensuremath{\text{--}}}
\title{A Generalization of the Hamilton-Waterloo Problem on Complete Equipartite Graphs}
\author{Melissa Keranen and Adri\'{a}n Pastine\\
Michigan Technological University\\
Michigan Technological University,
Houghton, MI 49931, U.S.A.}
\begin{document}
\maketitle
\abstract{The Hamilton-Waterloo problem asks for which $s$ and $r$ the complete graph $K_n$ can be decomposed into $s$ copies of a given 2-factor $F_1$ and $r$ copies of a given 2-factor $F_2$ (and one copy of a 1-factor if $n$ is even). In this paper we generalize the problem to complete equipartite graphs $K_{(n:m)}$ and show that $K_{(xyzw:m)}$ can be decomposed into $s$ copies of a 2-factor consisting of cycles of length $xzm$; and $r$ copies of a 2-factor consisting of cycles of length $yzm$, whenever $m$ is odd, $s,r\neq 1$, $\gcd(x,z)=\gcd(y,z)=1$ and $xyz\neq 0 \pmod 4$. We also give some more general constructions where the cycles in a given two factor may have different lengths. We use these constructions to find solutions to the Hamilton-Waterloo problem for complete graphs.}
\section{Introduction.}
The Oberwolfach Problem was first posed by Ringel in 1967 during a conference in Oberwolfach. The question was whether it was possible to seat the $v$ conference attendees at $n$ round tables for dinner during $\frac{v-1}{2}$ nights, in such a way that every attendee sits next to every other attendee exactly once. This is equivalent to asking whether the complete graph $K_v$ can be decomposed into $\frac{v-1}{2}$ copies of a $2$-factor $F$ (in a $2$-factor every component is a cycle, which represents a round table). To achieve this decomposition $v$ needs to be odd, because the vertices (attendees) need to have even degree. Later a version with $v$ even was studied. In this case, the attendees will never sit next to their spouses (and we are assuming that every attendee has a spouse). This is equivalent to asking for a decomposition of $K_v$ into $\frac{v-2}{2}$ copies of a $2$-factor $F$, and one copy of a $1$-factor (each attendee together with their spouse).

In \cite{Liu1} Liu first worked on the generalization of the Oberwolfach problem, where instead of avoiding their spouses, the attendees avoid all the other members of their delegation. The assumption was that all the delegations had the same number of people. Thus we are seeking to decompose the complete equipartite graph $K_{(m:n)}$ with $n$ partite sets (delegations) of size $m$ each (members of a delegations) into $\frac{(n-1)m}{2}$ copies of a $2$-factor $F$. Here $(n-1)m$ has to be even. In \cite{HH} Hoffman and Holliday worked on the equipartite generaliztion of the Oberwolfach problem when $(n-1)m$ is odd, decomposing into $\frac{(n-1)m-1}{2}$ copies of a $2$-factor $F$, and one copy of a $1$-factor.

The Hamilton-Waterloo problem is a generalization of the Oberwolfach problem, in which the conference is being held at two different cities. Because the table arrangements are different, we have two $2$-factors, $F_1$ and $F_2$. The Hamilton-Waterloo problem then asks whether the complete graph $K_v$ can be decomposed into $r$ copies of the $2$-factor $F_1$ (tables at Hamilton) and $s$ copies of the $2$-factor $F_2$ (tables at Waterloo), such that $s+r=\frac{v-1}{2}$, when $v$ is odd, and having $s+r=\frac{v-2}{2}$ and a $1$-factor when $v$ is even.

The uniform Oberwolfach problem (when all the tables have the same size, i.e. all the cycles of the $2$-factor have the same size) has been completely solved by Alspach and Haagkvist \cite{AH} and Alspach, Schellenberg, Stinson and Wagner \cite{ASSW}. For the non-uniform Oberwolfach problem many results have been obtained, for a survey of results up to 2006 see \cite{BR}. The uniform Oberwolfach problem over equipartite graphs has been completely solved by Liu \cite{Liu2} and Hoffman and Holliday \cite{HH}. In the non-uniform case Bryant, Danziger and Pettersson \cite{BDP} completely solved the case when the $2$-factor is bipartite. For the Hamilton-Waterloo problem most of the results are uniform, see for example \cite{AKKPO} or \cite{BDT}. In particular, Burgess, Danziger and Traetta \cite{BDT} proved the following theorem.

\begin{theorem}{\normalfont\cite{BDT}}\label{theoremBDT}
If $m$ and $n$ are odd integers with $n\geq m\geq 3$ and $t>1$, then there is a decomposition of $K_{mnt}$ into $s$ $C_{m}$-factors and $r$ $C_n$-factors if and only if $t$ is odd, $s,r\geq 0$ and $s+r=(mnt-1)/2$, except possibly when $r=1$ or $3$, or $(m,n,r)=(5,9,5),(5,9,7),(7,9,5),(7,9,7),(3,13,5)$.
\end{theorem}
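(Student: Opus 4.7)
The plan is to separate necessity from sufficiency, handle the extreme cases $s=0$ and $r=0$ using existing Oberwolfach results, and then tackle the mixed case via a two-part decomposition of $K_{mnt}$. For necessity, each $2$-factor uses exactly $mnt$ edges, so $(s+r)\,mnt=\binom{mnt}{2}$ forces $s+r=(mnt-1)/2$; this is an integer only if $mnt$ is odd, which (given $m,n$ odd) requires $t$ to be odd.

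For the extremes $r=0$ or $s=0$, I would invoke the solution of the uniform Oberwolfach problem on complete graphs of odd order due to Alspach--Haggkvist and Alspach--Schellenberg--Stinson--Wagner: since $mnt$ is odd and both $m$ and $n$ divide $mnt$, pure $C_m$-factorizations and pure $C_n$-factorizations of $K_{mnt}$ exist. For the mixed case with $s,r\ge 2$, partition $V(K_{mnt})$ into $t$ groups of size $mn$, yielding the edge decomposition
\[
K_{mnt} \;=\; \Big(\bigsqcup_{i=1}^{t} K_{mn}^{(i)}\Big)\ \oplus\ K_{(mn:t)}.
\]
On each clique $K_{mn}^{(i)}$ (an odd-order complete graph in which both $m$ and $n$ divide the order) I would apply a solution of the Hamilton--Waterloo problem on $K_{mn}$ to realize various mixes of $C_m$- and $C_n$-factors. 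On the equipartite piece $K_{(mn:t)}$, since $t$ is odd, I would construct $C_m$- and $C_n$-factorizations by blowing up a $C_t$-factorization of $K_t$ by groups of size $mn$, combined with Cayley-graph/difference-method constructions over $\mathbb{Z}_{mn}$ to specify the cycle lengths in each factor. By tuning the ratios on each of the two pieces independently, one aims to cover every admissible pair $(s,r)$.

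The principal obstacle is obtaining full flexibility on the equipartite piece $K_{(mn:t)}$: the factorizations produced by a blow-up of $K_t$ come in discrete ``packets,'' and these packets must be fine enough to interpolate between the mixing ratios achievable on each $K_{mn}$. The listed exceptions are exactly where this interpolation fails. When $r\in\{1,3\}$ the few available $C_n$-factors must simultaneously absorb the ``leftover'' structure from every one of the $t$ cliques, which is a highly rigid constraint; and the sporadic triples $(5,9,5),(5,9,7),(7,9,5),(7,9,7),(3,13,5)$ sit in a narrow parameter window where neither a recursive lift nor any small ad hoc base decomposition I would expect to have available can reach. I would therefore anticipate the full proof to consist of a uniform recursive scheme for large $r$ together with an array of base decompositions constructed by hand or by computer search for small $r$, with the stated exceptions marking precisely the gap between these two regimes.
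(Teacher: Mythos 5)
This statement is Theorem~\ref{theoremBDT}, which the paper does not prove at all: it is quoted from Burgess, Danziger and Traetta \cite{BDT} as an external ingredient, so there is no internal proof to compare your attempt against. Judged on its own, your outline gets the easy parts right (the edge count forcing $s+r=(mnt-1)/2$ and $t$ odd; the pure cases $s=0$ or $r=0$ via Theorem~\ref{OP}) and the overall two-piece strategy --- complete parts plus a complete equipartite leftover, exactly as in Lemma~\ref{buildcompletegraph} --- is indeed the standard skeleton used both in \cite{BDT} and in this paper.

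There are, however, two genuine gaps. First, your mixed case leans on ``a solution of the Hamilton--Waterloo problem on $K_{mn}$'' for the cliques $K_{mn}^{(i)}$; that is precisely the $t=1$ instance of the very problem being solved, which the theorem deliberately excludes ($t>1$) because it is not available in general. You could repair this by using only pure $C_m$- or pure $C_n$-factorizations on each clique and pushing all of the interpolation onto the equipartite piece, but that makes the second gap more acute. Second, and more fundamentally, the entire technical content of the theorem is the construction of decompositions of the equipartite piece into a \emph{prescribed} number $s_\beta$ of $C_m$-factors and $r_\beta$ of $C_n$-factors for essentially every admissible $s_\beta$; your proposal only names this as ``the principal obstacle'' and gestures at blow-ups of $C_t$-factorizations of $K_t$ and difference methods without exhibiting the factorizations or explaining how the cycle lengths $m$ and $n$ (rather than multiples of $t$ or of the part size) are achieved. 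Note also that \cite{BDT} partition into $m$ parts of size $nt$ (so that $C_m$-factors arise as transversal cycles and $C_n$-factors live along the parts), not into $t$ parts of size $mn$ as you propose; your choice of partition makes it unclear how a blow-up of $K_t$ would ever produce cycles of length $m$ or $n$, since $\gcd$-type arguments on $C_{(mn:t)}$ naturally yield cycle lengths divisible by $t$. As it stands the proposal is a plausible plan, not a proof.
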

Theorem \ref{theoremBDT} covers most of the odd ordered uniform cases. The authors in \cite{BDT} point out that it is possible to have solutions where the number of vertices is not a multimple of $mn$. Thus if $l=\lcm(m,n)$ and the number of vertices is a multiple of $l$, but not divisible by $mn$, then Theorem \ref{theoremBDT} cannot be applied. The constructions given in this paper can be applied to cover some of these cases.

There are some results in the non-uniform case, some examples are Bryant, Danzinger \cite{BD}, Bryant, Danzinger, Dean \cite{BDD} and Haggkvist \cite{H}.

The Hamilton-Waterloo problem can be generalized for complete equipartite graphs in the same way as the Oberwolfach problem was generalized, but not much work has been done in this direction. Asplund, Kamin, Keranen, Pastine and \"{O}zkan \cite{AKKPO} gave some constructions for complete equipartite graphs with $3$ parts. Burgess, Danziger and Traetta \cite{BDT} studied the case when the graph consists of $m$ partite sets of size $n$, and the cycle sizes are $m$ and $n$. In both papers the constructions were done in order to get a result on the Hamilton-Waterloo problem for complete graphs. The focus of this paper is to give a generalization of the Hamilton-Waterloo problem for complete equipartite graphs with an odd number of partite sets. We obtain results both in the uniform and non-uniform cases.
\section{Basic Definitions and Results}
Let $G$ be a multipartite graph with $k$ partite sets, $G_0,G_1,\ldots, G_{k-1}$. We identify each vertex  $g$ of $G$ as an ordered pair $(g,i)$, where $g\in G_i$.

\begin{example}
Consider the graph in Figure \ref{identificationfigure}. 
\begin{figure}
\begin{center}
\begin{tikzpicture}[every node/.style={draw,shape=circle}]
\draw (0,0) node [scale=.7, minimum size=.8cm](1){a};
\draw (1,0) node [scale=.7, minimum size=.8cm](2){b};
\draw (2,0) node [scale=.7, minimum size=.8cm](3){c};
\draw (0,-1) node[scale=.7, minimum size=.8cm] (4){d};
\draw (0,-2) node [scale=.7, minimum size=.8cm](5){f};
\draw (1,-1) node [scale=.7, minimum size=.8cm](6){e};
\draw (1) to (2);
\draw (2) to (4);
\draw (4) to (3);
\draw (3) to  [bend left=30](5);
\draw (5) to (6);
\draw (6) to (1);
\end{tikzpicture}
\caption{}
\label{identificationfigure}
\end{center}
\end{figure}
If we consider each column as a partite set, we have $3$ partite sets; $G_0$, with vertices $a,d$ and $f$; $G_1$ with vertices $b$ and $e$; and $G_2$ with $c$ as its only vertex. Then the vertices are $(a,0)$, $(b,1)$, $(c,2)$, $(d,0)$, $(e,1)$, $(f,0)$.

When it is convenient, we will just denote the vertices $(0,0)$, $(0,1)$, $(0,2)$, $(1,0)$, $(1,1)$, $(2,0)$, as in Figure \ref{identificationfigure2}.
\begin{figure}
\begin{center}
\begin{tikzpicture}[every node/.style={draw,shape=circle}]
\draw (0,0) node [scale=.7](1){0,0};
\draw (1,0) node [scale=.7](2){0,1};
\draw (2,0) node [scale=.7](3){0,2};
\draw (0,-1) node[scale=.7] (4){1,0};
\draw (0,-2) node [scale=.7](5){2,0};
\draw (1,-1) node [scale=.7](6){1,1};
\draw (1) to (2);
\draw (2) to (4);
\draw (4) to (3);
\draw (3) to  [bend left=30](5);
\draw (5) to (6);
\draw (6) to (1);
\end{tikzpicture}
\caption{}
\label{identificationfigure2}
\end{center}
\end{figure}
\end{example}
\begin{definition}\label{def1}
Let $G$ and $H$ be multipartite graphs. Then we define the \textit{partite product} of $G$ and $H$, $G\otimes   H$ as follows:
\begin{itemize}
\item $V(G\otimes   H)=\{(g,h,i)| (g,i)\in V(G)$ and $(h,i)\in V(H)\}$.
\item $E(G\otimes   H)=\{\{(g_1,h_1,i),(g_2,h_2,j)\}|\{(g_1,i),(g_2,j)\}\in E(G)$ and $\{(h_1,i),(h_2,j)\}\in E(H)\}$.
\end{itemize}
\end{definition}

Notice that this definition is quite similar to that of the direct product. The main difference is that we are doing this product ``just in $1$ coordinate''. To see that they are different it suffices to count the number of vertices in the product. If $G=H=K_{3,3,3}$ then $|V(G\times H)|=81$ but $|V(G\otimes   H)|=27$.

Indeed, if the $k$ partite sets of $G$ and $H$ have sizes $g_0,g_1,\ldots,g_{k-1}$ and $h_0,h_1,\ldots, h_{k-1}$, respectively, then $|V(G\otimes   H)|=g_0h_0+g_1h_1+\dots+g_{k-1}h_{k-1}$, whereas $|V(G\times H)|=\left(\sum g_i\right)\left(\sum h_i\right)$.
\begin{remark}
The partite product depends on the multipartite representation chosen for a graph. For example, the graphs $G$ and $H$ in Figure \ref{isomorphicgraphs} are isomorphic, but they behave differently in the product (where we understand that each column is a part of the multipartite graph).
\end{remark}

\begin{figure}
\begin{center}
\begin{tikzpicture}[every node/.style={draw,shape=circle}]
\draw (0,0) node [scale=.5](1){$0,0$};
\draw (1,0) node [scale=.5](2){$0,1$};
\draw (2,0) node [scale=.5](3){$0,2$};
\draw (0,-1) node[scale=.5] (4){$1,0$};
\draw (0,-2) node [scale=.5](5){$2,0$};
\draw (1,-1) node [scale=.5](6){$1,1$};
\draw (1) to (2);
\draw (2) to (4);
\draw (4) to (3);
\draw (3) to  [bend left=30](5);
\draw (5) to (6);
\draw (6) to (1);

\draw (4,0) node [scale=.5](11){$0,0$};
\draw (5,0) node [scale=.5](22){$0,1$};
\draw (6,0) node [scale=.5](33){$0,2$};
\draw (4,-1) node[scale=.5] (44){$1,0$};
\draw (5,-1) node [scale=.5](55){$1,1$};
\draw (6,-1) node [scale=.5](66){$1,2$};
\draw (11) to (22);
\draw (22) to (33);
\draw (44) to (33);
\draw (44) to (55);
\draw (55) to (66);
\draw (66) to (11);

\node[left=10pt,fill=none,draw=none] at (0,-.75) {$G=$};
\node[left=10pt,fill=none,draw=none] at (4,-.75) {$H=$};

\draw (-4,-4) node [scale=.5,inner sep=1pt](000){$0,0,0$};
\draw (-2,-4) node [scale=.5,inner sep=1pt](001){$0,0,1$};
\draw (0,-4) node [scale=.5,inner sep=1pt](002){$0,0,2$};

\draw (-4,-5) node [scale=.5,inner sep=1pt](010){$0,1,0$};
\draw (-2,-5) node [scale=.5,inner sep=1pt](011){$0,1,1$};

\draw (-4,-6) node [scale=.5,inner sep=1pt](020){$0,2,0$};

\draw (-4,-7) node [scale=.5,inner sep=1pt](100){$1,0,0$};
\draw (-2,-7) node [scale=.5,inner sep=1pt](101){$1,0,1$};

\draw (-4,-8) node [scale=.5,inner sep=1pt](110){$1,1,0$};
\draw (-2,-8) node [scale=.5,inner sep=1pt](111){$1,1,1$};

\draw (-4,-9) node [scale=.5,inner sep=1pt](120){$1,2,0$};

\draw (-4,-10) node [scale=.5,inner sep=1pt](200){$2,0,0$};

\draw (-4,-11) node [scale=.5,inner sep=1pt](210){$2,1,0$};

\draw (-4,-12) node [scale=.5,inner sep=1pt](220){$2,2,0$};

\draw (000) to (001);
\draw (000) to (011);
\draw (000) to (101);
\draw (000) to (111);

\draw (100) to (001);
\draw (100) to (011);
\draw (010) to (001);
\draw (010) to (101);

\draw (110) to (002);
\draw (220) [bend right=15] to (002);

\draw (020) to (111);
\draw (020) to (011);

\draw (110) to (001);

\draw (120) to (011);

\draw (200) to (101);
\draw (200) to (111);

\draw (210) to (101);

\draw (220) to (111);

\node[left=10pt,fill=none,draw=none] at (-4,-4.75) {$G\otimes G=$};

\draw (6,-4) node [scale=.5,inner sep=1pt](a000){$0,0,0$};
\draw (8,-4) node [scale=.5,inner sep=1pt](a001){$0,0,1$};
\draw (10,-4) node [scale=.5,inner sep=1pt](a002){$0,0,2$};
\draw (6,-5) node [scale=.5,inner sep=1pt](a010){$0,1,0$};
\draw (8,-5) node [scale=.5,inner sep=1pt](a011){$0,1,1$};
\draw (10,-5) node [scale=.5,inner sep=1pt](a012){$0,1,2$};
\draw (6,-6) node [scale=.5,inner sep=1pt](a100){$1,0,0$};
\draw (8,-6) node [scale=.5,inner sep=1pt](a101){$1,0,1$};
\draw (10,-6) node [scale=.5,inner sep=1pt](a102){$1,0,2$};
\draw (6,-7) node [scale=.5,inner sep=1pt](a110){$1,1,0$};
\draw (8,-7) node [scale=.5,inner sep=1pt](a111){$1,1,1$};
\draw (10,-7) node [scale=.5,inner sep=1pt](a112){$1,1,2$};

\draw (a000) to (a001);
\draw (a000) to (a112);

\draw (a010) to (a011);
\draw (a010) to (a102);

\draw (a100) to (a101);
\draw (a100) to (a012);

\draw (a110) to (a111);
\draw (a110) to (a002);

\draw (a001) to (a002);
\draw (a011) to (a012);
\draw (a101) to (a102);
\draw (a111) to (a112);
\node[left=10pt,fill=none,draw=none] at (6,-4.75) {$H\otimes H=$};
\end{tikzpicture}
\caption{}
\label{isomorphicgraphs}
\end{center}
\end{figure}

The next result follows directly from Definition \ref{def1}
\begin{lemma}\label{commutative}
The product is commutative, that is, $G\otimes   H=H\otimes   G$.
\end{lemma}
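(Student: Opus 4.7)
The plan is to exhibit the obvious coordinate-swap as a graph isomorphism, since the definition of the partite product is symmetric in $G$ and $H$ apart from the order in which the first two coordinates of each vertex triple are listed. More precisely, I will define the map $\phi : V(G\APT H) \to V(H\APT G)$ by $\phi\bigl((g,h,i)\bigr) = (h,g,i)$ and check (i) that it is a well-defined bijection on vertex sets, and (ii) that it carries edges to edges (and non-edges to non-edges).

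For the vertex part, observe that by Definition \ref{def1} a triple $(g,h,i)$ lies in $V(G\APT H)$ exactly when $(g,i)\in V(G)$ and $(h,i)\in V(H)$. These two conditions are the same conditions (in reversed order) that characterize $(h,g,i)\in V(H\APT G)$, so $\phi$ is well-defined and its pointwise inverse $(h,g,i)\mapsto (g,h,i)$ is also well-defined; hence $\phi$ is a bijection between the two vertex sets.

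For the edge part, take any pair $\bigl\{(g_1,h_1,i),(g_2,h_2,j)\bigr\}$ in $E(G\APT H)$. By definition this is equivalent to the simultaneous conditions $\{(g_1,i),(g_2,j)\}\in E(G)$ and $\{(h_1,i),(h_2,j)\}\in E(H)$. Reading these two conditions in the opposite order gives exactly the requirement that $\bigl\{(h_1,g_1,i),(h_2,g_2,j)\bigr\}$ is an edge of $H\APT G$. Thus $\phi$ sends edges to edges, and the same argument run in reverse shows that $\phi^{-1}$ does too, so $\phi$ is a graph isomorphism.

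I expect no real obstacle here: the lemma is purely bookkeeping, and the only thing to be slightly careful about is that the partite structure is preserved by $\phi$ (the part index $i$ is untouched), which is automatic from the definition. The statement $G\APT H = H\APT G$ should therefore be read as equality up to the canonical relabeling $\phi$, in the same sense that a direct product of groups is ``commutative''.
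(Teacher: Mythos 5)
Your proof is correct and is exactly the argument the paper has in mind: the paper offers no written proof, merely remarking that the lemma ``follows directly from Definition \ref{def1},'' and the coordinate-swap $(g,h,i)\mapsto(h,g,i)$ is the evident witness. Your added caveat that the equality should be read as a canonical isomorphism rather than a literal equality of vertex sets is a fair (and slightly more careful) reading than the paper's bare statement.
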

Most of our results will be concerning complete multipartite graphs. We will denote by $K_{(n:m)}$ the complete multipartite graph with $m$ parts, each of size $n$.
\begin{lemma}\label{k1kidentity}
If $G$ is $k$-partite, then $G\otimes   K_{(1:k)}$ is isomorphic to $G$.
\end{lemma}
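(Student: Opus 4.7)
The plan is to exhibit an explicit bijection $\phi\colon V(G\otimes K_{(1:k)})\to V(G)$ and verify it preserves adjacency.

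First I would unpack the vertex set of $G\otimes K_{(1:k)}$. Since each partite set of $K_{(1:k)}$ contains a unique vertex, which I may label $(0,i)$ in part $i$, the condition $(h,i)\in V(K_{(1:k)})$ forces $h=0$. So by Definition \ref{def1} the vertices of $G\otimes K_{(1:k)}$ are exactly the triples $(g,0,i)$ with $(g,i)\in V(G)$. The map $\phi(g,0,i)=(g,i)$ is therefore an obvious bijection onto $V(G)$.

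Next I would verify edges correspond. By definition, $\{(g_1,0,i),(g_2,0,j)\}\in E(G\otimes K_{(1:k)})$ iff both $\{(g_1,i),(g_2,j)\}\in E(G)$ and $\{(0,i),(0,j)\}\in E(K_{(1:k)})$. The key observation is that $K_{(1:k)}$, being a complete multipartite graph with singleton parts, has $\{(0,i),(0,j)\}$ as an edge precisely when $i\neq j$. Since $G$ is $k$-partite, every edge of $G$ joins vertices in distinct parts, so any edge $\{(g_1,i),(g_2,j)\}\in E(G)$ automatically satisfies $i\neq j$. Hence the two conditions reduce to the single condition $\{(g_1,i),(g_2,j)\}\in E(G)$, which is exactly $\{\phi(g_1,0,i),\phi(g_2,0,j)\}\in E(G)$.

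There is no real obstacle here; the only point that needs care is recognizing that the $k$-partiteness hypothesis on $G$ is precisely what is needed so that the extra edge-condition coming from $K_{(1:k)}$ is automatically satisfied and imposes no restriction. Once that is noted, the statement follows from directly unwinding Definition \ref{def1}.
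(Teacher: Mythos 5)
Your proof is correct and follows essentially the same route as the paper's: identify the vertices of $G\otimes K_{(1:k)}$ with those of $G$ via the obvious bijection and observe that the adjacency condition coming from $K_{(1:k)}$ is vacuous. You are somewhat more careful than the paper in noting that the $k$-partiteness of $G$ is what makes the condition $i\neq j$ automatic, but the argument is the same.
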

\begin{proof}
Here each part of $K_{(1:k)}$ has just 1 vertex and so $|V(G)|=|V\left(G\otimes   K_{(1:k)})\right |$. Because all the vertices of $K_{(1:k)}$ are neighbors, two vertices $(g_1,k_1,i), (g_2,k_2,j)$ of $G\otimes   K_{(1:k)}$ are neighbors if and only if $(g_1,i)$ and $(g_2,j)$ are neighbors. Therefore $G\otimes   K_{(1:k)}$ is isomorphic to $G$.
\end{proof}

\begin{definition}
The \textit{complete cyclic multipartite graph} $C_{(x:k)}$ is the graph with $k$ parts of size $x$, where two vertices $(g,i)$ and $(h,j)$ are neighbors if and only if $i-j=\pm 1\pmod{k}$, with this subtraction being done modulo $k$. The \textit{directed complete cyclic multipartite graph} $\overrightarrow{C}_{(x:k)}$ is the graph with $k$ parts of size $x$, with arcs of the form $\big((g,i),(h,i+1)\big)$ for every $0\leq g,h\leq x-1$, $0\leq i\leq k-1$.
\end{definition}
It should be noted that any decomposition of $\overrightarrow{C}_{(x:k)}$ gives a decomposition of $C_{(x:k)}$. Notice that $C_{(1:k)}$ is the cycle with $k$ vertices and $C_{(x:3)}$ is isomorphic to $K_{(x:3)}$. 
The next three results are easy to see, so the proofs are left to the reader.
\begin{lemma}
Let $G$ and $H$ be $k$-partite graphs. If each part of $G$ has $\frac{|V(G)|}{k}$ vertices and each part of $H$ has $\frac{|V(H)|}{k}$ vertices, then:
\begin{itemize}
\item Each part of $G\otimes   H$ has $\frac{|V(G)|\times |V(H)|}{k^2}$ vertices.
\item $|V\left(G\otimes   H\right)|=\frac{|V(G)|\times |V(H)|}{k}$.
\end{itemize}
\end{lemma}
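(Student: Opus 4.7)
The plan is to unpack Definition \ref{def1} and then carry out a direct count, partite class by partite class. First I would fix an index $0 \le i \le k-1$ and identify the $i$-th partite class of $G \otimes H$. By the definition of the partite product, a vertex of $G \otimes H$ lying in the $i$-th class is a triple $(g,h,i)$ with $(g,i) \in V(G)$ and $(h,i) \in V(H)$; that is, $g$ ranges over the $i$-th part $G_i$ of $G$ and $h$ ranges over the $i$-th part $H_i$ of $H$, independently. Thus the $i$-th class of $G \otimes H$ is in natural bijection with $G_i \times H_i$, so it has exactly $|G_i| \cdot |H_i|$ vertices.

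Next I would invoke the hypothesis that the partite classes of $G$ and of $H$ are all of uniform size, namely $|G_i| = |V(G)|/k$ and $|H_i| = |V(H)|/k$ for every $i$. Substituting into the count from the previous paragraph yields
\[
|G_i| \cdot |H_i| \;=\; \frac{|V(G)|}{k} \cdot \frac{|V(H)|}{k} \;=\; \frac{|V(G)| \cdot |V(H)|}{k^2},
\]
which proves the first bullet. For the second bullet I would simply sum over the $k$ partite classes of $G \otimes H$, obtaining
\[
|V(G \otimes H)| \;=\; \sum_{i=0}^{k-1} \frac{|V(G)| \cdot |V(H)|}{k^2} \;=\; k \cdot \frac{|V(G)| \cdot |V(H)|}{k^2} \;=\; \frac{|V(G)| \cdot |V(H)|}{k}.
\]

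There is no real obstacle here — the statement is purely a bookkeeping consequence of Definition \ref{def1} together with the uniformity hypothesis. The only thing to be mindful of is to work from the triple description of $V(G \otimes H)$ rather than, say, confusing it with the direct product (whose vertex count would instead be $\left(\sum_i |G_i|\right)\left(\sum_i |H_i|\right) = |V(G)| \cdot |V(H)|$, as already noted in the remark following Definition \ref{def1}). Once the indexing $(g,h,i)$ is in hand, both items follow in one line each.
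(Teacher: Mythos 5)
Your proof is correct, and it is exactly the computation the paper has in mind: the authors leave this lemma to the reader, but the identity $|V(G\otimes H)|=g_0h_0+\cdots+g_{k-1}h_{k-1}$ stated right after Definition \ref{def1} is precisely your classwise count, which under the uniformity hypothesis specializes to both bullets. Nothing further is needed.
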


\begin{lemma}
\begin{itemize}
\item $K_{(x:k)}\otimes   K_{(y:k)}$ is isomorphic to $K_{(xy:k)}$.
\item $C_{(x:k)}\otimes   C_{(y:k)}$ is isomorphic to $C_{(xy:k)}$.
\item $\overrightarrow{C}_{(x:k)}\otimes   \overrightarrow{C}_{(y:k)}$ is isomorphic to $\overrightarrow{C}_{(xy:k)}$.
\end{itemize}
\end{lemma}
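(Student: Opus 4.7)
The plan is to handle all three items by exhibiting the same natural bijection and then checking in each case that the partite structure and adjacency relations match. Specifically, for any of the three factors $G=K_{(x:k)}$ or $C_{(x:k)}$ or $\overrightarrow{C}_{(x:k)}$ paired with the analogous $H$ of size $y$, Definition \ref{def1} yields
\[
V(G\otimes H) = \{(g,h,i): 0\le g\le x-1,\ 0\le h\le y-1,\ 0\le i\le k-1\},
\]
so the product has $k$ parts (indexed by $i$), each containing exactly $xy$ vertices. This matches the partite structure of the target graph on the right-hand side, and the map $\varphi\colon (g,h,i)\mapsto (\langle g,h\rangle, i)$, where $\langle g,h\rangle$ is any fixed bijection from $\{0,\dots,x-1\}\times\{0,\dots,y-1\}$ onto $\{0,\dots,xy-1\}$, is a bijection on vertex sets that respects the partition.

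It then remains to verify, case by case, that $\varphi$ preserves (and reflects) adjacency. For $K_{(x:k)}\otimes K_{(y:k)}$: by Definition \ref{def1}, $(g_1,h_1,i)\sim(g_2,h_2,j)$ iff $(g_1,i)\sim(g_2,j)$ in $K_{(x:k)}$ \emph{and} $(h_1,i)\sim(h_2,j)$ in $K_{(y:k)}$, which in both complete multipartite graphs is precisely the condition $i\ne j$. But $i\ne j$ is also exactly the adjacency condition in $K_{(xy:k)}$, so $\varphi$ is a graph isomorphism. For the cyclic case $C_{(x:k)}\otimes C_{(y:k)}$, adjacency in each factor is the single condition $i-j\equiv\pm 1\pmod k$, so the conjunction is again $i-j\equiv\pm 1\pmod k$, which is the adjacency relation defining $C_{(xy:k)}$. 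The directed version $\overrightarrow{C}_{(x:k)}\otimes\overrightarrow{C}_{(y:k)}$ is identical, with the arc $\big((g_1,h_1,i),(g_2,h_2,i+1)\big)$ arising iff both factor arcs $\big((g_1,i),(g_2,i+1)\big)$ and $\big((h_1,i),(h_2,i+1)\big)$ are present, which holds for all choices of $(g_1,g_2,h_1,h_2)$ and $i$, matching exactly the arc set of $\overrightarrow{C}_{(xy:k)}$.

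There is really no obstacle here; the lemma is a direct unpacking of Definition \ref{def1}, combined with the observation that in each of the three factor graphs the adjacency of $(a,i)$ and $(b,j)$ depends only on the pair $(i,j)$ and not on the ``fiber'' coordinates $a,b$. This common feature is exactly what makes the partite product of two such graphs collapse to another graph of the same type on ``fused'' fibers of size $xy$. The most delicate point to state cleanly is simply that the three graph families are closed under the partite product in this way; no nontrivial combinatorial argument is needed.
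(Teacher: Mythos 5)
Your proof is correct: the paper explicitly leaves this lemma to the reader ("The next three results are easy to see, so the proofs are left to the reader"), and your direct unpacking of Definition \ref{def1} --- noting that in each family adjacency of $(a,i)$ and $(b,j)$ depends only on $(i,j)$, so the conjunction of the two factor conditions coincides with the adjacency rule of the target graph on fused fibers of size $xy$ --- is exactly the intended argument.
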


\begin{lemma}
The complete cyclic multipartite graph is the product of the complete multipartite graph by the cycle. This is: $K_{(x:k)}\otimes   C_{(1:k)}=C_{(x:k)}$.
\end{lemma}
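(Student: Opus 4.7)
The plan is to unpack the partite product directly from Definition \ref{def1} and exhibit an explicit isomorphism. First I would record that the vertex set of $K_{(x:k)}\otimes C_{(1:k)}$ consists of triples $(g,0,i)$ with $0\leq g\leq x-1$ and $0\leq i\leq k-1$, because the only vertex in the $i$-th part of $C_{(1:k)}$ is $(0,i)$, while the $i$-th part of $K_{(x:k)}$ contributes $x$ first coordinates. This already gives the right vertex count, and suggests the natural bijection $\varphi\colon (g,0,i)\mapsto (g,i)$ onto $V(C_{(x:k)})$, which also respects the partition into $k$ parts of size $x$.

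Next I would check edges. By Definition \ref{def1}, $(g_1,0,i)$ and $(g_2,0,j)$ are adjacent in $K_{(x:k)}\otimes C_{(1:k)}$ exactly when $(g_1,i)$ and $(g_2,j)$ are adjacent in $K_{(x:k)}$ \emph{and} $(0,i)$ and $(0,j)$ are adjacent in $C_{(1:k)}$. The first condition is equivalent to $i\neq j$; the second, by the definition of $C_{(1:k)}$, is equivalent to $i-j\equiv \pm 1\pmod{k}$. Since the latter already forces $i\neq j$ (for $k\geq 2$), the combined condition collapses to $i-j\equiv \pm 1\pmod{k}$, which is precisely the adjacency condition in $C_{(x:k)}$.

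So it only remains to observe that $\varphi$ sends edges to edges and non-edges to non-edges, which is immediate from the equivalence above; and I would note in passing the degenerate case $k=1$ (no edges on either side) and $k=2$ (where $C_{(1:2)}=K_2$), neither of which causes trouble. There is no real obstacle here: once the vertex sets are identified via $\varphi$, the whole argument is a definition chase, and the only thing that needs care is the observation that in the partite product, an edge of $C_{(1:k)}$ automatically provides an edge of $K_{(x:k)}$ between the corresponding parts (so the ``complete'' factor contributes no extra constraint beyond $i\neq j$). I would write this up as a short paragraph rather than as a formal multi-case proof.
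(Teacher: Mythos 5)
Your argument is correct and complete: the paper explicitly leaves this lemma's proof to the reader ("The next three results are easy to see, so the proofs are left to the reader"), and your definition chase --- identifying $(g,0,i)$ with $(g,i)$ and observing that the $C_{(1:k)}$ adjacency condition $i-j\equiv\pm 1\pmod{k}$ subsumes the $K_{(x:k)}$ condition $i\neq j$ --- is exactly the intended verification. Nothing is missing.
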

\section{Product and Decompositions.}
We can consider a decomposition of a graph as a partition of the edge set or as a union of edge disjoint subgraphs. This means that a decomposition of $G$ into $H_1,\ldots,H_s$ is given by $E(G)=\cup E(H_i)$ or by $G=\oplus H_i$. We will think of $\oplus$ as a boolean sum, which means that $H_i\oplus H_i=\emptyset$.

We have the following easy result. 
\begin{theorem}[Distribution]\label{distributive}
Let $G=\oplus_i G_i$ and $H=\oplus_j H_j$ be $k$-partite graphs. Then $G\otimes   H=\left(\oplus_i G_i\right)\otimes  \left(\oplus_j H_j\right)$. Furthermore, the following distributive property holds:
\[
\left(\oplus_i G_i\right)\otimes  \left(\oplus_j H_j\right)=\oplus_i\left(G_i\otimes  \oplus_j H_j\right)=\oplus_i\oplus_j\left( G_i\otimes   H_j\right)
\]
\end{theorem}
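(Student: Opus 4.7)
The plan is to prove both equalities by unpacking Definition \ref{def1} and tracking edges. The first equation, $G\otimes H = (\oplus_i G_i)\otimes(\oplus_j H_j)$, is immediate substitution, since by hypothesis $G = \oplus_i G_i$ and $H = \oplus_j H_j$ hold as identities of $k$-partite graphs. The real content is the distributive identity, and I would prove the outer equality $(\oplus_i G_i)\otimes(\oplus_j H_j) = \oplus_i\oplus_j(G_i\otimes H_j)$ first, then deduce the intermediate form $\oplus_i(G_i\otimes\oplus_j H_j)$ as a corollary.

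For the outer equality I would argue edge-by-edge. Since $\oplus$ is a boolean sum (i.e.\ the $G_i$ partition $E(G)$ and the $H_j$ partition $E(H)$), it suffices to check that each edge of the left-hand side lies in exactly one summand on the right, and that no additional edges appear on the right. Pick an arbitrary edge $e = \{(g_1,h_1,i),(g_2,h_2,j)\}$ of the left-hand side. By Definition \ref{def1} this edge exists exactly when $\{(g_1,i),(g_2,j)\}$ is an edge of $\oplus_i G_i$ and $\{(h_1,i),(h_2,j)\}$ is an edge of $\oplus_j H_j$. Because the two decompositions are boolean, the first edge lies in a unique $G_{i_0}$ and the second in a unique $H_{j_0}$. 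Applying Definition \ref{def1} in reverse to the pair $(G_{i_0},H_{j_0})$, the edge $e$ belongs to $G_{i_0}\otimes H_{j_0}$. Moreover, if $e$ were an edge of some other $G_i\otimes H_j$, then the projections would give an edge of $G_i$ and an edge of $H_j$, contradicting uniqueness of $i_0$ and $j_0$. Conversely, any edge of any $G_i\otimes H_j$ is immediately an edge of the left-hand side by the same computation read backwards.

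The intermediate equality $\oplus_i(G_i\otimes\oplus_j H_j) = \oplus_i\oplus_j(G_i\otimes H_j)$ is then obtained by applying the same edge-tracking to each term $G_i\otimes(\oplus_j H_j)$ individually (with the trivial decomposition of $G_i$ as a single summand on the first factor) and then taking the boolean sum over $i$. I do not expect a serious obstacle; the entire argument is bookkeeping on edges. The one point worth stating explicitly is that the summands $G_i$ must inherit the same $k$-partite structure as $G$, and likewise $H_j$ and $H$, so that the index $i$ in every triple $(g,h,i)$ refers consistently to the same part across all the graphs under consideration. This compatibility is implicit in writing $G=\oplus_i G_i$ as an equality of $k$-partite graphs, so the distributive identity goes through without modification.
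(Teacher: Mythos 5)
Your proposal is correct and takes essentially the same route as the paper's proof: both unpack Definition \ref{def1} and use the edge-disjointness of the summands to place each edge of the product in a unique summand $G_i\otimes H_j$, with the converse inclusion read off from the definition in reverse. The only difference is organizational --- the paper proves the two-term identity $G\otimes(H_1\oplus H_2)=(G\otimes H_1)\oplus(G\otimes H_2)$ and then invokes induction, whereas you run the same edge-tracking argument over all indices at once.
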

\begin{proof}
It is enough to prove that 
\[
G\otimes  \left(H_1 \oplus H_2\right)=\left(G\otimes   H_1\right)\oplus\left(G\otimes   H_2\right),\]
where $E(H_1)\cap E(H_2)=\emptyset$.

Let 
\[
\{(g_1,h_1,i),(g_2,h_2,j)\}\in E\left(G\otimes  \left(H_1 \oplus H_2\right)\right).
\] 
This means that 
\[
\{(h_1,i),(h_2,j)\}\in E(H_1)\cup E(H_2).\]
But since $E(H_1)\cap E(H_2)=\emptyset$, without loss of generality we may assume 
\[
\{(h_1,i),(h_2,j)\}\in E(H_1),\]
and so 
\[
\{(g_1,h_1,i),(g_2,h_2,j)\}\in E\left(G\otimes   H_1\right)\subset E\left(\left(G\otimes   H_1\right)\oplus\left(G\otimes   H_2\right)\right).\]
Hence
\[
E\left(G\otimes  \left(H_1 \oplus H_2\right)\right)\subset E\left(\left(G\otimes   H_1\right)\oplus\left(G\otimes   H_2\right)\right).
\]

Let now 
\[
\{(g_1,h_1,i),(g_2,h_2,j)\}\in E\left(\left(G\otimes   H_1\right)\oplus\left(G\otimes   H_2\right)\right).\]
This means that 
\[
\{(g_1,h_1,i),(g_2,h_2,j)\}\in E\left(\left(G\otimes   H_1\right)\right) \cup E\left(\left(G\otimes   H_2\right)\right).\]
Without loss of generality we may assume 
\[
\{(g_1,h_1,i),(g_2,h_2,j)\}\in E\left(\left(G\otimes   H_1\right)\right),
\]
which gives 
\[
\{(h_1,i),(h_2,j)\}\in E(H_1)\subset E(H_1\oplus H_2),\text{ and}
\]
\[
\{(g_1,h_1,i),(g_2,h_2,j)\}\in E\left(G\otimes   \left(H_1\oplus H_2\right)\right).
\]
Hence
\[
E\left(\left(G\otimes   H_1\right)\oplus\left(G\otimes   H_2\right)\right)\subset E\left(G\otimes  \left(H_1 \oplus H_2\right)\right).\]

Therefore 
\[
G\otimes  \left(H_1 \oplus H_2\right)=\left(G\otimes   H_1\right)\oplus\left(G\otimes   H_2\right),
\]
and by induction we get that the product and additions in 
\[
G\otimes   H=\left(\oplus_i G_i\right)\otimes  \left(\oplus_j H_j\right)\]
are distributive.
\end{proof}

\begin{corollary}
Let $G$ and $H$ be multipartite graphs with $k$ partite sets.
\begin{itemize}
\item If $G$ can be decomposed into isomorphic copies of $\Gamma$ and $H$ can be decomposed into isomorphic copies of $K_{(1:k)}=K_k$, then $G\otimes   H$ can be decomposed into isomorphic copies of $\Gamma$.
\item If $G$ can be factored into isomorphic copies of $\Gamma$ and $H$ can be factored into unions of copies of $K_{(1:k)}=K_k$, then $G\otimes   H$ can be factored into unions of copies of $\Gamma$.
\end{itemize}
\end{corollary}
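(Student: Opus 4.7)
The plan is to derive both statements as consequences of the distributive property (Theorem \ref{distributive}) together with the identity $\Gamma \otimes K_{(1:k)} \cong \Gamma$ (Lemma \ref{k1kidentity}). Since the partite product distributes over boolean sums, it suffices to understand a single piece of the form (copy of $\Gamma$) $\otimes$ (copy of $K_{(1:k)}$); by Lemma \ref{k1kidentity}, each such piece is itself an isomorphic copy of $\Gamma$ sitting inside $G\otimes H$. The rest is bookkeeping.

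For the first bullet, write $G=\bigoplus_i \Gamma_i$ with each $\Gamma_i\cong \Gamma$ and $H=\bigoplus_j K^{(j)}$ with each $K^{(j)}\cong K_{(1:k)}$. Theorem \ref{distributive} gives
\[
G\otimes H=\bigoplus_{i,j}\bigl(\Gamma_i\otimes K^{(j)}\bigr).
\]
Each subgraph $K^{(j)}\subseteq H$ uses exactly one vertex from each of the $k$ parts of $H$, so with its inherited multipartite structure it is precisely $K_{(1:k)}$. Lemma \ref{k1kidentity} then yields $\Gamma_i\otimes K^{(j)}\cong \Gamma_i\cong \Gamma$, which is the desired decomposition.

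For the second bullet, the factorization hypothesis provides $G=\bigoplus_i \Gamma_i$ with each $\Gamma_i$ a spanning copy of $\Gamma$, and $H=\bigoplus_j H_j$ with each $H_j$ a spanning disjoint union $H_j=\bigoplus_\ell K^{(j,\ell)}$ of copies of $K_{(1:k)}$. Applying Theorem \ref{distributive} twice,
\[
G\otimes H=\bigoplus_{i,j}\bigl(\Gamma_i\otimes H_j\bigr)=\bigoplus_{i,j}\bigoplus_\ell\bigl(\Gamma_i\otimes K^{(j,\ell)}\bigr),
\]
and the inner summands are copies of $\Gamma$ by the argument used in the first bullet. Regrouping by $(i,j)$ exhibits each $\Gamma_i\otimes H_j$ as an edge-disjoint union of copies of $\Gamma$; since $\Gamma_i$ spans $V(G)$ and $H_j$ spans $V(H)$, the product $\Gamma_i\otimes H_j$ spans $V(G\otimes H)$, so this really is a factorization of $G\otimes H$ into unions of copies of $\Gamma$.

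I do not anticipate any real obstacle here: once the distributive law is in hand the corollary is almost a formal manipulation. The one place that warrants a moment of care is verifying that each $K^{(j,\ell)}$, viewed with the $k$-partite structure it inherits from $H$, genuinely presents as the abstract graph $K_{(1:k)}$ (one vertex per part with all $\binom{k}{2}$ edges present) so that Lemma \ref{k1kidentity} applies. This is immediate from the shape of $K_{(1:k)}$, and no further case analysis or computation is required.
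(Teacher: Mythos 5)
Your proof is correct and follows essentially the same route as the paper: decompose both factors, apply the Distribution Theorem to reduce to a single summand of the form $\Gamma_i\otimes K^{(j)}$, and invoke Lemma \ref{k1kidentity} to identify that summand with $\Gamma$. The only difference is that you spell out the spanning/regrouping bookkeeping for the factorization case, which the paper leaves implicit.
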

\begin{proof}
If $G$ is decomposed into copies of $\Gamma$, it means that $G=\oplus G_i$, where each $G_i$ is isomorphic to $\Gamma$. If $H$ is decomposed into copies of $K_{(1:k)}$ (or union of them), it means that $H=\oplus H_i$, where each $H_i$ is isomorphic to $K_{(1:k)}$ (or union of them).

By the Distribution Theorem we only need to show that $G_i\otimes   H_i$ is isomorphic to $\Gamma$ or to $G_i$. But by Lemma \ref{k1kidentity} we know this is true.
\end{proof}

It is interesting to notice that the set of $k$-partite graphs, with $\oplus$ as a sum and $\otimes  $ as a product form a commutative ring. The empty graph is the $0$ element, and $K_{(1:k)}$ is the $1$ element. All of the elements are additive involutions, Theorem \ref{distributive} gives us the distribution laws, and Lemma \ref{commutative} shows that the product is commutative. 
\section{Product of cycles.}
In this section we will concern ourselves with the product of two or more cycles.
Since our product depends on what kind of partition we are using, we need to ask something more from our cycles in order to get results.

\begin{definition}
Given a graph $G$ we will say that $C$ is a \textit{$C_n$-factor of $G$} if $C$ is a $2$-factor of $G$ where each connected component is of size $n$. This means that $C$ is a spanning subgraph of $G$ and $C$ is a union of disjoint cycles of size $n$.
When it is understood that the graph is $G$, then we will just call $C$ a $C_n$-factor (instead of a $C_n$-factor of $G$).
Similarly given a directed graph $\overrightarrow{G}$ we will say that $\overrightarrow{C}$ is a \textit{$\overrightarrow{C}_n$-factor of $G$} if $\overrightarrow{C}$ is a $2$-factor of $\overrightarrow{G}$ where each connected component is a directed cycle of size $n$. 
When it is understood that the graph is $\overrightarrow{G}$, the we will just call $\overrightarrow{C}$ a $\overrightarrow{C}_n$-factor.
\end{definition}

The following lemmas give us an idea of how directed cycles work under the product. They also illustrate why we introduce $\overrightarrow{C}_{(x:k)}$ instead of just working with $C_{(x:k)}$.

\begin{lemma}\label{productdirectedcycles}
Let $\overrightarrow{C}$ be a directed cycle of length $n$ of $\overrightarrow{C}_{(x:k)}$, and let $\overrightarrow{C}'$ be a directed cycle of length $m$ of $\overrightarrow{C}_{(y:k)}$. Then $\overrightarrow{C}\otimes \overrightarrow{C}'$ is a set of $\frac{\gcd(n,m)}{k}$ disjoint directed cycles of $\overrightarrow{C}_{(xy:k)}$ of length $l=\frac{nm}{\gcd(n,m)}$ and $xyk-\frac{nm}{k}$ isolated vertices.
\end{lemma}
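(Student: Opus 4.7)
The plan is to exploit the layered structure of $\overrightarrow{C}_{(x:k)}$: every arc goes from part $i$ to part $i+1\pmod{k}$, so any directed cycle visits the parts in cyclic order and therefore has length divisible by $k$. Write $n=ak$ and $m=bk$. For each $i$, let $A_i\subseteq\{0,\ldots,x-1\}$ be the first-coordinate projection of the vertices of $\overrightarrow{C}$ in part $i$ (so $|A_i|=a$), and define $B_i\subseteq\{0,\ldots,y-1\}$ similarly for $\overrightarrow{C}'$ (so $|B_i|=b$). Since every vertex of a directed cycle has in-degree and out-degree $1$, the arcs of $\overrightarrow{C}$ between parts $i$ and $i+1$ encode a bijection $\pi_i\colon A_i\to A_{i+1}$, and the hypothesis that $\overrightarrow{C}$ is a single $n$-cycle is equivalent to $\sigma:=\pi_{k-1}\circ\cdots\circ\pi_0$ being a single $a$-cycle on $A_0$; define $\pi'_i$ and $\sigma'$ analogously on the $B_i$.

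Next I would read off the vertex and arc sets of the product directly from the definition of $\otimes$: the arcs of $\overrightarrow{C}\otimes\overrightarrow{C}'$ are exactly $\big((g,h,i),(\pi_i(g),\pi'_i(h),i+1)\big)$, so the non-isolated vertices form $\bigsqcup_i A_i\times B_i\times\{i\}$, of size $kab=nm/k$, and the remaining $xyk-nm/k$ vertices of $\overrightarrow{C}_{(xy:k)}$ are isolated. This already gives the claimed isolated-vertex count, and it shows that every non-isolated vertex retains in- and out-degree $1$, so $\overrightarrow{C}\otimes\overrightarrow{C}'$ decomposes into disjoint directed cycles on those $kab$ vertices.

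To measure the lengths of those cycles, I would trace the walk starting at an arbitrary $(g_0,h_0,0)$: after $jk$ arcs it reaches $(\sigma^j(g_0),(\sigma')^j(h_0),0)$, which equals $(g_0,h_0,0)$ exactly when $a\mid j$ and $b\mid j$, i.e.\ when $\operatorname{lcm}(a,b)\mid j$. Therefore every cycle of the product has the same length $l=k\operatorname{lcm}(a,b)=\operatorname{lcm}(n,m)=nm/\gcd(n,m)$, and the number of cycles is $kab/l=\gcd(a,b)=\gcd(n,m)/k$, matching the statement.

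The step requiring the most care is the translation ``$\overrightarrow{C}$ is a single $n$-cycle $\Longleftrightarrow$ $\sigma$ is a single $a$-cycle on $A_0$''; without it, the permutation $(\sigma,\sigma')$ acting on $A_0\times B_0$ could decompose into orbits of different sizes and the cycles of the product would not all have length $l$. Once that equivalence is in hand, everything else reduces to the familiar fact that the orbits of $(\sigma,\sigma')$ acting on $A_0\times B_0$ each have size $\operatorname{lcm}(a,b)$, with exactly $\gcd(a,b)$ orbits in total, and that each orbit of length $t$ in part $0$ lifts to a single directed cycle of length $tk$ in the product.
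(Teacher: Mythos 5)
Your proof is correct and follows essentially the same route as the paper: identify the isolated vertices, use the in/out-degree argument to see the rest splits into directed cycles, trace a cycle to find its length is $\operatorname{lcm}(n,m)$, and divide the $nm/k$ non-isolated vertices by that length to count the cycles. Your reformulation via the return permutations $\sigma,\sigma'$ on part $0$ is a cleaner way of justifying the step the paper handles by listing the vertices $(i_t,j_t)$ in order, and it makes explicit why all cycles of the product have the same length.
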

\begin{proof}
Notice that $\overrightarrow{C}$ has $xk-n$ isolated vertices, because it is a subgraph of $\overrightarrow{C}_{(x:k)}$. Likewise, $\overrightarrow{C}'$ has $yk-m$ isolated vertices.

Let $(x_0,y_0,i)$ be a vertex in $\overrightarrow{C}\otimes   \overrightarrow{C}'$. If either $(x_0,i)$ or $(y_0,i)$ are isolated vertices, then $(x_0,y_0,i)$ is isolated. If neither $(x_0,i)$ and $(y_0,i)$ are isolated, then they respectively have an arrow coming from $(x_1,i-1)$ and $(y_1,i-1)$; and an arrow going to $(x_2,i+1)$ and $(y_2,i+1)$, for some $0\leq x_1,x_2\leq x-1$, $0\leq y_1,y_2\leq y-1$. Hence $(x_0,y_0,i)$ has an arrow coming from $(x_1,y_1,i-1)$ and an arrow going to $(x_2,y_2,i+1)$. So $\overrightarrow{C}\otimes \overrightarrow{C}'$ is composed of directed cycles and isolated vertices.

Assume without loss of generality $n\leq m$.
Let $i_0,i_1,\ldots, i_{n-1}$ be the non-isolated vertices of $\overrightarrow{C}$, in the order they appear, with $i_0\in \overrightarrow{C}_0$; and let $j_0,j_1,\ldots,j_{m-1}$ be the non-isolated vertices of $\overrightarrow{C}'$, in the order they appear with $j_0\in \overrightarrow{C}'_0$. Then the directed cycle starting at $(i_0,j_0)$ in $\overrightarrow{C}\otimes \overrightarrow{C}'$ consists of the vertices:
\[
(i_0,j_0),(i_1,j_1),\ldots, (i_{n-1},j_{n-1}), (i_0,j_{n}),\ldots (i_{n-1},i_{m-1})
\]
This directed cycle has length $l=\frac{nm}{gcd(n,m)}$.
Notice that $\overrightarrow{C}\otimes \overrightarrow{C}'$ has $\frac{nm}{k}$ non-isolated vertices, which means that the number of directed cycles is $\frac{nm}{k}\frac{gcd(n,m)}{nm}=\frac{\gcd(n,m)}{k}$. Thus
$\overrightarrow{C}\otimes \overrightarrow{C}'$ is a set of $\frac{\gcd(n,m)}{k}$ disjoint directed cycles of $\overrightarrow{C}_{(xy:k)}$ of length $l=\frac{nm}{\gcd(n,m)}$ and $xyk-\frac{nm}{k}$ isolated vertices.
\end{proof}
\begin{lemma}\label{productofbalanced}
Let $\overrightarrow{G}$ and $\overrightarrow{H}$ be a $\overrightarrow{C}_n$-factor and a $\overrightarrow{C}_m$-factor of $\overrightarrow{C}_{(x:k)}$ and $\overrightarrow{C}_{(y:k)}$, respectively. Then $\overrightarrow{G}\otimes   \overrightarrow{H}$ is a $\overrightarrow{C}_{l}$-factor of $\overrightarrow{C}_{(xy:k)}$, where $l=\frac{nm}{gcd(n,m)}$.
\end{lemma}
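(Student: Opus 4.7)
The plan is to leverage the single-cycle calculation in Lemma \ref{productdirectedcycles} by decomposing both $2$-factors and distributing the product. Write $\overrightarrow{G}=\oplus_{i}\overrightarrow{C}_{i}$ as the edge-disjoint union of its directed $n$-cycles, and $\overrightarrow{H}=\oplus_{j}\overrightarrow{C}'_{j}$ as the union of its directed $m$-cycles. Theorem \ref{distributive} then gives
\[
\overrightarrow{G}\otimes\overrightarrow{H} \;=\; \bigoplus_{i,j}\bigl(\overrightarrow{C}_{i}\otimes\overrightarrow{C}'_{j}\bigr),
\]
and Lemma \ref{productdirectedcycles} tells us that each summand is a disjoint union of directed cycles of length $l=nm/\gcd(n,m)$ together with some isolated vertices. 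So the only thing left is to show that, once the summands are combined, every vertex of $\overrightarrow{C}_{(xy:k)}$ ends up on exactly one of these $l$-cycles.

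For this, I would fix a vertex $(x_{0},y_{0},i)$ of $\overrightarrow{C}_{(xy:k)}$. Because $\overrightarrow{G}$ is a $\overrightarrow{C}_{n}$-factor, $(x_{0},i)$ lies on a unique component $\overrightarrow{C}_{a}$ and thus has exactly one in-neighbor and one out-neighbor in $\overrightarrow{G}$; similarly $(y_{0},i)$ lies on a unique $\overrightarrow{C}'_{b}$. By the criterion from the proof of Lemma \ref{productdirectedcycles}, $(x_{0},y_{0},i)$ is non-isolated in $\overrightarrow{C}_{a}\otimes\overrightarrow{C}'_{b}$ and is isolated in every other summand (as soon as $a$ or $b$ is changed, one of the underlying coordinates becomes isolated). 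So in the boolean sum the in-degree and out-degree of $(x_{0},y_{0},i)$ are each exactly one, making $\overrightarrow{G}\otimes\overrightarrow{H}$ a spanning union of vertex-disjoint directed cycles of $\overrightarrow{C}_{(xy:k)}$.

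Combining the two observations, every connected component of $\overrightarrow{G}\otimes\overrightarrow{H}$ is a directed cycle that sits entirely inside one of the summands $\overrightarrow{C}_{i}\otimes\overrightarrow{C}'_{j}$, and Lemma \ref{productdirectedcycles} forces its length to be exactly $l$. Hence $\overrightarrow{G}\otimes\overrightarrow{H}$ is a $\overrightarrow{C}_{l}$-factor of $\overrightarrow{C}_{(xy:k)}$, which is what we wanted.

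I do not anticipate any serious obstacle here, since Lemma \ref{productdirectedcycles} does the combinatorial heavy lifting; the only new ingredient is that the $2$-factor hypothesis eliminates the isolated vertices that appear in the single-cycle case. The delicate step worth double-checking is the assertion that $(x_{0},y_{0},i)$ cannot be non-isolated in two different summands, but this is immediate from the uniqueness of cycle membership in a $\overrightarrow{C}_{n}$-factor and a $\overrightarrow{C}_{m}$-factor.
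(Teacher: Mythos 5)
Your proposal is correct and follows essentially the same route as the paper: decompose both factors into their directed cycles, apply Theorem \ref{distributive}, invoke Lemma \ref{productdirectedcycles} for the length of each resulting cycle, and use the in-degree/out-degree count coming from the $2$-factor hypothesis to see that every vertex lies on exactly one cycle. The paper simply presents the degree argument first and the distributive decomposition second, whereas you reverse the order; the content is the same.
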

\begin{proof}
Notice that neither $\overrightarrow{G}$ nor $\overrightarrow{H}$ have isolated vertices.
Let $(x_0,y_0,i)$ be a vertex in $\overrightarrow{G}\otimes   \overrightarrow{H}$. We know that $(x_0,i)$ has an arrow coming from $(x_1,i-1)$ and an arrow going to $(x_2,i+1)$, for exactly one pair $0\leq x_1,x_2\leq x-1$, because $\overrightarrow{G}$ is a $\overrightarrow{C}_n$-factor. Likewise $(y_0,i)$ has has an arrow coming from $(y_1,i-1)$ and an arrow going to $(y_2,i+1)$, for exactly one pair $0\leq y_1,y_2\leq y-1$. Hence $(x_0,y_0,i)$ has an arrow coming from $(x_1,y_1,i-1)$ and an arrow going to $(x_2,y_2,i+1)$, so each vertex in $\overrightarrow{G}\otimes \overrightarrow{H}$ is in exactly one directed cycle.

Let $\overrightarrow{G}=\oplus_i \overrightarrow{C}(i)$ and $\overrightarrow{H}=\oplus_j \overrightarrow{C}'(j)$, where each $\overrightarrow{C}(i)$ is a directed cycle of length $n$, and each $\overrightarrow{C}'(j)$ is a directed cycle of length $m$.

Then by Theorem \ref{distributive} we get:
\begin{align*}
\overrightarrow{G}\otimes \overrightarrow{H}&=\left(\oplus_i \overrightarrow{C}(i)\right)\otimes\left(\oplus_j \overrightarrow{C}'(j)\right)\\
&=\oplus_i\oplus_j\left(\overrightarrow{C}(i)\otimes \overrightarrow{C}'(j)\right)\\
\end{align*}

But by Lemma \ref{productdirectedcycles} we know that $\overrightarrow{C}(i)\otimes \overrightarrow{C}'(j)$ is composed of $\frac{\gcd(n,m)}{k}$ directed cycles of length $l=\frac{nm}{gcd(n,m)}$.
Hence each directed cycle in $\overrightarrow{G}\otimes   \overrightarrow{H}$ has size $l=\frac{nm}{gcd(n,m)}$ and  $\overrightarrow{G}\otimes   \overrightarrow{H}$ is a $\overrightarrow{C}_{l}$-factor.
\end{proof}

\begin{definition}
Given a graph $G$ we will say that $F$ is a $[n_1^{e_1},n_2^{e_2},\ldots,n_p^{e_p}]$-factor of $G$ if $F$ is a $2$-factor of $G$ with $e_i$ connected components of size $n_i$, $i=1,2,\ldots, p$.
\end{definition}

If $e_i$ is not listed, we will just assume that it is $1$. Also, we allow $n_i=n_j$, so that the number of cycles of a certain size is just the sum of the exponents of that number in the expression $[n_1^{e_1},\ldots,n_p^{e_p}]$.
\begin{example}
A $[3^2,3^3,5^2,11,13]$-factor is a subgraph of a graph on $59$ vertices, consisting of $5$ cycles of size $3$, $2$ cycles of size $5$, $1$ cycle of size $11$ and $1$ cycle of size $13$. This subgraph can also be written as a $[3,3,3,3,3,5,5,11,13]$-factor or as a $[3^5,5^2,11,13]$-factor.
\end{example}
Notice that a $C_n$-factor of a graph on $m$ vertices is a $[n^{\frac{m}{n}}]$-factor.
\section{From $C_{(v:n)}$ to $K_{(v:m)}$}
It is advantageous to find solutions to the the Hamilton-Waterloo problem on complete multipartite graphs  because they can then be used to obtain solutions to the Hamilton-Waterloo problem on complete graphs:

\begin{lemma}\label{buildcompletegraph}
Let $m$, $n$, $x$, $y$ and $v$ be positive integers. Suppose the following conditions are satisfied:
\begin{itemize}
\item There exists a decomposition of $K_v$ into $s_\alpha$ $C_{xn}$-factors and $r_\alpha$ $C_{yn}$-factors.
\item There exists a decomposition of $K_{(v:m)}$ into $s_\beta$ $C_{xn}$-factors and $r_\beta$ $C_{yn}$-factors.
\end{itemize}

Then there exists a decomposition of $K_{vm}$ into $s=s_\alpha+s_\beta$ $C_{xn}$-factors and $r=r_\alpha+r_\beta$ $C_{yn}$-factors.
\end{lemma}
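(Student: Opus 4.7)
The plan is to exploit the standard decomposition $K_{vm}=mK_{v}\oplus K_{(v:m)}$, where the edges of $K_{vm}$ split into the $m$ copies of $K_v$ induced on each class of a fixed partition of the $vm$ vertices into $m$ classes of size $v$, together with all the remaining edges, which form exactly the complete multipartite graph $K_{(v:m)}$ on the same vertex partition.

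First I would fix such a partition $V(K_{vm})=V_0\cup V_1\cup\cdots\cup V_{m-1}$ with $|V_i|=v$. On each $V_i$, the induced subgraph is a copy of $K_v$, so by the first hypothesis it decomposes into $s_\alpha$ many $C_{xn}$-factors and $r_\alpha$ many $C_{yn}$-factors. Label these factors $F^{(i)}_1,\ldots,F^{(i)}_{s_\alpha}$ (the $C_{xn}$-factors) and $G^{(i)}_1,\ldots,G^{(i)}_{r_\alpha}$ (the $C_{yn}$-factors) for $0\le i\le m-1$.

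Next I would \emph{synchronize across classes}: for each $j\in\{1,\ldots,s_\alpha\}$, define
\[
F_j=\bigoplus_{i=0}^{m-1} F^{(i)}_j,
\]
and similarly $G_k=\bigoplus_{i=0}^{m-1} G^{(i)}_k$ for $1\le k\le r_\alpha$. Since the $V_i$ are pairwise disjoint and each $F^{(i)}_j$ spans $V_i$ with vertex-disjoint cycles of length $xn$, the union $F_j$ spans all of $V(K_{vm})$ and consists of vertex-disjoint cycles of length $xn$; hence $F_j$ is a $C_{xn}$-factor of $K_{vm}$. The same argument shows each $G_k$ is a $C_{yn}$-factor of $K_{vm}$. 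By construction the $F_j$'s and $G_k$'s are pairwise edge-disjoint and together use exactly the edges of $\bigoplus_{i}K_v$ on the $V_i$'s.

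Finally, the remaining edges of $K_{vm}$ form $K_{(v:m)}$ on the partition $\{V_0,\ldots,V_{m-1}\}$, which by the second hypothesis decomposes into $s_\beta$ $C_{xn}$-factors and $r_\beta$ $C_{yn}$-factors; each of these is automatically a $2$-factor of $K_{vm}$ with all components of the appropriate cycle length. Concatenating these with the $F_j$'s and $G_k$'s yields the desired decomposition of $K_{vm}$ into $s_\alpha+s_\beta$ $C_{xn}$-factors and $r_\alpha+r_\beta$ $C_{yn}$-factors. There is no real obstacle here; the only point that needs explicit verification is that a union of $C_n$-factors on the pairwise disjoint vertex classes $V_i$ is again a $C_n$-factor of $K_{vm}$, which is immediate from the definition.
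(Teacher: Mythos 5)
Your proof is correct and follows essentially the same route as the paper's: split $K_{vm}$ into the $m$ vertex-disjoint copies of $K_v$ plus the complementary $K_{(v:m)}$, decompose each piece by hypothesis, and combine. The only difference is that you make explicit the synchronization step (taking the union of the $j$-th factor across all $m$ copies of $K_v$ to form a single spanning factor), which the paper leaves implicit.
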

\begin{proof}
Partition the vertices of $K_{vm}$ into $m$ sets $A_1,\ldots, A_m$ of size $v$ each. 
The graph that contains the edges between vertices belonging to a same partite set is the union of $m$ disjoint copies of $K_v$. We can decompose each copy of $K_v$ into $s_\alpha$ $C_{xn}$-factors and $r_\alpha$ $C_{yn}$-factors.
The graph that contains the edges between vertices belonging to different parts is isomorphic to $K_{(v:m)}$. We can decompose this graph into $s_\beta$ $C_{xn}$-factors and $r_\beta$ $C_{yn}$-factors.

Thus we have a decomposition of $K_{vm}$ into $s=s_\alpha+s_\beta$ $C_{xn}$-factors and $r=r_\alpha+r_\beta$ $C_{yn}$-factors. 
\end{proof}

One could write a version of the lemma for non-uniform solutions as follows, where by $mK_v$ we understand the graph consisting of $m$ disconnected copies of $K_v$.
\begin{lemma}\label{buildcompletegraphnonuniform}
Let $m$, and $v$ be positive integers. Let $F_1$ and $F_2$ be two $2$-factors on $vm$ vertices. Suppose the following conditions are satisfied:
\begin{itemize}
\item There exists a decomposition of $mK_v$ into $s_\alpha$ copies of $F_1$ and $r_\alpha$ copies of $F_2$.
\item There exists a decomposition of $K_{(v:m)}$ into $s_\beta$ copies of $F_1$ and $r_\beta$ copies of $F_2$.
\end{itemize}

Then there exists a decomposition of $K_{vm}$ into $s=s_\alpha+s_\beta$ copies of $F_1$ and $r=r_\alpha+r_\beta$ copies of $F_2$.
\end{lemma}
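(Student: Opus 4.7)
The plan is to follow the proof of the preceding Lemma \ref{buildcompletegraph} essentially verbatim, since the argument only uses that the complete graph $K_{vm}$ splits canonically into an ``inside parts'' piece and a ``between parts'' piece; the uniformity of the factors plays no role.

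First I would fix a partition $V(K_{vm}) = A_1 \cup A_2 \cup \cdots \cup A_m$ with $|A_i|=v$ for each $i$. The edges of $K_{vm}$ split into two edge-disjoint subgraphs: the subgraph $G_{\text{in}}$ consisting of edges whose endpoints lie in a common $A_i$, which is isomorphic to $mK_v$; and the subgraph $G_{\text{out}}$ consisting of edges whose endpoints lie in different classes, which is isomorphic to $K_{(v:m)}$. Clearly $K_{vm} = G_{\text{in}} \oplus G_{\text{out}}$.

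Next I would invoke the two hypotheses: decompose $G_{\text{in}} \cong mK_v$ into $s_\alpha$ copies of $F_1$ and $r_\alpha$ copies of $F_2$, and decompose $G_{\text{out}} \cong K_{(v:m)}$ into $s_\beta$ copies of $F_1$ and $r_\beta$ copies of $F_2$. Since $F_1$ and $F_2$ are $2$-factors on $vm$ vertices and both $G_{\text{in}}$ and $G_{\text{out}}$ span all $vm$ vertices of $K_{vm}$, every one of these copies is a spanning $2$-factor of $K_{vm}$. The union of the two decompositions is then a decomposition of $K_{vm}$ into $s_\alpha + s_\beta$ copies of $F_1$ and $r_\alpha + r_\beta$ copies of $F_2$, as required.

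There is no real obstacle here; the only point worth a sentence of care is that copies of $F_1$ coming from $G_{\text{in}}$ and copies of $F_1$ coming from $G_{\text{out}}$ are automatically edge-disjoint (their edges live in disjoint parts of $K_{vm}$), so the counts add without overlap. Everything else is bookkeeping.
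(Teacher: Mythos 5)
Your proof is correct and is essentially the same argument as the paper's proof of the uniform version (Lemma \ref{buildcompletegraph}), which the paper implicitly carries over to this non-uniform statement without writing it out. The edge partition into the within-parts graph $mK_v$ and the between-parts graph $K_{(v:m)}$, followed by combining the two hypothesized decompositions, is exactly what is intended.
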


In order to use these lemmas we need two types of ingredients. The first one is the decomposition of the complete graph $K_v$. In \cite{AH,ASSW} uniform decompositions were given:
\begin{theorem}
\label{OP}
\cite{AH,ASSW}
There exists a decomposition of $K_v$ into $C_n$-factors if and
only if $v \equiv 0 \pmod{n}$, $(v,n) \not = (6,3)$ and $(v,n) \not = (12,3)$.
\end{theorem}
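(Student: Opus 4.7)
The plan is to prove both directions of the biconditional, with necessity being straightforward and sufficiency requiring the bulk of the work.

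For necessity, observe that a $C_n$-factor of $K_v$ is a spanning $2$-regular subgraph whose components each have exactly $n$ vertices, so partitioning $V(K_v)$ by components shows $n \mid v$. Since $K_v$ is $(v-1)$-regular and each $C_n$-factor contributes $2$ to every vertex degree, $v-1$ must be even, forcing $v$ odd (in the pure $2$-factor formulation). The exceptions $(6,3)$ and $(12,3)$ are classical small non-existences: $K_6$ admits no parallel class of triangles since resolvability would force a Steiner triple system on $6$ points, which fails by a parity/counting argument, and the corresponding $KTS(12)$ is ruled out by an analogous counting on parallel classes. Both are finite checks, best done by hand or by short computer enumeration.

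For sufficiency, I would split by $n$. When $n=3$, the claim is equivalent to the existence of a Kirkman triple system $KTS(v)$, which by Ray-Chaudhuri and Wilson exists for every admissible $v \equiv 3 \pmod 6$. For general $n \geq 4$, the natural approach is a combination of starter-adder constructions over $\mathbb{Z}_v$ and recursive product constructions. The starter-adder method builds one $C_n$-factor $F_0$ on the vertex set $\mathbb{Z}_v$ whose edge-differences form a system of representatives for $\mathbb{Z}_v \setminus \{0\}$ modulo $\pm 1$, and then cyclic rotation of $F_0$ by each element of $\mathbb{Z}_v$ produces $(v-1)/2$ edge-disjoint factors covering $K_v$. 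When a direct starter is unavailable, one invokes a recursive lift: combining a $C_n$-factorization of a smaller $K_{v'}$ with a $C_n$-factorization of a complete multipartite graph $K_{(v':m)}$ via Lemma \ref{buildcompletegraph} yields a $C_n$-factorization of $K_{v'm}$, so once enough base cases are in hand the recursion reaches every admissible $v$.

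The main obstacle is the sporadic small cases that elude both the starter-adder construction and the recursive lift. Isolating the exception set as precisely $\{(6,3),(12,3)\}$ requires explicit constructions (often by computer) for every small admissible pair, which is precisely the heart of the arguments in \cite{AH,ASSW}. I would expect to spend the majority of the proof organizing these finite verifications and matching them to congruence classes where the starter fails, rather than on the infinite-family constructions themselves.
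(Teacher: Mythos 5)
This statement is not proved in the paper at all: it is Theorem \ref{OP}, an external result imported verbatim from \cite{AH,ASSW} and used as a black box (to factor $K_m$ into $C_n$-factors in Lemma \ref{cvntokvm} and the corollaries of Section 10). So there is no in-paper proof to measure your attempt against, and the honest comparison is between your sketch and the actual content of the cited papers.

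As a sketch your outline has the right overall shape (divisibility and degree counting for necessity; Kirkman triple systems for $n=3$; starter--adder plus recursive constructions for general $n$), but it is an outline rather than a proof: every hard step is deferred either to \cite{AH,ASSW} or to unspecified ``finite verifications,'' which is exactly the part that would need to be supplied. There are also two concrete slips. First, your degree argument forces $v$ odd, yet the two listed exceptions $(6,3)$ and $(12,3)$ have $v$ even; the classical theorem handles even $v$ by removing a $1$-factor from $K_v$ first, and the statement as quoted in the paper silently elides this (the paper only ever invokes it with $v$ odd). Your necessity argument as written would ``prove'' that no even $v$ is admissible, which contradicts the exceptions being meaningful. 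Second, the claim that ``$K_6$ admits no parallel class of triangles'' is false as stated: two disjoint triangles on $6$ vertices obviously exist. What fails for $(6,3)$ and $(12,3)$ is the existence of a \emph{complete} resolvable decomposition (a nearly Kirkman triple system), not of a single parallel class, and the nonexistence argument is a genuine case analysis, not a one-line parity count. If you intend this as a citation-level justification, say so and cite the sources as the paper does; if you intend it as a proof, the core of the work remains to be done.
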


The other ingredient is the decomposition of the complete multipartite graph $K_{(v:m)}$. In \cite{BDT} the authors used decompositions of $C_{(v:m)}$ to obtain decompositions of $K_{(v:m)}$. We describe this type of construction in a formal fashion in the following lemma.
\begin{lemma}\label{cvntokvm}
Let $m$, $x_1,\ldots,x_p$, $y_1,\ldots,y_p$, and $v$ be positive integers. Let $s_1,\ldots,s_{\frac{m-1}{2}}$ be non-negative integers. Suppose the following conditions are satisfied:
\begin{itemize}
\item There exists a decomposition of $K_m$ into $[n_1,\ldots,n_p]$-factors.
\item For every $1\leq i\leq p$, and for every $1\leq t \leq \frac{m-1}{2}$ there exists a decomposition of $C_{(v:n_i)}$ into $s_t$ $C_{x_in_i}$-factors and $r_t$ $C_{y_in_i}$-factors.
\end{itemize}

Let
\[
s=\sum_{t=1}^{\frac{(m-1)}{2}}s_t\text{\hspace{.3in} and \hspace{.3in}}r=\sum_{t=1}^{\frac{(m-1)}{2}}r_t
\]

Then there exists a decomposition of $K_{(v:m)}$ into $s$ $[(x_1n_1)^{\frac{v}{x_1}},\ldots,(x_pn_p)^{\frac{v}{x_p}}]$-factors and $r$ $[(y_1n_1)^{\frac{v}{y_1}},\ldots,(y_pn_p)^{\frac{v}{y_p}}]$-factors.
\end{lemma}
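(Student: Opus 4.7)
The plan is to lift the given decomposition of $K_m$ to $K_{(v:m)}$ via a vertex blow-up and then feed each cycle-blown subgraph into the hypothesized decomposition of the corresponding $C_{(v:n_i)}$.

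First I would identify $V(K_{(v:m)})$ with $\{0,\ldots,v-1\}\times\{0,\ldots,m-1\}$, so that the $j$-th partite set is $\{0,\ldots,v-1\}\times\{j\}$. By hypothesis $K_m=\oplus_{t=1}^{(m-1)/2}F_t$ with each $F_t$ an $[n_1,\ldots,n_p]$-factor. For each $t$, define $G_t$ to be the spanning subgraph of $K_{(v:m)}$ consisting of all edges $\{(a,j),(b,k)\}$ with $\{j,k\}\in E(F_t)$. Since the $F_t$ partition $E(K_m)$ and every edge of $K_{(v:m)}$ joins two distinct parts, the $G_t$ partition $E(K_{(v:m)})$.

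Next, I would observe that each cycle $(j_0,j_1,\ldots,j_{n_i-1})$ of length $n_i$ in $F_t$ contributes to $G_t$ the subgraph on parts $j_0,\ldots,j_{n_i-1}$ consisting of every edge between cyclically consecutive parts, which by the definition of $C_{(v:n_i)}$ is a copy of it. Thus $G_t$ is the vertex-disjoint union of one $C_{(v:n_i)}$ for each $i=1,\ldots,p$. Applying the second hypothesis at level $t$ to each such copy decomposes it into $s_t$ $C_{x_in_i}$-factors and $r_t$ $C_{y_in_i}$-factors. Because $s_t$ and $r_t$ do not depend on $i$, I can index the $s_t$ factors of the $i$-th copy by $q=1,\ldots,s_t$ and, for each $q$, take the union across $i$; this produces a spanning 2-factor of $G_t$ with exactly $v/x_i$ cycles of length $x_in_i$ for every $i$, i.e., an $[(x_1n_1)^{v/x_1},\ldots,(x_pn_p)^{v/x_p}]$-factor of $K_{(v:m)}$. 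The analogous bundling on the $r_t$ side gives $r_t$ factors of the second type, and summing over $t=1,\ldots,(m-1)/2$ yields the claimed $s=\sum_t s_t$ and $r=\sum_t r_t$ factors.

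The only step requiring more than an unwinding of definitions is identifying the blown-up cycle with $C_{(v:n_i)}$, which is immediate from the definition of $C_{(v:n_i)}$ as the multipartite graph whose edges lie between parts at cyclic distance one. The mildly delicate combinatorial point, namely that the component decompositions can be merged into coherent 2-factors of $G_t$, is handled automatically by the hypothesis that $s_t$ and $r_t$ do not depend on $i$, so the number of parallel factors matches across all components.
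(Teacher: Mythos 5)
Your proposal is correct and follows essentially the same route as the paper: the paper phrases the blow-up $G_t$ as the partite product $K_{(v:m)}\otimes N_t$ and uses the distributive property of $\otimes$ over $\oplus$ to split it into copies of $C_{(v:n_i)}$, but this is exactly your explicit construction of $G_t$ and its decomposition into blown-up cycles. The bundling of one $C_{x_in_i}$-factor per component into a single $[(x_1n_1)^{v/x_1},\ldots,(x_pn_p)^{v/x_p}]$-factor is also carried out the same way in the paper's proof.
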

\begin{proof}
Using the decomposition of $K_m$ into $[n_1,n_2,\ldots,n_p]$-factors, we have:
\[
K_{(1:m)}=K_m=\bigoplus_{t=1}^{\frac{m-1}{2}}N_t
\]

where each $N_t$ is a $[n_1,n_2,\ldots,n_p]$-factor.

We know that $K_{(v:m)}=K_{(v:m)}\otimes   K_{(1:m)}$. This gives:
\[
\begin{array}{ccl}
K_{(v:m)}&=&K_{(v:m)}\otimes   K_{(1:m)}\\
&=&K_{(v:m)}\otimes   \left(\bigoplus_{t=1}^{\frac{m-1}{2}}N_t\right)\\
&=&\bigoplus_{t=1}^{\frac{m-1}{2}}\left(K_{(v:m)}\otimes   N_t\right)\\
\end{array}
\]

Notice that $K_{(v:m)}\otimes   N_t$ is a spanning subgraph of $K_{(v:m)}$. Since $N_t$ is a $[n_1,n_2,\ldots,n_p]$-factor, we have $N_t=\bigoplus_{i=1}^p C(w_i)$, where $C(w_i)$ is a cycle of size $n_i$.
Therefore,
\[
\begin{array}{ccl}
K_{(v:m)}\otimes   N_t&=K_{(v:m)}\otimes   \left(\bigoplus_{i=1}^p C(w_i)\right)\\
&=\bigoplus_{i=1}^p \left(K_{(v:m)}\otimes  C(w_i)\right)\\
\end{array}
\]

But $K{(v:m)}\otimes   C(w_i)$ is isomorphic to $C_{(v:n_i)}$ because $C(w_i)$ is isomorphic to $C_{(1:n_i)}$. So for each $w_i$ we can decompose $K{(v:m)}\otimes   C(w_i)$ into $s_t$ $C_{x_in_i}$-factors and $r_t$ $C_{y_in_i}$-factors. Since $C_{(v:n_i)}$ has $vn_i$ vertices, a $C_{x_in_i}$-factor has $\frac{v}{x_i}$ cycles, hence it is a $[(x_in_i)^{\frac{v}{x_i}}]$-factor. Likewise a $C_{y_in_i}$-factor is a $[(y_in_i)^{\frac{v}{y_i}}]$-factor.

 Taking the union of one $C_{x_in_i}$-factor for each $i$ gives a $[(x_1n_1)^{\frac{v}{x_1}},\ldots,(x_pn_p)^{\frac{v}{x_p}}]$-factor. Thus we get a decomposition of $K{(v:m)}\otimes  N_t$ into $s_t$ $[(x_1n_1)^{\frac{v}{x_1}},\ldots,(x_pn_p)^{\frac{v}{x_p}}]$-factors and $r_t$ $[(y_1n_1)^{\frac{v}{y_1}},\ldots,(y_pn_p)^{\frac{v}{y_p}}]$-factors.

Doing this for every $1\leq t \leq \frac{m-1}{2}$, we end up with a decomposition of $K_{(v:m)}$ into $s$ $[(x_1n_1)^{\frac{v}{x_1}},\ldots,(x_pn_p)^{\frac{v}{x_p}}]$-factors and $r$ $[(y_1n_1)^{\frac{v}{y_1}},\ldots,(y_pn_p)^{\frac{v}{y_p}}]$-factors.
\end{proof}

It is important to note that Theorem \ref{OP} can be used to obtain decompositions of $K_m$ into $C_n$-factors. Thus the focus of the next three sections is to find decompositions of $C_{(v:n)}$. Any decomposition of $\overrightarrow{C}_{(v:n)}$ is equivalent to a decomposition of $C_{(v:n)}$, by simply removing the direction of each edge. We will work with the partite product on directed graphs to find decompositions of $\overrightarrow{C}_{(v:n)}$, and thus obtain decompositions of $C_{(v:n)}$.
\section{Hamilton Waterloo problem on directed complete cyclic multipartite graphs.}
For the entirety of this section, we assume $x$ is odd. In this section we will decompose $\overrightarrow{C}_{(x:n)}$ into $\overrightarrow{C}_n$-factors and $\overrightarrow{C}_{xn}$-factors (Hamilton Cycles), and $\overrightarrow{C}_{(4x:n)}$ into $\overrightarrow{C}_n$-factors and $\overrightarrow{C}_{2xn}$-factors.

Suppose $G_{\alpha}$ and $G_{\beta}$ are two parts of size $x$ in an equipartite directed graph $G$. We say an arc in $G$ has difference $d$ if $\left((g_1,\alpha),(g_2,\beta)\right)\in E(G)$ and $g_2-g_1\equiv d \pmod{x}$. If $\left\lbrace\left((g_1,\alpha),(g_2,\beta)\right):g_2-g_1\equiv d \pmod x \right\rbrace \subset E(G)$, then we say that difference $d$ between parts $\alpha$ and $\beta$ is covered by $G$.

Let the partite sets of $\overrightarrow{C}_{(x:n)}$ be $G_0,G_1,\ldots,G_{n-1}$. Write $n-1=2^{e_1}+2^{e_2}+\ldots+2^{e_k}$ with $e_{\alpha} > e_{\beta}$ if $\alpha < \beta$. Notice that $k\leq \frac{n}{2}$. Working modulo $x$, let $T_x(i)$ be the directed subgraph of $\overrightarrow{C}_{(x:n)}$ obtained by taking differences:
\begin{itemize}
\item $2^{e_j}i$ between $G_{j-1}$ and $G_{j}$ for $1\leq j \leq k$.
\item $-2i$ between $G_{j-1}$ and $G_{j}$ for $k+1\leq j \leq 2k-1$.
\item $-i$ between $G_{j-1}$ and $G_{j}$ for $2k\leq j \leq n-1$.
\item $-i$ between $G_{n-1}$ and $G_{0}$.
\end{itemize}

\begin{example}
We construct $T_5(1)$ with $n=7$. We have $n-1=6=2^2+2^1$, and $k=2$.

This means that from the first column to the second one we add $2^2$, from the second to the third we add $2$; since $k=2$, from the third to the forth we subtract $2$, and for the rest of the arcs we just subtract $1$. Figure \ref{t51figure} shows one directed cycle of this construction.

\begin{figure}
\begin{center}
\begin{tikzpicture}[every node/.style={draw,shape=circle}]
\draw (0,0) node [scale=.5] (000){0,0};
\draw (1,0) node [scale=.5] (010){0,1};
\draw (2,0) node [scale=.5] (020){0,2};
\draw (3,0) node [scale=.5] (030){0,3};
\draw (4,0) node [scale=.5] (040){0,4};
\draw (5,0) node [scale=.5] (050){0,5};
\draw (6,0) node [scale=.5] (060){0,6};
\draw (0,-1) node [scale=.5](001){1,0};
\draw (1,-1) node [scale=.5](011){1,1};
\draw (2,-1) node [scale=.5](021){1,2};
\draw (3,-1) node [scale=.5](031){1,3};
\draw (4,-1) node [scale=.5](041){1,4};
\draw (5,-1) node [scale=.5](051){1,5};
\draw (6,-1) node [scale=.5](061){1,6};
\draw (0,-2) node [scale=.5](002){2,0};
\draw (1,-2) node [scale=.5](012){2,1};
\draw (2,-2) node [scale=.5](022){2,2};
\draw (3,-2) node [scale=.5](032){2,3};
\draw (4,-2) node [scale=.5](042){2,4};
\draw (5,-2) node [scale=.5](052){2,5};
\draw (6,-2) node [scale=.5](062){2,6};
\draw (0,-3) node [scale=.5](003){3,0};
\draw (1,-3) node [scale=.5](013){3,1};
\draw (2,-3) node [scale=.5](023){3,2};
\draw (3,-3) node [scale=.5](033){3,3};
\draw (4,-3) node [scale=.5](043){3,4};
\draw (5,-3) node [scale=.5](053){3,5};
\draw (6,-3) node [scale=.5](063){3,6};
\draw (0,-4) node [scale=.5](004){4,0};
\draw (1,-4) node [scale=.5](014){4,1};
\draw (2,-4) node [scale=.5](024){4,2};
\draw (3,-4) node [scale=.5](034){4,3};
\draw (4,-4) node [scale=.5](044){4,4};
\draw (5,-4) node [scale=.5](054){4,5};
\draw (6,-4) node [scale=.5](064){4,6};

\draw (000) [->,out=300,in=120,looseness=1] to (014);

\draw (014) [->] to (021);

\draw (021) [->] to (034);

\draw (034) [->] to (043);

\draw (043) [->] to (052);

\draw (052) [->] to (061);

\draw[dashed,->] (061) .. controls +(135:1) and +(-45:1) .. (000);
\end{tikzpicture}
\caption{one directed cycle in $T_5(1)$, with $n=7$.}
\label{t51figure}
\end{center}
\end{figure}
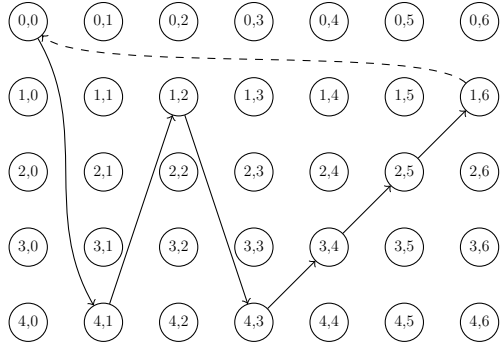

The rest of the arcs are obtained by developing this base directed cycle modulo $x$, i.e. if $\left((\alpha,j),(\beta,j+1)\right)\in E(T_x(i))$ then $\left((\alpha+h,j),(\beta+h,j+1)\right)\in E(T_x(i))$ for all $h\in \mathbb{Z}_x$.
\end{example}
\begin{lemma}\label{bbb}
$T_x(i)$ is a $\overrightarrow{C}_n$-factor for any $i$.
\end{lemma}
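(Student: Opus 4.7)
The plan is to verify two things: (i) $T_x(i)$ is a spanning subdigraph of $\overrightarrow{C}_{(x:n)}$ in which every vertex has in-degree and out-degree exactly $1$, and (ii) every directed cycle component of $T_x(i)$ has length exactly $n$. Once both are established, $T_x(i)$ decomposes $V(\overrightarrow{C}_{(x:n)})$ into disjoint directed $n$-cycles, which is the definition of a $\overrightarrow{C}_n$-factor.

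For (i), note that between each consecutive pair of parts $(G_{j-1},G_j)$ (indices mod $n$), the construction selects a single difference $d_j\in\mathbb Z_x$ and, after developing modulo $x$, includes the $x$ arcs $\bigl((h,j-1),(h+d_j,j)\bigr)$ for $h\in\mathbb Z_x$. These form a perfect matching between $G_{j-1}$ and $G_j$, so each vertex of $\overrightarrow{C}_{(x:n)}$ has in-degree $1$ and out-degree $1$ in $T_x(i)$. Consequently $T_x(i)$ is a disjoint union of directed cycles, and since every arc shifts the part index by $+1\pmod n$, the length of each such cycle must be a positive multiple of $n$.

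For (ii), it suffices to show that starting at any vertex $(g,0)$ and following the $n$ arcs of one full trip around the parts we return to $(g,0)$; equivalently $\sum_{j=1}^n d_j\equiv 0\pmod x$. Using the binary expansion $n-1=2^{e_1}+\cdots+2^{e_k}$, the $k$ arcs of the first type contribute $i(2^{e_1}+\cdots+2^{e_k})=i(n-1)$, the $k-1$ arcs of the second type contribute $-2i(k-1)$, the $n-2k$ arcs of the third type contribute $-i(n-2k)$, and the last arc contributes $-i$. Summing these four terms gives
\[
i(n-1)-2i(k-1)-i(n-2k)-i = 0.
\]
Hence each cycle closes after exactly $n$ steps and (ii) holds.

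The whole argument is essentially just bookkeeping: the genuine content sits in the arithmetic identity of the final display, which is where the construction's clever choice of differences is used — the $-2i$ arcs and the $-i$ arcs are tuned precisely to absorb the large jumps $2^{e_j}i$ coming from the binary expansion. No real obstacle is anticipated beyond verifying this cancellation and being careful about the edge cases $k=1$ (in which the second family is empty) and small $n$.
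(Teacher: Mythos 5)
Your proof is correct and follows essentially the same route as the paper: the key step in both is verifying that the differences around one trip through the parts telescope to $i(n-1)-2i(k-1)-i(n-2k)-i=0$, so the base cycle closes after exactly $n$ steps. You merely make explicit the in-degree/out-degree bookkeeping that the paper leaves implicit.
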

\begin{proof}
It suffices to show that the construction gives a base directed cycle of length $n$. The directed cycle containing the vertex $(0,0)$ can be tracked by considering the first coordinate of each vertex that is visited while passing from $G_0$ to $G_1$ to $G_2 \ldots$ to $G_0$. If we add the respective differences of the edges between $G_0$ and $G_1$, $G_1$ and $G_2, \ldots$, $G_{n-1}$ and $G_0$, we must show that this total sum is $0$. Because $2^{e_1}+2^{e_2}+\ldots+2^{e_k}=n-1$, we have for the sum:
\[
i(n-1)-2i(k-1)-i(n-2k+1)=i(n-1)-i(n-1)=0.
\]

\end{proof}

Let $F_h(G)$ be the directed subgraph of the directed graph $G$ that contains only the arcs between parts $h-1$ and $h$. That is 
\[
E(F_h(G))=\left\{\left((g_1,h-1)(g_2,h)\right)|\{(g_1,h-1)(g_2,h)\}\in E(G)\right\}.
\] 
In particular $F_n(G)$ contains the arcs between $G_{n-1}$ and $G_0$.

Let $H_x(i,s)=T_x(i)\oplus F_n(T_x(i))\oplus F_n(T_x(s))$. This means that $H_x(i,s)$ is the directed subgraph of $\overrightarrow{C}_{(x:n)}$ obtained by taking the same arcs as $T_x(i)$ between $G_j$ and $G_{j+1}$ for $0\leq j \leq n-2$ and $T_x(s)$ between $G_{n-1}$ and $G_0$.

\begin{example}
Figure \ref{Hexample} ilustrates $T_3(0)$, $T_3(1)$, $H_3(0,1)$ and $H_3(1,0)$ with $n=3$.

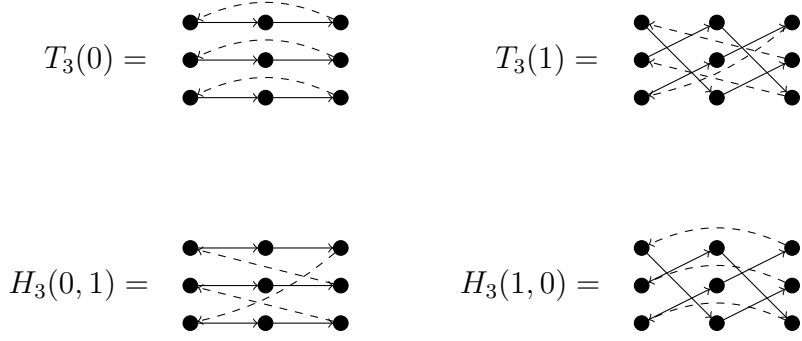
\begin{figure}
\begin{center}
\begin{tikzpicture}[every node/.style={draw,shape=circle,fill=black}]
\draw (0,0) node [scale=.5] (000){};
\draw (1,0) node [scale=.5] (010){};
\draw (2,0) node [scale=.5] (020){};
\draw (0,-.5) node [scale=.5](001){};
\draw (1,-.5) node [scale=.5](011){};
\draw (2,-.5) node [scale=.5](021){};
\draw (0,-1) node [scale=.5](002){};
\draw (1,-1) node [scale=.5](012){};
\draw (2,-1) node [scale=.5](022){};

\draw (000) [->] to (010);
\draw (010) [->] to (020);
\draw (020) [bend right=25, dashed,->] to (000);
\draw (001) [->] to (011);
\draw (011) [->] to (021);
\draw (021) [bend right=25, dashed,->] to (001);
\draw (002) [->] to (012);
\draw (012) [->] to (022);
\draw (022) [bend right=25, dashed,->] to (002);

\draw (6,0) node [scale=.5] (100){};
\draw (7,0) node [scale=.5] (110){};
\draw (8,0) node [scale=.5] (120){};
\draw (6,-.5) node [scale=.5](101){};
\draw (7,-.5) node [scale=.5](111){};
\draw (8,-.5) node [scale=.5](121){};
\draw (6,-1) node [scale=.5](102){};
\draw (7,-1) node [scale=.5](112){};
\draw (8,-1) node [scale=.5](122){};

\draw (100) [->] to (112);
\draw (112) [->] to (121);
\draw (121) [dashed,->] to (100);
\draw (101) [->] to (110);
\draw (110) [->] to (122);
\draw (122) [dashed,->] to (101);
\draw (102) [->] to (111);
\draw (111) [->] to (120);
\draw (120) [bend left=13, dashed,->] to (102);

\draw (0,-3) node [scale=.5] (200){};
\draw (1,-3) node [scale=.5] (210){};
\draw (2,-3) node [scale=.5] (220){};
\draw (0,-3.5) node [scale=.5](201){};
\draw (1,-3.5) node [scale=.5](211){};
\draw (2,-3.5) node [scale=.5](221){};
\draw (0,-4) node [scale=.5](202){};
\draw (1,-4) node [scale=.5](212){};
\draw (2,-4) node [scale=.5](222){};

\draw (200) [->] to (210);
\draw (210) [->] to (220);

\draw (201) [->] to (211);
\draw (211) [->] to (221);

\draw (202) [->] to (212);
\draw (212) [->] to (222);
\draw (220) [bend left=13, dashed,->] to (202);
\draw (222) [dashed,->] to (201);
\draw (221) [dashed,->] to (200);

\draw (6,-3) node [scale=.5] (300){};
\draw (7,-3) node [scale=.5] (310){};
\draw (8,-3) node [scale=.5] (320){};
\draw (6,-3.5) node [scale=.5](301){};
\draw (7,-3.5) node [scale=.5](311){};
\draw (8,-3.5) node [scale=.5](321){};
\draw (6,-4) node [scale=.5](302){};
\draw (7,-4) node [scale=.5](312){};
\draw (8,-4) node [scale=.5](322){};

\draw (320) [bend right=25, dashed,->] to (300);
\draw (321) [bend right=25, dashed,->] to (301);
\draw (322) [bend right=25, dashed,->] to (302);

\draw (300) [->] to (312);
\draw (312) [->] to (321);

\draw (301) [->] to (310);
\draw (310) [->] to (322);

\draw (302) [->] to (311);
\draw (311) [->] to (320);

\node[left=10pt,fill=none,draw=none] at (0,-.5) {$T_3(0)=$};
\node[left=10pt,fill=none,draw=none] at (6,-.5) {$T_3(1)=$};
\node[left=10pt,fill=none,draw=none] at (0,-3.5) {$H_3(0,1)=$};
\node[left=10pt,fill=none,draw=none] at (6,-3.5) {$H_3(1,0)=$};
\end{tikzpicture}
\caption{$T_3(0)$, $T_3(1)$, $H_3(0,1)$, $H_3(1,0)$.}
\label{Hexample}
\end{center}
\end{figure}
\end{example}

\begin{lemma}\label{HisforHamilton}
If $gcd(x,i-s)=1$ then $H_x(i,s)$ is a directed Hamiltonian cycle.
\end{lemma}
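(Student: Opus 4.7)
The plan is to first argue that $H_x(i,s)$ is a (directed) $2$-factor of $\overrightarrow{C}_{(x:n)}$, i.e.\ every vertex has in-degree and out-degree exactly one, and then to compute the length of the cycle through $(0,0)$ and show that under $\gcd(x,i-s)=1$ it equals $xn$.

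First I would observe that $T_x(i)$ and $T_x(s)$ are both $\overrightarrow{C}_n$-factors of $\overrightarrow{C}_{(x:n)}$ by Lemma \ref{bbb}. In particular, $F_n(T_x(i))$ and $F_n(T_x(s))$ are each a perfect matching between $G_{n-1}$ and $G_0$: every vertex of $G_{n-1}$ has a unique out-arc and every vertex of $G_0$ has a unique in-arc. Therefore the operation $\oplus F_n(T_x(i))\oplus F_n(T_x(s))$ replaces one perfect matching with another between $G_{n-1}$ and $G_0$, leaving the in-/out-degree of every vertex equal to $1$. Hence $H_x(i,s)$ is a disjoint union of directed cycles spanning $V(\overrightarrow{C}_{(x:n)})$.

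Next I would track the directed cycle of $H_x(i,s)$ containing $(0,0)$. By definition, the arcs from $G_{j-1}$ to $G_j$ for $1\le j\le n-1$ agree with $T_x(i)$, and the arcs from $G_{n-1}$ to $G_0$ come from $T_x(s)$ (and therefore have difference $-s$). Summing the differences along one full pass $G_0\to G_1\to\cdots\to G_{n-1}\to G_0$, the contribution from the first $n-1$ arcs is, as in the proof of Lemma \ref{bbb},
\[
(2^{e_1}+\cdots+2^{e_k})\,i - 2i(k-1) - i(n-2k) = (n-1)i - 2i(k-1) - i(n-2k) = i,
\]
and the last arc contributes $-s$, for a net displacement of $i-s$ in the first coordinate per full pass. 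Hence starting from $(0,0)$, after $\ell$ passes we reach $(\ell(i-s) \bmod x,\,0)$, and we return to $(0,0)$ for the first time when $\ell$ is the smallest positive integer with $\ell(i-s)\equiv 0\pmod x$, namely $\ell=x/\gcd(x,i-s)$.

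Finally, when $\gcd(x,i-s)=1$ this gives $\ell=x$, so the cycle through $(0,0)$ has length $\ell n = xn = |V(\overrightarrow{C}_{(x:n)})|$. Since $H_x(i,s)$ is a $2$-factor and contains a directed cycle that uses all the vertices, it must consist of a single directed Hamiltonian cycle, as claimed. The only step requiring any real care is the displacement computation, which is straightforward once one rewrites $n-1=2^{e_1}+\cdots+2^{e_k}$; everything else is bookkeeping.
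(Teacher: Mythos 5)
Your proposal is correct and follows essentially the same route as the paper: both arguments reduce to the observation that one full pass $G_0\to G_1\to\cdots\to G_{n-1}\to G_0$ shifts the first coordinate by $i-s$, so that $\gcd(x,i-s)=1$ forces $x$ passes before the cycle closes. Your version merely makes explicit two points the paper treats briefly (that $H_x(i,s)$ is a $2$-factor because swapping the two perfect matchings between $G_{n-1}$ and $G_0$ preserves degrees, and that the first $n-1$ arcs accumulate a displacement of exactly $i$), and both computations check out.
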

\begin{proof}
Because the arcs are given by differences it is clear that each vertex has in-degree and out-degree both equal to $1$.
We need to show that all of the vertices are connected. We will first show that there is a directed path between any 2 vertices of $G_0$.
Without loss of generality, we will show that $(0,0)$ is connected to $(\alpha,0)$ for any $\alpha$.
Because the arcs between groups $G_j$ and $G_{j+1}$ are the same as the arcs in $T_x(i)$ for $j=0,\ldots,n-2$ it is easy to see that there is a path from $(0,0)$ to $(i,n-1)$. Now the arc leaving from $(i,n-1)$ has its other end as $(i-s,0)$. So $(0,0)$ is connected to $(i-s,0)$. If we continue on this path, every time we arrive back in $G_0$, we arrive at the vertex $(\alpha'(i-s),0)$. Because $\gcd(x,i-s)=1$, the order of $i-s$ in the cyclic group $\mathbb{Z}_x$ is $x$. Thus any $\alpha$ modulo $x$ can be written as $\alpha'(i-s)$.
Hence $(0,0)$ is connected to all the vertices of $G_0$. 

Because we are defining arcs by differences, every vertex in $G_1$ is connected to a vertex in $G_0$, every vertex in $G_2$ to a vertex in $G_1$, and so on. Therefore all the vertices are connected, and the directed cycle is Hamiltonian as we wanted to prove.
\end{proof}

Next we show how to decompose $\overrightarrow{C}_{(x:n)}$ by using the $H_x(i,j)$ graphs. First we will decompose $\overrightarrow{C}_{(x:n)}$ into $\overrightarrow{C}_n$-factors using the $T_x(i)$ graphs. Then we will show how to switch some edges in the $T_x(i)$ graphs to obtain $H_x(i,j)$ graphs. It is important to notice that $H_x(i,i)=T_x(i)$.
\begin{lemma}
Let $x$ be odd, and let $\phi$ be a bijection on $\{0,...,x-1\}$. Then
\[
\overrightarrow{C}_{(x:n)}=\bigoplus_{i=0}^{x-1} T_x(i)=\bigoplus_{i=0}^{x-1} H_x(i,\phi(i))
\]
\end{lemma}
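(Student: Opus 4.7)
The plan is to check that every arc of $\overrightarrow{C}_{(x:n)}$ is covered exactly once by the union $\bigoplus_{i=0}^{x-1} T_x(i)$, and then argue that switching from $T_x(i)$ to $H_x(i,\phi(i))$ only reshuffles the arcs between $G_{n-1}$ and $G_0$ in a way that preserves the decomposition. The underlying idea is that $\overrightarrow{C}_{(x:n)}$ splits as a disjoint union (over consecutive pairs of parts) of complete directed bipartite graphs, and between any such pair the arcs are naturally indexed by differences $d\in \mathbb{Z}_x$, each difference class appearing exactly once. So it suffices to verify, position by position, that as $i$ ranges over $\mathbb{Z}_x$ the prescribed differences in the definition of $T_x(i)$ also range over all of $\mathbb{Z}_x$ exactly once.

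For the first equality, I would take the four cases from the definition of $T_x(i)$ and check each. Between $G_{j-1}$ and $G_j$ with $1\le j\le k$, the difference used is $2^{e_j}i$; because $x$ is odd, $\gcd(2^{e_j},x)=1$, so $i\mapsto 2^{e_j}i$ is a bijection on $\mathbb{Z}_x$. Between $G_{j-1}$ and $G_j$ with $k+1\le j\le 2k-1$, the difference is $-2i$, and again $\gcd(2,x)=1$ makes this a bijection. For $2k\le j\le n-1$ and between $G_{n-1}$ and $G_0$, the difference is $-i$, which is obviously a bijection. Hence for every consecutive pair of parts, the arcs $\{((g,j-1),(g+d,j)):g\in\mathbb{Z}_x\}$ for each $d\in\mathbb{Z}_x$ appear in exactly one of the $T_x(0),\dots,T_x(x-1)$. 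Since these arcs together comprise all of $E(\overrightarrow{C}_{(x:n)})$, the first equality follows.

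For the second equality, recall $H_x(i,s)=T_x(i)\oplus F_n(T_x(i))\oplus F_n(T_x(s))$, so $H_x(i,\phi(i))$ agrees with $T_x(i)$ on every $F_j$ for $1\le j\le n-1$, and its arcs between $G_{n-1}$ and $G_0$ are those of $T_x(\phi(i))$, i.e. the difference $-\phi(i)$. For $1\le j\le n-1$ the argument of the previous paragraph already shows that $\bigoplus_i F_j(H_x(i,\phi(i)))=\bigoplus_i F_j(T_x(i))=F_j(\overrightarrow{C}_{(x:n)})$. For the wraparound position, the differences used are $\{-\phi(i):i\in\mathbb{Z}_x\}$; since $\phi$ is a bijection on $\{0,\dots,x-1\}$ and $i\mapsto -i$ is a bijection on $\mathbb{Z}_x$, this set is exactly $\mathbb{Z}_x$, so $\bigoplus_i F_n(H_x(i,\phi(i)))=F_n(\overrightarrow{C}_{(x:n)})$. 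Summing over all $j$ yields $\bigoplus_{i=0}^{x-1} H_x(i,\phi(i))=\overrightarrow{C}_{(x:n)}$.

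The only even mildly delicate step is confirming the bijectivity in the first case ($2^{e_j}i$), which is really just the observation that $x$ being odd makes every power of $2$ a unit in $\mathbb{Z}_x$; everything else is bookkeeping. No genuine obstacle is expected, and the same argument simultaneously delivers both equalities asserted by the lemma.
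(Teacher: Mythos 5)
Your proposal is correct and follows essentially the same route as the paper: the first equality rests on the observation that $2^{e_j}$, $-2$, and $-1$ are all units modulo the odd number $x$, so each prescribed difference map is a bijection of $\mathbb{Z}_x$ and every difference class between consecutive parts is covered exactly once; the second equality rests on $\phi$ being a bijection, so the wraparound arcs are merely permuted among the factors (the paper phrases this as a boolean-sum cancellation $\bigoplus_i F_n(T_x(\phi(i)))=\bigoplus_i F_n(T_x(i))$ rather than your direct re-verification of coverage, but the content is identical). If anything, your statement that multiplication by $2^{e_j}$ is a bijection because $2^{e_j}$ is a unit is cleaner than the paper's phrasing about the ``order of $2^e$'' in $\mathbb{Z}_x$.
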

\begin{proof}
To prove the first equality,
\[
\overrightarrow{C}_{(x:n)}=\bigoplus_{i=0}^{x-1} T_x(i),
\]
we need to show that any difference between consecutive parts is covered by one of the $T_x(i)$ graphs. Notice that all the differences in the $T_x$ graphs are given by a power of $2$ times $i$, or $-2i$ or $-i$. It is clear that between parts which use difference $-i$, we cover all the differences, with difference $\delta$ being covered in $T_x(x-\delta)$. Because $x$ is odd, $\gcd(x,2^e)=1$, so the order of $2^e$ in the cyclic group $C_x$ is $x$ for any $1\leq e\leq x-1$. This means that any $\delta\equiv 2^{e_j}i\pmod{x}$ can be written as $2^e\delta'$. Therefore, the difference $\delta$ between the remaining pairs of consecutive groups is covered in some $T_x(\delta')$. A similar calculation can be used for differences of the form $-2i$, because $\gcd(x,x-2)=1$. Here we write $\delta$ as $-2\delta'$, and the difference is covered in $T_x(\delta')$.

For the second equality we have
\begin{align*}
\bigoplus_{i=0}^{x-1} H_x(i,\phi(i))&=\bigoplus_{i=0}^{x-1}\left( T_x(i)\oplus F_n(T_x(i))\oplus F_n (T_x(\phi(i)))\right)\\
&=\bigoplus_{i=0}^{x-1} T_x(i) \bigoplus_{i=0}^{x-1} F_n(T_x(i)) \bigoplus_{i=0}^{x-1} F_n(T_x(\phi(i)))
\end{align*}
and 
\begin{align*}
\bigoplus_{i=0}^{x-1} F_n(T_x(\phi(i)))&=\bigoplus_{i=0}^{x-1} F_n(T_x(i))
\end{align*} because $\phi$ is a bijection. So:
\begin{align*}
\bigoplus_{i=0}^{x-1} T_x(i) \bigoplus_{i=0}^{x-1} F_n(T_x(i)) \bigoplus_{i=0}^{x-1} F_n(T_x(\phi(i)))&=\bigoplus_{i=0}^{x-1} T_x(i) \bigoplus_{i=0}^{x-1} F_n(T_x(i)) \bigoplus_{i=0}^{x-1} F_n(T_x(i))\\
&=\bigoplus_{i=0}^{x-1} T_x(i)
\end{align*}
Hence:

\[
\bigoplus_{i=0}^{x-1} T_x(i)=\bigoplus_{i=0}^{x-1} H_x(i,\phi(i))
\]
\end{proof}

In some cases we have $H_x(i,i)$, notice that this is the same as $T_x(i)$. Decomposing $\overrightarrow{C}_{(x:n)}$ into $s$ directed Hamilton cycles and $x-s$ $\overrightarrow{C}_n$-factors is now equivalent to finding a bijection $\phi$ with $gcd(x,i-\phi(i))=1$ for $s$ elements of $\{0,...,x-1\}$ and $\phi(i)=i$ for the rest.
We will use these functions extensively throughout the paper, so we will refer to them as ``the phi-functions'':
Let $2\leq s \leq x$. Define $\phi_s:\mathbb{Z}_x\rightarrow \mathbb{Z}_x$ as follows:
\begin{center}
\underline{\textbf{The phi-functions}}
\[
\phi_s(i)=\left\lbrace \begin{array}{lcl} 

i+1 & \text{for}  & i\leq (s-3),\, i\text{ even}\\
i-1 & \text{for}  & i\leq (s-3),\, i\text{ odd} \\
i+1 & \text{for}  & i=s-2\\
i-2 & \text{for}  & i=s-1,\, s\text{ odd} \\
i-1 & \text{for}  & i=s-1,\, s\text{ even} \\
i &\text{for} & s\leq i \leq x-1\\

\end{array}\right.
\]
\end{center}
For example, if $s=7$ and $x=11$, then 
\[
\phi_7=(01)(23)(456)(7)(8)(9)(10)
\]

Notice that $\phi_s$ has $x-s$ fixed points. For any non-fixed point we have $i-\phi(i)\in \{\pm 1,2\}$, and so $\gcd(x,i-\phi(i))=1$ if $x$ is odd.

\begin{theorem}\label{1varfun}
Let $x$ be odd. Let $s\in \{0,...,x\}$, $s\neq 1$. Then $\overrightarrow{C}_{(x:n)}$ can be decomposed into $s$ directed Hamiltonian cycles and $x-s$ $\overrightarrow{C}_n$-factors.
\end{theorem}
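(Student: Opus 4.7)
The plan is to apply the decomposition lemma $\overrightarrow{C}_{(x:n)}=\bigoplus_{i=0}^{x-1}H_x(i,\phi(i))$ with a carefully chosen bijection $\phi$, so that exactly $s$ of the summands $H_x(i,\phi(i))$ are directed Hamilton cycles and the remaining $x-s$ are $\overrightarrow{C}_n$-factors. The dichotomy is driven by the observation (already recorded) that $H_x(i,i)=T_x(i)$, which is a $\overrightarrow{C}_n$-factor by Lemma \ref{bbb}, while $H_x(i,\phi(i))$ with $\gcd(x,i-\phi(i))=1$ is a directed Hamiltonian cycle by Lemma \ref{HisforHamilton}. Thus the task reduces to producing a bijection with exactly $s$ non-fixed points, all of which satisfy the coprimality condition with $x$.

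For $s=0$ I would simply take $\phi$ to be the identity on $\mathbb{Z}_x$; every summand is $T_x(i)$, yielding $x$ $\overrightarrow{C}_n$-factors and no Hamilton cycles. For $s\ge 2$ I would use the phi-functions $\phi_s$ defined just before the statement. The first step is to verify that $\phi_s$ is indeed a bijection of $\mathbb{Z}_x$, which I would do by splitting on the parity of $s$: when $s$ is even, $\phi_s$ is the product of the transpositions $(0\,1)(2\,3)\cdots(s-2\,s-1)$ together with the identity on $\{s,\dots,x-1\}$; when $s$ is odd, $\phi_s$ is the product of the transpositions $(0\,1)(2\,3)\cdots(s-4\,s-3)$, the $3$-cycle $(s-3\,\,s-2\,\,s-1)$, and the identity on $\{s,\dots,x-1\}$. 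In both cases $\phi_s$ has precisely $x-s$ fixed points, namely $i\in\{s,\dots,x-1\}$.

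The second step is to check the coprimality condition at every non-fixed point. Directly from the case distinction in the definition, $i-\phi_s(i)\in\{\pm 1,2\}$ for each non-fixed $i$. Since $x$ is odd, $\gcd(x,\pm 1)=\gcd(x,2)=1$, so Lemma \ref{HisforHamilton} applies at every non-fixed point and $H_x(i,\phi_s(i))$ is a directed Hamilton cycle. At each of the $x-s$ fixed points we have $H_x(i,\phi_s(i))=H_x(i,i)=T_x(i)$, a $\overrightarrow{C}_n$-factor. Assembling these via the decomposition lemma yields the desired partition of $\overrightarrow{C}_{(x:n)}$.

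The only mildly delicate point is the parity case analysis for $\phi_s$, which also makes it transparent why $s=1$ must be excluded: no permutation of a finite set has exactly one non-fixed point, so the technique cannot produce exactly one Hamilton cycle. Everything else reduces to invoking previously proved lemmas, so there is no substantial computational obstacle.
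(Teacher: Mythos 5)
Your proposal is correct and follows essentially the same route as the paper: use the identity for $s=0$ and the phi-function $\phi_s$ for $s\geq 2$, note that $\phi_s$ has $x-s$ fixed points with $i-\phi_s(i)\in\{\pm 1,2\}$ at non-fixed points, and combine Lemma \ref{HisforHamilton}, Lemma \ref{bbb}, and the decomposition $\overrightarrow{C}_{(x:n)}=\bigoplus_i H_x(i,\phi_s(i))$. One small indexing slip: for odd $s$ the transpositions run $(0\,1)(2\,3)\cdots(s-5\;s-4)$, not up to $(s-4\;s-3)$, since $s-3$ already belongs to the $3$-cycle $(s-3\;s-2\;s-1)$ (compare the paper's example $\phi_7=(01)(23)(456)$).
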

\begin{proof}
If $s=0$ we just use the identity mapping.
Otherwise we use the phi-function $\phi_s$.
Hence the discussion that precedes this theorem shows that
\[
\overrightarrow{C}_{(x:n)}=\bigoplus_{i=0}^{x-1} H_x(i,\phi_s(i))
\]
is a decomposition of $\overrightarrow{C}_{(x:n)}$ into $s$ directed Hamiltonian cycles and $x-s$ $\overrightarrow{C}_n$-factors.
\end{proof}

Next we turn to the case of $x$ even. We begin by considering $\overrightarrow{C}_{(4:n)}$.
In $\cite{AKKPO}$ the graphs in Figure \ref{trianglesk43} were used to decompose $K_{(4:3)}$ into triangle factors and $C_6$-factors. 
We extend the ideas used in \cite{AKKPO} to decompose $\overrightarrow{C}_{(4:n)}$ into $\overrightarrow{C}_n$-factors and $\overrightarrow{C}_{2n}$-factors.
We will define directed subgraphs $\gamma_{i,j}$ and build the directed graphs $\Gamma(j)$ as the sum of some of these directed subgraphs. In each of these directed subgraphs, the vertices in the top row will be said to have height 0, in the second row height 1, and so on.
We begin with the base directed subgraphs from Figure \ref{babygammas} . In each subgraph, the rows are indexed by their height.
The following result is easy to verify by inspection of the graphs $\gamma_{i,j}$.

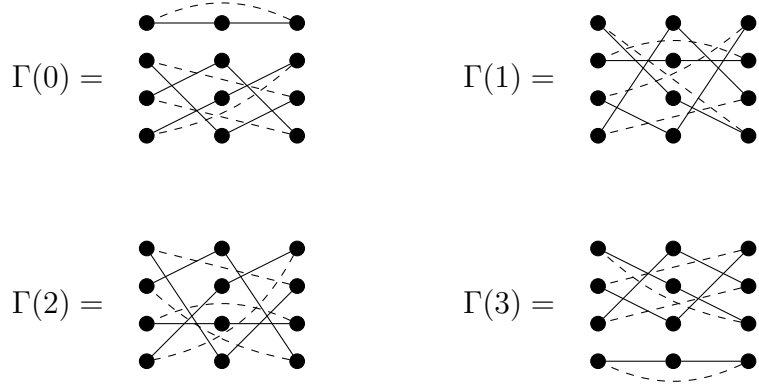
\begin{figure}
\begin{center}
\begin{tikzpicture}[every node/.style={draw,shape=circle,fill=black}]
\draw (0,0) node [scale=.5] (000){};
\draw (1,0) node [scale=.5] (010){};
\draw (2,0) node [scale=.5] (020){};
\draw (0,-.5) node [scale=.5](001){};
\draw (1,-.5) node [scale=.5](011){};
\draw (2,-.5) node [scale=.5](021){};
\draw (0,-1) node [scale=.5](002){};
\draw (1,-1) node [scale=.5](012){};
\draw (2,-1) node [scale=.5](022){};
\draw (0,-1.5) node [scale=.5](003){};
\draw (1,-1.5) node [scale=.5](013){};
\draw (2,-1.5) node [scale=.5](023){};

\draw (000) to (010);
\draw (010) to (020);
\draw (020) [bend right=25, dashed] to (000);
\draw (001) to (013);
\draw (013) to (022);
\draw (022) [dashed] to (001);
\draw (002) to (011);
\draw (011) to (023);
\draw (023) [dashed] to (002);
\draw (003) to (012);
\draw (012) to (021);
\draw (021) [bend left=15, dashed] to (003);

\draw (6,0) node [scale=.5] (100){};
\draw (7,0) node [scale=.5] (110){};
\draw (8,0) node [scale=.5] (120){};
\draw (6,-.5) node [scale=.5](101){};
\draw (7,-.5) node [scale=.5](111){};
\draw (8,-.5) node [scale=.5](121){};
\draw (6,-1) node [scale=.5](102){};
\draw (7,-1) node [scale=.5](112){};
\draw (8,-1) node [scale=.5](122){};
\draw (6,-1.5) node [scale=.5](103){};
\draw (7,-1.5) node [scale=.5](113){};
\draw (8,-1.5) node [scale=.5](123){};

\draw (101) to (111);
\draw (111) to (121);
\draw (121) [bend right=25, dashed] to (101);
\draw (100) to (112);
\draw (112) to (123);
\draw (123) [dashed] to (100);
\draw (102) to (113);
\draw (113) to (120);
\draw (120) [bend left=13, dashed] to (102);
\draw (103) to (110);
\draw (110) to (122);
\draw (122) [dashed] to (103);

\draw (0,-3) node [scale=.5] (200){};
\draw (1,-3) node [scale=.5] (210){};
\draw (2,-3) node [scale=.5] (220){};
\draw (0,-3.5) node [scale=.5](201){};
\draw (1,-3.5) node [scale=.5](211){};
\draw (2,-3.5) node [scale=.5](221){};
\draw (0,-4) node [scale=.5](202){};
\draw (1,-4) node [scale=.5](212){};
\draw (2,-4) node [scale=.5](222){};
\draw (0,-4.5) node [scale=.5](203){};
\draw (1,-4.5) node [scale=.5](213){};
\draw (2,-4.5) node [scale=.5](223){};

\draw (202) to (212);
\draw (212) to (222);
\draw (222) [bend right=25, dashed] to (202);
\draw (201) to (210);
\draw (210) to (223);
\draw (223) [bend left=15, dashed] to (201);
\draw (200) to (213);
\draw (213) to (221);
\draw (221) [dashed] to (200);
\draw (203) to (211);
\draw (211) to (220);
\draw (220) [bend left=25, dashed] to (203);

\draw (6,-3) node [scale=.5] (300){};
\draw (7,-3) node [scale=.5] (310){};
\draw (8,-3) node [scale=.5] (320){};
\draw (6,-3.5) node [scale=.5](301){};
\draw (7,-3.5) node [scale=.5](311){};
\draw (8,-3.5) node [scale=.5](321){};
\draw (6,-4) node [scale=.5](302){};
\draw (7,-4) node [scale=.5](312){};
\draw (8,-4) node [scale=.5](322){};
\draw (6,-4.5) node [scale=.5](303){};
\draw (7,-4.5) node [scale=.5](313){};
\draw (8,-4.5) node [scale=.5](323){};

\draw (303) to (313);
\draw (313) to (323);
\draw (323) [bend left=25, dashed] to (303);
\draw (301) to (312);
\draw (312) to (320);
\draw (320) [dashed] to (301);
\draw (302) to (310);
\draw (310) to (321);
\draw (321) [dashed] to (302);
\draw (300) to (311);
\draw (311) to (322);
\draw (322) [bend left=15, dashed] to (300);
\node[left=10pt,fill=none,draw=none] at (0,-.75) {$\Gamma(0)=$};
\node[left=10pt,fill=none,draw=none] at (6,-.75) {$\Gamma(1)=$};
\node[left=10pt,fill=none,draw=none] at (0,-3.75) {$\Gamma(2)=$};
\node[left=10pt,fill=none,draw=none] at (6,-3.75) {$\Gamma(3)=$};
\end{tikzpicture}
\caption{Triangle factors for $K_{(4:3)}$.}
\label{trianglesk43}
\end{center}
\end{figure}
\begin{figure}
\begin{center}
\begin{tikzpicture}[every node/.style={draw,shape=circle,fill=black}, every path/.style={->}]
\draw (0,0) node [scale=.5] (000){};
\draw (1,0) node [scale=.5] (010){};
\draw (2,0) node [scale=.5] (020){};
\draw (3,0) node [scale=.5] (030){};
\draw (0,-.5) node [scale=.5](001){};
\draw (1,-.5) node [scale=.5](011){};
\draw (2,-.5) node [scale=.5](021){};
\draw (3,-.5) node [scale=.5](031){};
\draw (0,-1) node [scale=.5](002){};
\draw (1,-1) node [scale=.5](012){};
\draw (2,-1) node [scale=.5](022){};
\draw (3,-1) node [scale=.5](032){};
\draw (0,-1.5) node [scale=.5](003){};
\draw (1,-1.5) node [scale=.5](013){};
\draw (2,-1.5) node [scale=.5](023){};
\draw (3,-1.5) node [scale=.5](033){};

\draw (000) to (010);
\draw (010) to (020);
\draw (020) to (030);
\draw (001) to (013);
\draw (013) to (022);
\draw (022) to (031);
\draw (002) to (011);
\draw (011) to (023);
\draw (023) to (032);
\draw (003) to (012);
\draw (012) to (021);
\draw (021) to (033);

\draw (6,0) node [scale=.5] (100){};
\draw (7,0) node [scale=.5] (110){};
\draw (8,0) node [scale=.5] (120){};
\draw (9,0) node [scale=.5] (130){};
\draw (6,-.5) node [scale=.5](101){};
\draw (7,-.5) node [scale=.5](111){};
\draw (8,-.5) node [scale=.5](121){};
\draw (9,-.5) node [scale=.5](131){};
\draw (6,-1) node [scale=.5](102){};
\draw (7,-1) node [scale=.5](112){};
\draw (8,-1) node [scale=.5](122){};
\draw (9,-1) node [scale=.5](132){};
\draw (6,-1.5) node [scale=.5](103){};
\draw (7,-1.5) node [scale=.5](113){};
\draw (8,-1.5) node [scale=.5](123){};
\draw (9,-1.5) node [scale=.5](133){};

\draw (101) to (111);
\draw (111) to (121);
\draw (121) to (131);
\draw (100) to (112);
\draw (112) to (123);
\draw (123) to (130);
\draw (102) to (113);
\draw (113) to (120);
\draw (120) to (132);
\draw (103) to (110);
\draw (110) to (122);
\draw (122) to (133);

\draw (0,-3) node [scale=.5] (200){};
\draw (1,-3) node [scale=.5] (210){};
\draw (2,-3) node [scale=.5] (220){};
\draw (3,-3) node [scale=.5] (230){};
\draw (0,-3.5) node [scale=.5](201){};
\draw (1,-3.5) node [scale=.5](211){};
\draw (2,-3.5) node [scale=.5](221){};
\draw (3,-3.5) node [scale=.5](231){};
\draw (0,-4) node [scale=.5](202){};
\draw (1,-4) node [scale=.5](212){};
\draw (2,-4) node [scale=.5](222){};
\draw (3,-4) node [scale=.5](232){};
\draw (0,-4.5) node [scale=.5](203){};
\draw (1,-4.5) node [scale=.5](213){};
\draw (2,-4.5) node [scale=.5](223){};
\draw (3,-4.5) node [scale=.5](233){};

\draw (202) to (212);
\draw (212) to (222);
\draw (222) to (232);
\draw (201) to (210);
\draw (210) to (223);
\draw (223) to (231);
\draw (200) to (213);
\draw (213) to (221);
\draw (221) to (230);
\draw (203) to (211);
\draw (211) to (220);
\draw (220) to (233);

\draw (6,-3) node [scale=.5] (300){};
\draw (7,-3) node [scale=.5] (310){};
\draw (8,-3) node [scale=.5] (320){};
\draw (9,-3) node [scale=.5] (330){};
\draw (6,-3.5) node [scale=.5](301){};
\draw (7,-3.5) node [scale=.5](311){};
\draw (8,-3.5) node [scale=.5](321){};
\draw (9,-3.5) node [scale=.5](331){};
\draw (6,-4) node [scale=.5](302){};
\draw (7,-4) node [scale=.5](312){};
\draw (8,-4) node [scale=.5](322){};
\draw (9,-4) node [scale=.5](332){};
\draw (6,-4.5) node [scale=.5](303){};
\draw (7,-4.5) node [scale=.5](313){};
\draw (8,-4.5) node [scale=.5](323){};
\draw (9,-4.5) node [scale=.5](333){};

\draw (303) to (313);
\draw (313) to (323);
\draw (323) to (333);
\draw (301) to (312);
\draw (312) to (320);
\draw (320) to (331);
\draw (302) to (310);
\draw (310) to (321);
\draw (321) to (332);
\draw (300) to (311);
\draw (311) to (322);
\draw (322) to (330);
\draw (0,0) node [scale=.5] (000){};
\draw (1,0) node [scale=.5] (010){};
\draw (2,0) node [scale=.5] (020){};
\draw (3,0) node [scale=.5] (030){};
\node[left=5pt,fill=none,draw=none] at (0,0) {$0$};
\node[left=5pt,fill=none,draw=none] at (0,-.5) {$1$};
\node[left=5pt,fill=none,draw=none] at (0,-1) {$2$};
\node[left=5pt,fill=none,draw=none] at (0,-1.5) {$3$};

\node[left=5pt,fill=none,draw=none] at (6,0) {$0$};
\node[left=5pt,fill=none,draw=none] at (6,-.5) {$1$};
\node[left=5pt,fill=none,draw=none] at (6,-1) {$2$};
\node[left=5pt,fill=none,draw=none] at (6,-1.5) {$3$};

\node[left=5pt,fill=none,draw=none] at (6,-3) {$0$};
\node[left=5pt,fill=none,draw=none] at (6,-3.5) {$1$};
\node[left=5pt,fill=none,draw=none] at (6,-4) {$2$};
\node[left=5pt,fill=none,draw=none] at (6,-4.5) {$3$};

\node[left=5pt,fill=none,draw=none] at (0,-3) {$0$};
\node[left=5pt,fill=none,draw=none] at (0,-3.5) {$1$};
\node[left=5pt,fill=none,draw=none] at (0,-4) {$2$};
\node[left=5pt,fill=none,draw=none] at (0,-4.5) {$3$};

\node[left=20pt,fill=none,draw=none] at (0,-.75) {$\gamma_{0,0}=$};
\node[left=20pt,fill=none,draw=none] at (6,-.75) {$\gamma_{0,1}=$};
\node[left=20pt,fill=none,draw=none] at (0,-3.75) {$\gamma_{0,2}=$};
\node[left=20pt,fill=none,draw=none] at (6,-3.75) {$\gamma_{0,3}=$};
\end{tikzpicture}
\end{center}

\vspace{.25in}

\begin{center}
\begin{tikzpicture}[every node/.style={draw,shape=circle,fill=black}, every path/.style={->}]
\draw (2,0) node [scale=.5] (100){};
\draw (3,0) node [scale=.5] (110){};
\draw (4,0) node [scale=.5] (120){};
\draw (5,0) node [scale=.5] (130){};
\draw (6,0) node [scale=.5] (140){};
\draw (2,-.5) node [scale=.5](101){};
\draw (3,-.5) node [scale=.5](111){};
\draw (4,-.5) node [scale=.5](121){};
\draw (5,-.5) node [scale=.5](131){};
\draw (6,-.5) node [scale=.5](141){};
\draw (2,-1) node [scale=.5](102){};
\draw (3,-1) node [scale=.5](112){};
\draw (4,-1) node [scale=.5](122){};
\draw (5,-1) node [scale=.5](132){};
\draw (6,-1) node [scale=.5](142){};
\draw (2,-1.5) node [scale=.5](103){};
\draw (3,-1.5) node [scale=.5](113){};
\draw (4,-1.5) node [scale=.5](123){};
\draw (5,-1.5) node [scale=.5](133){};
\draw (6,-1.5) node [scale=.5](143){};

\draw (100) to (110);
\draw (110) to (120);
\draw (120) to (130);
\draw (130) to (140);
\draw (101) to (113);
\draw (113) to (122);
\draw (122) to (133);
\draw (133) to (141);
\draw (102) to (111);
\draw (111) to (123);
\draw (123) to (131);
\draw (131) to (142);
\draw (103) to (112);
\draw (112) to (121);
\draw (121) to (132);
\draw (132) to (143);

\draw (9,0) node [scale=.5] (200){};
\draw (10,0) node [scale=.5] (210){};
\draw (11,0) node [scale=.5] (220){};
\draw (12,0) node [scale=.5] (230){};
\draw (13,0) node [scale=.5] (240){};
\draw (9,-.5) node [scale=.5](201){};
\draw (10,-.5) node [scale=.5](211){};
\draw (11,-.5) node [scale=.5](221){};
\draw (12,-.5) node [scale=.5](231){};
\draw (13,-.5) node [scale=.5](241){};
\draw (9,-1) node [scale=.5](202){};
\draw (10,-1) node [scale=.5](212){};
\draw (11,-1) node [scale=.5](222){};
\draw (12,-1) node [scale=.5](232){};
\draw (13,-1) node [scale=.5](242){};
\draw (9,-1.5) node [scale=.5](203){};
\draw (10,-1.5) node [scale=.5](213){};
\draw (11,-1.5) node [scale=.5](223){};
\draw (12,-1.5) node [scale=.5](233){};
\draw (13,-1.5) node [scale=.5](243){};

\draw (201) to (211);
\draw (211) to (221);
\draw (221) to (231);
\draw (231) to (241);
\draw (200) to (212);
\draw (212) to (223);
\draw (223) to (232);
\draw (232) to (240);
\draw (202) to (213);
\draw (213) to (220);
\draw (220) to (233);
\draw (233) to (242);
\draw (203) to (210);
\draw (210) to (222);
\draw (222) to (230);
\draw (230) to (243);

\node[left=5pt,fill=none,draw=none] at (2,0) {$0$};
\node[left=5pt,fill=none,draw=none] at (2,-.5) {$1$};
\node[left=5pt,fill=none,draw=none] at (2,-1) {$2$};
\node[left=5pt,fill=none,draw=none] at (2,-1.5) {$3$};

\node[left=5pt,fill=none,draw=none] at (9,0) {$0$};
\node[left=5pt,fill=none,draw=none] at (9,-.5) {$1$};
\node[left=5pt,fill=none,draw=none] at (9,-1) {$2$};
\node[left=5pt,fill=none,draw=none] at (9,-1.5) {$3$};

\node[left=20pt,fill=none,draw=none] at (2,-0.75) {$\gamma_{1,0}=$};
\node[left=20pt,fill=none,draw=none] at (9,-0.75) {$\gamma_{1,1}=$};

\draw (2,-3) node [scale=.5] (300){};
\draw (3,-3) node [scale=.5] (310){};
\draw (4,-3) node [scale=.5] (320){};
\draw (5,-3) node [scale=.5] (330){};
\draw (6,-3) node [scale=.5] (340){};
\draw (2,-3.5) node [scale=.5](301){};
\draw (3,-3.5) node [scale=.5](311){};
\draw (4,-3.5) node [scale=.5](321){};
\draw (5,-3.5) node [scale=.5](331){};
\draw (6,-3.5) node [scale=.5](341){};
\draw (2,-4) node [scale=.5](302){};
\draw (3,-4) node [scale=.5](312){};
\draw (4,-4) node [scale=.5](322){};
\draw (5,-4) node [scale=.5](332){};
\draw (6,-4) node [scale=.5](342){};
\draw (2,-4.5) node [scale=.5](303){};
\draw (3,-4.5) node [scale=.5](313){};
\draw (4,-4.5) node [scale=.5](323){};
\draw (5,-4.5) node [scale=.5](333){};
\draw (6,-4.5) node [scale=.5](343){};

\draw (302) to (312);
\draw (312) to (322);
\draw (322) to (332);
\draw (332) to (342);
\draw (301) to (310);
\draw (310) to (323);
\draw (323) to (330);
\draw (330) to (341);
\draw (300) to (313);
\draw (313) to (321);
\draw (321) to (333);
\draw (333) to (340);
\draw (303) to (311);
\draw (311) to (320);
\draw (320) to (331);
\draw (331) to (343);

\draw (9,-3) node [scale=.5] (400){};
\draw (10,-3) node [scale=.5] (410){};
\draw (11,-3) node [scale=.5] (420){};
\draw (12,-3) node [scale=.5] (430){};
\draw (13,-3) node [scale=.5] (440){};
\draw (9,-3.5) node [scale=.5](401){};
\draw (10,-3.5) node [scale=.5](411){};
\draw (11,-3.5) node [scale=.5](421){};
\draw (12,-3.5) node [scale=.5](431){};
\draw (13,-3.5) node [scale=.5](441){};
\draw (9,-4) node [scale=.5](402){};
\draw (10,-4) node [scale=.5](412){};
\draw (11,-4) node [scale=.5](422){};
\draw (12,-4) node [scale=.5](432){};
\draw (13,-4) node [scale=.5](442){};
\draw (9,-4.5) node [scale=.5](403){};
\draw (10,-4.5) node [scale=.5](413){};
\draw (11,-4.5) node [scale=.5](423){};
\draw (12,-4.5) node [scale=.5](433){};
\draw (13,-4.5) node [scale=.5](443){};

\draw (403) to (413);
\draw (413) to (423);
\draw (423) to (433);
\draw (433) to (443);
\draw (401) to (412);
\draw (412) to (420);
\draw (420) to (432);
\draw (432) to (441);
\draw (402) to (410);
\draw (410) to (421);
\draw (421) to (430);
\draw (430) to (442);
\draw (400) to (411);
\draw (411) to (422);
\draw (422) to (431);
\draw (431) to (440);

\node[left=5pt,fill=none,draw=none] at (2,-3) {$0$};
\node[left=5pt,fill=none,draw=none] at (2,-3.5) {$1$};
\node[left=5pt,fill=none,draw=none] at (2,-4) {$2$};
\node[left=5pt,fill=none,draw=none] at (2,-4.5) {$3$};

\node[left=5pt,fill=none,draw=none] at (9,-3) {$0$};
\node[left=5pt,fill=none,draw=none] at (9,-3.5) {$1$};
\node[left=5pt,fill=none,draw=none] at (9,-4) {$2$};
\node[left=5pt,fill=none,draw=none] at (9,-4.5) {$3$};

\node[left=20pt,fill=none,draw=none] at (2,-3.75) {$\gamma_{1,2}=$};
\node[left=20pt,fill=none,draw=none] at (9,-3.75) {$\gamma_{1,3}=$};

\end{tikzpicture}
\end{center}

\vspace{.25in}

\begin{center}
\begin{tikzpicture}[every node/.style={draw,shape=circle,fill=black}, every path/.style={->}]

\draw (1,0) node [scale=.5] (100){};
\draw (2,0) node [scale=.5] (110){};
\draw (3,0) node [scale=.5] (120){};
\draw (4,0) node [scale=.5] (130){};
\draw (5,0) node [scale=.5] (140){};
\draw (6,0) node [scale=.5] (150){};
\draw (1,-.5) node [scale=.5](101){};
\draw (2,-.5) node [scale=.5](111){};
\draw (3,-.5) node [scale=.5](121){};
\draw (4,-.5) node [scale=.5](131){};
\draw (5,-.5) node [scale=.5](141){};
\draw (6,-.5) node [scale=.5](151){};
\draw (1,-1) node [scale=.5](102){};
\draw (2,-1) node [scale=.5](112){};
\draw (3,-1) node [scale=.5](122){};
\draw (4,-1) node [scale=.5](132){};
\draw (5,-1) node [scale=.5](142){};
\draw (6,-1) node [scale=.5](152){};
\draw (1,-1.5) node [scale=.5](103){};
\draw (2,-1.5) node [scale=.5](113){};
\draw (3,-1.5) node [scale=.5](123){};
\draw (4,-1.5) node [scale=.5](133){};
\draw (5,-1.5) node [scale=.5](143){};
\draw (6,-1.5) node [scale=.5](153){};

\draw (100) to (110);
\draw (110) to (120);
\draw (120) to (130);
\draw (130) to (140);
\draw (140) to (150);
\draw (101) to (113);
\draw (113) to (122);
\draw (122) to (131);
\draw (131) to (143);
\draw (143) to (151);
\draw (102) to (111);
\draw (111) to (123);
\draw (123) to (132);
\draw (132) to (141);
\draw (141) to (152);
\draw (103) to (112);
\draw (112) to (121);
\draw (121) to (133);
\draw (133) to (142);
\draw (142) to (153);

\draw (9,0) node [scale=.5] (200){};
\draw (10,0) node [scale=.5] (210){};
\draw (11,0) node [scale=.5] (220){};
\draw (12,0) node [scale=.5] (230){};
\draw (13,0) node [scale=.5] (240){};
\draw (14,0) node [scale=.5] (250){};
\draw (9,-.5) node [scale=.5](201){};
\draw (10,-.5) node [scale=.5](211){};
\draw (11,-.5) node [scale=.5](221){};
\draw (12,-.5) node [scale=.5](231){};
\draw (13,-.5) node [scale=.5](241){};
\draw (14,-.5) node [scale=.5](251){};
\draw (9,-1) node [scale=.5](202){};
\draw (10,-1) node [scale=.5](212){};
\draw (11,-1) node [scale=.5](222){};
\draw (12,-1) node [scale=.5](232){};
\draw (13,-1) node [scale=.5](242){};
\draw (14,-1) node [scale=.5](252){};
\draw (9,-1.5) node [scale=.5](203){};
\draw (10,-1.5) node [scale=.5](213){};
\draw (11,-1.5) node [scale=.5](223){};
\draw (12,-1.5) node [scale=.5](233){};
\draw (13,-1.5) node [scale=.5](243){};
\draw (14,-1.5) node [scale=.5](253){};

\draw (201) to (211);
\draw (211) to (221);
\draw (221) to (231);
\draw (231) to (241);
\draw (241) to (251);
\draw (200) to (212);
\draw (212) to (223);
\draw (223) to (230);
\draw (230) to (242);
\draw (242) to (250);
\draw (202) to (213);
\draw (213) to (220);
\draw (220) to (232);
\draw (232) to (243);
\draw (243) to (252);
\draw (203) to (210);
\draw (210) to (222);
\draw (222) to (233);
\draw (233) to (240);
\draw (240) to (253);

\node[left=5pt,fill=none,draw=none] at (1,0) {$0$};
\node[left=5pt,fill=none,draw=none] at (1,-.5) {$1$};
\node[left=5pt,fill=none,draw=none] at (1,-1) {$2$};
\node[left=5pt,fill=none,draw=none] at (1,-1.5) {$3$};

\node[left=5pt,fill=none,draw=none] at (9,0) {$0$};
\node[left=5pt,fill=none,draw=none] at (9,-.5) {$1$};
\node[left=5pt,fill=none,draw=none] at (9,-1) {$2$};
\node[left=5pt,fill=none,draw=none] at (9,-1.5) {$3$};

\node[left=20pt,fill=none,draw=none] at (1,-0.75) {$\gamma_{2,0}=$};
\node[left=20pt,fill=none,draw=none] at (9,-0.75) {$\gamma_{2,1}=$};

\draw (1,-3) node [scale=.5] (300){};
\draw (2,-3) node [scale=.5] (310){};
\draw (3,-3) node [scale=.5] (320){};
\draw (4,-3) node [scale=.5] (330){};
\draw (5,-3) node [scale=.5] (340){};
\draw (6,-3) node [scale=.5] (350){};
\draw (1,-3.5) node [scale=.5](301){};
\draw (2,-3.5) node [scale=.5](311){};
\draw (3,-3.5) node [scale=.5](321){};
\draw (4,-3.5) node [scale=.5](331){};
\draw (5,-3.5) node [scale=.5](341){};
\draw (6,-3.5) node [scale=.5](351){};
\draw (1,-4) node [scale=.5](302){};
\draw (2,-4) node [scale=.5](312){};
\draw (3,-4) node [scale=.5](322){};
\draw (4,-4) node [scale=.5](332){};
\draw (5,-4) node [scale=.5](342){};
\draw (6,-4) node [scale=.5](352){};
\draw (1,-4.5) node [scale=.5](303){};
\draw (2,-4.5) node [scale=.5](313){};
\draw (3,-4.5) node [scale=.5](323){};
\draw (4,-4.5) node [scale=.5](333){};
\draw (5,-4.5) node [scale=.5](343){};
\draw (6,-4.5) node [scale=.5](353){};

\draw (302) to (312);
\draw (312) to (322);
\draw (322) to (332);
\draw (332) to (342);
\draw (342) to (352);
\draw (301) to (310);
\draw (310) to (323);
\draw (323) to (331);
\draw (331) to (340);
\draw (340) to (351);
\draw (300) to (313);
\draw (313) to (321);
\draw (321) to (330);
\draw (330) to (343);
\draw (343) to (350);
\draw (303) to (311);
\draw (311) to (320);
\draw (320) to (333);
\draw (333) to (341);
\draw (341) to (353);

\draw (9,-3) node [scale=.5] (400){};
\draw (10,-3) node [scale=.5] (410){};
\draw (11,-3) node [scale=.5] (420){};
\draw (12,-3) node [scale=.5] (430){};
\draw (13,-3) node [scale=.5] (440){};
\draw (14,-3) node [scale=.5] (450){};
\draw (9,-3.5) node [scale=.5](401){};
\draw (10,-3.5) node [scale=.5](411){};
\draw (11,-3.5) node [scale=.5](421){};
\draw (12,-3.5) node [scale=.5](431){};
\draw (13,-3.5) node [scale=.5](441){};
\draw (14,-3.5) node [scale=.5](451){};
\draw (9,-4) node [scale=.5](402){};
\draw (10,-4) node [scale=.5](412){};
\draw (11,-4) node [scale=.5](422){};
\draw (12,-4) node [scale=.5](432){};
\draw (13,-4) node [scale=.5](442){};
\draw (14,-4) node [scale=.5](452){};
\draw (9,-4.5) node [scale=.5](403){};
\draw (10,-4.5) node [scale=.5](413){};
\draw (11,-4.5) node [scale=.5](423){};
\draw (12,-4.5) node [scale=.5](433){};
\draw (13,-4.5) node [scale=.5](443){};
\draw (14,-4.5) node [scale=.5](453){};

\draw (403) to (413);
\draw (413) to (423);
\draw (423) to (433);
\draw (433) to (443);
\draw (443) to (453);
\draw (401) to (412);
\draw (412) to (420);
\draw (420) to (431);
\draw (431) to (442);
\draw (442) to (451);
\draw (402) to (410);
\draw (410) to (421);
\draw (421) to (432);
\draw (432) to (440);
\draw (440) to (452);
\draw (400) to (411);
\draw (411) to (422);
\draw (422) to (430);
\draw (430) to (441);
\draw (441) to (450);

\node[left=5pt,fill=none,draw=none] at (1,-3) {$0$};
\node[left=5pt,fill=none,draw=none] at (1,-3.5) {$1$};
\node[left=5pt,fill=none,draw=none] at (1,-4) {$2$};
\node[left=5pt,fill=none,draw=none] at (1,-4.5) {$3$};

\node[left=5pt,fill=none,draw=none] at (9,-3) {$0$};
\node[left=5pt,fill=none,draw=none] at (9,-3.5) {$1$};
\node[left=5pt,fill=none,draw=none] at (9,-4) {$2$};
\node[left=5pt,fill=none,draw=none] at (9,-4.5) {$3$};

\node[left=20pt,fill=none,draw=none] at (1,-3.75) {$\gamma_{2,2}=$};
\node[left=20pt,fill=none,draw=none] at (9,-3.75) {$\gamma_{2,3}=$};

\end{tikzpicture}
\caption{}
\label{babygammas}
\end{center}
\end{figure}
\begin{lemma}\label{fixedheight}
For any $i,j,h$, the directed path beginning at height $h$ in the first column of $\gamma_{i,j}$ finishes at height $h$ in the last column of $\gamma_{i,j}$.
\end{lemma}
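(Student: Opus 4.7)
The plan is to verify the claim directly from the definition of the $\gamma_{i,j}$'s as drawn in Figure \ref{babygammas}. First I would reformulate the statement in terms of permutations: for each $\gamma_{i,j}$ and each transition from column $c$ to column $c+1$, the arcs define a bijection $\sigma_c \colon \{0,1,2,3\} \to \{0,1,2,3\}$ sending the height of the tail to the height of the head (well-defined because every vertex has a unique outgoing arc in $\gamma_{i,j}$). The path starting at height $h$ in the first column ends at height $(\sigma_{c_{\max}} \circ \cdots \circ \sigma_0)(h)$ in the last column, so the lemma reduces to showing that this composite permutation is the identity for each $\gamma_{i,j}$.

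Second, I would read off $\sigma_c$ directly from each diagram in Figure \ref{babygammas}. For the two left-most graphs $\gamma_{0,0}$ and $\gamma_{0,1}$ (which have four columns, giving three transitions), one checks that all three permutations $\sigma_0, \sigma_1, \sigma_2$ coincide, and that this common permutation fixes one height and acts as a $3$-cycle on the other three; since a $3$-cycle has order three, three successive applications yield the identity. For the $\gamma_{1,\ast}$ (five columns, four transitions) and $\gamma_{2,\ast}$ (six columns, five transitions) graphs, one similarly sees that the $\sigma_c$'s consist of $3$-cycles (or products including identity pieces) whose composition, grouped as consecutive pairs in which $\sigma_{c+1} = \sigma_c^{-1}$ or otherwise compensating, collapses to the identity. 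Equivalently, one can simply trace the four starting heights $h = 0, 1, 2, 3$ through each diagram and confirm the terminal height equals the initial one.

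The only real work is the tedium of going through all twelve graphs; there is no conceptual obstacle, which is why the authors call the result \emph{easy to verify by inspection}. I would therefore present the proof as one brief paragraph setting up the permutation reformulation, followed by a compact table or enumeration of the permutations $\sigma_c$ for each $\gamma_{i,j}$, and conclude by noting that in every case the product telescopes to the identity, which is precisely the claim.
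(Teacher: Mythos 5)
Your proposal is correct and matches the paper's approach: the paper offers no argument beyond the remark that the lemma is ``easy to verify by inspection of the graphs $\gamma_{i,j}$,'' and your permutation reformulation together with tracing each starting height through the twelve diagrams is exactly that inspection, organized slightly more systematically. The specific observations you make (e.g., that in $\gamma_{0,j}$ the three column-transitions give the same $3$-cycle fixing one height) check out against Figure \ref{babygammas}.
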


Let $n=3b+a$ with $0\leq a < 3$, $b\geq 2$.
For $0\leq t \leq b-2$, we define $\gamma_{0,j}(t)$ as the directed graph $\gamma_{0,j}$ on the parts $G_{3t-1},G_{3t},G_{3t+1},G_{3t+2}$, with calculations done in $\mathbb{Z}_n$. We define $\gamma_{a,j}(n)$ as the directed graph $\gamma_{a,j}$ on the parts $G_{3b-4},G_{3b-3},\ldots,G_{3b+a-1}$.
Let $\Gamma(j)=\bigoplus_{t=0}^{b-2}\gamma_{0,j}(t)\oplus \gamma_{a,j}(n)$. Notice that $\gamma_{0,j}(0)$ is on the parts $G_{-1}, G_{0}, G_{1}$ and $G_{2}$. This means that $F_n(\Gamma(j))$ is the matching between the first and second columns in $\gamma_{0,j}(0)$.
\begin{example}
Let $n=7$. We will construct $\Gamma(0)$. Since $n=6+1$, we have $b=2$ and $a=1$. This means that $\Gamma(0)=\gamma_{0,0}(0)\oplus\gamma_{1,0}(7)$, where $\gamma_{0,0}(0)$ is on the parts $G_{-1}=G_{7-1}=G_6,G_0,G_1$ and $G_2$; and $\gamma_{1,0}(7)$ is on the parts $G_2,G_3,G_4,G_5$ and $G_6$. So we get the picture in Figure \ref{Gamma0a}, where the arcs from $\gamma_{0,0}(0)$ are dashed.
\begin{figure}
\begin{center}
\begin{minipage}[b]{.5\linewidth}
\begin{tikzpicture}[every node/.style={draw,shape=circle,fill=black}, every path/.style={->}]
\draw (0,0) node [scale=.5] (000){};
\draw (1,0) node [scale=.5] (010){};
\draw (2,0) node [scale=.5] (020){};
\draw (3,0) node [scale=.5] (030){};
\draw (0,-.5) node [scale=.5](001){};
\draw (1,-.5) node [scale=.5](011){};
\draw (2,-.5) node [scale=.5](021){};
\draw (3,-.5) node [scale=.5](031){};
\draw (0,-1) node [scale=.5](002){};
\draw (1,-1) node [scale=.5](012){};
\draw (2,-1) node [scale=.5](022){};
\draw (3,-1) node [scale=.5](032){};
\draw (0,-1.5) node [scale=.5](003){};
\draw (1,-1.5) node [scale=.5](013){};
\draw (2,-1.5) node [scale=.5](023){};
\draw (3,-1.5) node [scale=.5](033){};

\draw (000)[dashed] to (010);
\draw (010)[dashed] to (020);
\draw (020)[dashed] to (030);
\draw (001)[dashed] to (013);
\draw (013) [dashed]to (022);
\draw (022)[dashed] to (031);
\draw (002)[dashed] to (011);
\draw (011)[dashed] to (023);
\draw (023)[dashed] to (032);
\draw (003)[dashed] to (012);
\draw (012)[dashed] to (021);
\draw (021)[dashed] to (033);

\draw (4,0) node [scale=.5] (110){};
\draw (5,0) node [scale=.5] (120){};
\draw (6,0) node [scale=.5] (130){};
\draw (7,0) node [scale=.5] (140){};
\draw (4,-.5) node [scale=.5](111){};
\draw (5,-.5) node [scale=.5](121){};
\draw (6,-.5) node [scale=.5](131){};
\draw (7,-.5) node [scale=.5](141){};
\draw (4,-1) node [scale=.5](112){};
\draw (5,-1) node [scale=.5](122){};
\draw (6,-1) node [scale=.5](132){};
\draw (7,-1) node [scale=.5](142){};
\draw (4,-1.5) node [scale=.5](113){};
\draw (5,-1.5) node [scale=.5](123){};
\draw (6,-1.5) node [scale=.5](133){};
\draw (7,-1.5) node [scale=.5](143){};

\draw (030) to (110);
\draw (110) to (120);
\draw (120) to (130);
\draw (130) to (140);
\draw (031) to (113);
\draw (113) to (122);
\draw (122) to (133);
\draw (133) to (141);
\draw (032) to (111);
\draw (111) to (123);
\draw (123) to (131);
\draw (131) to (142);
\draw (033) to (112);
\draw (112) to (121);
\draw (121) to (132);
\draw (132) to (143);

\node[above=10pt,fill=none,draw=none] at (0,0) {$G_6$};
\node[above=10pt,fill=none,draw=none] at (1,0) {$G_0$};
\node[above=10pt,fill=none,draw=none] at (2,0) {$G_1$};
\node[above=10pt,fill=none,draw=none] at (3,0) {$G_2$};
\node[above=10pt,fill=none,draw=none] at (4,0) {$G_3$};
\node[above=10pt,fill=none,draw=none] at (5,0) {$G_4$};
\node[above=10pt,fill=none,draw=none] at (6,0) {$G_5$};
\node[above=10pt,fill=none,draw=none] at (7,0) {$G_6$};

\end{tikzpicture}
\subcaption{}\label{Gamma0a}
\end{minipage}

\vspace{.5in}

\begin{minipage}[b]{.5\linewidth}
\begin{tikzpicture}[every node/.style={draw,shape=circle,fill=black}, every path/.style={->}]

\draw (1,0) node [scale=.5] (010){};
\draw (2,0) node [scale=.5] (020){};
\draw (3,0) node [scale=.5] (030){};

\draw (1,-.5) node [scale=.5](011){};
\draw (2,-.5) node [scale=.5](021){};
\draw (3,-.5) node [scale=.5](031){};

\draw (1,-1) node [scale=.5](012){};
\draw (2,-1) node [scale=.5](022){};
\draw (3,-1) node [scale=.5](032){};

\draw (1,-1.5) node [scale=.5](013){};
\draw (2,-1.5) node [scale=.5](023){};
\draw (3,-1.5) node [scale=.5](033){};

\draw (4,0) node [scale=.5] (110){};
\draw (5,0) node [scale=.5] (120){};
\draw (6,0) node [scale=.5] (130){};
\draw (7,0) node [scale=.5] (140){};
\draw (4,-.5) node [scale=.5](111){};
\draw (5,-.5) node [scale=.5](121){};
\draw (6,-.5) node [scale=.5](131){};
\draw (7,-.5) node [scale=.5](141){};
\draw (4,-1) node [scale=.5](112){};
\draw (5,-1) node [scale=.5](122){};
\draw (6,-1) node [scale=.5](132){};
\draw (7,-1) node [scale=.5](142){};
\draw (4,-1.5) node [scale=.5](113){};
\draw (5,-1.5) node [scale=.5](123){};
\draw (6,-1.5) node [scale=.5](133){};
\draw (7,-1.5) node [scale=.5](143){};

\draw (140)[dashed, bend right=15] to (010);
\draw (010)[dashed] to (020);
\draw (020)[dashed] to (030);
\draw (141)[dashed, bend left=35] to (013);
\draw (013) [dashed]to (022);
\draw (022)[dashed] to (031);
\draw (142)[dashed, bend right=17] to (011);
\draw (011)[dashed] to (023);
\draw (023)[dashed] to (032);
\draw[dashed] (143) .. controls +(135:1) and +(-45:1) .. (012);
\draw (012)[dashed] to (021);
\draw (021)[dashed] to (033);

\draw (030) to (110);
\draw (110) to (120);
\draw (120) to (130);
\draw (130) to (140);
\draw (031) to (113);
\draw (113) to (122);
\draw (122) to (133);
\draw (133) to (141);
\draw (032) to (111);
\draw (111) to (123);
\draw (123) to (131);
\draw (131) to (142);
\draw (033) to (112);
\draw (112) to (121);
\draw (121) to (132);
\draw (132) to (143);

\node[above=10pt,fill=none,draw=none] at (1,0) {$G_0$};
\node[above=10pt,fill=none,draw=none] at (2,0) {$G_1$};
\node[above=10pt,fill=none,draw=none] at (3,0) {$G_2$};
\node[above=10pt,fill=none,draw=none] at (4,0) {$G_3$};
\node[above=10pt,fill=none,draw=none] at (5,0) {$G_4$};
\node[above=10pt,fill=none,draw=none] at (6,0) {$G_5$};
\node[above=10pt,fill=none,draw=none] at (7,0) {$G_6$};

\end{tikzpicture}
\subcaption{}
\label{Gamma0b}
\end{minipage}
\caption{}
\label{Gamma0}
\end{center}
\end{figure}

Notice that the first and last columns are both $G_6$. Because the directed graph $\overrightarrow{C}_{(4:n)}$ has $G_0$ as the first column, we connect the vertices from $G_0$ to the last column instead, obtaining the picture in Figure \ref{Gamma0b}; where the dashed arcs still belong to $\gamma_{0,0}(0)$.

\end{example}

Using Lemma \ref{fixedheight} we have the following result.
\begin{lemma}\label{GisforGamma}
\[
\overrightarrow{C}_{(4:n)}=\Gamma(0)\oplus \Gamma(1) \oplus \Gamma(2) \oplus \Gamma(3)=\bigoplus_{j=0}^3 \Gamma(j)
\]
is a $\overrightarrow{C}_n$-factorization of $\overrightarrow{C}_{(4:n)}$.
\end{lemma}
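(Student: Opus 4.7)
The plan is to establish two things: that each $\Gamma(j)$ is a $\overrightarrow{C}_n$-factor of $\overrightarrow{C}_{(4:n)}$, and that the four subgraphs $\Gamma(0), \Gamma(1), \Gamma(2), \Gamma(3)$ are arc-disjoint and together exhaust the arcs of $\overrightarrow{C}_{(4:n)}$.

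First I would verify that each $\Gamma(j)$ is a $\overrightarrow{C}_n$-factor. By construction $\Gamma(j)$ is obtained by gluing the pieces $\gamma_{0,j}(0), \gamma_{0,j}(1),\ldots, \gamma_{0,j}(b-2), \gamma_{a,j}(n)$ along shared partite sets (each adjacent pair of pieces shares exactly one column). Inspection of Figure \ref{babygammas} shows that every $\gamma_{i,j}$ is a union of four vertex-disjoint directed paths, one beginning at each height of its first column, and by Lemma \ref{fixedheight} each such path ends at the same height in its last column. Since consecutive pieces agree on their shared column, the paths splice together consistently, so every vertex of $\overrightarrow{C}_{(4:n)}$ has in-degree and out-degree exactly $1$ in $\Gamma(j)$. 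Consequently $\Gamma(j)$ is a disjoint union of directed cycles. Tracking a walk starting at height $h$ in $G_0$: by Lemma \ref{fixedheight} the height stays at $h$ throughout, while each step advances to the next partite set. Since each partite set contains exactly one vertex at height $h$, after $n$ steps the walk must return to its starting vertex, so every cycle has length $n$. This yields four cycles, one per height, establishing the $\overrightarrow{C}_n$-factor structure.

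Next I would verify the decomposition property. The graph $\overrightarrow{C}_{(4:n)}$ has exactly $16n$ arcs ($16$ between each of the $n$ consecutive pairs of partite sets), while each $\Gamma(j)$, being a spanning union of directed cycles, contributes exactly $4n$ arcs. So the total count gives $16n$ arcs across the four $\Gamma(j)$'s, and it suffices to show arc-disjointness. Fix any consecutive pair of partite sets $G_k, G_{k+1}$ of $\overrightarrow{C}_{(4:n)}$ (including the wrap-around pair $G_{n-1}, G_0$, which is handled by the first two columns of $\gamma_{0,j}(0)$); this pair lies inside exactly one piece of $\Gamma(j)$ for each $j$, and that piece supplies a perfect matching on the $4+4$ endpoints between $G_k$ and $G_{k+1}$. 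It remains to show that the four matchings so produced, as $j$ ranges over $\{0,1,2,3\}$, are pairwise disjoint.

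The main obstacle is this last verification, which reduces to a finite inspection of Figure \ref{babygammas}: for each $i\in\{0,1,2\}$ and each pair of adjacent columns inside $\gamma_{i,j}$, the four matchings obtained by setting $j=0,1,2,3$ must be pairwise disjoint and hence cover all $16$ arcs between the two columns. This is the essential combinatorial content of the construction, and while it is entirely mechanical, it requires working case by case through the three families $\gamma_{0,j}$, $\gamma_{1,j}$, $\gamma_{2,j}$. Once this case check is in hand, we conclude that the arcs of $\Gamma(0),\Gamma(1),\Gamma(2),\Gamma(3)$ partition the $16n$ arcs of $\overrightarrow{C}_{(4:n)}$, completing the proof.
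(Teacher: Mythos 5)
Your proposal is correct and follows essentially the same route as the paper: both establish that each $\Gamma(j)$ is a union of four directed $n$-cycles via Lemma \ref{fixedheight}, and both reduce the decomposition claim to the finite, picture-based check that $\gamma_{i,0},\gamma_{i,1},\gamma_{i,2},\gamma_{i,3}$ are arc-disjoint for each $i$. Your explicit arc count ($16n = 4\cdot 4n$) makes the exhaustiveness step slightly more transparent than the paper's version, and the only loose phrasing is the claim that the height ``stays at $h$ throughout''---Lemma \ref{fixedheight} guarantees this only at the junction columns between consecutive pieces, which is all the argument actually needs.
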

\begin{proof}
It is easy to verify from the pictures that for any given $0\leq i\leq 2$, the directed graphs $\gamma_{i,0},\gamma_{i,1},\gamma_{i,2}$ and $\gamma_{i,3}$ are arc disjoint.
By Lemma \ref{fixedheight}, in each $\gamma_{i,j}$ the directed paths start and end at the same height. Thus when we connect all the directed paths in each factor $\Gamma(j)$, we obtain four directed cycles of length $n$.
Therefore $\bigoplus_{j=0}^3 \Gamma(j)$ is a $\overrightarrow{C}_n$-factorization of $\overrightarrow{C}_{(4:n)}$.
\end{proof}

To construct directed cycles of size $2n$, we perform switches on the edges between columns. Define $\lambda_{i,j}=\gamma_{0,i}(0)\oplus F_n(\gamma_{0,i}(0))\oplus F_n(\gamma_{0,j}(0))$. Keep in mind that $\gamma_{0,i}(0)$ is on the parts $G_{n-1},G_0,G_1,G_2$, and so $F_n(\gamma_{0,i}(0))$ only consists of the edges between parts $G_{n-1}$ and $G_0$.

\begin{lemma}\label{2heights}
If the directed path that starts at height $h_1$ in part $G_{n-1}$ in $\lambda_{i,j}$ ends at height $h_2$ in $G_2$, then the directed path that starts at height $h_2$ in $G_{n-1}$ ends at height $h_1$ in $G_2$. Even more, if $i\neq j$ then no directed path starts and ends at the same height.
\end{lemma}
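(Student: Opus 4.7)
The plan is to reformulate the lemma as a statement about a single permutation of heights. First I would observe that, as a boolean sum, $\lambda_{i,j}$ agrees with $\gamma_{0,i}(0)$ on every arc except those between $G_{n-1}$ and $G_0$: the XOR of $\gamma_{0,i}(0)$ with $F_n(\gamma_{0,i}(0))$ cancels precisely those arcs, and adding $F_n(\gamma_{0,j}(0))$ installs the corresponding arcs from $\gamma_{0,j}(0)$ in their place. For each $k\in\{0,1,2,3\}$, the arcs between two consecutive parts of $\gamma_{0,k}(0)$ form a perfect matching on the height labels, and hence induce a permutation of $\{0,1,2,3\}$; let $\alpha_k$ denote the permutation induced by the slab $G_{n-1}\to G_0$ of $\gamma_{0,k}(0)$. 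By Lemma~\ref{fixedheight}, the full composition along $\gamma_{0,k}(0)$ from its first column to its last column is the identity, so the remaining slab $G_0\to G_1\to G_2$ must realize $\alpha_k^{-1}$. Consequently, tracing a directed path in $\lambda_{i,j}$ from height $h$ in $G_{n-1}$ yields height $\sigma_{i,j}(h)$ in $G_2$, where
\[
\sigma_{i,j} \;=\; \alpha_i^{-1}\circ\alpha_j.
\]
The two assertions of the lemma become: $\sigma_{i,j}$ is an involution, and it is fixed-point-free whenever $i\neq j$.

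Next I would read the four permutations directly off Figure~\ref{babygammas}, obtaining
\[
\alpha_0=(1\,3\,2),\qquad \alpha_1=(0\,2\,3),\qquad \alpha_2=(0\,3\,1),\qquad \alpha_3=(0\,1\,2),
\]
and verify by a short $4\times 4$ table that for every ordered pair $i\neq j$ and every $h\in\{0,1,2,3\}$ one has $\alpha_i(h)\neq\alpha_j(h)$; equivalently, $\sigma_{i,j}(h)\neq h$. This delivers the fixed-point-free conclusion. For the involution conclusion I would argue structurally rather than case-by-case: each $\alpha_k$ is a $3$-cycle and hence an even permutation, so $\sigma_{i,j}\in A_4$. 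The only even permutation of four letters with no fixed points is a product of two disjoint transpositions, and any such permutation is an involution. The case $i=j$ gives $\sigma_{i,j}=\mathrm{id}$, which is trivially an involution, completing both parts.

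The real work is in the bookkeeping rather than in the combinatorics: I need to be careful that $F_n$ extracts precisely the $G_{n-1}$--$G_0$ arcs of $\gamma_{0,k}(0)$, that the columns of $\gamma_{0,k}(0)$ are labeled $G_{n-1},G_0,G_1,G_2$ in that order, and that the height permutations compose in the right direction. Once these identifications are pinned down, the lemma collapses to a finite check on a handful of permutations in $S_4$.
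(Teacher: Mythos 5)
Your proof is correct, and it takes a genuinely different route from the paper's. The paper proves the lemma by exhaustive tabulation: it lists, for each of the sixteen ordered pairs $(i,j)$ and each of the four starting heights, the finishing height of the corresponding directed path in $\lambda_{i,j}$, and then reads both conclusions off the tables. You instead encode each slab of arcs as a permutation of $\{0,1,2,3\}$, use Lemma~\ref{fixedheight} to identify the $G_0\to G_2$ portion of $\gamma_{0,i}(0)$ with $\alpha_i^{-1}$, and reduce the lemma to the two facts that $\sigma_{i,j}=\alpha_i^{-1}\circ\alpha_j$ is an involution and is fixed-point-free for $i\neq j$. Your four permutations $\alpha_0=(1\,3\,2)$, $\alpha_1=(0\,2\,3)$, $\alpha_2=(0\,3\,1)$, $\alpha_3=(0\,1\,2)$ agree with Figure~\ref{babygammas}, and I checked that your $\sigma_{0,1}$ reproduces the paper's $i=0$, $j=1$ table row, so the composition order is right. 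What your approach buys is a smaller finite check (a single $4\times 4$ array of images, verified to have distinct entries in each column) plus a clean structural reason for the involution property: a fixed-point-free element of $A_4$ must be a double transposition. This parity argument is genuinely not in the paper, where the symmetry of the tables is simply observed. The paper's tables, on the other hand, require no setup and double as a reference for later lemmas. One small point worth making explicit in a final write-up: the identity $\lambda_{i,j}=\gamma_{0,i}(0)\oplus F_n(\gamma_{0,i}(0))\oplus F_n(\gamma_{0,j}(0))$ performs the intended arc swap only because $F_n(\gamma_{0,i}(0))$ is exactly the set of arcs of $\gamma_{0,i}(0)$ between $G_{n-1}$ and $G_0$ and because $F_n(\gamma_{0,j}(0))$ is disjoint from the remaining arcs of $\gamma_{0,i}(0)$; you flag this bookkeeping correctly, and it matches the paper's own remark that $F_n(\Gamma(i))$ is the matching between the first and second columns of $\gamma_{0,i}(0)$.
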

\begin{proof}
We build tables that show for each possible combination of $i$ and $j$, the starting and ending heights of the directed paths $\lambda_{i,j}$. We have one table for each $i$, with the rows indexed by the options for $j$, and the columns indexed by the options for the starting height of each directed path. The entry in the table gives the finishing height.
\[
i=0\quad
\begin{array}{|c|c|c|c|c|}
\hline
&\text{height $0$} & \text{height $1$}  & \text{height $2$} & \text{height $3$}\\
\hline
 j=0 & 0 & 1 & 2 & 3\\
 \hline
j=1 & 3 & 2 & 1 & 0\\
 \hline
j=2 & 1 & 0 & 3 & 2\\
 \hline
j=3 & 2 & 3 & 0 &1\\
 \hline
 \end{array}
 \]
 \[
i=1\quad
\begin{array}{|c|c|c|c|c|}
\hline
&\text{height $0$} & \text{height $1$}  & \text{height $2$} & \text{height $3$}\\
\hline
 j=0 & 3 & 2 & 1 & 0\\
 \hline
j=1 & 0 & 1 & 2 & 3\\
 \hline
j=2 & 2 & 3 & 0 & 1\\
 \hline
j=3 & 1 & 0 & 3 &2\\
 \hline
 \end{array}
 \]
 \[
i=2\quad
\begin{array}{|c|c|c|c|c|}
\hline
&\text{height $0$} & \text{height $1$}  & \text{height $2$} & \text{height $3$}\\
\hline
 j=0 & 1& 0 & 3 & 2\\
 \hline
j=1 & 2 & 3 & 0 & 1\\
 \hline
j=2 & 0 & 1 & 2 & 3\\
 \hline
j=3 & 3 & 2 & 1 &0\\
 \hline
 \end{array}
 \]
 \[
i=3\quad
\begin{array}{|c|c|c|c|c|}
\hline
&\text{height $0$} & \text{height $1$}  & \text{height $2$} & \text{height $3$}\\
\hline
 j=0 & 2 & 3 & 0 & 1\\
 \hline
j=1 & 1 & 0 & 3 & 2\\
 \hline
j=2 & 3 & 2 & 1 & 0\\
 \hline
j=3 & 0 & 1 & 2 & 3\\
 \hline
 \end{array}
 \]

Notice that whenever $i=j$ we have $\lambda_{i,i}=\gamma_{0,i}(0)$, in which case we already know that the starting and ending heights of each directed path are the same. When $i\neq j$ the starting and ending heights are never the same, but if the starting height in $\lambda_{i,j}$ is $h_1$ and the ending height is $h_2$, then the directed path with starting height $h_2$ has ending height $h_1$.
Therefore the result is proven.
\end{proof}

Let $\Lambda(i,j)=\Gamma(i)\oplus F_n(\Gamma(i))\oplus F_n(\Gamma(j))$.
\begin{lemma}\label{LisforLambda}
If $i\neq j$, 
$\Lambda(i,j)$ is a $\overrightarrow{C}_{2n}$-factor.
\end{lemma}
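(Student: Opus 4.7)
The plan is to show that $\Lambda(i,j)$ is a disjoint union of exactly two directed cycles, each of length $2n$, covering all $4n$ vertices of $\overrightarrow{C}_{(4:n)}$. First I would observe that $\Lambda(i,j)$ is obtained from $\Gamma(i)$ by deleting the arcs between $G_{n-1}$ and $G_0$ (which form a perfect matching, since $\Gamma(i)$ is a $\overrightarrow{C}_n$-factor by Lemma \ref{GisforGamma}) and inserting in their place the corresponding matching from $\Gamma(j)$. Because both $F_n(\Gamma(i))$ and $F_n(\Gamma(j))$ are perfect matchings between $G_{n-1}$ and $G_0$, every vertex of $\Lambda(i,j)$ has in-degree and out-degree exactly $1$, so $\Lambda(i,j)$ decomposes into a disjoint union of spanning directed cycles.

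The next step is to analyze how these cycles look. I would split any cycle into two kinds of segments: the portion lying on parts $G_{n-1},G_0,G_1,G_2$, which is a directed path in $\lambda_{i,j}$, and the complementary portion on parts $G_2,G_3,\ldots,G_{n-1}$, which consists of arcs from $\bigoplus_{t=1}^{b-2}\gamma_{0,i}(t)\oplus \gamma_{a,i}(n)$. By Lemma \ref{fixedheight} applied to each of these summands, the second kind of segment is height-preserving: a path starting at height $h$ in $G_2$ arrives at height $h$ in $G_{n-1}$. By Lemma \ref{2heights}, since $i\neq j$, the first kind of segment realizes a fixed-point-free involution $\sigma$ on the heights $\{0,1,2,3\}$: a directed path entering $\lambda_{i,j}$ at height $h$ in $G_{n-1}$ exits at height $\sigma(h)$ in $G_2$, and crucially $\sigma(h)\neq h$ while $\sigma^2=\mathrm{id}$.

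Composing these, as we traverse a cycle of $\Lambda(i,j)$ starting at height $h$ in $G_{n-1}$, we pass through $\lambda_{i,j}$ to reach height $\sigma(h)$ in $G_2$, then return height-preservingly to $G_{n-1}$ at height $\sigma(h)$. One more loop returns us to height $\sigma(\sigma(h))=h$, closing the cycle. Each cycle therefore uses precisely the pair of heights $\{h,\sigma(h)\}$ and visits each of the $n$ parts exactly twice (once at each of these two heights), giving a directed cycle of length $2n$. Since $\sigma$ is a fixed-point-free involution on four heights, it has exactly two orbits, producing exactly two such $2n$-cycles, which together span all $4n$ vertices. This exhibits $\Lambda(i,j)$ as a $\overrightarrow{C}_{2n}$-factor.

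The main obstacle is bookkeeping rather than a deep idea: one must carefully justify that everything outside $\gamma_{0,i}(0)$ acts as the identity on heights (by invoking Lemma \ref{fixedheight} across all the constituents $\gamma_{0,i}(t)$ and $\gamma_{a,i}(n)$), and that the substitution $F_n(\Gamma(i))\to F_n(\Gamma(j))$ indeed results in the local graph $\lambda_{i,j}$ on the four parts $G_{n-1},G_0,G_1,G_2$, so that Lemma \ref{2heights} applies verbatim.
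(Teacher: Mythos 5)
Your proposal is correct and follows essentially the same route as the paper's proof: decompose $\Lambda(i,j)$ as $\bigl(\bigoplus_{t=1}^{b-2}\gamma_{0,i}(t)\bigr)\oplus\gamma_{a,i}(n)\oplus\lambda_{i,j}$, use Lemma \ref{fixedheight} for height-preservation outside $\lambda_{i,j}$ and Lemma \ref{2heights} for the fixed-point-free involution on heights inside it, and conclude that each cycle closes after two circuits, giving two directed cycles of length $2n$. Your phrasing via the two orbits of the involution $\sigma$ is a slightly tidier packaging of the identical argument.
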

\begin{proof}
Notice that 
\begin{align*}
\Lambda(i,j)&=\Gamma(i)\oplus F_n(\Gamma(i))\oplus F_n(\Gamma(j))\\
&=\left(\bigoplus_{t=0}^{b-2}\gamma_{0,i}(t)\right)\oplus \gamma_{a,i}(n)\oplus F_n(\Gamma(i))\oplus F_n(\Gamma(j))
\end{align*}

Because $F_n(\Gamma(i))$ is the matching between the first and second columns in $\gamma_{0,i}(0)$, we have $F_n(\Gamma(i))=F_n(\gamma_{0,i}(0))$. Therefore,

\begin{align*}
\Lambda(i,j)&=\left(\bigoplus_{t=0}^{b-2}\gamma_{0,i}(t)\right)\oplus \gamma_{a,i}(n)\oplus F_n(\Gamma(i))\oplus F_n(\Gamma(j))\\
&=\left(\bigoplus_{t=0}^{b-2}\gamma_{0,i}(t)\right)\oplus \gamma_{a,i}(n)\oplus F_n(\gamma_{0,i}(0))\oplus F_n(\gamma_{0,j}(0))\\
&=\left(\bigoplus_{t=1}^{b-2}\gamma_{0,i}(t)\right)\oplus \gamma_{a,i}(n)\oplus\left( \gamma_{0,i}(0)\oplus F_n(\gamma_{0,i}(0))\oplus F_n(\gamma_{0,j}(0))\right)\\
&=\left(\bigoplus_{t=1}^{b-2}\gamma_{0,i}(t)\right)\oplus \gamma_{a,i}(n)\oplus \lambda_{i,j}\\
\end{align*}

Consider the directed cycle that contains the vertex at height $h_1$ in $G_{n-1}$.
From Lemma \ref{2heights} we know that in $\lambda_{i,j}$ the directed path that starts at height $h_1$ in $G_{n-1}$ finishes at height $h_2$ in $G_2$. By Lemma \ref{fixedheight}, the directed paths through all the $\gamma_{i,j}(l)$, with $l\in\{1,2,\ldots, b-1,n\}$ start and end at the same heights. So when we reach $G_{n-1}$ again, it is at at height $h_2$. We leave $G_2$ at height $h_1$ this time, and as we move through all the $\gamma_{i,j}(l)$, with $l\in\{1,2,\ldots, b-1,n\}$, the heights never change. Therefore, we reach $G_{n-1}$ again at height $h_1$, closing the directed cycle.
This produces one directed cycle of size $2n$. By repeating the process with the directed cycle starting at one of the vertices that we have not used yet, we get the second directed cycle. 
Therefore $\Lambda(i,j)$ consists of two directed cycles of length $2n$. 
\end{proof}
Notice that if $i=j$, then $\Lambda(i,j)=\Gamma(i)$ consists of $4$ directed cycles of length $n$.
\begin{theorem}\label{even}
If $s\in \{0,2,3,4\}$, then $\overrightarrow{C}_{(4:n)}$ can be decomposed into $s$ $\overrightarrow{C}_{2n}$-factors and $4-s$ $\overrightarrow{C}_{n}$-factors.
\end{theorem}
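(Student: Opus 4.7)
The strategy mirrors the treatment of $\overrightarrow{C}_{(x:n)}$ for odd $x$ earlier in the paper: build a decomposition of $\overrightarrow{C}_{(4:n)}$ indexed by a bijection $\phi$ on $\{0,1,2,3\}$, where the number of non-fixed points of $\phi$ equals $s$. The plan is to show that for any bijection $\phi:\{0,1,2,3\}\to\{0,1,2,3\}$,
\[
\overrightarrow{C}_{(4:n)}=\bigoplus_{j=0}^{3}\Lambda(j,\phi(j))
\]
is a valid decomposition, and then count factor types using Lemma \ref{LisforLambda} together with the observation that $\Lambda(j,j)=\Gamma(j)$.

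To verify the displayed identity, I would expand using the definition $\Lambda(j,\phi(j))=\Gamma(j)\oplus F_n(\Gamma(j))\oplus F_n(\Gamma(\phi(j)))$ and apply the distribution rule in Theorem \ref{distributive}:
\[
\bigoplus_{j=0}^{3}\Lambda(j,\phi(j))=\left(\bigoplus_{j=0}^{3}\Gamma(j)\right)\oplus\left(\bigoplus_{j=0}^{3}F_n(\Gamma(j))\right)\oplus\left(\bigoplus_{j=0}^{3}F_n(\Gamma(\phi(j)))\right).
\]
Because $\phi$ is a bijection, the last two boolean sums index the same collection of subgraphs and therefore cancel. What remains is $\bigoplus_{j=0}^{3}\Gamma(j)$, which equals $\overrightarrow{C}_{(4:n)}$ by Lemma \ref{GisforGamma}.

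It remains to pick $\phi$ appropriately for each admissible $s$. By Lemma \ref{LisforLambda}, each index $j$ with $\phi(j)\neq j$ contributes a $\overrightarrow{C}_{2n}$-factor, while each fixed point contributes the $\overrightarrow{C}_n$-factor $\Gamma(j)$. Hence we need a bijection of $\{0,1,2,3\}$ with exactly $s$ non-fixed points. For $s=0$ take the identity; for $s=2$ take the transposition $(0\,1)(2)(3)$; for $s=3$ take the $3$-cycle $(0\,1\,2)(3)$; for $s=4$ take the derangement $(0\,1)(2\,3)$. (Note that $s=1$ is impossible, which is consistent with the statement excluding $s=1$.)

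The main technical point is already carried by Lemma \ref{LisforLambda}: that the ``edge-swap'' between $\Gamma(i)$ and $\Gamma(j)$ fuses each pair of $\overrightarrow{C}_n$-factors into a single $\overrightarrow{C}_{2n}$-factor. Everything else is bookkeeping: confirming the boolean-sum cancellation and exhibiting a bijection of $\{0,1,2,3\}$ with the prescribed number of non-fixed points, both of which are straightforward.
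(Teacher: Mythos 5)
Your proposal is correct and follows essentially the same route as the paper: the paper's proof also chooses a permutation $\pi$ of $\{0,1,2,3\}$ with exactly $4-s$ fixed points, rewrites $\overrightarrow{C}_{(4:n)}=\bigoplus_{j=0}^{3}\Gamma(j)$ as $\bigoplus_{j=0}^{3}\Lambda(j,\pi(j))$ via the same boolean-sum cancellation, and invokes Lemma \ref{LisforLambda} to classify each summand. You merely make the cancellation step and the explicit choice of permutations more detailed than the paper does.
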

\begin{proof}
Let $\pi$ be a permutation of the set $\{0,1,2,3\}$ with exactly $4-s$ fixed points. Then 
\[
\overrightarrow{C}_{(4:n)}=\bigoplus_{j=0}^3 \Gamma(j)=\bigoplus_{j=0}^3 \left(\Gamma(j)\oplus F_n(\Gamma(j))\oplus F_n(\Gamma(\pi(j))\right)=\bigoplus_{j=0}^3 \Lambda(j,\pi(j))
\]

Since $\Lambda(j,\pi(j))$ is a $\overrightarrow{C}_{2n}$-factor if $j\neq \pi(j)$ and a $\overrightarrow{C}_n$-factor otherwise, the theorem is proven.
\end{proof}
\begin{remark}
Notice that if $n=5$, we have $b=1$ and $a=2$. This means we have $\Gamma(j)=\gamma_{a,j}(5)=\gamma_{2,j}(5)$, which is on the parts $G_{-1}=G_4,G_0,G_1,G_2,G_3,G_4$. This will actually close the directed cycle. The results given in Lemmas \ref{2heights} and \ref{LisforLambda} only apply to $b\geq 2$, but it can be shown that the same results are true with $b=1$ by applying similar techniques on $\gamma_{a,j}$ instead of $\gamma_{0,j}$.
\end{remark}

There is one more basic decomposition that we will use, based on the resolvable gregarious decomposition of $K_{(w:n)}$ from \cite{BHR}. We make use of the constructions given in Lemma $3.1$ and Corollary $3.2$ of \cite{BHR}, and apply them to $\overrightarrow{C}_{(w:n)}$ instead of $K_{(w:n)}$.

\begin{definition}
A quasigroup $(Q,*)$ is a set $Q$ with a binary operation $*$ such that for each $a$ and $b$ in $Q$, there exist unique elements $x$ and $y$ in $Q$ such that:
\begin{itemize}
\item $a*x=b$;
\item $y*a=b$.
\end{itemize}
\end{definition}
\begin{definition}
Two quasigroups on the same set $(Q,*)$, $(Q,\circ)$ are said to be orthogonal if $i*j\neq i\circ j$ for every $i,j$ in $Q$.
\end{definition}
The reader may be familiar with Latin Squares, which are the multiplication tables of quasigroups, and mutually orthogonal Latin Squares, which are the multiplication tables of orthogonal quasigroups.
In \cite{BS}, \cite{BSP} it was shown that if $|Q|\not\in\{1,2,6\}$ then there are at least $2$ orthogonal quasigroups on $Q$.
Again, the decomposition in the following theorem is obtained by modifying the construction from Lemma $3.2$ in \cite{BHR} to work with $\overrightarrow{C}_{(w:n)}$ instead of $K_{(w:n)}$:
\begin{theorem}\label{gregarious}
Let $w\not\in\{2,6\}$ and $n$ odd. Then there is a decomposition of $\overrightarrow{C}_{(w:n)}$ into $\overrightarrow{C}_n$-factors.
\end{theorem}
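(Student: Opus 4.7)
The plan is to adapt the resolvable gregarious $C_n$-factorization of $K_{(w:n)}$ from \cite{BHR} (Lemma 3.1 and Corollary 3.2) so that every directed $n$-cycle traverses the parts in the natural order $G_0\to G_1\to\cdots\to G_{n-1}\to G_0$ and thus sits inside $\overrightarrow{C}_{(w:n)}$. The hypothesis $w\notin\{2,6\}$ (the case $w=1$ being immediate, since $\overrightarrow{C}_{(1:n)}$ is itself a single $\overrightarrow{C}_n$-factor) guarantees, by \cite{BS,BSP}, the existence of two orthogonal quasigroups $(Q,*)$ and $(Q,\circ)$ on $Q=\{0,1,\dots,w-1\}$, and these will drive the construction.

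For each $k\in Q$ I would define the candidate $\overrightarrow{C}_n$-factor $F_k$ to be the union of $w$ disjoint directed cycles, one for each $a\in Q$, given by
\[
(a,0)\to(a*k,1)\to(a\circ k,2)\to(a*k,3)\to(a\circ k,4)\to\cdots\to(a*k,n-2)\to(a\circ k,n-1)\to(a,0).
\]
After the initial vertex, the first coordinate alternates between $a*k$ (in odd-indexed parts) and $a\circ k$ (in even-indexed parts). Since $n$ is odd, $n-1$ is even, so the last vertex before returning to $(a,0)$ is $(a\circ k,n-1)$, which closes back via the $\circ$-based arc $(a\circ k,n-1)\to(a,0)$. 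Different values of $a$ give vertex-disjoint cycles covering $V(\overrightarrow{C}_{(w:n)})$, so $F_k$ is genuinely a $\overrightarrow{C}_n$-factor.

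It then remains to verify that $\bigoplus_{k\in Q}F_k=\overrightarrow{C}_{(w:n)}$, i.e., the $F_k$'s partition the arc set. Fix a transition $G_j\to G_{j+1}$; as $(a,k)$ varies over $Q\times Q$, the map sending $(a,k)$ to the arc contributed at this transition should be a bijection onto $Q\times Q$. There are four cases. At $G_0\to G_1$ the arc is $(a,a*k)$, a bijection because $*$ is a quasigroup; at transitions of the form $G_{2t-1}\to G_{2t}$ the arc is $(a*k,a\circ k)$, a bijection by orthogonality of $*$ and $\circ$; at transitions $G_{2t}\to G_{2t+1}$ with $t\geq 1$ the arc is $(a\circ k,a*k)$, again by orthogonality; and at $G_{n-1}\to G_0$ the arc is $(a\circ k,a)$, a bijection because $\circ$ is a quasigroup. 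Hence every transition receives all $w^2$ of its arcs exactly once.

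The only real subtlety is the parity: odd $n$ is exactly what makes the alternating $*/\circ$ pattern land on a $\circ$-value in $G_{n-1}$, so the single quasigroup operation $\circ$ can close the cycle back to $(a,0)$; for even $n$ the same recipe would instead leave a $*$-value at $G_{n-1}$ and a different closing mechanism would be required. The excluded orders $w\in\{2,6\}$ are precisely those for which no pair of orthogonal quasigroups exists, which is why the construction stops there.
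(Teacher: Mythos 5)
Your proof is correct and takes essentially the same route as the paper: both arguments invoke the existence of two orthogonal quasigroups of order $w\notin\{1,2,6\}$ (Bose--Shrikhande--Parker) to build gregarious directed $n$-cycles inside $\overrightarrow{C}_{(w:n)}$ and then group them into $w$ parallel classes, exactly as in the adaptation of the construction from \cite{BHR}. The only difference is bookkeeping --- the paper parametrizes its cycles by pairs $(i,j)$ with last vertex $i\circ j$ and collects a factor from all pairs with $i*j=l$, whereas you parametrize by $(a,k)$ with first coordinates alternating between $a*k$ and $a\circ k$ and collect by $k$ --- and your verification that each transition receives every arc exactly once is sound.
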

\begin{proof}
Since $w\not\in\{2,6\}$ there exist two orthogonal quasigroups (Latin Squares) $(Q,\circ)$ and $(Q,*)$ of order $w$, with $Q=\{0,1,2,\dots,w-1\}$. We take directed cycles of the form:
\[
(i,0)(j,1)(i,2)(j,3)\ldots(i,n-3)(j,n-2)(k,n-1),\quad \text{ where }\quad
0\leq i,j \leq w-1, k=i\circ j.
\]
This produces a decomposition of $\overrightarrow{C}_{(w:n)}$ into $w^2$ directed cycles of size $n$.
To form a $\overrightarrow{C}_n$-factor, given $l\in Q$ we take all cycles arising from the pairs $i,j$ with $i*j=l$ in the second quasigroup $(Q,*)$. Thus we have a decomposition of $\overrightarrow{C}_{(w:n)}$ into $w$ $\overrightarrow{C}_n$-factors.
\end{proof}

\section{Multivariable functions}

\begin{definition}\label{definition71}
Let $x$ and $y$ be odd. We define $T_{(xy)}(i,\alpha)$ to be the directed subgraph of $\overrightarrow{C}_{(xy:n)}$ obtained by taking $T_{(xy)}(i,\alpha)=T_{x}(i)\otimes   T_y(\alpha)$. We also define 
\[
H_{(xy)}(i,\alpha)(j,\beta)=T_{(xy)}(i,\alpha)\oplus F_n(T_{(xy)}(i,\alpha))\oplus F_n(T_{(xy)}(j,\beta))\]
This means that $H_{(xy)}(i,\alpha)(j,\beta)$ is the directed graph obtained by taking the arcs of $T_{(xy)}(i,\alpha)$ between parts $t$ and $t+1$ for $0\leq t\leq n-2$, and the arcs between parts $n-1$ and $0$ from $T_{(xy)}(j,\beta)$.
\end{definition}
\begin{example}\label{examplemultipartite}
Figure \ref{figdef71} illustrates the first part of Definition \ref{definition71} by showing $T_{x}(i)$, $T_y(\alpha)$ and $T_{(xy)}(i,\alpha)$, for $x=3$, $y=5$, $i=1$ and $\alpha=2$, with $3$ partite sets.

Figure \ref{figlemma72} illustrates the second part of Definition \ref{definition71} by showing $H_{(xy)}(i,\alpha)(j,\beta)$, for $x=3$, $y=5$, $i=1$, $\alpha=2$, $j=2$, $\beta=4$, with $3$ partite sets.  Figure \ref{figlemma72} also shows $H_x(i,j)$ and $H_y(\alpha,\beta)$, to illustrate Lemma \ref{lemma72}.

Notice that in both figures instead of giving all the coordinates in each vertex, we give the first two coordinates of all the vertices in each row (the third coordinate would specify which partite set the vertex belongs to).
\end{example}
\begin{lemma}\label{lemma72}
Let $x$, $y$ and $n$ be odd. Then:
\[
H_{(xy)}(i,\alpha)(j,\beta)=H_x(i,j)\otimes   H_y(\alpha,\beta)
\]
\end{lemma}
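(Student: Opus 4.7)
The plan is to use Theorem \ref{distributive} to expand $H_x(i,j) \otimes H_y(\alpha,\beta)$ and match the result against Definition \ref{definition71}. The slick way is to first rewrite
\[
H_x(i,j) = T_x^-(i) \oplus F_n(T_x(j)), \qquad H_y(\alpha,\beta) = T_y^-(\alpha) \oplus F_n(T_y(\beta)),
\]
where $T_x^-(i) := T_x(i) \oplus F_n(T_x(i))$ consists of the arcs of $T_x(i)$ lying between parts $t$ and $t+1$ for $0 \le t \le n-2$, and similarly for $T_y^-(\alpha)$. The key structural observation is that $T_x^-(i)$ and $F_n(T_x(j))$ occupy complementary pairs of consecutive parts, as do $T_y^-(\alpha)$ and $F_n(T_y(\beta))$.

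Distributing the product yields four cross terms. I expect the two mixed ones, $T_x^-(i) \otimes F_n(T_y(\beta))$ and $F_n(T_x(j)) \otimes T_y^-(\alpha)$, to vanish: an arc of $T_x^-(i) \otimes F_n(T_y(\beta))$ would have to lie between parts $n-1$ and $0$ (required by its second coordinate) and simultaneously not between those parts (forbidden by its first coordinate), which is impossible. The remaining two products are $T_x^-(i) \otimes T_y^-(\alpha)$ and $F_n(T_x(j)) \otimes F_n(T_y(\beta))$. The first picks out exactly the arcs of $T_x(i) \otimes T_y(\alpha) = T_{(xy)}(i,\alpha)$ sitting between parts $t$ and $t+1$ for $0 \le t \le n-2$, which is $T_{(xy)}(i,\alpha) \oplus F_n(T_{(xy)}(i,\alpha))$. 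The second equals $F_n(T_{(xy)}(j,\beta))$ by the elementary identity $F_n(G) \otimes F_n(H) = F_n(G \otimes H)$, which follows at once from the fact that $F_n$ is the restriction to arcs between parts $n-1$ and $0$. Summing the two surviving pieces produces
\[
T_{(xy)}(i,\alpha) \oplus F_n(T_{(xy)}(i,\alpha)) \oplus F_n(T_{(xy)}(j,\beta)),
\]
which is precisely $H_{(xy)}(i,\alpha)(j,\beta)$ by Definition \ref{definition71}.

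The only potential pitfall is the notational bookkeeping of which pieces live in which pair of parts; no combinatorial difficulty is anticipated, since the whole argument reduces to the single observation that the partite product commutes with the operation of restricting to arcs between a fixed pair of consecutive parts. A fallback, if the four-term collapse feels too terse, is to expand all nine cross terms of the naive distribution directly, identify each with some $F_n(T_{(xy)}(\cdot,\cdot))$ via the identities $F_n(G)\otimes H = G\otimes F_n(H) = F_n(G)\otimes F_n(H) = F_n(G\otimes H)$, and then apply $A\oplus A = \emptyset$ to cancel the duplicated summands.
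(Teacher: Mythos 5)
Your proof is correct and follows essentially the same route as the paper's: distribute the partite product over the $\oplus$-decompositions of $H_x(i,j)$ and $H_y(\alpha,\beta)$ and collapse the cross terms using the identity $F_n(G)\otimes F_n(H)=F_n(G)\otimes H=G\otimes F_n(H)=F_n(G\otimes H)$. Your preliminary regrouping into the edge-disjoint pieces $T_x^-(i)$ and $F_n(T_x(j))$ is a mild streamlining of the paper's direct expansion of the two three-term sums (and neatly sidesteps the fact that $F_n(T_x(i))\subseteq T_x(i)$ are not disjoint summands), but the substance is identical.
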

\begin{proof}
Notice that 
\[
F_n(T_{(xy)}(i,\alpha))=F_n(T_x(i)\otimes   T_y(\alpha))=F_n(T_x(i))\otimes   F_n(T_y(\alpha))
\]

Notice also that 
\[
F_n(T_x(i)\otimes   T_y(\alpha))=F_n(T_x(i))\otimes  T_y(\alpha)=T_x(i)\otimes   F_n(T_y(\alpha))
\]

Then we have 
\begin{align*}
H_x(i,j)\otimes   H_y(\alpha,\beta)&=\left(T_x(i)\oplus F_n(T_x(i))\oplus F_n(T_x(j))\right) \otimes \left(T_y(\alpha)\oplus F_n(T_y(\alpha))\oplus F_n(T_y(\beta))\right)\\
&=T_x(i)\otimes T_y(\alpha)\oplus F_n(T_x(i))\otimes F_n(T_y(\alpha))\oplus F_n(T_x(j))\otimes F_n(T_y(\beta))\\
&=T_{(xy)}(i,\alpha)\oplus F_n(T_{(xy)}(i,\alpha))\oplus F_n( T_{(xy)}(j,\beta))\\
&=H_{(xy)}(i,\alpha)(j,\beta)
\end{align*}

\end{proof}

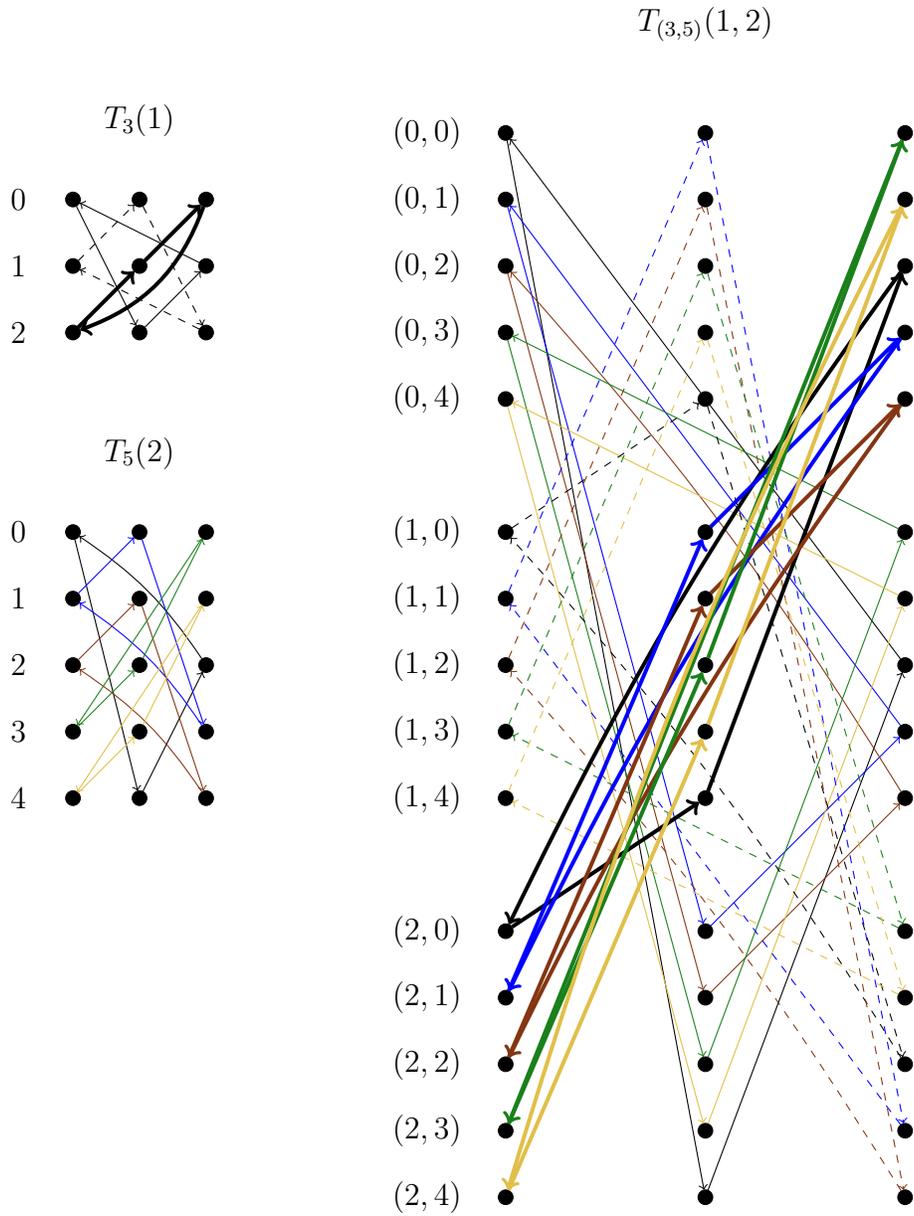
\begin{figure}
\begin{center}
\begin{tikzpicture}[every node/.style={draw,shape=circle,fill=black},scale=.885,every path/.style={->}]
\draw (-5,-7) node [scale=.5](1){};
\draw (-4,-7) node [scale=.5](2){};
\draw (-3,-7) node [scale=.5](3){};
\draw (-5,-8) node[scale=.5] (4){};
\draw (-4,-8) node [scale=.5](5){};
\draw (-3,-8) node [scale=.5](6){};
\draw (-5,-9) node[scale=.5] (7){};
\draw (-4,-9) node [scale=.5](8){};
\draw (-3,-9) node [scale=.5](9){};

\draw (1) to (8);
\draw (8) to (6);
\draw (6) to (1);
\draw (4) [dashed] to (2);
\draw (2) [dashed] to (9);
\draw (9) [dashed] to (4);
\draw (7) [line width=1.5] to (5);
\draw (5) [line width=1.5] to (3);
\draw (3) [bend left=25,line width=1.5] to (7);

\draw (-5,-12) node [scale=.5](a1){};
\draw (-4,-12) node [scale=.5](a2){};
\draw (-3,-12) node [scale=.5](a3){};
\draw (-5,-13) node[scale=.5] (a4){};
\draw (-4,-13) node [scale=.5](a5){};
\draw (-3,-13) node [scale=.5](a6){};
\draw (-5,-14) node[scale=.5] (a7){};
\draw (-4,-14) node [scale=.5](a8){};
\draw (-3,-14) node [scale=.5](a9){};
\draw (-5,-15) node[scale=.5] (a10){};
\draw (-4,-15) node [scale=.5](a11){};
\draw (-3,-15) node [scale=.5](a12){};
\draw (-5,-16) node[scale=.5] (a13){};
\draw (-4,-16) node [scale=.5](a14){};
\draw (-3,-16) node [scale=.5](a15){};

\draw (a1)  to (a14);
\draw (a14) to (a9);
\draw (a9) [bend right=11] to (a1);

\draw (a4) [blue] to (a2);
\draw (a2) [blue] to (a12);
\draw (a12) [blue,bend right=11] to (a4);

\draw (a7) [Castano] to (a5);
\draw (a5) [Castano] to (a15);
\draw (a15) [Castano, bend right=11] to (a7);

\draw (a10) [Green] to (a8);
\draw (a8) [Green] to (a3);
\draw (a3) [Green] to (a10);

\draw (a13) [Amarillo] to (a11);
\draw (a11) [Amarillo] to (a6);
\draw (a6) [Amarillo] to (a13);

\node[above=10pt,fill=none,draw=none] at (-4,-7) {$T_3(1)$};
\node[above=10pt,fill=none,draw=none] at (-4,-12) {$T_5(2)$};

\node[left=10pt,fill=none,draw=none] at (-5,-7) {$0$};
\node[left=10pt,fill=none,draw=none] at (-5,-12) {$0$};
\node[left=10pt,fill=none,draw=none] at (-5,-8) {$1$};
\node[left=10pt,fill=none,draw=none] at (-5,-13) {$1$};
\node[left=10pt,fill=none,draw=none] at (-5,-9) {$2$};
\node[left=10pt,fill=none,draw=none] at (-5,-14) {$2$};
\node[left=10pt,fill=none,draw=none] at (-5,-15) {$3$};
\node[left=10pt,fill=none,draw=none] at (-5,-16) {$4$};

\draw (1.5,-6) node [scale=.5](11){};
\draw (4.5,-6) node [scale=.5](22){};
\draw (7.5,-6) node [scale=.5](33){};

\draw (1.5,-7) node [scale=.5](14){};
\draw (4.5,-7) node [scale=.5](25){};
\draw (7.5,-7) node [scale=.5](36){};

\draw (1.5,-8) node [scale=.5](17){};
\draw (4.5,-8) node [scale=.5](28){};
\draw (7.5,-8) node [scale=.5](39){};

\draw (1.5,-9) node [scale=.5](110){};
\draw (4.5,-9) node [scale=.5](211){};
\draw (7.5,-9) node [scale=.5](312){};

\draw (1.5,-10) node [scale=.5](113){};
\draw (4.5,-10) node [scale=.5](214){};
\draw (7.5,-10) node [scale=.5](315){};

\draw (1.5,-12) node [scale=.5](41){};
\draw (4.5,-12) node [scale=.5](52){};
\draw (7.5,-12) node [scale=.5](63){};

\draw (1.5,-13) node [scale=.5](44){};
\draw (4.5,-13) node [scale=.5](55){};
\draw (7.5,-13) node [scale=.5](66){};

\draw (1.5,-14) node [scale=.5](47){};
\draw (4.5,-14) node [scale=.5](58){};
\draw (7.5,-14) node [scale=.5](69){};

\draw (1.5,-15) node [scale=.5](410){};
\draw (4.5,-15) node [scale=.5](511){};
\draw (7.5,-15) node [scale=.5](612){};

\draw (1.5,-16) node [scale=.5](413){};
\draw (4.5,-16) node [scale=.5](514){};
\draw (7.5,-16) node [scale=.5](615){};

\draw (1.5,-18) node [scale=.5](71){};
\draw (4.5,-18) node [scale=.5](82){};
\draw (7.5,-18) node [scale=.5](93){};

\draw (1.5,-19) node [scale=.5](74){};
\draw (4.5,-19) node [scale=.5](85){};
\draw (7.5,-19) node [scale=.5](96){};

\draw (1.5,-20) node [scale=.5](77){};
\draw (4.5,-20) node [scale=.5](88){};
\draw (7.5,-20) node [scale=.5](99){};

\draw (1.5,-21) node [scale=.5](710){};
\draw (4.5,-21) node [scale=.5](811){};
\draw (7.5,-21) node [scale=.5](912){};

\draw (1.5,-22) node [scale=.5](713){};
\draw (4.5,-22) node [scale=.5](814){};
\draw (7.5,-22) node [scale=.5](915){};

\node[above=10pt,fill=none,draw=none] at (4.5,-6) {$T_{(3,5)}(1,2)$};

\node[left=10pt,fill=none,draw=none] at (1.5,-6) {$(0,0)$};
\node[left=10pt,fill=none,draw=none] at (1.5,-7) {$(0,1)$};
\node[left=10pt,fill=none,draw=none] at (1.5,-8) {$(0,2)$};
\node[left=10pt,fill=none,draw=none] at (1.5,-9) {$(0,3)$};
\node[left=10pt,fill=none,draw=none] at (1.5,-10) {$(0,4)$};
\node[left=10pt,fill=none,draw=none] at (1.5,-12) {$(1,0)$};
\node[left=10pt,fill=none,draw=none] at (1.5,-13) {$(1,1)$};
\node[left=10pt,fill=none,draw=none] at (1.5,-14) {$(1,2)$};
\node[left=10pt,fill=none,draw=none] at (1.5,-15) {$(1,3)$};
\node[left=10pt,fill=none,draw=none] at (1.5,-16) {$(1,4)$};
\node[left=10pt,fill=none,draw=none] at (1.5,-18) {$(2,0)$};
\node[left=10pt,fill=none,draw=none] at (1.5,-19) {$(2,1)$};
\node[left=10pt,fill=none,draw=none] at (1.5,-20) {$(2,2)$};
\node[left=10pt,fill=none,draw=none] at (1.5,-21) {$(2,3)$};
\node[left=10pt,fill=none,draw=none] at (1.5,-22) {$(2,4)$};

\draw (11) to (814);
\draw (814) to (69);
\draw (69) to (11);

\draw (14) [blue] to (82);
\draw (82) [blue] to (612);
\draw (612) [blue] to (14);

\draw (17) [Castano] to (85);
\draw (85) [Castano] to (615);
\draw (615) [Castano,bend right=5] to (17);

\draw (110) [Green] to (88);
\draw (88) [Green] to (63);
\draw (63) [Green] to (110);

\draw (113) [Amarillo] to (811);
\draw (811) [Amarillo] to (66);
\draw (66) [Amarillo] to (113);

\draw (41) [dashed] to (214);
\draw (214)[dashed] to (99);
\draw (99) [dashed,bend right=5]to (41);

\draw (44)[dashed,blue] to (22);
\draw (22)[dashed,blue] to (912);
\draw (912)[dashed,blue] to (44);

\draw (47) [dashed,Castano]to (25);
\draw (25) [dashed,Castano]to (915);
\draw (915) [dashed,Castano,bend right=5]to (47);

\draw (410)[dashed,Green] to (28);
\draw (28) [dashed,Green]to (93);
\draw (93) [dashed,Green]to (410);

\draw (413)[dashed,Amarillo] to (211);
\draw (211)[dashed,Amarillo] to (96);
\draw (96) [dashed,Amarillo]to (413);

\draw (71)[line width=1.5] to (514);
\draw (514)[line width=1.5] to (39);
\draw (39) [line width=1.5,bend right=5]to (71);
\draw (74) [line width=1.5,blue]to (52);
\draw (52) [line width=1.5,blue]to (312);
\draw (312) [line width=1.5,blue,bend right=5]to (74);
\draw (77) [line width=1.5,Castano]to (55);
\draw (55) [line width=1.5,Castano]to (315);
\draw (315) [line width=1.5,Castano,bend right=5]to (77);
\draw (710) [line width=1.5,Green]to (58);
\draw (58) [line width=1.5,Green]to (33);
\draw (33) [line width=1.5,Green]to (710);
\draw (713) [line width=1.5,Amarillo]to (511);
\draw (511) [line width=1.5,Amarillo]to (36);
\draw (36) [line width=1.5,Amarillo,bend right=5]to (713);

\end{tikzpicture}

\caption{$T_3(1)$, $T_5(2)$ and $T_{(3,5)}(1,2)$}
\label{figdef71}
\end{center}
\end{figure}
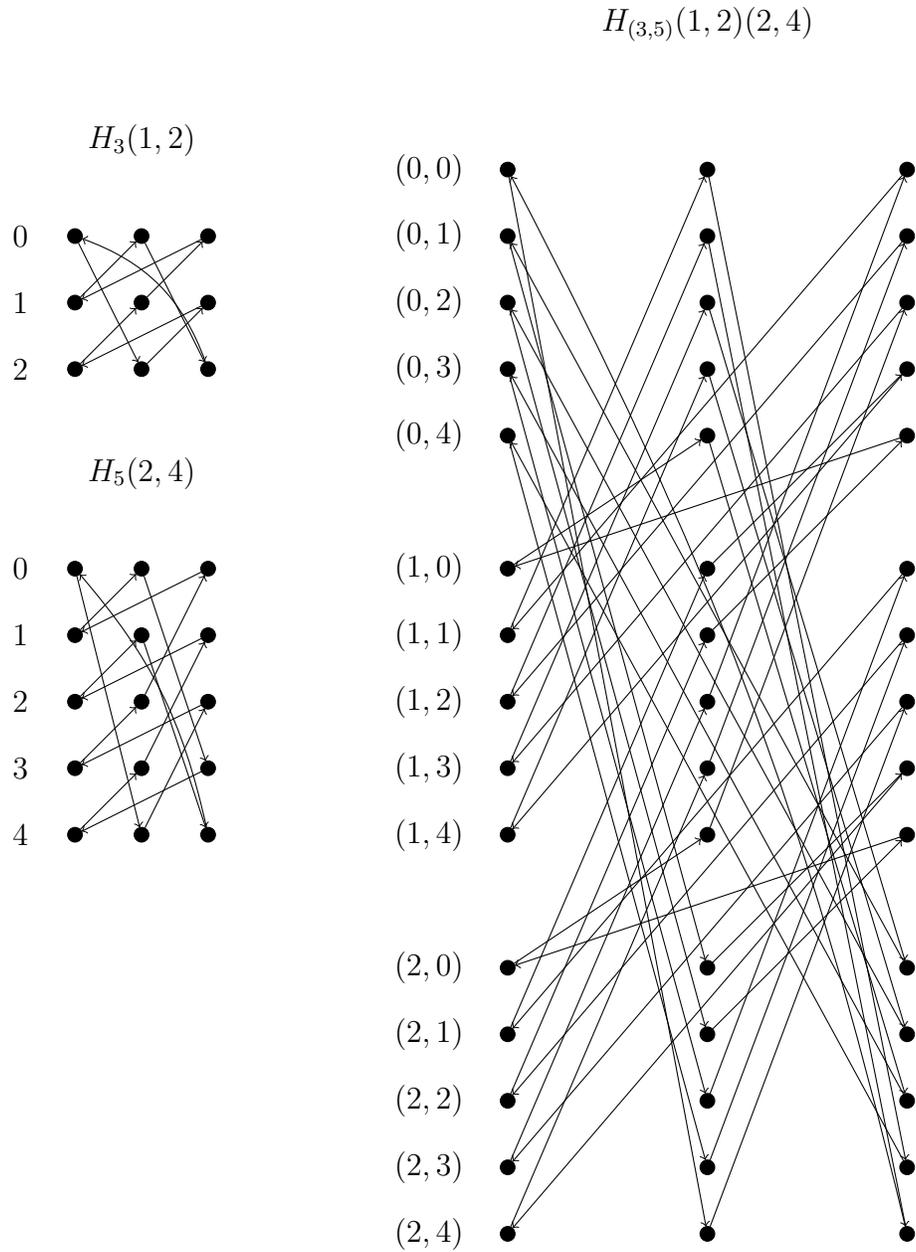
\begin{figure}
\begin{center}
\begin{tikzpicture}[every node/.style={draw,shape=circle,fill=black},scale=.885,every path/.style={->}]
\draw (-5,-7) node [scale=.5](1){};
\draw (-4,-7) node [scale=.5](2){};
\draw (-3,-7) node [scale=.5](3){};
\draw (-5,-8) node[scale=.5] (4){};
\draw (-4,-8) node [scale=.5](5){};
\draw (-3,-8) node [scale=.5](6){};
\draw (-5,-9) node[scale=.5] (7){};
\draw (-4,-9) node [scale=.5](8){};
\draw (-3,-9) node [scale=.5](9){};

\draw (1) to (8);
\draw (8) to (6);
\draw (6) to (7);
\draw (4) to (2);
\draw (2) to (9);
\draw (9) to [bend right=25] (1);
\draw (7) to (5);
\draw (5) to (3);
\draw (3) to (4);

\draw (-5,-12) node [scale=.5](a1){};
\draw (-4,-12) node [scale=.5](a2){};
\draw (-3,-12) node [scale=.5](a3){};
\draw (-5,-13) node[scale=.5] (a4){};
\draw (-4,-13) node [scale=.5](a5){};
\draw (-3,-13) node [scale=.5](a6){};
\draw (-5,-14) node[scale=.5] (a7){};
\draw (-4,-14) node [scale=.5](a8){};
\draw (-3,-14) node [scale=.5](a9){};
\draw (-5,-15) node[scale=.5] (a10){};
\draw (-4,-15) node [scale=.5](a11){};
\draw (-3,-15) node [scale=.5](a12){};
\draw (-5,-16) node[scale=.5] (a13){};
\draw (-4,-16) node [scale=.5](a14){};
\draw (-3,-16) node [scale=.5](a15){};

\draw (a1) to (a14);
\draw (a14) to (a9);
\draw (a9) to (a10);
\draw (a4) to (a2);
\draw (a2) to (a12);
\draw (a12) to (a13);

\draw (a7) to (a5);
\draw (a5) to (a15);
\draw (a15) to [bend right=14] (a1);
\draw (a10) to (a8);
\draw (a8) to (a3);
\draw (a3) to (a4);
\draw (a13) to (a11);
\draw (a11) to (a6);
\draw (a6) to (a7);

\node[above=10pt,fill=none,draw=none] at (-4,-7) {$H_3(1,2)$};
\node[above=10pt,fill=none,draw=none] at (-4,-12) {$H_5(2,4)$};

\node[left=10pt,fill=none,draw=none] at (-5,-7) {$0$};
\node[left=10pt,fill=none,draw=none] at (-5,-12) {$0$};
\node[left=10pt,fill=none,draw=none] at (-5,-8) {$1$};
\node[left=10pt,fill=none,draw=none] at (-5,-13) {$1$};
\node[left=10pt,fill=none,draw=none] at (-5,-9) {$2$};
\node[left=10pt,fill=none,draw=none] at (-5,-14) {$2$};
\node[left=10pt,fill=none,draw=none] at (-5,-15) {$3$};
\node[left=10pt,fill=none,draw=none] at (-5,-16) {$4$};

\draw (1.5,-6) node [scale=.5](11){};
\draw (4.5,-6) node [scale=.5](22){};
\draw (7.5,-6) node [scale=.5](33){};

\draw (1.5,-7) node [scale=.5](14){};
\draw (4.5,-7) node [scale=.5](25){};
\draw (7.5,-7) node [scale=.5](36){};

\draw (1.5,-8) node [scale=.5](17){};
\draw (4.5,-8) node [scale=.5](28){};
\draw (7.5,-8) node [scale=.5](39){};

\draw (1.5,-9) node [scale=.5](110){};
\draw (4.5,-9) node [scale=.5](211){};
\draw (7.5,-9) node [scale=.5](312){};

\draw (1.5,-10) node [scale=.5](113){};
\draw (4.5,-10) node [scale=.5](214){};
\draw (7.5,-10) node [scale=.5](315){};

\draw (1.5,-12) node [scale=.5](41){};
\draw (4.5,-12) node [scale=.5](52){};
\draw (7.5,-12) node [scale=.5](63){};

\draw (1.5,-13) node [scale=.5](44){};
\draw (4.5,-13) node [scale=.5](55){};
\draw (7.5,-13) node [scale=.5](66){};

\draw (1.5,-14) node [scale=.5](47){};
\draw (4.5,-14) node [scale=.5](58){};
\draw (7.5,-14) node [scale=.5](69){};

\draw (1.5,-15) node [scale=.5](410){};
\draw (4.5,-15) node [scale=.5](511){};
\draw (7.5,-15) node [scale=.5](612){};

\draw (1.5,-16) node [scale=.5](413){};
\draw (4.5,-16) node [scale=.5](514){};
\draw (7.5,-16) node [scale=.5](615){};

\draw (1.5,-18) node [scale=.5](71){};
\draw (4.5,-18) node [scale=.5](82){};
\draw (7.5,-18) node [scale=.5](93){};

\draw (1.5,-19) node [scale=.5](74){};
\draw (4.5,-19) node [scale=.5](85){};
\draw (7.5,-19) node [scale=.5](96){};

\draw (1.5,-20) node [scale=.5](77){};
\draw (4.5,-20) node [scale=.5](88){};
\draw (7.5,-20) node [scale=.5](99){};

\draw (1.5,-21) node [scale=.5](710){};
\draw (4.5,-21) node [scale=.5](811){};
\draw (7.5,-21) node [scale=.5](912){};

\draw (1.5,-22) node [scale=.5](713){};
\draw (4.5,-22) node [scale=.5](814){};
\draw (7.5,-22) node [scale=.5](915){};

\node[above=10pt,fill=none,draw=none] at (4.5,-6) {$H_{(3,5)}(1,2)(2,4)$};

\node[left=10pt,fill=none,draw=none] at (1.5,-6) {$(0,0)$};
\node[left=10pt,fill=none,draw=none] at (1.5,-7) {$(0,1)$};
\node[left=10pt,fill=none,draw=none] at (1.5,-8) {$(0,2)$};
\node[left=10pt,fill=none,draw=none] at (1.5,-9) {$(0,3)$};
\node[left=10pt,fill=none,draw=none] at (1.5,-10) {$(0,4)$};
\node[left=10pt,fill=none,draw=none] at (1.5,-12) {$(1,0)$};
\node[left=10pt,fill=none,draw=none] at (1.5,-13) {$(1,1)$};
\node[left=10pt,fill=none,draw=none] at (1.5,-14) {$(1,2)$};
\node[left=10pt,fill=none,draw=none] at (1.5,-15) {$(1,3)$};
\node[left=10pt,fill=none,draw=none] at (1.5,-16) {$(1,4)$};
\node[left=10pt,fill=none,draw=none] at (1.5,-18) {$(2,0)$};
\node[left=10pt,fill=none,draw=none] at (1.5,-19) {$(2,1)$};
\node[left=10pt,fill=none,draw=none] at (1.5,-20) {$(2,2)$};
\node[left=10pt,fill=none,draw=none] at (1.5,-21) {$(2,3)$};
\node[left=10pt,fill=none,draw=none] at (1.5,-22) {$(2,4)$};

\draw (11) to (814);
\draw (814) to (69);
\draw (69) to (710);
\draw (14) to (82);
\draw (82) to (612);
\draw (612) to (713);

\draw (17) to (85);
\draw (85) to (615);
\draw (615) to (71);
\draw (110) to (88);
\draw (88) to (63);
\draw (63) to (74);
\draw (113) to (811);
\draw (811) to (66);
\draw (66) to (77);

\draw (41) to (214);
\draw (214) to (99);
\draw (99) to (110);
\draw (44) to (22);
\draw (22) to (912);
\draw (912) to (113);

\draw (47) to (25);
\draw (25) to (915);
\draw (915) to [bend right=7](11);
\draw (410) to (28);
\draw (28) to (93);
\draw (93) to (14);
\draw (413) to (211);
\draw (211) to (96);
\draw (96) to (17);

\draw (71) to (514);
\draw (514) to (39);
\draw (39) to (410);
\draw (74) to (52);
\draw (52) to (312);
\draw (312) to (413);
\draw (77) to (55);
\draw (55) to (315);
\draw (315) to (41);
\draw (710) to (58);
\draw (58) to (33);
\draw (33) to (44);
\draw (713) to (511);
\draw (511) to (36);
\draw (36) to (47);

\end{tikzpicture}

\caption{$H_3(1,2)$, $H_5(2,4)$ and $H_{(3,5)}(1,2)(2,4)$}
\label{figlemma72}
\end{center}
\end{figure}

\begin{lemma}
Let $\psi$ be a bijection on the set $\{(i,\alpha)|0\leq i \leq x-1, 0\leq \alpha \leq y-1\}$. Then
\[
\overrightarrow{C}_{(xy:n)}=\bigoplus_{(i,\alpha)}H_{(xy)}(i,\alpha)\psi(i,\alpha).\]
\end{lemma}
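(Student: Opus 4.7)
The plan is to mirror the single-variable argument that produced $\bigoplus_{i=0}^{x-1}T_x(i)=\bigoplus_{i=0}^{x-1}H_x(i,\phi(i))$, now working with the bivariate indexing $(i,\alpha)$ and the product graph. First I would establish the unswitched decomposition
\[
\overrightarrow{C}_{(xy:n)}=\bigoplus_{(i,\alpha)}T_{(xy)}(i,\alpha).
\]
This is obtained in three steps: start from the product identity $\overrightarrow{C}_{(xy:n)}=\overrightarrow{C}_{(x:n)}\otimes\overrightarrow{C}_{(y:n)}$ (the lemma on products of directed cyclic multipartite graphs); replace each factor by its $T$-decomposition, so that
\[
\overrightarrow{C}_{(x:n)}=\bigoplus_{i=0}^{x-1}T_x(i),\qquad \overrightarrow{C}_{(y:n)}=\bigoplus_{\alpha=0}^{y-1}T_y(\alpha);
\]
and then apply the Distribution Theorem (Theorem \ref{distributive}) together with Definition \ref{definition71}, which gives $T_x(i)\otimes T_y(\alpha)=T_{(xy)}(i,\alpha)$.

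Next I would expand the right-hand side of the desired identity using the definition of $H_{(xy)}$ and the fact that $\oplus$ is a boolean sum with $A\oplus A=\emptyset$. Writing $\psi(i,\alpha)=(j,\beta)$ for short,
\[
\bigoplus_{(i,\alpha)}H_{(xy)}(i,\alpha)\psi(i,\alpha)=\bigoplus_{(i,\alpha)}\Bigl(T_{(xy)}(i,\alpha)\oplus F_n(T_{(xy)}(i,\alpha))\oplus F_n(T_{(xy)}(\psi(i,\alpha)))\Bigr).
\]
Regrouping the boolean sum (which is valid since the set of $k$-partite graphs is a commutative ring under $(\oplus,\otimes)$), this splits as
\[
\bigoplus_{(i,\alpha)}T_{(xy)}(i,\alpha)\;\oplus\;\bigoplus_{(i,\alpha)}F_n(T_{(xy)}(i,\alpha))\;\oplus\;\bigoplus_{(i,\alpha)}F_n(T_{(xy)}(\psi(i,\alpha))).
\]
Because $\psi$ is a bijection on the index set $\{(i,\alpha):0\le i\le x-1,\;0\le\alpha\le y-1\}$, re-indexing gives
\[
\bigoplus_{(i,\alpha)}F_n(T_{(xy)}(\psi(i,\alpha)))=\bigoplus_{(i,\alpha)}F_n(T_{(xy)}(i,\alpha)),
\]
so the last two boolean sums cancel. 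What remains is $\bigoplus_{(i,\alpha)}T_{(xy)}(i,\alpha)$, which by the first step equals $\overrightarrow{C}_{(xy:n)}$.

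I do not expect a real obstacle here; the proof is essentially a two-variable copy of the earlier phi-function argument. The only point that deserves a line of justification is that the reordering/regrouping of the boolean sum is legitimate, and this is exactly the ring structure noted in the paper (Theorem \ref{distributive} plus Lemma \ref{commutative}, together with the involution property $A\oplus A=\emptyset$). Note also that Lemma \ref{lemma72} gives the alternative factored form $H_{(xy)}(i,\alpha)(j,\beta)=H_x(i,j)\otimes H_y(\alpha,\beta)$, which provides a useful sanity check but is not needed for the proof above.
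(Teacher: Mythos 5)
Your proposal is correct and follows essentially the same route as the paper: both first reduce $\overrightarrow{C}_{(xy:n)}$ to $\bigoplus_{(i,\alpha)}T_{(xy)}(i,\alpha)$ via the product identity and the Distribution Theorem, and then convert the $T$'s into $H$'s. The only difference is that the paper simply asserts the equality $\bigoplus T_{(xy)}(i,\alpha)=\bigoplus H_{(xy)}(i,\alpha)\psi(i,\alpha)$, whereas you supply the cancellation argument (expanding each $H$, re-indexing by the bijection $\psi$, and using $A\oplus A=\emptyset$) exactly as in the earlier single-variable lemma.
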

\begin{proof}
We know that $\overrightarrow{C}_{(xy:n)}=\overrightarrow{C}_{(x:n)}\otimes   \overrightarrow{C}_{(y:n)}=\left(\bigoplus_i T_{x}(i)\right)\otimes  \left(\bigoplus_{\alpha}T_{y}(\alpha)\right)$. By definition of $T_{(xy)}(i,\alpha)$ we get 
\[
\overrightarrow{C}_{(xy:n)}=\bigoplus_{(i,\alpha)}T_{(x,y)}(i,\alpha)\]

We also have
\[
\bigoplus_{(i,\alpha)}T_{(xy)}(i,\alpha)=\bigoplus_{(i,\alpha)}H_{(xy)}(i,\alpha)\psi(i,\alpha)
\]

Combining both we get:
\[
\overrightarrow{C}_{(xy:n)}=\bigoplus_{(i,\alpha)}H_{(xy)}(i,\alpha)\psi(i,\alpha)
\]
as we wanted to prove.
\end{proof}

If $\psi(i,\alpha)=(j,\beta)$ we will denote $\psi_1(i,\alpha)=j$ and $\psi_2(i,\alpha)=\beta$.
If $\gcd(x,i-j)=1$ and $\alpha=\beta$, then $H_{(xy)}(i,\alpha)(j,\beta)$ is a $\overrightarrow{C}_{xn}$-factor. This is because
\[
H_{(xy)}(i,\alpha)(j,\beta)=H_x(i,j)\otimes   H_y(\alpha,\alpha)=H_x(i,j)\otimes   T_y(\alpha)
\]
By Lemma \ref{HisforHamilton} $H_x(i,j)$ is a $\overrightarrow{C}_{xn}$-factor. By Lemma \ref{bbb} $T_y(\alpha)$ is a $\overrightarrow{C}_n$-factor. Then by Lemma \ref{productofbalanced} $H_x(i,j)\otimes   T_y(\alpha)$ is a $\overrightarrow{C}_{xn}$-factor.
Thus to obtain a decomposition of $\overrightarrow{C}_{(xy:n)}$ into $\overrightarrow{C}_{xn}$-factors and $\overrightarrow{C}_{yn}$-factors we need a bijection $\psi$ that satisfies the following set of conditions
\begin{conditions}\label{doubleCheese}
\begin{enumerate}[a)]
\item For all $(i,\alpha)$, $\gcd(x,i-\psi_1(i,\alpha))=1$ and $\psi_2(i,\alpha)=\alpha$, or
\item $\gcd(y,\alpha-\psi_2(i,\alpha))=1$ and $\psi_1(i,\alpha)=i$.
\end{enumerate}
\end{conditions}

\begin{lemma}\label{xnyn}
Let $x$, $y$, and $n$ be odd. Let $s_p\neq 1,xy-1$. Then there is a decomposition of $\overrightarrow{C}_{(xy:n)}$ into $s_p$ $\overrightarrow{C}_{xn}$-factors and $r_p=xy-s_p$ $\overrightarrow{C}_{yn}$-factors.
\end{lemma}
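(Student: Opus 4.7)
The plan is to construct a bijection $\psi$ on $X \times Y = \{0,\dots,x-1\} \times \{0,\dots,y-1\}$ satisfying Conditions \ref{doubleCheese}, with exactly $s_p$ cells falling under option (a) with $\psi_1(i,\alpha)\ne i$ (yielding $\overrightarrow{C}_{xn}$-factors) and the other $r_p = xy - s_p$ cells under option (b) with $\psi_2(i,\alpha)\ne\alpha$ (yielding $\overrightarrow{C}_{yn}$-factors). The identity $\overrightarrow{C}_{(xy:n)} = \bigoplus_{(i,\alpha)} H_{(xy)}(i,\alpha)\psi(i,\alpha)$ from the preceding discussion then delivers the required decomposition.

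The bijection is built by partitioning $X \times Y$ into a row-type set $R$ of size $s_p$ and a column-type set $C$ of size $r_p$, and writing $R_\alpha = \{i : (i,\alpha)\in R\}$, $C^i = \{\alpha : (i,\alpha)\in C\}$. I arrange the partition so that each $R_\alpha$ is an initial segment of $X$ with $|R_\alpha| \in \{0, 2, 3, \dots, x\}$, and so that each column $C^i$ ends up with $|C^i| \in \{0, 2, 3, \dots, y\}$; indexing the rows in an appropriate order makes each $C^i$ an interval in $Y$. On each row I then let $\psi$ act by the phi-function $\phi_{|R_\alpha|}$ on $R_\alpha$, and on each column by $\phi_{|C^i|}$ shifted onto $C^i$. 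Because $x$ and $y$ are odd, the differences $\pm 1, \pm 2$ introduced by the phi-functions are automatically coprime to $x$ (resp.\ $y$), so Conditions \ref{doubleCheese} are fulfilled.

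The substantive step, and the main obstacle, is showing that such a partition exists for every admissible $s_p$. I would proceed by case analysis. The endpoints $s_p \in \{0, xy\}$ are immediate. For $2 \le s_p \le xy-2$ I write $s_p = qx + t$ with $0 \le t < x$; the generic choice is $q$ fully row-type rows plus one partial row of size $t$ when $t \ge 2$, leaving the rest fully column-type. This gives every $|C^i|$ equal to $y-q$ or $y-q-1$, which avoid $1$ unless $t = 1$ or $q \in \{y-2,y-1\}$. For these boundary subcases I plan to exploit the oddness of $x$ (or $y$) to replace an offending configuration by several partial rows sharing the same initial segment: when $x\ge 5$, the split $x+1 = \tfrac{x+1}{2}+\tfrac{x+1}{2}$ yields two partial rows of size $(x+1)/2$ whose column-type cells coincide, pushing each $|C^i|$ to either $0$ or $2$; when $x=3$, the analogous fix uses three rows of size $2$ sharing a common $R$. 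A symmetric trick handles the dual boundary where $r_p$ is the problematic quantity. The excluded values $s_p\in\{1,xy-1\}$ are inherently unrealizable, since no sum of integers from $\{0,2,3,\dots\}$ equals $1$, which is precisely why the hypothesis $s_p\neq 1, xy-1$ appears.
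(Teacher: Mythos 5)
Your proposal is correct and follows essentially the same route as the paper: both build a bijection satisfying Conditions \ref{doubleCheese} by partitioning the $x\times y$ grid into row-type and column-type cells (with no row or column containing exactly one cell needing a phi-function of parameter $1$) and then composing phi-functions in each coordinate. The only real difference is bookkeeping: the paper parametrizes by the column-type counts $r_\alpha$ and gets the interval structure for free by requiring the $r_\alpha$ to be non-increasing with $r_0=r_1$ and $r_\alpha\neq x-1$, whereas you argue the existence of an admissible partition by explicit case analysis on $s_p=qx+t$.
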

\begin{proof}
We will describe a bijection $\psi$ that satisfies conditions \ref{doubleCheese} with $r_p$ pairs $(i,\alpha)$ that satisfy $i=\psi_1(i,\alpha)$.

Let $r_\alpha$, $0\leq \alpha\leq y-1$ be such that:
\begin{itemize}
\item $\sum_\alpha r_\alpha=r_p$,
\item $r_i\geq r_j$ if $i\leq j$,
\item $r_{0}=r_{1}$,
\item $0\leq r_{\alpha}\leq x$, $r_\alpha\neq x-1$. 
\end{itemize}
Define the function $\phi_\alpha (i)=\phi_s(i)$, with $\phi_s(i)$ as the phi-function over the set $\{0,1,\ldots, x-1\}$ with $s=x-r_\alpha$.
Let $\pi(i)=\left|\{\alpha|\phi_\alpha(i)=i\}\right|$.
Let $\sigma_i(\alpha)=\phi_s(\alpha)$, with $\phi_s(\alpha)$ as the phi-function over the set $\{0,1,\ldots, y-1\}$ with $s=\pi(i)$.
Notice that
\[
\psi(i,\alpha)=(\phi_{\alpha}(i),\sigma_i(\alpha))
\]
is a function satisfying conditions \ref{doubleCheese} because if $\alpha\leq \pi(i)$, then $\psi_{\alpha}(i)=i$ and $\gcd(y,\alpha-\sigma_i(\alpha))=1$. If, on the other hand $\alpha \geq \pi(i)$, then we have $\sigma_i(\alpha)=\alpha$, and $\gcd(x,i-\psi_{\alpha}(i))=1$. Finally, notice that there are $r_p$ pairs $(i,\alpha)$ that satisfy $i=\varphi_1(i,\alpha)$. Therefore there is a decomposition of $\overrightarrow{C}_{(xy:n)}$ into $s_p$ $\overrightarrow{C}_{xn}$-factors and $r_p=xy-s_p$ $\overrightarrow{C}_{yn}$-factors.
\end{proof}

We can work with $\Gamma(i)$ and $\Lambda(i)$ in a similar fashion as to what we did with $T_{x}(i)$.

\begin{definition}
Let $x$ be odd. We define $T_{(2x)}(i,\alpha)$ to be the directed subgraph of $\overrightarrow{C}_{(4x:n)}$ obtained by taking $T_{(xy)}(i,\alpha)=T_x(i)\otimes   \Gamma(\alpha)$.
We also define $H_{(2x)}(i,\alpha)(j,\beta)=T_{(2x)}(i,\alpha)\oplus F_n(T_{(2x)}(i,\alpha))\oplus F_n(T_{(2x)}(j,\beta))$. This is the directed graph obtained by taking the arcs of 
$T_{(2x)}(i,\alpha)$ between parts 
$t$ and $t+1$ for $0\leq t\leq n-2$, and the arcs between parts $n-1$ and $0$ from $T_{(2x)}(j,\beta)$.
\end{definition}

Now we can apply the same techniques that we did to $T_{(xy)}$ and $H_{(xy)}$.
\begin{lemma}
Let $x$ and $n$ be odd. Then:
\[
H_{(2x)}(i,\alpha)(j,\beta)=H_x(i,j)\otimes   \Lambda(\alpha,\beta)
\]
\end{lemma}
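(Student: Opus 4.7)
The plan is to mimic the proof of Lemma \ref{lemma72} verbatim, replacing $T_y(\alpha)$ by $\Gamma(\alpha)$ and $T_y(\beta)$ by $\Gamma(\beta)$. The argument there only uses formal properties of $F_n$ and the partite product, not the specific structure of $T_y$, so the same scheme should work without modification.

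First, I would record the key identity
\[
F_n(A\otimes B)=F_n(A)\otimes F_n(B)=F_n(A)\otimes B=A\otimes F_n(B),
\]
which holds because $F_n$ extracts only the arcs between parts $n-1$ and $0$, and in a partite product an arc between those two parts exists iff both factors contribute such an arc. Specialising to $A=T_x(i)$ and $B=\Gamma(\alpha)$ gives
\[
F_n(T_{(2x)}(i,\alpha))=F_n(T_x(i))\otimes \Gamma(\alpha)=T_x(i)\otimes F_n(\Gamma(\alpha))=F_n(T_x(i))\otimes F_n(\Gamma(\alpha)),
\]
and similarly for the $(j,\beta)$ variant.

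Next, I would expand
\[
H_x(i,j)\otimes \Lambda(\alpha,\beta)=\bigl(T_x(i)\oplus F_n(T_x(i))\oplus F_n(T_x(j))\bigr)\otimes \bigl(\Gamma(\alpha)\oplus F_n(\Gamma(\alpha))\oplus F_n(\Gamma(\beta))\bigr)
\]
into nine summands by Theorem \ref{distributive}. Using the identity above, every summand except $T_x(i)\otimes \Gamma(\alpha)$ collapses to one of $F_n(T_x(i)\otimes \Gamma(\alpha))$, $F_n(T_x(i)\otimes \Gamma(\beta))$, $F_n(T_x(j)\otimes \Gamma(\alpha))$, or $F_n(T_x(j)\otimes \Gamma(\beta))$. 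Counting multiplicities, $F_n(T_x(i)\otimes \Gamma(\alpha))$ appears three times, $F_n(T_x(i)\otimes \Gamma(\beta))$ and $F_n(T_x(j)\otimes \Gamma(\alpha))$ each appear twice, and $F_n(T_x(j)\otimes \Gamma(\beta))$ appears once. Since $\oplus$ is the boolean sum, the two pairs cancel and an odd count leaves a single copy, so the total reduces to
\[
T_x(i)\otimes \Gamma(\alpha)\ \oplus\ F_n(T_x(i)\otimes \Gamma(\alpha))\ \oplus\ F_n(T_x(j)\otimes \Gamma(\beta)),
\]
which is exactly $T_{(2x)}(i,\alpha)\oplus F_n(T_{(2x)}(i,\alpha))\oplus F_n(T_{(2x)}(j,\beta))=H_{(2x)}(i,\alpha)(j,\beta)$ by definition.

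There is no real obstacle here; the only thing that needs care is the bookkeeping of the nine cross terms under the boolean sum, and the verification that the $F_n$-identity holds for $\Gamma$ exactly as it did for $T_y$. Both are formal consequences of the definitions of $F_n$ and $\otimes$, and are independent of the internal structure of $\Gamma(\alpha)$, so the argument is essentially a repetition of Lemma \ref{lemma72}.
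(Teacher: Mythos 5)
Your proposal is correct and follows essentially the same route as the paper: the paper's own proof records exactly the identities $F_n(T_x(i)\otimes\Gamma(\alpha))=F_n(T_x(i))\otimes F_n(\Gamma(\alpha))=F_n(T_x(i))\otimes\Gamma(\alpha)=T_x(i)\otimes F_n(\Gamma(\alpha))$ and then declares the rest trivial, implicitly relying on the same nine-term expansion and boolean cancellation carried out in Lemma \ref{lemma72}. You have merely made the bookkeeping explicit, and your multiplicity count (three, two, two, one) is accurate.
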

\begin{proof}
Notice that 
\[
F_n(T_{(2x)}(i,\alpha))=F_n(T_x(i)\otimes   \Gamma(\alpha))=F_n(T_x(i))\otimes   F_n(\Gamma(\alpha))
\]
and
\[
F_n(T_x(i)\otimes \Gamma(\alpha))=F_n(T_x(i))\otimes \Gamma(\alpha)=T_x(i)\otimes F_n(\Gamma(\alpha))
\]
Using this the result is trivial.
\end{proof}

\begin{lemma}
Let $\varphi$ by a bijection on the set $\{(i,\alpha)|0\leq i \leq x-1, 0\leq \alpha \leq 3\}$. Then
\[
\overrightarrow{C}_{(4x:n)}=\bigoplus_{(i,\alpha)}H_{(2x)}(i,\alpha)\varphi(i,\alpha).\]
\end{lemma}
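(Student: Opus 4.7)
The plan is to mirror the proof of the analogous statement for $\overrightarrow{C}_{(xy:n)}$ (Lemma immediately preceding ``If $\psi(i,\alpha)=(j,\beta)$\ldots''), replacing the role of $T_y(\alpha)$ by $\Gamma(\alpha)$. The only new ingredient is that the $4$-part direction must use the $\Gamma$-factorization from Lemma \ref{GisforGamma} instead of the $T_y$-decomposition.

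First I would rewrite the underlying graph as a partite product. By the directed version of Lemma stating $\overrightarrow{C}_{(x:k)}\otimes\overrightarrow{C}_{(y:k)}\cong\overrightarrow{C}_{(xy:k)}$, we have $\overrightarrow{C}_{(4x:n)}=\overrightarrow{C}_{(x:n)}\otimes\overrightarrow{C}_{(4:n)}$. Next I would factor each side: the first factor decomposes as $\overrightarrow{C}_{(x:n)}=\bigoplus_{i=0}^{x-1}T_x(i)$ (established in the earlier lemma, using that $x$ is odd), and the second factor decomposes as $\overrightarrow{C}_{(4:n)}=\bigoplus_{\alpha=0}^{3}\Gamma(\alpha)$ by Lemma \ref{GisforGamma}. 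Applying the distributive law (Theorem \ref{distributive}) yields
\[
\overrightarrow{C}_{(4x:n)}=\left(\bigoplus_{i=0}^{x-1}T_x(i)\right)\otimes\left(\bigoplus_{\alpha=0}^{3}\Gamma(\alpha)\right)=\bigoplus_{(i,\alpha)}\bigl(T_x(i)\otimes\Gamma(\alpha)\bigr)=\bigoplus_{(i,\alpha)}T_{(2x)}(i,\alpha),
\]
where the last equality is just the definition of $T_{(2x)}(i,\alpha)$.

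Then I would carry out the ``$F_n$-swap'' that turns $T$'s into $H$'s. By definition,
\[
H_{(2x)}(i,\alpha)(j,\beta)=T_{(2x)}(i,\alpha)\oplus F_n(T_{(2x)}(i,\alpha))\oplus F_n(T_{(2x)}(j,\beta)),
\]
so summing over all pairs $(i,\alpha)$ and using the fact that $\oplus$ is the boolean (symmetric-difference) sum gives
\[
\bigoplus_{(i,\alpha)}H_{(2x)}(i,\alpha)\varphi(i,\alpha)=\bigoplus_{(i,\alpha)}T_{(2x)}(i,\alpha)\;\oplus\;\bigoplus_{(i,\alpha)}F_n(T_{(2x)}(i,\alpha))\;\oplus\;\bigoplus_{(i,\alpha)}F_n(T_{(2x)}(\varphi(i,\alpha))).
\]
Because $\varphi$ is a bijection on $\{(i,\alpha)\}$, the third sum is a reindexing of the second, so the two cancel.

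The only step that requires a moment's care is verifying that the $F_n$-terms really cancel, i.e.\ that the collection $\{F_n(T_{(2x)}(j,\beta)):(j,\beta)=\varphi(i,\alpha)\}$ is exactly the collection $\{F_n(T_{(2x)}(i,\alpha))\}$ as multisets; this is immediate from $\varphi$ being a bijection and from the fact that each $F_n$ term appears exactly once on each side. With that observed, combining the two displays gives
\[
\bigoplus_{(i,\alpha)}H_{(2x)}(i,\alpha)\varphi(i,\alpha)=\bigoplus_{(i,\alpha)}T_{(2x)}(i,\alpha)=\overrightarrow{C}_{(4x:n)},
\]
which is the desired identity. No genuinely new obstacle arises beyond what was handled in the $(xy)$-analogue; the proof is essentially a transcription with $\Gamma(\alpha)$ in place of $T_y(\alpha)$.
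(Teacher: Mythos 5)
Your proposal is correct and follows essentially the same route as the paper: write $\overrightarrow{C}_{(4x:n)}=\overrightarrow{C}_{(x:n)}\otimes\overrightarrow{C}_{(4:n)}$, decompose the factors into $\bigoplus_i T_x(i)$ and $\bigoplus_\alpha\Gamma(\alpha)$, distribute to get $\bigoplus_{(i,\alpha)}T_{(2x)}(i,\alpha)$, and then cancel the $F_n$-terms using that $\varphi$ is a bijection. If anything, you spell out the cancellation step more explicitly than the paper does, which simply asserts the equality of the $T$-sum and the $H$-sum.
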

\begin{proof}
We know that $\overrightarrow{C}_{(4x:n)}=\overrightarrow{C}_{(x:n)}\otimes   \overrightarrow{C}_{(4:n)}=\left(\bigoplus_i T_x(i)\right)\otimes  \left(\bigoplus_{\alpha}\Gamma(\alpha)\right)$. By definition of $T_{(2x)}(i,\alpha)$ we get 
\[
\overrightarrow{C}_{(4x:n)}=\bigoplus_{(i,\alpha)}T_{(2x)}(i,\alpha)\]

We also have
\[
\bigoplus_{(i,\alpha)}T_{(2x)}(i,\alpha)=\bigoplus_{(i,\alpha)}H_{(2x)}(i,\alpha)\varphi(i,\alpha)
\]

Combining both we get:
\[
\overrightarrow{C}_{(4x:n)}=\bigoplus_{(i,\alpha)}H_{(2x)}(i,\alpha)\varphi(i,\alpha)
\]
as we wanted to prove.
\end{proof}

Next we develop the conditions needed for our decompositions. Recall that if 
$\varphi(i,\alpha)=(j,\beta)$ we will denote $\varphi_1(i,\alpha)=j$ and $\varphi_2(i,\alpha)=\beta$.
\begin{itemize}
\item If $\alpha\neq \beta$ and $\gcd(x,i-j)=1$ then 
\[
H_{(2x)}(i,\alpha)(j,\beta)=H_x(i,j)\otimes   \Lambda(\alpha,\beta)
\]
is a $\overrightarrow{C}_{2xn}$-factor by Lemmas \ref{HisforHamilton}, \ref{LisforLambda}, and \ref{productofbalanced}. 

\item If $i=j$ and $\alpha\neq \beta$, then $H_{(2x)}(i,\alpha)(j,\beta)$ is a $\overrightarrow{C}_{2n}$-factor by Lemmas \ref{bbb}, \ref{LisforLambda}, and \ref{productofbalanced}.

\item If $\alpha=\beta$ and $\gcd(x,i-j)=1$, then $H_{(2x)}(i,\alpha)(j,\beta)$ is a $\overrightarrow{C}_{xn}$-factor by Lemmas \ref{HisforHamilton}, \ref{GisforGamma}, and \ref{productofbalanced}.

\item If $i=j$ and $\alpha=\beta$, then $H_{(xy)}(i,\alpha)(j,\beta)$ is a $\overrightarrow{C}_{n}$-factor by Lemmas \ref{bbb}, \ref{GisforGamma}, and \ref{productofbalanced}.
\end{itemize}

So for a decomposition of $\overrightarrow{C}_{(4x:n)}$ into $\overrightarrow{C}_{2xn}$-factors and $\overrightarrow{C}_{n}$-factors we need a bijection $\varphi$ that satisfies:
\begin{conditions}\label{2HamandTrig}
For all $(i,\alpha)$ such that $\varphi(i,\alpha)\neq (i,\alpha)$, $\alpha\neq \varphi_2(i,\alpha)$ and $\gcd(x,i-\varphi_1(i,\alpha))=1$.
\end{conditions}

For a decomposition of $\overrightarrow{C}_{(4x:n)}$ into $\overrightarrow{C}_{2n}$-factors and $\overrightarrow{C}_{xn}$-factors we need a bijection $\varphi$ that satisfies:
\begin{conditions}\label{2doubleCheese}
\begin{enumerate}[a)]
\item For all $(i,\alpha)$ either $\alpha\neq \varphi_2(i,\alpha)$ and $i=j$, or
\item $\alpha=\varphi_2(i,\alpha)$ and $\gcd(x,i-\varphi_1(i,\alpha))= 1$.
\end{enumerate}
\end{conditions}

We define a new family of functions $\theta_s:\mathbb{Z}_x\times\mathbb{Z}_4\rightarrow \mathbb{Z}_x\times\mathbb{Z}_4$. These functions will be referred as \textit{theta-functions}. 
Let $s\in \{0,1,\ldots,4x\}$, $s\notin \{1,4x-1\}$, and write $s=4k+2a+3b$, with $a,b\in\{0,1\}$, $k\leq x$. We define:
\begin{center}
\textbf{Theta-functions}:
\[
\theta_s(i,\alpha)\coloneqq\left\lbrace\begin{array}{ccc}
(i+1,\alpha+1)&\text{ if }& 0\leq i \leq k-1, \alpha= 0,2\\
(i-1,\alpha-1)&\text{ if }& 1\leq i \leq k, \alpha= 1,3 \\
(i+1,\alpha+1)&\text{ if }&  i=k,\, a=1,\, \alpha=0\\
(i-1,\alpha-1)&\text{ if }&  i=k+1,\, a=1,\, \alpha=1\\
(i+1,\alpha+1)&\text{ if }&  i=k,\, b=1,\, \alpha=2\\
(i+1,\alpha-2)&\text{ if }&  i=k+1,\, b=1,\, \alpha=3\\
(i-2,\alpha+1)&\text{ if }&  i=k+2,\, b=1,\, \alpha=1\\
(i,\alpha)& &\text{ otherwise}\\
\end{array}\right.
\]
\end{center}
If $s=4x-1=4(x-1)+3$, we define $\theta_s$ in a similar way, with a small change:
\[
\theta_{4x-1}(i,\alpha)\coloneqq\left\lbrace\begin{array}{ccc}
(i+1,\alpha+1)&\text{ if }& 0\leq i \leq x-2, \alpha= 0,2 \\
(i-1,\alpha-1)&\text{ if }& 1\leq i \leq x-2, \alpha= 1,3 \\
(x-2,2)&\text{ if }&  i=x-1,\, \alpha=1\\
(0,1)&\text{ if }&  i=x-1,\, \alpha=3\\
(x-1,0)&\text{ if }&  i=0,\, \alpha=1\\
(0,3)&\text{ if }&  i=x-1,\, \alpha=0\\
(x-2,0)&\text{ if }&  i=0,\, \alpha=3\\
(i,\alpha)& &\text{ otherwise}\\
\end{array}\right.
\]

We give a visual example of $\theta_9$ and $\theta_{19}$, for $x=5$ in Figure \ref{thetaexample}.

\begin{figure}
\begin{center}
\begin{tikzpicture}[every node/.style={shape=circle},outer sep=0pt, inner sep=0pt]
\draw (0,0) node [scale=.7](00){$(0,0)$};
\draw (1,5) node [scale=.7](01){$(0,1)$};
\draw (2,0) node [scale=.7](02){$(0,2)$};
\draw (3,5) node [scale=.7](03){$(0,3)$};
\draw (0,1) node [scale=.7](10){$(1,0)$};
\draw (1,1) node [scale=.7](11){$(1,1)$};
\draw (2,1) node [scale=.7](12){$(1,2)$};
\draw (3,1) node [scale=.7](13){$(1,3)$};
\draw (0,2) node [scale=.7](20){$(2,0)$};
\draw (1,2) node [scale=.7](21){$(2,1)$};
\draw (2,2) node [scale=.7](22){$(2,2)$};
\draw (3,2) node [scale=.7](23){$(2,3)$};
\draw (0,3) node [scale=.7](30){$(3,0)$};
\draw (1,3) node [scale=.7](31){$(3,1)$};
\draw (2,3) node [scale=.7](32){$(3,2)$};
\draw (3,3) node [scale=.7](33){$(3,3)$};
\draw (0,4) node [scale=.7](40){$(4,0)$};
\draw (1,4) node [scale=.7](41){$(4,1)$};
\draw (2,4) node [scale=.7](42){$(4,2)$};
\draw (3,4) node [scale=.7](43){$(4,3)$};

\draw (00) [->,thick] to (11);
\draw (11) [->,thick,bend right=15] to (00);
\draw (02) [->,thick] to (13);
\draw (13) [->,thick,bend right=15] to (02);
\draw (10) [->,thick] to (21);
\draw (21) [->,thick,bend right=15] to (10);
\draw (12) [->,thick] to (23);
\draw (23) [->,thick,bend right=10] to (31);
\draw (31) [->,thick] to (12);
\draw (22) [->,out=90,in=135,looseness=4] to (22);
\draw (30) [->,out=90,in=135,looseness=4] to (30);
\draw (32) [->,out=90,in=135,looseness=4] to (32);
\draw (33) [->,out=90,in=135,looseness=4] to (33);
\draw (40) [->,out=90,in=135,looseness=4] to (40);
\draw (41) [->,out=90,in=135,looseness=4] to (41);
\draw (42) [->,out=90,in=135,looseness=4] to (42);
\draw (43) [->,out=90,in=135,looseness=4] to (43);
\draw (01) [->,out=90,in=135,looseness=4] to (01);
\draw (03) [->,out=90,in=135,looseness=4] to (03);
\draw (20) [->,out=90,in=135,looseness=4] to (20);

\draw (8,0) node [scale=.7](100){$(0,0)$};
\draw (9,5) node [scale=.7](101){$(0,1)$};
\draw (10,0) node [scale=.7](102){$(0,2)$};
\draw (11,5) node [scale=.7](103){$(0,3)$};
\draw (8,1) node [scale=.7](110){$(1,0)$};
\draw (9,1) node [scale=.7](111){$(1,1)$};
\draw (10,1) node [scale=.7](112){$(1,2)$};
\draw (11,1) node [scale=.7](113){$(1,3)$};
\draw (8,2) node [scale=.7](120){$(2,0)$};
\draw (9,2) node [scale=.7](121){$(2,1)$};
\draw (10,2) node [scale=.7](122){$(2,2)$};
\draw (11,2) node [scale=.7](123){$(2,3)$};
\draw (8,3) node [scale=.7](130){$(3,0)$};
\draw (9,3) node [scale=.7](131){$(3,1)$};
\draw (10,3) node [scale=.7](132){$(3,2)$};
\draw (11,3) node [scale=.7](133){$(3,3)$};
\draw (8,4) node [scale=.7](140){$(4,0)$};
\draw (9,4) node [scale=.7](141){$(4,1)$};
\draw (10,4) node [scale=.7](142){$(4,2)$};
\draw (11,4) node [scale=.7](143){$(4,3)$};
\draw (100) [->,thick] to (111);
\draw (111) [->,thick,bend right=15] to (100);
\draw (102) [->,thick] to (113);
\draw (113) [->,thick,bend right=15] to (102);
\draw (110) [->,thick] to (121);
\draw (121) [->,thick,bend right=15] to (110);
\draw (112) [->,thick] to (123);
\draw (123) [->,thick,bend right=15] to (112);

\draw (120) [->,thick] to (131);
\draw (131) [->,thick,bend right=15] to (120);
\draw (122) [->,thick] to (133);
\draw (133) [->,thick,bend right=15] to (122);
\draw (120) [->,thick] to (131);
\draw (131) [->,thick,bend right=15] to (120);
\draw (122) [->,thick] to (133);
\draw (133) [->,thick,bend right=15] to (122);

\draw (130) [->,thick] to (141);
\draw (141) [->,thick] to (132);
\draw (132) [->,thick] to (143);
\draw (143) [->,thick] to (101);
\draw (101) [->,thick] to (140);
\draw (140) [->,thick] to (103);
\draw (103) [->,thick,bend right=35,dotted] to (130);
\draw (142) [->,out=90,in=135,looseness=4] to (142);

\node[below=10pt,fill=none,draw=none] at (1.5,2) (aaaa){$x=5$, $s=9$, $k=1,a=1,b=1$};
\node[right=190pt,fill=none,draw=none] at (aaaa) {$x=5$, $s=19$};
\end{tikzpicture}
\vspace*{-1in}

\caption{Example of Theta-functions}
\label{thetaexample}
\end{center}
\end{figure}

The following lemma is a generalization of a result given in $\cite{AKKPO}$. 

\begin{lemma}\label{2xnandntheorem}
Let $s_p\in \{0,2,\ldots,4x-1,4x\}$, $x$ odd. Then there exists a decomposition of $C_{(4x:n)}$ into $s_p$ $C_{2xn}$-factors and $r_p=4x-s_p$ $C_n$-factors.
\end{lemma}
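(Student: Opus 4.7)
My plan is to construct a suitable bijection $\varphi$ on $\mathbb{Z}_x \times \mathbb{Z}_4$ and then apply the decomposition
$\overrightarrow{C}_{(4x:n)} = \bigoplus_{(i,\alpha)} H_{(2x)}(i,\alpha)\varphi(i,\alpha)$
established in the lemma just proved. By the four-case analysis of $H_{(2x)}(i,\alpha)\varphi(i,\alpha)$ given immediately before Conditions~\ref{2HamandTrig}, a non-fixed point $(i,\alpha)$ of $\varphi$ satisfying $\alpha \neq \varphi_2(i,\alpha)$ and $\gcd(x, i-\varphi_1(i,\alpha))=1$ contributes a $\overrightarrow{C}_{2xn}$-factor, while a fixed point contributes a $\overrightarrow{C}_n$-factor. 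The problem thus reduces to producing a bijection with exactly $s_p$ non-fixed points, all of which satisfy Conditions~\ref{2HamandTrig}; removing orientations from the resulting decomposition of $\overrightarrow{C}_{(4x:n)}$ then yields the advertised decomposition of $C_{(4x:n)}$.

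For $s_p = 0$ I would take $\varphi$ to be the identity. For every other admissible $s_p$ with $s_p \neq 4x-1$, I would take $\varphi = \theta_{s_p}$, the theta-function defined just above. Writing $s_p = 4k + 2a + 3b$ with $a,b \in \{0,1\}$ and $k \leq x$, the non-trivial part of $\theta_{s_p}$ consists of $2k$ transpositions of the form $(i,0)\leftrightarrow(i+1,1)$ and $(i,2)\leftrightarrow(i+1,3)$ for $0 \leq i \leq k-1$, one extra transposition $(k,0)\leftrightarrow(k+1,1)$ when $a=1$, and one extra 3-cycle $(k,2)\to(k+1,3)\to(k+2,1)\to(k,2)$ when $b=1$. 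This accounts for exactly $4k + 2a + 3b = s_p$ non-fixed points. In every such arrow the second coordinate genuinely changes, and the first-coordinate displacement reduces modulo $x$ to an element of $\{\pm 1, \pm 2\}$; since $x$ is odd, all such displacements are coprime to $x$, verifying Conditions~\ref{2HamandTrig}.

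The main obstacle is the boundary case $s_p = 4x-1$, which forces $k = x-1$, $a = 0$, $b = 1$. The generic 3-cycle would require $(k+2,1) = (1,1)$, but $(1,1)$ is already absorbed by the standard transposition $(0,0)\leftrightarrow(1,1)$, so the generic construction collapses. This is precisely why the special definition $\theta_{4x-1}$ is introduced above: it retains the standard transpositions only for $0 \leq i \leq x-3$ in each pair of rows (contributing $4(x-2)$ non-fixed points) and absorbs the remaining vertices into a single 7-cycle $(x-2,0)\to(x-1,1)\to(x-2,2)\to(x-1,3)\to(0,1)\to(x-1,0)\to(0,3)\to(x-2,0)$. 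Direct inspection shows the second coordinate changes at every step and each of the seven first-coordinate shifts lies in $\{\pm 1, \pm 2\} \pmod{x}$, so Conditions~\ref{2HamandTrig} again hold; adding up, $\theta_{4x-1}$ has $4(x-2) + 7 = 4x - 1$ non-fixed points, completing the construction.
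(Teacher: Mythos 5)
Your proof is correct and takes essentially the same route as the paper's: the paper also sets $\psi=\theta_{s_p}$, observes that every non-fixed point satisfies $\alpha\neq\psi_2(i,\alpha)$ and $i-\psi_1(i,\alpha)\in\{\pm 1,\pm 2\}$ (hence coprime to the odd $x$), and notes that $\theta_{s_p}$ has exactly $s_p$ non-fixed points. Your explicit accounting $4k+2a+3b=s_p$ and your analysis of the $7$-cycle in $\theta_{4x-1}$ merely spell out details the paper leaves to the reader.
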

\begin{proof}
The bijection $\psi=\theta_{s_p}$ satisfies Conditions $7.9$. In particular if $\psi(i,\alpha)\neq (i,\alpha)$, then $\alpha\neq \psi_{2}(i,\alpha)$ and $i-\psi_1 (i,\alpha)\in\{\pm 1, \pm 2\}$; and as $x$ is odd, $\gcd(x,i-\psi_1(i,\alpha))=1$. Furthermore, $\psi$ has $s_p$ non-fixed points.
Therefore there exists a decomposition of $C_{(4x:n)}$ into $s_p$ $C_{2xn}$-factors and $4x-s_p$ $C_n$-factors.
\end{proof}

\begin{lemma}\label{xnand2ntheorem}
Let $s_p\in \{0,2,3,\ldots,4x-3,4x-2,4x\}$. Then there exists a decomposition of $\overrightarrow{C}_{(4x:n)}$ into $s_p$ $\overrightarrow{C}_{xn}$-factors and $r_p=4x-s_p$ $\overrightarrow{C}_{2n}$-factors.
\end{lemma}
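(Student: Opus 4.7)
The approach mirrors Lemma \ref{2xnandntheorem}: construct a bijection $\varphi$ on $\mathbb{Z}_x\times\mathbb{Z}_4$ that satisfies Conditions \ref{2doubleCheese} and has exactly $s_p$ points $(i,\alpha)$ of type (b) (producing $\overrightarrow{C}_{xn}$-factors) and $4x-s_p$ points of type (a) (producing $\overrightarrow{C}_{2n}$-factors). Note that Conditions \ref{2doubleCheese} force $\varphi$ to be fixed-point-free, since a fixed point has $i=\varphi_1$ (so $\gcd(x,0)=x\neq1$) and $\alpha=\varphi_2$, violating both clauses.

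The building blocks for $\varphi$ will be orbits of four types: a \emph{horizontal transposition} $(i,\alpha)\leftrightarrow(i+1,\alpha)$ satisfies (b) at both points (diff $=\pm 1$); a \emph{vertical transposition} $(i,\alpha)\leftrightarrow(i,\alpha+1)$ satisfies (a) at both points; a \emph{horizontal $3$-cycle} on $\{(i,\alpha),(i+1,\alpha),(i+2,\alpha)\}$ satisfies (b) at all three points (differences $\pm1,\pm2$, coprime to the odd $x$); a \emph{vertical $3$-cycle} on $\{(i,0),(i,1),(i,2)\}$ (say) satisfies (a) at all three points. It then suffices to partition the $x\times 4$ grid $\mathbb{Z}_x\times\mathbb{Z}_4$ into such blocks so that the total number of cells contained in horizontal blocks is exactly $s_p$.

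For $s_p$ even with $0\leq s_p\leq 4x-4$, I will use only transpositions. The main tool is a $2\times 2$ \emph{horizontal block} occupying rows $\{0,1\}$ (or $\{2,3\}$) in two adjacent columns $(i,i+1)$: it consumes $4$ cells horizontally. A single column $i$ handled by two vertical transpositions $(i,0)\leftrightarrow(i,1)$, $(i,2)\leftrightarrow(i,3)$ consumes $4$ cells vertically. By starting from the all-vertical tiling ($s_p=0$) and repeatedly replacing a pair of adjacent all-vertical columns by two horizontal $2\times 2$ blocks, we obtain every $s_p$ divisible by $4$ up to $4x-4$. To reach $s_p\equiv 2\pmod 4$, replace one of the column pairs by the mixed configuration: horizontal transpositions $(i,0)\leftrightarrow(i+1,0)$ and $(i,1)\leftrightarrow(i+1,1)$ together with vertical transpositions $(i,2)\leftrightarrow(i,3)$ and $(i+1,2)\leftrightarrow(i+1,3)$, which converts exactly $4$ cells (net $2$) from vertical to horizontal relative to the $4$-step above.

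For the extreme even values $s_p\in\{4x-2,4x\}$ and for every odd $s_p\in\{3,5,\ldots,4x-3\}$, I must introduce $3$-cycles, since a row of odd length $x$ cannot be covered entirely by horizontal transpositions. For $s_p=4x$, tile each row by one horizontal $3$-cycle on $\{(x-3,\alpha),(x-2,\alpha),(x-1,\alpha)\}$ plus $(x-3)/2$ horizontal transpositions. For $s_p=4x-2$, use one vertical transposition $(0,0)\leftrightarrow(0,1)$ and tile the remainder analogously. For odd $s_p$, place one horizontal $3$-cycle in row $0$ on columns $\{x-3,x-2,x-1\}$ and vertical $3$-cycles on $\{(j,1),(j,2),(j,3)\}$ for $j\in\{x-3,x-2,x-1\}$; this accounts for $3$ horizontal cells and $9$ vertical cells. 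The remaining columns $0,1,\ldots,x-4$ (an even number) together with the unused cells of row~$0$ can now be paired by a balanced mixture of horizontal and vertical transpositions exactly as in the even case, adjusting the count of horizontal transpositions to reach any target odd $s_p$ in the allowed range.

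The main obstacle is the bookkeeping showing that the excluded values $s_p=1$ and $s_p=4x-1$ cannot arise and that every other value in the stated range is realized. Structurally, $s_p=1$ would force some row to contain a single horizontal cell with no partner (and likewise $s_p=4x-1$ for the vertical side), which is impossible; every other value is attained by the constructions above. Once the bijection $\varphi$ is in hand, Conditions \ref{2doubleCheese} follow immediately since each non-fixed point lies in a purely horizontal or purely vertical orbit, and applying the preceding discussion yields the claimed decomposition of $\overrightarrow{C}_{(4x:n)}$ into $s_p$ $\overrightarrow{C}_{xn}$-factors and $4x-s_p$ $\overrightarrow{C}_{2n}$-factors.
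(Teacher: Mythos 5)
Your overall strategy is sound and is essentially the one the paper uses: reduce the lemma to finding a fixed-point-free bijection of the grid $\mathbb{Z}_x\times\mathbb{Z}_4$ whose orbits are either ``horizontal'' (fixed $\alpha$, displacements in $i$ coprime to the odd $x$) or ``vertical'' (fixed $i$), with exactly $s_p$ cells lying in horizontal orbits. However, your explicit tilings do not realize all the claimed values of $s_p$. First, in the even case your ``mixed configuration'' contains two horizontal transpositions, i.e.\ $4$ horizontal cells, which is $\equiv 0\pmod 4$; so every configuration you describe there yields $s_p\equiv 0\pmod 4$, and the values $s_p\equiv 2\pmod 4$ (including $s_p=2$) are never produced --- the parenthetical ``net $2$'' does not correspond to anything in the configuration. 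Reaching $s_p\equiv 2\pmod 4$ requires an odd number of horizontal transpositions, which (each row having odd length $x$) forces vertical $3$-cycles or a more careful row-by-row count, contrary to your claim of using ``only transpositions.'' Second, your odd-$s_p$ construction reserves nine cells (three vertical $3$-cycles) as vertical, so it reaches at most $s_p=3+4(x-3)=4x-9$; the odd values $4x-7$, $4x-5$ and $4x-3$ are left uncovered. They are attainable --- e.g.\ for $s_p=4x-3$ take a single vertical $3$-cycle on $\{(0,1),(0,2),(0,3)\}$ and make everything else horizontal --- but not by the tiling you describe.

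Third, and most seriously, the case $x=3$, $s_p=7$ cannot be handled by this framework at all: the horizontal counts per row must lie in $\{0,2,3\}$ and sum to $7$, forcing the multiset $\{3,2,2,0\}$, while the vertical counts per column must lie in $\{0,2,3,4\}$ and sum to $5$, forcing some column to be entirely horizontal; but a fully horizontal column makes every row nonempty horizontally, whence every row has at least $2$ horizontal cells and the total is at least $8>7$. The paper hits the same obstruction (its choice of the $r_\alpha$ fails exactly when $x=3$, $s_p=7$) and resolves it with a separate direct construction, Lemma \ref{theissuewith12} in the Appendix. Your closing assertion that ``every other value is attained by the constructions above'' is therefore false as stated: the proof needs corrected tilings for $s_p\equiv 2\pmod 4$ and for the large odd values, together with an ad hoc treatment of $(x,s_p)=(3,7)$.
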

\begin{proof}
We provide a bijection $\varphi$ that satisfies Conditions \ref{2doubleCheese} with $r_p$ pairs $(i,\alpha)$ that satisfy $i=\varphi_1(i,\alpha)$.

Let $r_\alpha$, $0\leq \alpha\leq 3$ be such that:
\begin{itemize}
\item $\sum_\alpha r_\alpha=r_p$,
\item $r_i\geq r_j$ if $i\leq j$, 
\item $r_0=r_1$, 
\item $r_\alpha\leq x$, $r_\alpha\neq x-1$.
\end{itemize}
The only case where such a choice of $r_\alpha$ cannot be made is when $x=3$, $s=7$. This case is covered in Lemma \ref{theissuewith12} in the Appendix.

Define the function $\psi_\alpha (i)=\phi_s(i)$, with $\phi_s(i)$ as the phi-functions over the set $\{0,1,\ldots, x-1\}$ with $s=x-r_\alpha$.
Let $\pi(i)=\left|\{\alpha|\psi_\alpha(i)=i\}\right|$.
Let $\sigma_i(\alpha)$ be the permutation on the set $\{0,1,2,3\}$ that cyclically permutes the first $\pi(i)$ elements and fixes the rest.
Notice that
\[
\varphi(i,\alpha)=(\psi_{\alpha}(i),\sigma_i(\alpha))
\]
is a function satisfying conditions \ref{2doubleCheese} because if $\alpha\leq \pi(i)$, then $\psi_{\alpha}(i)=i$ and $\sigma_i(\alpha)\neq \alpha$. If, on the other hand $\alpha \geq \pi(i)$, then we have $\sigma_i(\alpha)=\alpha$, and $\gcd(x,i-\psi_{\alpha}(i))=1$. Finally, notice that there are $r_p$ pairs $(i,\alpha)$ that satisfy $i=\varphi_1(i,\alpha)$.
Therefore there exists a decomposition of $\overrightarrow{C}_{(4x:n)}$ into $s_p$ $\overrightarrow{C}_{xn}$-factors and $r_p=4x-s_p$ $\overrightarrow{C}_{2n}$-factors.
\end{proof}
We are interested in one more type of decomposition, into $\overrightarrow{C}_{2xn}$ and $\overrightarrow{C}_{yn}$ factors. To do this we introduce the following:

\begin{definition}
Let $x$ and $y$ be odd. Define $T_{(2xy)}(i,\alpha,\gamma)$ to be the directed subgraph of $\overrightarrow{C}_{(4xy:n)}$ obtained by taking $T_{(2xy)}(i,\alpha,\gamma)=T_{(2x)}(i,\alpha)\otimes   T_y(\gamma)$. We also define 
\[
H_{(2xy)}(i,\alpha,\gamma)(j,\beta,\delta)=T_{(2xy)}(i,\alpha,\gamma)\oplus F_n(T_{(2xy)}(i,\alpha,\gamma))\oplus F_n(T_{(2xy)}(j,\beta,\delta)\]
This means that $H_{(2xy)}(i,\alpha,\gamma)(j,\beta,\delta)$ is the directed graph obtained by taking the arcs of $T_{(2xy)}(i,\alpha,\gamma)$ between parts $t$ and $t+1$ for $0\leq t\leq n-2$, and the arcs between parts $n-1$ and $0$ from $T_{(2xy)}(j,\beta,\delta)$.
\end{definition}

Now we have all the usual results:

\begin{lemma}
Let $x$, $y$ and $n$ be odd. Then:
\[
H_{(2xy)}(i,\alpha,\gamma)(j,\beta,\delta)=H_{(2x)}(i,\alpha)(j,\beta)\otimes   H_y(\gamma,\delta)
\]
\end{lemma}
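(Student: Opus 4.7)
The plan is to mimic the proof of Lemma \ref{lemma72} almost verbatim, with $T_x(i)$ replaced by $T_{(2x)}(i,\alpha)$ and $T_y(\alpha)$ replaced by $T_y(\gamma)$. The underlying algebraic skeleton is the same: expand the tensor of two $H$-graphs via the Distribution Theorem (Theorem \ref{distributive}), use the fact that $F_n$ commutes with the partite product in a very strong way, and then let the boolean sum collapse the duplicate cross terms.

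First I would record the two key identities about $F_n$ that drive the argument. By definition $F_n(G)$ is the subgraph of $G$ consisting of the arcs between $G_{n-1}$ and $G_0$, so for any two $n$-partite graphs $A$ and $B$ we have
\[
F_n(A\otimes B)=F_n(A)\otimes F_n(B)=F_n(A)\otimes B=A\otimes F_n(B),
\]
because an arc of $A\otimes B$ between parts $n-1$ and $0$ must arise from arcs of $A$ and of $B$ which both sit between parts $n-1$ and $0$. Applying this to $T_{(2xy)}(i,\alpha,\gamma)=T_{(2x)}(i,\alpha)\otimes T_y(\gamma)$ gives in particular $F_n(T_{(2xy)}(i,\alpha,\gamma))=F_n(T_{(2x)}(i,\alpha))\otimes F_n(T_y(\gamma))$ and similarly for the $(j,\beta,\delta)$ version.

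Next I would expand the right-hand side using distributivity:
\[
H_{(2x)}(i,\alpha)(j,\beta)\otimes H_y(\gamma,\delta)=\bigl(T_{(2x)}(i,\alpha)\oplus F_n(T_{(2x)}(i,\alpha))\oplus F_n(T_{(2x)}(j,\beta))\bigr)\otimes \bigl(T_y(\gamma)\oplus F_n(T_y(\gamma))\oplus F_n(T_y(\delta))\bigr).
\]
By Theorem \ref{distributive} this is a boolean sum of nine terms. Using the $F_n$ identities above, each of the six ``cross'' terms of the form $T\otimes F_n(T')$, $F_n(T)\otimes T'$ or $F_n(T)\otimes F_n(T')$ reduces to $F_n(T\otimes T')$; for instance $T_{(2x)}(i,\alpha)\otimes F_n(T_y(\gamma))=F_n(T_{(2x)}(i,\alpha)\otimes T_y(\gamma))=F_n(T_{(2xy)}(i,\alpha,\gamma))$.

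Tracking multiplicities, the summand $T_{(2xy)}(i,\alpha,\gamma)$ appears once; the summand $F_n(T_{(2xy)}(i,\alpha,\gamma))$ appears three times (from the pairs $(T,F_nT)$, $(F_nT,T)$ and $(F_nT,F_nT)$ with ``same'' coordinates) and so survives once under boolean addition; the summand $F_n(T_{(2xy)}(j,\beta,\delta))$ appears once; and the two ``mixed'' summands $F_n(T_{(2xy)}(i,\alpha,\delta))$ and $F_n(T_{(2xy)}(j,\beta,\gamma))$ each appear exactly twice and hence cancel. What remains is
\[
T_{(2xy)}(i,\alpha,\gamma)\oplus F_n(T_{(2xy)}(i,\alpha,\gamma))\oplus F_n(T_{(2xy)}(j,\beta,\delta))=H_{(2xy)}(i,\alpha,\gamma)(j,\beta,\delta),
\]
which is the desired identity. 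The only real point of care is the bookkeeping of the boolean cancellations; this is purely symbolic and uses no property of $x$, $y$, $n$ beyond the commutative-ring structure of the partite product that was already noted in the ring remark after Theorem \ref{distributive}. I do not foresee any genuine obstacle, since the oddness of $x,y,n$ plays no role in this identity itself (it will be used only when this lemma is deployed in later decomposition arguments).
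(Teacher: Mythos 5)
Your proposal is correct and follows essentially the same route as the paper: the paper's proof simply records the identity $F_n(T_{(2xy)}(i,\alpha,\gamma))=F_n(T_{(2x)}(i,\alpha))\otimes F_n(T_y(\gamma))$ and declares the rest ``trivial,'' the implicit computation being exactly the nine-term distributive expansion and boolean cancellation you carry out (mirroring the paper's own proof of Lemma \ref{lemma72}). Your multiplicity bookkeeping (one, three, one, and two twos that cancel) is accurate, so nothing is missing.
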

\begin{proof}
Notice that 
\[
F_n(T_{(2xy)}(i,\alpha,\gamma))=F_n(T_{(2x)}(i,\alpha)\otimes   T_y(\gamma))=F_n(T_{(2x)}(i,\alpha))\otimes   F_n(T_y(\gamma))
\]

Using this the result is trivial.

\end{proof}

\begin{lemma}
Let $\varphi$ be a bijection on the set $\{(i,\alpha,\gamma)|0\leq i \leq x-1, 0\leq \alpha \leq 3,0\leq \gamma \leq y-1\}$. Then
\[
\overrightarrow{C}_{(4xy:n)}=\bigoplus_{(i,\alpha,\gamma)}H_{(2xy)}(i,\alpha,\gamma)\varphi(i,\alpha,\gamma).\]
\end{lemma}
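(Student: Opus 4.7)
The plan is to mirror the proof structure of the earlier two-variable analogue (the lemma giving $\overrightarrow{C}_{(xy:n)}=\bigoplus H_{(xy)}(i,\alpha)\psi(i,\alpha)$) almost verbatim, now carrying a third coordinate $\gamma$. The whole argument reduces to (a) a product decomposition combined with distributivity, and (b) a cancellation argument using the fact that $\varphi$ is a bijection.

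First I would observe that $\overrightarrow{C}_{(4xy:n)}$ factors as $\overrightarrow{C}_{(4x:n)}\otimes \overrightarrow{C}_{(y:n)}$ (the product of cyclic multipartite graphs on the same number $n$ of parts, as established earlier in the paper). Applying the base decompositions already proven, namely $\overrightarrow{C}_{(4x:n)}=\bigoplus_{(i,\alpha)} T_{(2x)}(i,\alpha)$ and $\overrightarrow{C}_{(y:n)}=\bigoplus_\gamma T_y(\gamma)$, together with the Distribution Theorem \ref{distributive}, gives
\[
\overrightarrow{C}_{(4xy:n)} = \Bigl(\bigoplus_{(i,\alpha)} T_{(2x)}(i,\alpha)\Bigr)\otimes\Bigl(\bigoplus_\gamma T_y(\gamma)\Bigr) = \bigoplus_{(i,\alpha,\gamma)} \bigl(T_{(2x)}(i,\alpha)\otimes T_y(\gamma)\bigr),
\]
and by the definition of $T_{(2xy)}$ the right-hand side is precisely $\bigoplus_{(i,\alpha,\gamma)} T_{(2xy)}(i,\alpha,\gamma)$.

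Next I would show that replacing each $T_{(2xy)}(i,\alpha,\gamma)$ by $H_{(2xy)}(i,\alpha,\gamma)\varphi(i,\alpha,\gamma)$ does not change the total sum. Expanding by the definition of $H_{(2xy)}$,
\[
\bigoplus_{(i,\alpha,\gamma)} H_{(2xy)}(i,\alpha,\gamma)\varphi(i,\alpha,\gamma) = \bigoplus_{(i,\alpha,\gamma)} T_{(2xy)}(i,\alpha,\gamma) \oplus \bigoplus_{(i,\alpha,\gamma)} F_n\bigl(T_{(2xy)}(i,\alpha,\gamma)\bigr) \oplus \bigoplus_{(i,\alpha,\gamma)} F_n\bigl(T_{(2xy)}(\varphi(i,\alpha,\gamma))\bigr).
\]
Since $\varphi$ is a bijection on the index set, the third summand equals the second summand, so the two $F_n$-terms cancel (under boolean sum, $A\oplus A=\emptyset$). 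Combining with the first step yields the claimed equality.

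The proof is essentially routine; the only thing to be careful about is keeping the definitions of $\otimes$ and $F_n$ straight and using the already-established facts that $F_n$ commutes suitably with $\otimes$ (this was verified in the two-variable lemma). There is no real obstacle, because the extra coordinate $\gamma$ is handled completely symmetrically with $(i,\alpha)$ — the base decomposition of $\overrightarrow{C}_{(y:n)}$ via the $T_y(\gamma)$'s is the odd-factor decomposition already used in Lemma \ref{bbb} and the preceding Hamilton-cycle theorem, so all the needed ingredients are in place.
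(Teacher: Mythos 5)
Your proposal is correct and follows essentially the same route as the paper: factor $\overrightarrow{C}_{(4xy:n)}$ as $\overrightarrow{C}_{(4x:n)}\otimes\overrightarrow{C}_{(y:n)}$, apply the Distribution Theorem to get $\bigoplus_{(i,\alpha,\gamma)}T_{(2xy)}(i,\alpha,\gamma)$, and then use the bijectivity of $\varphi$ to cancel the two $F_n$-sums under the boolean sum. You actually make the cancellation step more explicit than the paper, which simply asserts the equality of the two sums.
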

\begin{proof}
We know that $\overrightarrow{C}_{(4xy:n)}=\overrightarrow{C}_{(4x:n)}\otimes   \overrightarrow{C}_{(y:n)}=\left(\bigoplus_{(i,\alpha)} T_{(2x)}(i,\alpha)\right)\otimes  \left(\bigoplus_{\gamma}T_{y}(\gamma)\right)$. By the definition of $T_{(2xy)}(i,\alpha,\gamma)$ we get 
\[
\overrightarrow{C}_{(4xy:n)}=\bigoplus_{(i,\alpha,\gamma)}T_{(2xy)}(i,\alpha)\]

We also have
\[
\bigoplus_{(i,\alpha,\gamma)}T_{(2xy)}(i,\alpha,\gamma)=\bigoplus_{(i,\alpha,\gamma)}H_{(2xy)}(i,\alpha,\gamma)\varphi(i,\alpha,\gamma)
\]

Combining both we get:
\[
\overrightarrow{C}_{(4xy:n)}=\bigoplus_{(i,\alpha,\gamma)}H_{(2xy)}(i,\alpha,\gamma)\varphi(i,\alpha,\gamma)
\]
as we wanted to prove.
\end{proof}

We have the following properties:
\begin{itemize}
\item If $\alpha\neq \beta$, $\gamma=\delta$, and $\gcd(x,i-j)=1$, then
\begin{align*}
H_{(2xy)}(i,\alpha,\gamma)(j,\beta,\delta)&=H_{(2x)}(i,\alpha)(j,\beta)\otimes H_{y}(\gamma,\gamma)\\
&=H_x(i,j)\otimes   \Lambda(\alpha,\beta) \otimes T_{y}(\gamma)
\end{align*}
is a $\overrightarrow{C}_{2xn}$-factor by Lemmas \ref{HisforHamilton}, \ref{LisforLambda}, \ref{bbb} and \ref{productofbalanced}. 

\item If $i=j$, $\alpha=\beta$, and $\gcd(y,\gamma-\delta)=1$ then 

\begin{align*}
H_{(2xy)}(i,\alpha,\gamma)(j,\beta,\delta)&=H_{(2x)}(i,\alpha)(i,\alpha)\otimes H_{y}(\gamma,\delta)\\
&=T_x(i)\otimes   \Gamma(\alpha) \otimes H_{y}(\gamma,\delta)
\end{align*}
is a $\overrightarrow{C}_{yn}$-factor by Lemmas \ref{bbb}, \ref{GisforGamma}, \ref{HisforHamilton}, and \ref{productofbalanced}.
\end{itemize}
To get a decomposition of $\overrightarrow{C}_{(4xy:n)}$ into $\overrightarrow{C}_{2xn}$-factors and  $\overrightarrow{C}_{yn}$-factors we need a bijection $\varphi$ that satisfies:
\begin{conditions}\label{2xnandyn}
\begin{enumerate}[a)]
\item For all $(i,\alpha,\gamma)$, $\gcd(y,\gamma-\varphi_3(i,\alpha,\gamma))=1$ or $\gamma=\varphi_3(i,\alpha,\gamma)$.
\item If $\gamma=\varphi_3(i,\alpha,\gamma)$, then $\gcd(x,i-\varphi_{1}(i,\alpha,\gamma))=1$ and $\alpha\neq \varphi_{2}(i,\alpha,\gamma)$.
\item If $\gcd(y,\gamma-\varphi_3(i,\alpha,\gamma))=1$, then $i=\varphi_1(i,\alpha,\gamma)$ and $\alpha=\varphi_2(i,\alpha,\gamma)$.
\end{enumerate}
\end{conditions}

Now we can write our lemma:
\begin{lemma}\label{2xnyn}
Let $x,y$, and $n$ be odd. Let $s_p\neq 1, 4xy-1$. Then there is a decomposition of $C_{(4xy:n)}$ into $s_p$ $C_{2xn}$-factors and $r_p=4xy-s_p$ $C_{yn}$-factors.
\end{lemma}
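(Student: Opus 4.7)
The plan is to mirror the strategy of Lemma \ref{xnyn}, but with three coordinates instead of two. Specifically, I will build a bijection
\[
\varphi:\mathbb{Z}_x\times\mathbb{Z}_4\times\mathbb{Z}_y\longrightarrow\mathbb{Z}_x\times\mathbb{Z}_4\times\mathbb{Z}_y
\]
satisfying Conditions \ref{2xnandyn}, with exactly $s_p$ non-fixed triples of type (b) (those with $\gamma=\varphi_3(i,\alpha,\gamma)$, yielding $\overrightarrow{C}_{2xn}$-factors via the structural result $H_x(i,j)\otimes\Lambda(\alpha,\beta)\otimes T_y(\gamma)$) and $r_p$ triples of type (c) (those with $(i,\alpha)=(\varphi_1,\varphi_2)(i,\alpha,\gamma)$, yielding $\overrightarrow{C}_{yn}$-factors via $T_x(i)\otimes\Gamma(\alpha)\otimes H_y(\gamma,\delta)$), and no fixed points. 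Together with the earlier identity $\overrightarrow{C}_{(4xy:n)}=\bigoplus H_{(2xy)}(i,\alpha,\gamma)\varphi(i,\alpha,\gamma)$, this immediately yields the desired decomposition of $C_{(4xy:n)}$ after forgetting orientations.

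The concrete construction is two-tiered. First I would distribute $s_p$ as $\sum_\gamma s_\gamma=s_p$ with $s_\gamma\in\{0,2,3,\ldots,4x-2,4x\}$ (these are the admissible parameters for theta-functions), and use the order on $\gamma$ to apply $\theta_{s_\gamma}$ to the $(i,\alpha)$-coordinates on each slice; each non-fixed point $(i,\alpha)$ of $\theta_{s_\gamma}$ is moved to a pair $(i',\alpha')$ with $\alpha\neq\alpha'$ and $i-i'\in\{\pm1,\pm2\}$, hence (since $x$ is odd) coprime to $x$, so Condition \ref{2xnandyn}(b) is met. Second, for each pair $(i,\alpha)$ let $\pi(i,\alpha)=|\{\gamma:\theta_{s_\gamma}(i,\alpha)=(i,\alpha)\}|$; on the corresponding set of $\gamma$-values I apply a phi-function $\phi_{\pi(i,\alpha)}$ over $\mathbb{Z}_y$, which has no fixed points on its active block and whose displacements $\gamma-\phi_{\pi(i,\alpha)}(\gamma)\in\{\pm1,2\}$ are coprime to the odd $y$, so Condition \ref{2xnandyn}(c) is met. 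Glueing the two tiers produces $\varphi$, and the count $r_p=\sum_\gamma(4x-s_\gamma)=4xy-s_p$ matches.

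The main obstacle is aligning the two tiers. In Lemma \ref{xnyn} the key free fact was that the fixed-point sets of the $\phi$-functions are nested $\{s,\ldots,x-1\}$, so after ordering $r_\alpha$ monotonically the sets $\{\alpha:\phi_{x-r_\alpha}(i)=i\}$ automatically form initial segments of $\{0,\ldots,y-1\}$, allowing one phi-function per $i$ to cover them without fixed points. Here the analogous property fails: $\mathrm{NonFix}(\theta_{4k+2})$ and $\mathrm{NonFix}(\theta_{4(k+1)})$ are incomparable (for instance $\theta_3$ moves $(2,1)$ while $\theta_4$ does not), so a naive monotone choice of $s_\gamma$ does not make the sets $\{\gamma:\theta_{s_\gamma}(i,\alpha)=(i,\alpha)\}$ into initial segments. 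To sidestep this I would restrict the $s_\gamma$ to a compatible sub-ladder (for example, picking $s_\gamma$ so that consecutive values differ by $4$ whenever possible and grouping the inevitable $4k+2$, $4k+3$, $4k+5$ corrections at one end), relabel the $(i,\alpha)$-indexing accordingly, and handle the residual small cases — those values of $s_p$ for which no such distribution $\{s_\gamma\}$ avoiding $\{1,4x-1\}$ exists — by a dedicated appendix lemma in the spirit of Lemma \ref{theissuewith12}. Once nesting is arranged, checking Conditions \ref{2xnandyn}(a)--(c) for $\varphi$ is routine.
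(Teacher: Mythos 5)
Your plan is the same as the paper's: write $\varphi(i,\alpha,\gamma)=(\psi_\gamma(i,\alpha),\sigma_{i,\alpha}(\gamma))$, where each slice $\gamma$ carries a theta-function $\theta_{s_\gamma}$ acting on $(i,\alpha)$ and, for each pair $(i,\alpha)$, a phi-function permutes the set of slices that fix it. You have also correctly located the one nontrivial point, namely that the fixed-point sets of the $\theta_{s}$ are not nested for arbitrary choices of the $s_\gamma$. But this is exactly the step your proposal leaves unexecuted, and it is where the actual content of the proof lives. The paper does not use a general ``sub-ladder with steps of $4$''; it uses only the values $4x$ (repeated), one slice at $4x-\epsilon$ with $0\le\epsilon\le 4$, one or two slices at a single intermediate value $a$, and the identity on the rest, after writing $s_p=4xk+a-\epsilon$ (or $4xk'+2a-\epsilon$ when $k\ge y-2$, so that the correction can be split over two slices). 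It then verifies explicitly that the fixed points of $\theta_{4x-1},\theta_{4x-2},\theta_{4x-3},\theta_{4x-4}$ form the nested chain $\{(x-1,2)\}\subseteq\{(x-1,2),(0,3)\}\subseteq\{(x-1,2),(0,3),(x-1,0)\}\subseteq\{(x-1,2),(0,3),(x-1,0),(0,1)\}$ and that these points are fixed by every $\theta_a$ with $a\le 4x-\epsilon$; this is what makes $\{\gamma:\psi_\gamma(i,\alpha)=(i,\alpha)\}$ an initial segment. Your alternative ladder is underspecified (a staircase of values differing by $4$ does not realize an arbitrary target sum $s_p$ with $y$ slices), and deferring ``the residual small cases'' to an unnamed appendix lemma leaves it unclear that only finitely many, or indeed any identifiable, exceptions remain.

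A second, smaller omission: the phi-functions $\phi_s$ are only defined for $s\ne 1$, so you must guarantee that the number of slices fixing a given $(i,\alpha)$ is never exactly $1$. This is the analogue of the condition $r_0=r_1$ in Lemma \ref{xnyn}; the paper arranges it by forcing $\psi_0=\psi_1$ (the two lowest slices carry the same theta-function in both cases). Your construction as stated does not rule out $\pi(i,\alpha)=1$, at which point $\sigma_{i,\alpha}$ is undefined. Both gaps are repairable by specializing your ladder to the paper's choice, but as written the proposal stops short of a proof at precisely the points that require verification.
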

\begin{proof}
We give a bijection $\varphi$ that satisfies Conditions $7.16$ with $r_p$ elements $(i,\alpha,\gamma)$ that satisfy $i=\varphi_1(i,\alpha,\gamma)$.
Let $s_p=4xk+q$, with $0\leq q \leq 4x-1$. We have two cases,  $k\leq y-3$, and $k\geq y-2$.

\begin{description}
\item [Case 1]
If $k\leq y-3$, let $s_p=4xk+a-\epsilon$, with $2\leq a \leq 4x-1$, $0\leq \epsilon \leq 2$. For $y-k+1\leq \gamma \leq y-1$, let $\psi_{\gamma}(i,\alpha)=\theta_{4x}(i,\alpha)$, the theta-function with $s=4x$. Let $\psi_{y-k}(i,\alpha)=\theta_{4x-\epsilon}(i,\alpha)$, the theta-function with $s=4x-\epsilon$. Let $\psi_{y-k-1}(i,\alpha)=\theta_{a}(i,\alpha)$, the theta function with $s=a$. For $0\leq \gamma \leq y-k-2$ let $\psi_{\gamma}(i,\alpha)=(i,\alpha)$, the identity function.

\item [Case 2] 
If $k\geq y-2$, let $s_p=4xk'+2a-\epsilon$, with $2\leq a \leq 4x-\epsilon$, $0\leq \epsilon \leq 4$, where $k'\in\{y-2,y-3\}$ because $2a$ may be greater than $4x$. For $y-k+1\leq \gamma \leq y-1$, let $\psi_{\gamma}(i,\alpha)=\theta_{4x}(i,\alpha)$, the theta-function with $s=4x$. Let $\psi_{y-k}(i,\alpha)=\theta_{4x-\epsilon}(i,\alpha)$, the theta-function with $s=4x-\epsilon$.
Let $\psi_{y-k-1}(i,\alpha)=\psi_{y-k-2}(i,\alpha)=\theta_{a}(i,\alpha)$, the theta function with $s=a$. For $0\leq \gamma \leq y-k-3$ let $\psi_{\gamma}(i,\alpha)=(i,\alpha)$, the identity function.
\end{description}

Notice that the fixed point of $\theta_{4x-1}$ is $(x-1,2)$, the fixed points of $\theta_{4x-2}$ are $\{(x-1,2),(0,3)\}$, the fixed points of $\theta_{4x-3}$ are $\{(x-1,2),(0,3),(x-1,0)\}$, and the fixed points of $\theta_{4x-4}$ are $\{(x-1,2),(0,3),(x-1,0),(0,1)\}$. This means that if $0\leq \epsilon\leq 4$ and $a\leq 4x-\epsilon$, the fixed points of $\theta_{4x-\epsilon}$ are a subset of the fixed points of $\theta_{a}$. Hence if $\psi_{\delta}(i,\alpha)=(i,\alpha)$, then $\psi_{\gamma}(i,\alpha)=(i,\alpha)$ for all $\gamma\leq \delta$. Notice also that $\psi_{0}(i,\alpha)=\psi_{1}(i,\alpha)$, hence $\max\{\delta \in \{0,\ldots,y-1\}|\psi_{\delta}(i,\alpha)=(i,\alpha)\}\neq 1$. Therefore we can define $\sigma_{i,\alpha}(\gamma)=\phi_s(\gamma)$, the phi-function over the set $\{0,1,\ldots,y-1\}$ with $s=\max\{\delta \in \{0,\ldots,y-1\}|\psi_{\delta}(i,\alpha)=(i,\alpha)\}$. Then:
\[
\rho(i,\alpha,\gamma)=(\psi_{\gamma}(i,\alpha),\sigma_{i,\alpha}(\gamma))
\]
is a function satisfying conditions $7.16$.
\end{proof}

\begin{example}
We provide two visual examples, with $x=3$ and $y=5$. To make the picture easier to understand, the points satisfying $i=\varphi_1(i,\alpha,\gamma)$ have been boxed and underlined, and rearranged with their images at the right side.

In Figure \ref{717examplea} we have $s_p=25=2\cdot 12+1=2\cdot 12+2 -1$, giving us $r_p=60-25=35$, $k=2$, $a=2$, $\epsilon=1$.

In Figure \ref{717exampleb} we have $s_p=37=3\cdot 12+1=3\cdot 12+2\cdot2-3$, giving us $r_p=60-37=23$, $k'=3$, $a=2$, $\epsilon=3$.
\end{example}
\begin{figure}
\begin{center}
\begin{tikzpicture}[scale=.6,every node/.style={shape=circle},outer sep=0pt, inner sep=0pt]
\draw (0,0) node [scale=.7,shape=rectangle,draw,inner sep=2pt](000){$\underline{(0,0,0)}$};
\draw (2,3) node [scale=.7,shape=rectangle,draw,inner sep=2pt](001){$\underline{(0,1,0)}$};
\draw (4,0) node [scale=.7,shape=rectangle,draw,inner sep=2pt](002){$\underline{(0,2,0)}$};
\draw (6,3) node [scale=.7,shape=rectangle,draw,inner sep=2pt](003){$\underline{(0,3,0)}$};
\draw (0,1) node [scale=.7,shape=rectangle,draw,inner sep=2pt](010){$\underline{(1,0,0)}$};
\draw (2,1) node [scale=.7,shape=rectangle,draw,inner sep=2pt](011){$\underline{(1,1,0)}$};
\draw (4,1) node [scale=.7,shape=rectangle,draw,inner sep=2pt](012){$\underline{(1,2,0)}$};
\draw (6,1) node [scale=.7,shape=rectangle,draw,inner sep=2pt](013){$\underline{(1,3,0)}$};
\draw (0,2) node [scale=.7,shape=rectangle,draw,inner sep=2pt](020){$\underline{(2,0,0)}$};
\draw (2,2) node [scale=.7,shape=rectangle,draw,inner sep=2pt](021){$\underline{(2,1,0)}$};
\draw (4,2) node [scale=.7,shape=rectangle,draw,inner sep=2pt](022){$\underline{(2,2,0)}$};
\draw (6,2) node [scale=.7,shape=rectangle,draw,inner sep=2pt](023){$\underline{(2,3,0)}$};

\node[left=50pt,fill=none,draw=none] at (0,2) (aaaa){$\gamma=0$};
\node[left=50pt,fill=none,draw=none] at (0,1) (aaaa){$s=0$};
\draw [decorate,decoration={brace,amplitude=10pt},xshift=-4pt,yshift=0pt]
(-2,-.25) -- (-2,3.25)node [black,midway,xshift=9pt]{};

\draw (0,5) node [scale=.7,shape=rectangle,draw,inner sep=2pt](100){$\underline{(0,0,1)}$};
\draw (2,8) node [scale=.7,shape=rectangle,draw,inner sep=2pt](101){$\underline{(0,1,1)}$};
\draw (4,5) node [scale=.7,shape=rectangle,draw,inner sep=2pt](102){$\underline{(0,2,1)}$};
\draw (6,8) node [scale=.7,shape=rectangle,draw,inner sep=2pt](103){$\underline{(0,3,1)}$};
\draw (0,6) node [scale=.7,shape=rectangle,draw,inner sep=2pt](110){$\underline{(1,0,1)}$};
\draw (2,6) node [scale=.7,shape=rectangle,draw,inner sep=2pt](111){$\underline{(1,1,1)}$};
\draw (4,6) node [scale=.7,shape=rectangle,draw,inner sep=2pt](112){$\underline{(1,2,1)}$};
\draw (6,6) node [scale=.7,shape=rectangle,draw,inner sep=2pt](113){$\underline{(1,3,1)}$};
\draw (0,7) node [scale=.7,shape=rectangle,draw,inner sep=2pt](120){$\underline{(2,0,1)}$};
\draw (2,7) node [scale=.7,shape=rectangle,draw,inner sep=2pt](121){$\underline{(2,1,1)}$};
\draw (4,7) node [scale=.7,shape=rectangle,draw,inner sep=2pt](122){$\underline{(2,2,1)}$};
\draw (6,7) node [scale=.7,shape=rectangle,draw,inner sep=2pt](123){$\underline{(2,3,1)}$};

\node[left=50pt,fill=none,draw=none] at (0,7) (aaaa){$\gamma=1$};
\node[left=50pt,fill=none,draw=none] at (0,6) (aaaa){$s=0$};
\draw [decorate,decoration={brace,amplitude=10pt},xshift=-4pt,yshift=0pt]
(-2,4.75) -- (-2,8.25)node [black,midway,xshift=9pt]{};

\draw (0,10) node [scale=.7](200){$(0,0,2)$};
\draw (2,13) node [scale=.7,shape=rectangle,draw,inner sep=2pt](201){$\underline{(0,1,2)}$};
\draw (4,10) node [scale=.7,shape=rectangle,draw,inner sep=2pt](202){$\underline{(0,2,2)}$};
\draw (6,13) node [scale=.7,shape=rectangle,draw,inner sep=2pt](203){$\underline{(0,3,2)}$};
\draw (0,11) node [scale=.7,shape=rectangle,draw,inner sep=2pt](210){$\underline{(1,0,2)}$};
\draw (2,11) node [scale=.7](211){$(1,1,2)$};
\draw (4,11) node [scale=.7,shape=rectangle,draw,inner sep=2pt](212){$\underline{(1,2,2)}$};
\draw (6,11) node [scale=.7,shape=rectangle,draw,inner sep=2pt](213){$\underline{(1,3,2)}$};
\draw (0,12) node [scale=.7,shape=rectangle,draw,inner sep=2pt](220){$\underline{(2,0,2)}$};
\draw (2,12) node [scale=.7,shape=rectangle,draw,inner sep=2pt](221){$\underline{(2,1,2)}$};
\draw (4,12) node [scale=.7,shape=rectangle,draw,inner sep=2pt](222){$\underline{(2,2,2)}$};
\draw (6,12) node [scale=.7,shape=rectangle,draw,inner sep=2pt](223){$\underline{(2,3,2)}$};

\node[left=50pt,fill=none,draw=none] at (0,12) (aaaa){$\gamma=2$};
\node[left=50pt,fill=none,draw=none] at (0,11) (aaaa){$s=2$};
\draw [decorate,decoration={brace,amplitude=10pt},xshift=-4pt,yshift=0pt]
(-2,9.75) -- (-2,13.25)node [black,midway,xshift=9pt]{};

\draw (0,15) node [scale=.7](300){$(0,0,3)$};
\draw (2,18) node [scale=.7](301){$(0,1,3)$};
\draw (4,15) node [scale=.7](302){$(0,2,3)$};
\draw (6,18) node [scale=.7](303){$(0,3,3)$};
\draw (0,16) node [scale=.7](310){$(1,0,3)$};
\draw (2,16) node [scale=.7](311){$(1,1,3)$};
\draw (4,16) node [scale=.7](312){$(1,2,3)$};
\draw (6,16) node [scale=.7](313){$(1,3,3)$};
\draw (0,17) node [scale=.7](320){$(2,0,3)$};
\draw (2,17) node [scale=.7](321){$(2,1,3)$};
\draw (4,17) node [scale=.7,shape=rectangle,draw,inner sep=2pt](322){$\underline{(2,2,3)}$};
\draw (6,17) node [scale=.7](323){$(2,3,3)$};

\node[left=50pt,fill=none,draw=none] at (0,17) (aaaa){$\gamma=3$};
\node[left=50pt,fill=none,draw=none] at (0,16) (aaaa){$s=12$};
\draw [decorate,decoration={brace,amplitude=10pt},xshift=-4pt,yshift=0pt]
(-2,14.75) -- (-2,18.25)node [black,midway,xshift=9pt]{};

\draw (0,20) node [scale=.7](400){$(0,0,4)$};
\draw (2,23) node [scale=.7](401){$(0,1,4)$};
\draw (4,20) node [scale=.7](402){$(0,2,4)$};
\draw (6,23) node [scale=.7](403){$(0,3,4)$};
\draw (0,21) node [scale=.7](410){$(1,0,4)$};
\draw (2,21) node [scale=.7](411){$(1,1,4)$};
\draw (4,21) node [scale=.7](412){$(1,2,4)$};
\draw (6,21) node [scale=.7](413){$(1,3,4)$};
\draw (0,22) node [scale=.7](420){$(2,0,4)$};
\draw (2,22) node [scale=.7](421){$(2,1,4)$};
\draw (4,22) node [scale=.7](422){$(2,2,4)$};
\draw (6,22) node [scale=.7](423){$(2,3,4)$};

\node[left=50pt,fill=none,draw=none] at (0,22) (aaaa){$\gamma=4$};
\node[left=50pt,fill=none,draw=none] at (0,21) (aaaa){$s=12$};
\draw [decorate,decoration={brace,amplitude=10pt},xshift=-4pt,yshift=0pt]
(-2,19.75) -- (-2,23.25)node [black,midway,xshift=9pt]{};

\draw (400) [->] to (411);
\draw (411) [->,bend right=15] to (400);
\draw (402) [->] to (413);
\draw (413) [->,bend right=15] to (402);
\draw (410) [->] to (421);
\draw (421) [->,bend right=15] to (410);
\draw (412) [->] to (423);
\draw (423) [->,bend right=15] to (412);

\draw (420) [->] to (401);
\draw (401) [->,bend right=15] to (420);
\draw (422) [->] to (403);
\draw (403) [->,bend right=15] to (422);
\draw (420) [->] to (401);
\draw (401) [->,bend right=15] to (420);
\draw (422) [->] to (403);
\draw (403) [->,bend right=15] to (422);

\draw (300) [->] to (311);
\draw (311) [->,bend right=15] to (300);
\draw (302) [->] to (313);
\draw (313) [->,bend right=15] to (302);

\draw (310) [->] to (321);
\draw (321) [->] to (312);
\draw (312) [->] to (323);
\draw (323) [->,out=150,in=350,looseness=0] to (301);
\draw (301) [->] to (320);
\draw (320) [->] to (303);
\draw (303) [->,out=135, in=150, looseness=1,dotted] to (310);

\draw (200) [->] to (211);
\draw (211) [->,bend left=15] to (200);

\draw (12,14) node [scale=.7](a220){$\underline{(2,0,2)}$};
\draw (12,13) node [scale=.7](a120){$\underline{(2,0,1)}$};
\draw (12,12) node [scale=.7](a020){$\underline{(2,0,0)}$};

\draw (12,9) node [scale=.7](a210){$\underline{(1,0,2)}$};
\draw (12,8) node [scale=.7](a110){$\underline{(1,0,1)}$};
\draw (12,7) node [scale=.7](a010){$\underline{(1,0,0)}$};

\draw (12,3) node [scale=.7](a100){$\underline{(0,0,1)}$};
\draw (12,2) node [scale=.7](a000){$\underline{(0,0,0)}$};

\draw (15,14) node [scale=.7](a201){$\underline{(0,1,2)}$};
\draw (15,13) node [scale=.7](a101){$\underline{(0,1,1)}$};
\draw (15,12) node [scale=.7](a001){$\underline{(0,1,0)}$};

\draw (15,9) node [scale=.7](a221){$\underline{(2,1,2)}$};
\draw (15,8) node [scale=.7](a121){$\underline{(2,1,1)}$};
\draw (15,7) node [scale=.7](a021){$\underline{(2,1,0)}$};

\draw (15,3) node [scale=.7](a111){$\underline{(1,1,1)}$};
\draw (15,2) node [scale=.7](a011){$\underline{(1,1,0)}$};

\draw (18,15) node [scale=.7](a322){$\underline{(2,2,3)}$};
\draw (18,14) node [scale=.7](a222){$\underline{(2,2,2)}$};
\draw (18,13) node [scale=.7](a122){$\underline{(2,2,1)}$};
\draw (18,12) node [scale=.7](a022){$\underline{(2,2,0)}$};

\draw (18,9) node [scale=.7](a212){$\underline{(1,2,2)}$};
\draw (18,8) node [scale=.7](a112){$\underline{(1,2,1)}$};
\draw (18,7) node [scale=.7](a012){$\underline{(1,2,0)}$};

\draw (18,4) node [scale=.7](a202){$\underline{(0,2,2)}$};
\draw (18,3) node [scale=.7](a102){$\underline{(0,2,1)}$};
\draw (18,2) node [scale=.7](a002){$\underline{(0,2,0)}$};

\draw (21,14) node [scale=.7](a203){$\underline{(0,3,2)}$};
\draw (21,13) node [scale=.7](a103){$\underline{(0,3,1)}$};
\draw (21,12) node [scale=.7](a003){$\underline{(0,3,0)}$};

\draw (21,9) node [scale=.7](a223){$\underline{(2,3,2)}$};
\draw (21,8) node [scale=.7](a123){$\underline{(2,3,1)}$};
\draw (21,7) node [scale=.7](a023){$\underline{(2,3,0)}$};

\draw (21,4) node [scale=.7](a213){$\underline{(1,3,2)}$};
\draw (21,3) node [scale=.7](a113){$\underline{(1,3,1)}$};
\draw (21,2) node [scale=.7](a013){$\underline{(1,3,0)}$};

\draw (a020) [->,out=170,in=190, looseness=1,thick] to (a120);
\draw (a120) [->,out=170,in=190, looseness=1,thick] to (a220);
\draw (a220) [->,out=170,in=190, looseness=1,thick] to (a020);

\draw (a010) [->,out=170,in=190, looseness=1,thick] to (a110);
\draw (a110) [->,out=170,in=190, looseness=1,thick] to (a210);
\draw (a210) [->,out=170,in=190, looseness=1,thick] to (a010);

\draw (a000) [->,out=170,in=190, looseness=1,thick] to (a100);
\draw (a100) [->,out=170,in=190, looseness=1,thick] to (a000);

\draw (a021) [->,out=170,in=190, looseness=1,thick] to (a121);
\draw (a121) [->,out=170,in=190, looseness=1,thick] to (a221);
\draw (a221) [->,out=170,in=190, looseness=1,thick] to (a021);

\draw (a011) [->,out=170,in=190, looseness=1,thick] to (a111);
\draw (a111) [->,out=170,in=190, looseness=1,thick] to (a011);

\draw (a001) [->,out=170,in=190, looseness=1,thick] to (a101);
\draw (a101) [->,out=170,in=190, looseness=1,thick] to (a201);
\draw (a201) [->,out=170,in=190, looseness=1,thick] to (a001);

\draw (a012) [->,out=170,in=190, looseness=1,thick] to (a112);
\draw (a112) [->,out=170,in=190, looseness=1,thick] to (a212);
\draw (a212) [->,out=170,in=190, looseness=1,thick] to (a012);

\draw (a002) [->,out=170,in=190, looseness=1,thick] to (a102);
\draw (a102) [->,out=170,in=190, looseness=1,thick] to (a202);
\draw (a202) [->,out=170,in=190, looseness=1,thick] to (a002);

\draw (a023) [->,out=170,in=190, looseness=1,thick] to (a123);
\draw (a123) [->,out=170,in=190, looseness=1,thick] to (a223);
\draw (a223) [->,out=170,in=190, looseness=1,thick] to (a023);

\draw (a013) [->,out=170,in=190, looseness=1,thick] to (a113);
\draw (a113) [->,out=170,in=190, looseness=1,thick] to (a213);
\draw (a213) [->,out=170,in=190, looseness=1,thick] to (a013);

\draw (a003) [->,out=170,in=190, looseness=1,thick] to (a103);
\draw (a103) [->,out=170,in=190, looseness=1,thick] to (a203);
\draw (a203) [->,out=170,in=190, looseness=1,thick] to (a003);

\draw (a322) [->,out=170,in=190, looseness=1,thick] to (a222);
\draw (a222) [->,out=170,in=190, looseness=1,thick] to (a322);
\draw (a122) [->,out=170,in=190, looseness=1,thick] to (a022);
\draw (a022) [->,out=170,in=190, looseness=1,thick] to (a122);

\node[below=10pt,fill=none,draw=none] at (9,5.5) (aaaa){$x=3$, $y=5$, $s_p=25$, $r_p=35$, $k=2$, $a=2$, $\epsilon=1$};
\end{tikzpicture}
\vspace*{-1in}

\caption{Example of Lemma $7.17$ Case 1}
\label{717examplea}
\end{center}
\end{figure}

\begin{figure}
\begin{center}
\begin{tikzpicture}[scale=.6,every node/.style={shape=circle},outer sep=0pt, inner sep=0pt]
\draw (0,0) node [scale=.7](000){$(0,0,0)$};
\draw (2,3) node [scale=.7,shape=rectangle,draw,inner sep=2pt](001){$\underline{(0,1,0)}$};
\draw (4,0) node [scale=.7,shape=rectangle,draw,inner sep=2pt](002){$\underline{(0,2,0)}$};
\draw (6,3) node [scale=.7,shape=rectangle,draw,inner sep=2pt](003){$\underline{(0,3,0)}$};
\draw (0,1) node [scale=.7,shape=rectangle,draw,inner sep=2pt](010){$\underline{(1,0,0)}$};
\draw (2,1) node [scale=.7](011){$(1,1,0)$};
\draw (4,1) node [scale=.7,shape=rectangle,draw,inner sep=2pt](012){$\underline{(1,2,0)}$};
\draw (6,1) node [scale=.7,shape=rectangle,draw,inner sep=2pt](013){$\underline{(1,3,0)}$};
\draw (0,2) node [scale=.7,shape=rectangle,draw,inner sep=2pt](020){$\underline{(2,0,0)}$};
\draw (2,2) node [scale=.7,shape=rectangle,draw,inner sep=2pt](021){$\underline{(2,1,0)}$};
\draw (4,2) node [scale=.7,shape=rectangle,draw,inner sep=2pt](022){$\underline{(2,2,0)}$};
\draw (6,2) node [scale=.7,shape=rectangle,draw,inner sep=2pt](023){$\underline{(2,3,0)}$};

\node[left=50pt,fill=none,draw=none] at (0,2) (aaaa){$\gamma=0$};
\node[left=50pt,fill=none,draw=none] at (0,1) (aaaa){$s=2$};
\draw [decorate,decoration={brace,amplitude=10pt},xshift=-4pt,yshift=0pt]
(-2,-.25) -- (-2,3.25)node [black,midway,xshift=9pt]{};

\draw (0,5) node [scale=.7](100){$(0,0,1)$};
\draw (2,8) node [scale=.7,shape=rectangle,draw,inner sep=2pt](101){$\underline{(0,1,1)}$};
\draw (4,5) node [scale=.7,shape=rectangle,draw,inner sep=2pt](102){$\underline{(0,2,1)}$};
\draw (6,8) node [scale=.7,shape=rectangle,draw,inner sep=2pt](103){$\underline{(0,3,1)}$};
\draw (0,6) node [scale=.7,shape=rectangle,draw,inner sep=2pt](110){$\underline{(1,0,1)}$};
\draw (2,6) node [scale=.7](111){$(1,1,1)$};
\draw (4,6) node [scale=.7,shape=rectangle,draw,inner sep=2pt](112){$\underline{(1,2,1)}$};
\draw (6,6) node [scale=.7,shape=rectangle,draw,inner sep=2pt](113){$\underline{(1,3,1)}$};
\draw (0,7) node [scale=.7,shape=rectangle,draw,inner sep=2pt](120){$\underline{(2,0,1)}$};
\draw (2,7) node [scale=.7,shape=rectangle,draw,inner sep=2pt](121){$\underline{(2,1,1)}$};
\draw (4,7) node [scale=.7,shape=rectangle,draw,inner sep=2pt](122){$\underline{(2,2,1)}$};
\draw (6,7) node [scale=.7,shape=rectangle,draw,inner sep=2pt](123){$\underline{(2,3,1)}$};

\node[left=50pt,fill=none,draw=none] at (0,7) (aaaa){$\gamma=1$};
\node[left=50pt,fill=none,draw=none] at (0,6) (aaaa){$s=2$};
\draw [decorate,decoration={brace,amplitude=10pt},xshift=-4pt,yshift=0pt]
(-2,4.75) -- (-2,8.25)node [black,midway,xshift=9pt]{};

\draw (0,10) node [scale=.7](200){$(0,0,2)$};
\draw (2,13) node [scale=.7](201){$(0,1,2)$};
\draw (4,10) node [scale=.7](202){$(0,2,2)$};
\draw (6,13) node [scale=.7,shape=rectangle,draw,inner sep=2pt](203){$\underline{(0,3,2)}$};
\draw (0,11) node [scale=.7](210){$(1,0,2)$};
\draw (2,11) node [scale=.7](211){$(1,1,2)$};
\draw (4,11) node [scale=.7](212){$(1,2,2)$};
\draw (6,11) node [scale=.7](213){$(1,3,2)$};
\draw (0,12) node [scale=.7,shape=rectangle,draw,inner sep=2pt](220){$\underline{(2,0,2)}$};
\draw (2,12) node [scale=.7](221){$(2,1,2)$};
\draw (4,12) node [scale=.7,shape=rectangle,draw,inner sep=2pt](222){$\underline{(2,2,2)}$};
\draw (6,12) node [scale=.7](223){$(2,3,2)$};

\node[left=50pt,fill=none,draw=none] at (0,12) (aaaa){$\gamma=2$};
\node[left=50pt,fill=none,draw=none] at (0,11) (aaaa){$s=9$};
\draw [decorate,decoration={brace,amplitude=10pt},xshift=-4pt,yshift=0pt]
(-2,9.75) -- (-2,13.25)node [black,midway,xshift=9pt]{};

\draw (0,15) node [scale=.7](300){$(0,0,3)$};
\draw (2,18) node [scale=.7](301){$(0,1,3)$};
\draw (4,15) node [scale=.7](302){$(0,2,3)$};
\draw (6,18) node [scale=.7](303){$(0,3,3)$};
\draw (0,16) node [scale=.7](310){$(1,0,3)$};
\draw (2,16) node [scale=.7](311){$(1,1,3)$};
\draw (4,16) node [scale=.7](312){$(1,2,3)$};
\draw (6,16) node [scale=.7](313){$(1,3,3)$};
\draw (0,17) node [scale=.7](320){$(2,0,3)$};
\draw (2,17) node [scale=.7](321){$(2,1,3)$};
\draw (4,17) node [scale=.7](322){$(2,2,3)$};
\draw (6,17) node [scale=.7](323){$(2,3,3)$};

\node[left=50pt,fill=none,draw=none] at (0,17) (aaaa){$\gamma=3$};
\node[left=50pt,fill=none,draw=none] at (0,16) (aaaa){$s=12$};
\draw [decorate,decoration={brace,amplitude=10pt},xshift=-4pt,yshift=0pt]
(-2,14.75) -- (-2,18.25)node [black,midway,xshift=9pt]{};

\draw (0,20) node [scale=.7](400){$(0,0,4)$};
\draw (2,23) node [scale=.7](401){$(0,1,4)$};
\draw (4,20) node [scale=.7](402){$(0,2,4)$};
\draw (6,23) node [scale=.7](403){$(0,3,4)$};
\draw (0,21) node [scale=.7](410){$(1,0,4)$};
\draw (2,21) node [scale=.7](411){$(1,1,4)$};
\draw (4,21) node [scale=.7](412){$(1,2,4)$};
\draw (6,21) node [scale=.7](413){$(1,3,4)$};
\draw (0,22) node [scale=.7](420){$(2,0,4)$};
\draw (2,22) node [scale=.7](421){$(2,1,4)$};
\draw (4,22) node [scale=.7](422){$(2,2,4)$};
\draw (6,22) node [scale=.7](423){$(2,3,4)$};

\node[left=50pt,fill=none,draw=none] at (0,22) (aaaa){$\gamma=4$};
\node[left=50pt,fill=none,draw=none] at (0,21) (aaaa){$s=12$};
\draw [decorate,decoration={brace,amplitude=10pt},xshift=-4pt,yshift=0pt]
(-2,19.75) -- (-2,23.25)node [black,midway,xshift=9pt]{};

\draw (400) [->] to (411);
\draw (411) [->,bend right=15] to (400);
\draw (402) [->] to (413);
\draw (413) [->,bend right=15] to (402);
\draw (410) [->] to (421);
\draw (421) [->,bend right=15] to (410);
\draw (412) [->] to (423);
\draw (423) [->,bend right=15] to (412);

\draw (420) [->] to (401);
\draw (401) [->,bend right=15] to (420);
\draw (422) [->] to (403);
\draw (403) [->,bend right=15] to (422);
\draw (420) [->] to (401);
\draw (401) [->,bend right=15] to (420);
\draw (422) [->] to (403);
\draw (403) [->,bend right=15] to (422);

\draw (300) [->] to (311);
\draw (311) [->,bend right=15] to (300);
\draw (302) [->] to (313);
\draw (313) [->,bend right=15] to (302);
\draw (310) [->] to (321);
\draw (321) [->,bend right=15] to (310);
\draw (312) [->] to (323);
\draw (323) [->,bend right=15] to (312);

\draw (320) [->] to (301);
\draw (301) [->,bend right=15] to (320);
\draw (322) [->] to (303);
\draw (303) [->,bend right=15] to (322);
\draw (320) [->] to (301);
\draw (301) [->,bend right=15] to (320);
\draw (322) [->] to (303);
\draw (303) [->,bend right=15] to (322);

\draw (200) [->] to (211);
\draw (211) [->,bend right=15] to (200);
\draw (202) [->] to (213);
\draw (213) [->,bend right=15] to (202);

\draw (210) [->] to (221);
\draw (221) [->,bend left=15] to (210);
\draw (212) [->] to (223);
\draw (201) [->,out=330,in=150] to (212);
\draw (223) [->,out=150,in=350,looseness=0] to (201);

\draw (100) [->] to (111);
\draw (111) [->,bend left=15] to (100);

\draw (000) [->] to (011);
\draw (011) [->,bend left=15] to (000);

\draw (12,14) node [scale=.7](a220){$\underline{(2,0,2)}$};
\draw (12,13) node [scale=.7](a120){$\underline{(2,0,1)}$};
\draw (12,12) node [scale=.7](a020){$\underline{(2,0,0)}$};

\draw (12,8) node [scale=.7](a110){$\underline{(1,0,1)}$};
\draw (12,7) node [scale=.7](a010){$\underline{(1,0,0)}$};

\draw (15,13) node [scale=.7](a101){$\underline{(0,1,1)}$};
\draw (15,12) node [scale=.7](a001){$\underline{(0,1,0)}$};

\draw (15,8) node [scale=.7](a121){$\underline{(2,1,1)}$};
\draw (15,7) node [scale=.7](a021){$\underline{(2,1,0)}$};

\draw (18,14) node [scale=.7](a222){$\underline{(2,2,2)}$};
\draw (18,13) node [scale=.7](a122){$\underline{(2,2,1)}$};
\draw (18,12) node [scale=.7](a022){$\underline{(2,2,0)}$};

\draw (18,8) node [scale=.7](a112){$\underline{(1,2,1)}$};
\draw (18,7) node [scale=.7](a012){$\underline{(1,2,0)}$};

\draw (18,3) node [scale=.7](a102){$\underline{(0,2,1)}$};
\draw (18,2) node [scale=.7](a002){$\underline{(0,2,0)}$};

\draw (21,14) node [scale=.7](a203){$\underline{(0,3,2)}$};
\draw (21,13) node [scale=.7](a103){$\underline{(0,3,1)}$};
\draw (21,12) node [scale=.7](a003){$\underline{(0,3,0)}$};

\draw (21,8) node [scale=.7](a123){$\underline{(2,3,1)}$};
\draw (21,7) node [scale=.7](a023){$\underline{(2,3,0)}$};

\draw (21,3) node [scale=.7](a113){$\underline{(1,3,1)}$};
\draw (21,2) node [scale=.7](a013){$\underline{(1,3,0)}$};

\draw (a020) [->,out=170,in=190, looseness=1,thick] to (a120);
\draw (a120) [->,out=170,in=190, looseness=1,thick] to (a220);
\draw (a220) [->,out=170,in=190, looseness=1,thick] to (a020);

\draw (a010) [->,out=170,in=190, looseness=1,thick] to (a110);
\draw (a110) [->,out=170,in=190, looseness=1,thick] to (a010);


\draw (a021) [->,out=170,in=190, looseness=1,thick] to (a121);
\draw (a121) [->,out=170,in=190, looseness=1,thick] to (a021);


\draw (a001) [->,out=170,in=190, looseness=1,thick] to (a101);
\draw (a101) [->,out=170,in=190, looseness=1,thick] to (a001);

\draw (a012) [->,out=170,in=190, looseness=1,thick] to (a112);
\draw (a112) [->,out=170,in=190, looseness=1,thick] to (a012);

\draw (a002) [->,out=170,in=190, looseness=1,thick] to (a102);
\draw (a102) [->,out=170,in=190, looseness=1,thick] to (a002);

\draw (a023) [->,out=170,in=190, looseness=1,thick] to (a123);
\draw (a123) [->,out=170,in=190, looseness=1,thick] to (a023);

\draw (a013) [->,out=170,in=190, looseness=1,thick] to (a113);
\draw (a113) [->,out=170,in=190, looseness=1,thick] to (a013);

\draw (a003) [->,out=170,in=190, looseness=1,thick] to (a103);
\draw (a103) [->,out=170,in=190, looseness=1,thick] to (a203);
\draw (a203) [->,out=170,in=190, looseness=1,thick] to (a003);

\draw (a222) [->,out=170,in=190, looseness=1,thick] to (a022);
\draw (a122) [->,out=170,in=190, looseness=1,thick] to (a222);
\draw (a022) [->,out=170,in=190, looseness=1,thick] to (a122);

\node[below=10pt,fill=none,draw=none] at (9,5.5) (aaaa){$x=3$, $y=5$, $s_p=37$, $r_p=23$, $k=4$, $a=2$, $\epsilon=3$, $k'=3$};
\end{tikzpicture}
\vspace*{-1in}

\caption{Example of Lemma $7.17$ Case 2}
\label{717exampleb}
\end{center}
\end{figure}

\section{Product}
In this section the partite product will be applied to the decompositions obtained in the previous section, to obtain decompositions of larger graphs.
\begin{lemma}\label{zwlemma}
Let $m=zw$ with $z$ odd and $w\not\in\{2,6\}$. Then there is a decomposition of $\overrightarrow{C}_{(m:n)}$ into $\overrightarrow{C}_{zn}$-factors and a decomposition of $\overrightarrow{C}_{(4m:n)}$ into  $\overrightarrow{C}_{2zn}$-factors.
\end{lemma}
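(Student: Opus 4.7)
The plan is to recognise both graphs as partite products and combine factorisations already proven. The identity $\overrightarrow{C}_{(a:n)}\otimes \overrightarrow{C}_{(b:n)}\cong \overrightarrow{C}_{(ab:n)}$ established earlier gives
\[
\overrightarrow{C}_{(m:n)} \;=\; \overrightarrow{C}_{(z:n)}\otimes \overrightarrow{C}_{(w:n)} \qquad\text{and}\qquad \overrightarrow{C}_{(4m:n)} \;=\; \overrightarrow{C}_{(4z:n)}\otimes \overrightarrow{C}_{(w:n)}.
\]
Since this section is working under the odd-$n$ assumption (as inherited from Theorem \ref{gregarious}), I may freely use that hypothesis.

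For the first decomposition I would apply Theorem \ref{1varfun} with $s=z$ (legal because $z$ is odd, so either $z=1$ and the claim is trivial via Lemma \ref{k1kidentity}, or $z\geq 3$ and $s=z$ lies in the admissible range) to factor $\overrightarrow{C}_{(z:n)}$ into $z$ directed Hamilton cycles, that is, $\overrightarrow{C}_{zn}$-factors. Simultaneously, Theorem \ref{gregarious} factors $\overrightarrow{C}_{(w:n)}$ into $w$ directed $\overrightarrow{C}_n$-factors, using $w\notin\{2,6\}$. Expanding the tensor product via the Distribution Theorem \ref{distributive} writes $\overrightarrow{C}_{(m:n)}$ as a boolean sum of $zw=m$ partite products, each of the form (a $\overrightarrow{C}_{zn}$-factor) $\otimes$ (a $\overrightarrow{C}_n$-factor). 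By Lemma \ref{productofbalanced}, every such product is a $\overrightarrow{C}_{\ell}$-factor of $\overrightarrow{C}_{(m:n)}$ with
\[
\ell \;=\; \frac{zn\cdot n}{\gcd(zn,n)} \;=\; zn,
\]
so altogether we obtain a decomposition of $\overrightarrow{C}_{(m:n)}$ into $m$ $\overrightarrow{C}_{zn}$-factors, as required.

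For the second decomposition the argument is identical after replacing Theorem \ref{1varfun} by Lemma \ref{2xnandntheorem}: taking $s_p=4z$, which lies in the admissible set $\{0,2,\ldots,4z\}$, decomposes $\overrightarrow{C}_{(4z:n)}$ into $4z$ $\overrightarrow{C}_{2zn}$-factors. Pairing each of these with a $\overrightarrow{C}_n$-factor of $\overrightarrow{C}_{(w:n)}$ coming from Theorem \ref{gregarious}, and applying the Distribution Theorem together with Lemma \ref{productofbalanced}, every summand becomes a $\overrightarrow{C}_{\ell'}$-factor of $\overrightarrow{C}_{(4m:n)}$ with
\[
\ell' \;=\; \frac{2zn\cdot n}{\gcd(2zn,n)} \;=\; 2zn,
\]
producing $4zw=4m$ $\overrightarrow{C}_{2zn}$-factors whose boolean sum is $\overrightarrow{C}_{(4m:n)}$. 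There is no genuine obstacle: the lemma is a routine product-assembly of previously constructed factorisations, and the only verification is that the cycle-length formula of Lemma \ref{productofbalanced} returns exactly $zn$ and $2zn$, which it does because $n\mid zn$ and $n\mid 2zn$.
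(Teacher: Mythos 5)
Your proof is correct and follows essentially the same product-assembly route as the paper: factor $\overrightarrow{C}_{(z:n)}$ into Hamiltonian ($\overrightarrow{C}_{zn}$-) factors, factor $\overrightarrow{C}_{(w:n)}$ into $\overrightarrow{C}_n$-factors, and combine via the Distribution Theorem and Lemma \ref{productofbalanced}. The only (harmless) variation is in the second claim, where you factor $\overrightarrow{C}_{(4m:n)}$ as $\overrightarrow{C}_{(4z:n)}\otimes\overrightarrow{C}_{(w:n)}$ using Lemma \ref{2xnandntheorem}, whereas the paper writes it as $\overrightarrow{C}_{(m:n)}\otimes\overrightarrow{C}_{(4:n)}$ and uses Theorem \ref{even} to get $\overrightarrow{C}_{2n}$-factors, relying on $z$ odd so that $\gcd(zn,2n)=n$; both yield $\overrightarrow{C}_{2zn}$-factors.
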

\begin{proof}
We know that $\overrightarrow{C}_{(m:n)}=\overrightarrow{C}_{(z:n)}\otimes \overrightarrow{C}_{(w:n)}$.
By Theorem \ref{1varfun} we can decompose $\overrightarrow{C}_{(z:n)}=\bigoplus_i H_z(i,\phi(i))$ into $\overrightarrow{C}_{zn}$-factors, and by Lemma \ref{bbb} $\overrightarrow{C}_{(w:n)}=\bigoplus_j T_w(j,j)$ into $\overrightarrow{C}_n$-factors.

Then
\begin{align*}
\overrightarrow{C}_{(m:n)}&=\overrightarrow{C}_{(z:n)}\otimes \overrightarrow{C}_{(w:n)}\\
&=\left(\bigoplus_i H_z(i,\phi(i))\right)\otimes\left(\bigoplus_j H_w(j,j)\right)\\
&=\bigoplus_i \bigoplus_j H_z(i,\phi(i))\otimes H_w(j,j)
\end{align*}

By Lemma \ref{productofbalanced} $H_z(i,\phi(i))\otimes H_w(j,j)$ is a $\overrightarrow{C}_{zn}$-factor, and so $\overrightarrow{C}_{(m:n)}$ can be decomposed into $\overrightarrow{C}_{zn}$-factors.

For the result on $\overrightarrow{C}_{(4m:n)}$ we just multiply by $\overrightarrow{C}_{(4:n)}$:

\[
\overrightarrow{C}_{(4m:n)}=\overrightarrow{C}_{(m:n)}\otimes \overrightarrow{C}_{(4:n)}
\]

We can decompose $\overrightarrow{C}_{(4:n)}$ into $\overrightarrow{C}_{2n}$-factors by Theorem \ref{even} and so when multiplying by $\overrightarrow{C}_{(m:n)}$ we obtain a decomposition of $\overrightarrow{C}_{(4m:n)}$ into $\overrightarrow{C}_{2zn}$-factors.

\end{proof}

We may now use Lemma \ref{zwlemma} and the decompositions obtained in Section 7 to get a decomposition of $\overrightarrow{C}_{(xyzw:n)}$ into $\overrightarrow{C}_{xzn}$-factors and $\overrightarrow{C}_{yzn}$-factors:

\begin{lemma}\label{productcontruction}
Suppose $\overrightarrow{C}_{(m:n)}$ can be decomposed into $s_p$ $\overrightarrow{C}_{m_1n}$-factors and $r_p$ $\overrightarrow{C}_{m_2n}$ factors, with $s_p,r_p\neq 1$, $r_p+s_p=m$. Let $z$ be odd with $\gcd(m_1,z)=\gcd(m_2,z)=1$ and $w\not\in\{2,6\}$. Then there is a decomposition of $\overrightarrow{C}_{(mzw:n)}$ into $s$ $\overrightarrow{C}_{m_1zn}$-factors and $r$ $\overrightarrow{C}_{m_2zn}$-factors for any $s,r\neq 1$, $s+r=mzw$. Furthermore, if $m_1$ and $m_2$ are odd, there is a decomposition of $\overrightarrow{C}_{(4mzw:n)}$ into $s'$ $\overrightarrow{C}_{2m_1zn}$-factors and $r'$  $\overrightarrow{C}_{2m_2zn}$-factors for any $s',r'\neq 1$, $s'+r'=4mzw$.
\end{lemma}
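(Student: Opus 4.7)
The plan is to realize $\overrightarrow{C}_{(mzw:n)}$ as an iterated partite product and distribute known decompositions of each factor through it, tracking cycle lengths via Lemma~\ref{productofbalanced}. First I would use the isomorphism
\[
\overrightarrow{C}_{(mzw:n)} \cong \overrightarrow{C}_{(m:n)} \otimes \overrightarrow{C}_{(z:n)} \otimes \overrightarrow{C}_{(w:n)},
\]
obtained by iterating the product identity $\overrightarrow{C}_{(x:k)}\otimes \overrightarrow{C}_{(y:k)} \cong \overrightarrow{C}_{(xy:k)}$ from Section~2. The three ingredients to plug in are: the hypothesized decomposition of $\overrightarrow{C}_{(m:n)}$; a decomposition of $\overrightarrow{C}_{(z:n)}$ into Hamilton cycles (i.e.\ $\overrightarrow{C}_{zn}$-factors) and $\overrightarrow{C}_n$-factors supplied by Theorem~\ref{1varfun}; and the decomposition of $\overrightarrow{C}_{(w:n)}$ into $w$ $\overrightarrow{C}_n$-factors from Theorem~\ref{gregarious}, applicable because $w\notin\{2,6\}$.

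Applying the distributive law (Theorem~\ref{distributive}) expands the triple product into a sum of terms $F\otimes H\otimes G$. Two applications of Lemma~\ref{productofbalanced}, combined with the hypothesis $\gcd(m_i,z)=1$, show that when $H$ is a Hamilton cycle the product $F\otimes H\otimes G$ is a $\overrightarrow{C}_{m_izn}$-factor (since $\lcm(m_in,zn,n)=m_izn$), whereas when $H$ is a $\overrightarrow{C}_n$-factor the product is only a $\overrightarrow{C}_{m_in}$-factor, which is not of the target cycle length. Taking $s_z=z$ (all Hamilton cycles) in Theorem~\ref{1varfun} immediately gives the ``baseline'' decomposition of $\overrightarrow{C}_{(mzw:n)}$ with $s_pzw$ $\overrightarrow{C}_{m_1zn}$-factors and $r_pzw$ $\overrightarrow{C}_{m_2zn}$-factors.

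To reach every admissible pair $(s,r)$ with $s+r=mzw$ and $s,r\neq1$, and not only the baseline, I would independently choose, for each factor of $\overrightarrow{C}_{(m:n)}$, how many Hamilton cycles of $\overrightarrow{C}_{(z:n)}$ to use in the product; the constraint $s_z\neq 1$ from Theorem~\ref{1varfun} is exactly what propagates to the excluded values $s,r=1$ in the conclusion. The bookkeeping needed to manage this per-factor variation so that no spurious $\overrightarrow{C}_{m_in}$-factors remain in the final decomposition, and yet every admissible $(s,r)$ is hit, is the combinatorial heart of the argument and the part that demands care.

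Finally, for the $\overrightarrow{C}_{(4mzw:n)}$ claim, I would tensor the decomposition just obtained with $\overrightarrow{C}_{(4:n)}$, decomposed via Theorem~\ref{even} into $\overrightarrow{C}_{2n}$-factors and $\overrightarrow{C}_n$-factors. Since $m_1$, $m_2$, and $z$ are all odd, $\gcd(m_iz,2)=1$, so by Lemma~\ref{productofbalanced} a $\overrightarrow{C}_{m_izn}$-factor times a $\overrightarrow{C}_{2n}$-factor is a $\overrightarrow{C}_{2m_izn}$-factor, while multiplication by a $\overrightarrow{C}_n$-factor preserves the cycle length. The same per-factor flexibility argument, now applied to both $\overrightarrow{C}_{(z:n)}$ and $\overrightarrow{C}_{(4:n)}$, is expected to yield every admissible $(s',r')$ with $s'+r'=4mzw$ and $s',r'\neq1$.
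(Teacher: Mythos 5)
Your overall framework (writing $\overrightarrow{C}_{(mzw:n)}$ as an iterated partite product, distributing via Theorem~\ref{distributive}, and computing cycle lengths with Lemma~\ref{productofbalanced}) matches the paper's, and your baseline decomposition into $s_pzw$ $\overrightarrow{C}_{m_1zn}$-factors and $r_pzw$ $\overrightarrow{C}_{m_2zn}$-factors is correct. The gap is in the step you defer as ``the combinatorial heart'': you place the flexibility in the wrong tensor factor, and with that placement the deferred bookkeeping cannot be carried out. As you yourself observe, any summand of $\overrightarrow{C}_{(z:n)}$ that is a $\overrightarrow{C}_n$-factor rather than a Hamilton cycle produces, after multiplication, a $\overrightarrow{C}_{m_in}$-factor of the wrong length. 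Hence \emph{every} factor of $\overrightarrow{C}_{(z:n)}$ must be a Hamilton cycle, there is no admissible variation of $s_z$ at all, and ``choosing how many Hamilton cycles of $\overrightarrow{C}_{(z:n)}$ to use'' per factor of $\overrightarrow{C}_{(m:n)}$ can only ever reproduce the baseline $s=s_pzw$, i.e.\ multiples of $zw$. The same objection applies to the $\overrightarrow{C}_{(4mzw:n)}$ claim: any $\overrightarrow{C}_n$-factor summand of $\overrightarrow{C}_{(4:n)}$ would leave $\overrightarrow{C}_{m_izn}$-factors instead of $\overrightarrow{C}_{2m_izn}$-factors, so all four summands must be $\overrightarrow{C}_{2n}$-factors. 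Also, the exclusion $s,r\neq 1$ in the conclusion does not propagate from the constraint $s_z\neq 1$ of Theorem~\ref{1varfun}; it is inherited from the hypothesis $s_p,r_p\neq 1$ on the decompositions of $\overrightarrow{C}_{(m:n)}$.

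The flexibility has to sit on the other side of the product, which is what the paper does. Decompose $\overrightarrow{C}_{(zw:n)}=\bigoplus_{i=1}^{zw}Z_i$ into $zw$ $\overrightarrow{C}_{zn}$-factors once and for all (Lemma~\ref{zwlemma}), and then, for each index $i$ \emph{separately}, choose a different decomposition of $\overrightarrow{C}_{(m:n)}$ into $s_i$ $\overrightarrow{C}_{m_1n}$-factors and $m-s_i$ $\overrightarrow{C}_{m_2n}$-factors, with $s_i\in\{0,2,3,\dots,m-2,m\}$; every summand $F\otimes Z_i$ then has the required length $m_jzn$ because $\gcd(m_j,z)=1$, and $s=\sum_i s_i$. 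Writing $s=mt+u$, one takes $t$ indices with $s_i=m$, one index with $s_i=u$ when $u\notin\{1,m-1\}$, and the rest with $s_i=0$; the residues $u=1$ and $u=m-1$ are absorbed by splitting across two indices as $(m-2)+3$. Without this per-$Z_i$ variation of the $\overrightarrow{C}_{(m:n)}$ decomposition, your argument does not reach any $s$ that is not a multiple of $zw$.
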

\begin{proof}
We start with the product $\overrightarrow{C}_{(mzw:n)}=\overrightarrow{C}_{(m:n)}\otimes   \overrightarrow{C}_{(zw:n)}$.
By Lemma \ref{zwlemma} we can decompose $\overrightarrow{C}_{(zw:n)}=\oplus_{i=1}^{zw} Z_i$, where each $Z_i$ is a $\overrightarrow{C}_{zn}$-factor.

Let $s=mt+u$, with $0\leq u \leq m$. If $u\neq 1,m-1$ we decompose as follows:

\begin{align*}
\overrightarrow{C}_{(mzw:n)}&=\overrightarrow{C}_{(m:n)}\otimes   \overrightarrow{C}_{(zw:n)}\\
&=\overrightarrow{C}_{(m:n)}\otimes \left(\oplus_{i=1}^{zw} Z_i\right)\\
&= \oplus_{i=1}^{zw}\overrightarrow{C}_{(m:n)}\otimes Z_i\\
&= \left(\oplus_{i=1}^{t}\overrightarrow{C}_{(m:n)}\otimes Z_i\right)\oplus \overrightarrow{C}_{(m:n)}\otimes Z_{t+1}\oplus\left(\oplus_{i=t+2}^{zw}\overrightarrow{C}_{(m:n)}\otimes Z_i\right)
\end{align*}

From the theorem hypothesis on $\overrightarrow{C}_{(m:n)}$, we have the following decompositions:
\begin{itemize}
\item 
We decompose $\overrightarrow{C}_{(m:n)}$ into $m$ $\overrightarrow{C}_{m_1n}$-factors for the product $\oplus_{i=1}^{t}\overrightarrow{C}_{(m:n)}\otimes Z_i$.
\item
We decompose $\overrightarrow{C}_{(m:n)}$ into $m$ $\overrightarrow{C}_{m_2n}$-factors for the product $\oplus_{i=t+2}^{zw}C_{(m:n)}\otimes Z_i$.
\item
We decompose $\overrightarrow{C}_{(m:n)}$ into $u$ $\overrightarrow{C}_{m_1n}$-factors and $m-u$ $\overrightarrow{C}_{m_2n}$-factors for the product $\overrightarrow{C}_{(m:n)}\otimes Z_{t+1}$.
\end{itemize}

Because $m_1$ and $m_2$ are coprime with $z$, by Lemma \ref{productofbalanced} there is a decomposition into $mt+u=s$ $\overrightarrow{C}_{m_1zn}$-factors and $r$ $\overrightarrow{C}_{m_2zn}$-factors.

If $u=1$, we decompose as follows:
\begin{align*}
\overrightarrow{C}_{(mzw:n)}=&\left(\oplus_{i=1}^{t-1}\overrightarrow{C}_{(m:n)}\otimes Z_i\right)\oplus \left(\overrightarrow{C}_{(m:n)}\otimes Z_{t}\right)\oplus \\
&\left(\overrightarrow{C}_{(m:n)}\otimes Z_{t+1}\right)\oplus\left(\oplus_{i=t+2}^{zw}\overrightarrow{C}_{(m:n)}\otimes Z_i\right)
\end{align*}

We also have the following decompositions:
\begin{itemize}
\item 
We decompose $\overrightarrow{C}_{(m:n)}$ into $m$ $\overrightarrow{C}_{m_1n}$-factors for the product $\oplus_{i=1}^{t-1}\overrightarrow{C}_{(m:n)}\otimes Z_i$.
\item
We decompose $\overrightarrow{C}_{(m:n)}$ into $m$ $\overrightarrow{C}_{m_2n}$-factors for the product $\oplus_{i=t+2}^{zw}\overrightarrow{C}_{(m:n)}\otimes Z_i$.
\item
We decompose $\overrightarrow{C}_{(m:n)}$ into $m-2$ $\overrightarrow{C}_{m_1n}$-factors and $2$ $\overrightarrow{C}_{m_2n}$-factors for the product $\overrightarrow{C}_{(m:n)}\otimes Z_{t}$.
\item We decompose $\overrightarrow{C}_{(m:n)}$ into $3$ $\overrightarrow{C}_{m_1n}$-factors and $m-3$  $\overrightarrow{C}_{m_2n}$-factors for the product $\overrightarrow{C}_{(m:n)}\otimes Z_{t+1}$.
\end{itemize}

Because $m_1$ and $m_2$ are coprime with $z$, this gives a decomposition into $m(t-1)+m-2+3=mt+1=s$ $\overrightarrow{C}_{m_1zn}$-factors and $r$ $\overrightarrow{C}_{m_2zn}$-factors.
If $u=m-1$ we just change the roles of $m_1$ and $m_2$ and take the decomposition for $u=1$.
Therefore there is a decomposition of $\overrightarrow{C}_{(mzw:n)}$ into $s$ $\overrightarrow{C}_{m_1zn}$ factors and $r$ $\overrightarrow{C}_{m_2zn}$-factors for any $s,r\neq 1$, $r+s=mzw$.
The decomposition of $\overrightarrow{C}_{(4mzw:n)}$ into $s'$  $\overrightarrow{C}_{2m_1zn}$-factors and $r'$ $\overrightarrow{C}_{2m_2zn}$-factors works in the same way.
\end{proof}

We can now combine this result with our decompositions from Section 7 to obtain the following result, which we write using non-directed graphs, as we are getting ready to apply the results from Section 5.
\begin{theorem}\label{cvresult}
Let $x,y,z,n$ be odd numbers with $\gcd(x,z)=\gcd(y,z)=1$ and $w\not\in \{2,6\}$. Then we have the following decompositions:
\begin{enumerate}[a)] 
\item $C_{(xyzw:n)}$ can be decomposed into $s$ $C_{xzn}$-factors and $r$ $C_{yzn}$-factors for any $s,r\neq 1$, $s+r=xyzw$.
\item $C_{(4xzw:n)}$ can be decomposed into $s$ $C_{2xzn}$-factors and $r$ $C_{2zn}$-factors for any $s,r\neq 1$, $s+r=4xzw$.
\item $C_{(4xzw:n)}$ can be decomposed into $s$ $C_{2xzn}$-factors and $r$ $C_{zn}$-factors for any $s,r\neq 1$, $s+r=4xzw$.
\item $C_{(4xzw:n)}$ can be decomposed into $s$ $C_{xzn}$-factors and $r$ $C_{2zn}$-factors for any $s,r\neq 1$, $s+r=4xzw$.
\item $C_{(4xyzw:n)}$ can be decomposed into $s$ $C_{2xzn}$-factors and $r$ $C_{yzn}$-factors for any $s,r\neq 1$, $s+r=4xyzw$.
\item $C_{(4xyzw:n)}$ can be decomposed into $s$ $C_{2xzn}$-factors and $r$ $C_{2yzn}$-factors for any $s,r\neq 1$, $s+r=4xyzw$.
\end{enumerate}
\end{theorem}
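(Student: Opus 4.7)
The unified strategy is: for each of the six parts, exhibit a decomposition of a suitable intermediate graph $\overrightarrow{C}_{(m:n)}$ into $\overrightarrow{C}_{m_1n}$-factors and $\overrightarrow{C}_{m_2n}$-factors, then apply Lemma \ref{productcontruction} with the prescribed $z$ and $w$. The coprimality hypotheses $\gcd(m_1,z)=\gcd(m_2,z)=1$ required by that lemma are in every case immediate from the theorem's assumptions $\gcd(x,z)=\gcd(y,z)=1$, $z$ odd, together with $\gcd(2,z)=1$. Translation between directed and undirected graphs is free, by the remark in Section 5.

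Parts (a), (c), (d), and (e) go through without any new constructions, because the relevant intermediate decomposition already appears in Section 7: I would invoke Lemma \ref{xnyn} for (a) with $(m,m_1,m_2)=(xy,x,y)$; Lemma \ref{2xnandntheorem} for (c) with $(4x,2x,1)$; Lemma \ref{xnand2ntheorem} for (d) with $(4x,x,2)$; and Lemma \ref{2xnyn} for (e) with $(4xy,2x,y)$. In each case the range of admissible splits $s_p\neq 1$, $r_p\neq 1$ supplied by the source lemma matches what the proof of Lemma \ref{productcontruction} consumes.

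Parts (b) and (f) require new intermediate decompositions, and these must be supplied for \emph{every} admissible split, since the proof of Lemma \ref{productcontruction} uses decompositions of $\overrightarrow{C}_{(m:n)}$ with various $(s_p,r_p)$ (including the endpoints $(m,0)$ and $(0,m)$ and, when $u\in\{1,m-1\}$, also $(m-2,2)$ and $(3,m-3)$). For part (b) I would build $\overrightarrow{C}_{(4x:n)}=\bigoplus_{(i,\alpha)}H_{(2x)}(i,\alpha)\varphi(i,\alpha)$ using the bijection $\varphi(i,\alpha)=(\pi_\alpha(i),(\alpha+1)\bmod 4)$, where each $\pi_\alpha$ is the phi-function $\phi_{x-f_\alpha}$ with exactly $f_\alpha$ fixed points. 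Because $(\alpha+1)\bmod 4\neq\alpha$, the factor $\Lambda(\alpha,\varphi_2)$ is always a $\overrightarrow{C}_{2n}$-factor, so by the analysis preceding Conditions \ref{2HamandTrig} the resulting $H_{(2x)}$-factor is a $\overrightarrow{C}_{2n}$-factor when $i$ is a fixed point of $\pi_\alpha$ and a $\overrightarrow{C}_{2xn}$-factor otherwise (phi-displacements $\pm 1,2$ are coprime to odd $x$). Hence $r_p=f_0+f_1+f_2+f_3$, and since each $f_\alpha$ ranges over $\{0,1,\dots,x-2\}\cup\{x\}$, a direct count shows that every target value $r_p\in\{0,2,3,\dots,4x-2,4x\}$ is attained (the only unattainable sum is $4x-1$, which corresponds to the forbidden $s_p=1$). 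Lemma \ref{productcontruction} with $(m,m_1,m_2)=(4x,2x,2)$ then finishes part (b).

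Part (f) is the main obstacle. The needed intermediate decomposition of $\overrightarrow{C}_{(4xy:n)}$ into $\overrightarrow{C}_{2xn}$-factors and $\overrightarrow{C}_{2yn}$-factors is to be built inside the $H_{(2xy)}$ framework via a three-variable bijection $\varphi$ on $\{0,\dots,x-1\}\times\{0,1,2,3\}\times\{0,\dots,y-1\}$ that always moves the $\alpha$-coordinate (so that $\Lambda(\alpha,\varphi_2)$ is a $\overrightarrow{C}_{2n}$-factor throughout) and, on two complementary subsets, either (i) shifts $i$ by a displacement coprime to $x$ while fixing $\gamma$, yielding a $\overrightarrow{C}_{2xn}$-factor, or (ii) shifts $\gamma$ by a displacement coprime to $y$ while fixing $i$, yielding a $\overrightarrow{C}_{2yn}$-factor. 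The construction follows the template of Lemma \ref{2xnyn}: partition according to $\gamma$, insert a hybrid rule at a boundary row analogous to the $\theta_a$ step used there, and compose with a phi-function on $\gamma$; the hard part will be the case analysis, in the spirit of Lemma \ref{2xnyn}'s Cases 1 and 2, needed to realize every admissible split $s_p,r_p\ne 1$ with $s_p+r_p=4xy$, including the endpoint splits consumed by Lemma \ref{productcontruction}. Once this intermediate decomposition is established, Lemma \ref{productcontruction} with $(m,m_1,m_2)=(4xy,2x,2y)$ completes part (f).
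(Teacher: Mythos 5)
Your treatment of parts (a), (c), (d) and (e) matches the paper's proof exactly: the same source lemmas (\ref{xnyn}, \ref{2xnandntheorem}, \ref{xnand2ntheorem}, \ref{2xnyn}) feed into Lemma \ref{productcontruction} in the same way. The divergence is in parts (b) and (f), and it stems from overlooking the ``Furthermore'' clause of Lemma \ref{productcontruction}: when $m_1$ and $m_2$ are both odd, that clause already yields a decomposition of $\overrightarrow{C}_{(4mzw:n)}$ into $\overrightarrow{C}_{2m_1zn}$-factors and $\overrightarrow{C}_{2m_2zn}$-factors, the factor of $4$ and the doubling of the cycle lengths being absorbed by the second half of Lemma \ref{zwlemma}. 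The paper therefore obtains (b) from Theorem \ref{1varfun} applied to $\overrightarrow{C}_{(x:n)}$ with $(m,m_1,m_2)=(x,x,1)$, and (f) from Lemma \ref{xnyn} applied to $\overrightarrow{C}_{(xy:n)}$ with $(m,m_1,m_2)=(xy,x,y)$; no new intermediate decomposition of $\overrightarrow{C}_{(4x:n)}$ or $\overrightarrow{C}_{(4xy:n)}$ is required for either part.

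For part (b) your workaround --- the bijection $\varphi(i,\alpha)=(\pi_\alpha(i),\alpha+1\bmod 4)$, which always moves $\alpha$ so that every $\Lambda$-component is a $\overrightarrow{C}_{2n}$-factor --- does appear to produce a legitimate decomposition of $\overrightarrow{C}_{(4x:n)}$ into $\overrightarrow{C}_{2xn}$- and $\overrightarrow{C}_{2n}$-factors for all admissible splits, so (b) survives, just by a longer route than necessary. Part (f), however, is a genuine gap: you explicitly defer ``the hard part,'' namely the case analysis showing that every split $s_p,r_p\neq 1$ with $s_p+r_p=4xy$ of $\overrightarrow{C}_{(4xy:n)}$ into $\overrightarrow{C}_{2xn}$- and $\overrightarrow{C}_{2yn}$-factors is realizable, and this is never carried out, so as written the proposal does not prove (f). The fix is not to complete that three-variable construction but to notice it is unnecessary: feed the all-odd decomposition of Lemma \ref{xnyn} into the second conclusion of Lemma \ref{productcontruction}.
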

\begin{proof}
\begin{enumerate}[a)]
\item $\overrightarrow{C}_{(xy:n)}$ can be decomposed into $s_p$ $\overrightarrow{C}_{xn}$-factors and $r_p$ $\overrightarrow{C}_{yn}$-factors by Lemma \ref{xnyn}. So by Lemma \ref{productcontruction}; $\overrightarrow{C}_{(xywz:n)}$ can be decomposed into $s$ $\overrightarrow{C}_{xzn}$-factors and $r$ $\overrightarrow{C}_{yzn}$-factors.
\item $\overrightarrow{C}_{(x:n)}$ can be decomposed into $s_p$ $\overrightarrow{C}_{xn}$-factors and $r_p$ $\overrightarrow{C}_{n}$-factors by Lemma \ref{1varfun}. Now apply Lemma \ref{productcontruction}.
\item $\overrightarrow{C}_{(4x:n)}$ can be decomposed into $s_p$ $\overrightarrow{C}_{2xn}$-factors and $r_p$ $\overrightarrow{C}_{n}$-factors by Lemma \ref{2xnandntheorem}. Now apply Lemma \ref{productcontruction}.
\item $\overrightarrow{C}_{(4x:n)}$ can be decomposed into $s_p$ $\overrightarrow{C}_{xn}$-factors and $r_p$ $\overrightarrow{C}_{2n}$-factors by Lemma \ref{xnand2ntheorem}. Now apply Lemma \ref{productcontruction}.
\item $\overrightarrow{C}_{(4xy:n)}$ can be decomposed into $s_p$ $\overrightarrow{C}_{2xn}$-factors and $r_p$ $\overrightarrow{C}_{yn}$-factors by Lemma \ref{2xnyn}. Now apply Lemma \ref{productcontruction}.
\item $\overrightarrow{C}_{(xy:n)}$ can be decomposed into $s_p$ $\overrightarrow{C}_{xn}$-factors and $r_p$ $\overrightarrow{C}_{yn}$-factors by Lemma \ref{xnyn}. Now apply Lemma \ref{productcontruction}.
\end{enumerate}
\end{proof}
\section{Main Result}
We now to use the decompositions that we obtained for $C_{(v:n)}$ to obtain decompositions of $K_{(v:m)}$ via Lemmas $\ref{cvntokvm}$ and $\ref{buildcompletegraph}$.

\begin{theorem}\label{maintheorem}
Let $m$ and $n$ be odd, such that $m\equiv 0 \pmod n$. Let $s$ and $r$ be such that $s,r\neq 1$ and $s+r=v\frac{m-1}{2}$. Let $x_1,\ldots x_{m/n}$, $y_1,\ldots y_{m/n}$, $z_1,\ldots,z_{m/n}$ and $w_1,\ldots,w_{m/n}$ be such that:
\begin{itemize}
\item $\gcd(x_i,z_i)=\gcd(y_i,z_i)=1$,
\item $w_i\not\in\{2,6\}$,
\item $2$ divides at most one of $x_i,y_i$ and $z_i$,
\item $v=x_iy_iz_iw_i$ if $2$ divides none of $x_i,y_i,z_i$,
\item $v=2x_iy_iz_iw_i$ if $2$ divides one of $x_i,y_i,z_i$.
\end{itemize}
Furthermore, let $F_1$ be a $[(x_1z_1n)^{\frac{v}{x_1z_1}},\ldots,(x_{m/n}z_{m/n}n)^{\frac{v}{x_{m/n}z_{m/n}}}]$-factor, and let $F_2$ be a $[(y_1z_1n)^{\frac{v}{y_1z_1}},\ldots,(y_{m/n}z_{m/n}n)^{\frac{v}{y_{m/n}z_{m/n}}}]$-factor.
Then there is a decomposition of $K_{(v:m)}$ into $s$ copies of $F_1$ and $r$ copies of $F_2$.
\end{theorem}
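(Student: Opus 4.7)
The strategy is to apply Lemma~\ref{cvntokvm} with the uniform choice $n_1=n_2=\cdots=n_{m/n}=n$, combining a decomposition of $K_m$ into $C_n$-factors supplied by Theorem~\ref{OP} with the cycle factorizations of $C_{(v:n)}$ supplied by Theorem~\ref{cvresult}. Since $m$ is odd the excluded pairs $(6,3)$ and $(12,3)$ of Theorem~\ref{OP} cannot arise, so $K_m=\bigoplus_{t=1}^{(m-1)/2}N_t$ with each $N_t$ a $C_n$-factor having $p=m/n$ cycles. This will match the hypothesis of Lemma~\ref{cvntokvm} provided we produce, for every block index $i\in\{1,\dots,p\}$ and every $t$, a decomposition of $C_{(v:n)}$ into $s_t$ $C_{x_iz_in}$-factors and $r_t$ $C_{y_iz_in}$-factors for a single set of splits $s_t,r_t\ge 0$ satisfying $s_t+r_t=v$, $s_t,r_t\neq 1$, $\sum_t s_t=s$ and $\sum_t r_t=r$.

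For each block $i$ the required $C_{(v:n)}$-decomposition comes from Theorem~\ref{cvresult} by case analysis on the parities of $x_i,y_i,z_i$. If all three are odd then $v=x_iy_iz_iw_i$ and part~(a) applies directly. If $x_i=2x_i'$ is even then $v=4x_i'y_iz_iw_i$ and $\gcd(x_i',z_i)=1$ (since $z_i$ is odd), so part~(e) applied to $(x_i',y_i,z_i,w_i)$ produces $C_{2x_i'z_in}=C_{x_iz_in}$-factors together with $C_{y_iz_in}$-factors; the case $y_i$ even is symmetric (swap the roles of the two cycle lengths). If $z_i=2z_i'$ is even then $v=4x_iy_iz_i'w_i$ with $\gcd(x_i,z_i')=\gcd(y_i,z_i')=1$, and part~(f) yields $C_{2x_iz_i'n}=C_{x_iz_in}$ and $C_{2y_iz_i'n}=C_{y_iz_in}$ factors. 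In every case Theorem~\ref{cvresult} delivers the decomposition for any admissible pair $(s_t,r_t)$, which crucially allows the split to be chosen uniformly across the blocks $i$.

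It remains to choose the splits themselves. Writing $s=qv+u$ with $0\le u<v$, the natural choice $s_t=v$ for $t\le q$, $s_{q+1}=u$, and $s_t=0$ otherwise, works unless $u\in\{1,v-1\}$, in which case one spreads the residue across two indices (e.g.\ replace $s_q=v,\,s_{q+1}=1$ by $s_q=v-2,\,s_{q+1}=3$ when $u=1$). The hypotheses $s,r\neq 1$ guarantee enough room to perform such a shift; the degenerate boundary $v=3$ forces $x_i=y_i=z_i=1$ for every $i$ (so $F_1=F_2$ and any split is immaterial), and small even values $v\in\{4,6,8,10\}$ do not arise under the stated factorization. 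Feeding these splits together with the block-by-block $C_{(v:n)}$-decompositions into Lemma~\ref{cvntokvm} then assembles the decomposition of $K_{(v:m)}$ into $s$ copies of $F_1$ and $r$ copies of $F_2$. The main obstacle is exactly this last bookkeeping: managing the forbidden values $1$ and $v-1$ in a split that must be uniform across all $p$ blocks is the only step that is not wholly routine.
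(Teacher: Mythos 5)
Your proposal is correct and follows essentially the same route as the paper: decompose $K_m$ into $C_n$-factors via Theorem~\ref{OP}, split $s$ into admissible summands $s_t\notin\{1,v-1\}$ with $0\le s_t\le v$, obtain the block decompositions of $C_{(v:n)}$ from Theorem~\ref{cvresult}, and assemble via Lemma~\ref{cvntokvm}. Your write-up is in fact more explicit than the paper's (which does not spell out the parity case analysis selecting parts (a), (e), or (f) of Theorem~\ref{cvresult}, nor the bookkeeping for the residue $u\in\{1,v-1\}$), but the underlying argument is identical.
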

\begin{proof}
By Theorem \ref{OP} there is a decomposition of $K_m$ into $C_n$-factors.

Pick $s_1,\ldots, s_{\frac{m-1}{2}}$ such that $s=\bigoplus_{i=1}^{v\frac{m-1}{2}}s_i$, with $0\leq s_i \leq v$ and $s_i\not\in \{1,v-1\}$.

By Theorem \ref{cvresult} there is a decomposition of $C_{(v:n)}$ into $s_i$ $C_{x_iz_in}$-factors and $r_i=v-s_i$ $C_{y_iz_in}$-factors.

Therefore by Theorem \ref{cvntokvm} there is a decomposition of $K_{(v:m)}$ into $s$ copies of the 2-factor $F_1$ and $r$ copies of the $2$-factor $F_2$.
\end{proof}

\section{Applications}
We can use our results to solve many cases of the Hamilton-Waterloo Problem for complete graphs.
For some of them we will need the notion of resolvable group divisible design.

A \emph{resolvable group divisible design} $(k,\l)\-\RGDD(h^u)$ is a triple $(\mathcal{V},\mathcal{G},\mathcal{B})$ where $\mathcal{V}$ is a finite set of size $v=hu$, $\mathcal{G}$ is a partition of $\mathcal{V}$ into $u$ \emph{groups} each containing $h$ elements, and $\mathcal{B}$ is a collection of $k$ element subsets of $\mathcal{V}$ called \emph{blocks} which satisfy the following properties.
\begin{packed_item}
 \item If $B\in \mathcal{B}$, then $|B|=k$.
 \item If a pair of elements from $\mathcal{V}$ appear in the same group, then the pair cannot be in any block.
 \item Two points that are not in the same group, called a \emph{transverse pair}, appear in exactly $\l$ blocks.
 \item $|\mathcal{G}|>1$.
 \item The blocks can be partitioned into parallel classes such that for each element of $\mathcal{V}$ there is exactly one block in each parallel class containing it.
\end{packed_item}
Here we use the term group to indicate an element of $\mathcal{G}$. In this context, group simply means a set of elements without any algebraic structure.
If $\l=1$, we refer to the RGDD as a $k\-\RGDD(h^u)$.

In \cite{R} the following characterization theorem was proven:
\begin{theorem}{\normalfont\cite{R}}\label{3rgdd}
 A $(3,\l)\-\RGDD(h^u)$ exists if and only if $u\geq 3,\l h(u-1)$ is even, $hu\equiv 0\pmod{3}$, and $(\l,h,u)\not\in\{(1,2,6),(1,6,3) \}\bigcup\{(2j+1,2,3), (4j+2,1,6):j\geq 0\}$.
\end{theorem}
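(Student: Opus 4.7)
The plan is to separate the claim into necessity and sufficiency. For necessity, each of the three divisibility/parity conditions follows from routine counting. A block of size three must be a transverse triple, so the design requires at least three groups, giving $u \geq 3$. Any parallel class partitions the point set of size $v = hu$ into triples, forcing $3 \mid hu$. Fixing a point $p$ and counting the transverse pairs through $p$ shows that $p$ lies in exactly $\lambda h(u-1)/2$ blocks, which gives the parity requirement. For the exceptional triples, non-existence must be argued case-by-case: $(1,2,6)$ and $(1,6,3)$ are classical Kirkman-type obstructions, while for the infinite families $(2j{+}1,2,3)$ and $(4j{+}2,1,6)$ the argument is that the very small pool of admissible transverse triples (for $h=2$, $u=3$ there are only $2^3 = 8$ transverse triples, severely constrained into parallel classes of two blocks) cannot realise the required replication parity when $\lambda$ has the stated residue modulo $2$ or $4$.

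For sufficiency, which is the main obstacle, I would follow the standard three-tier strategy common to characterisations of resolvable designs. First, build a library of small \emph{base} designs by direct methods (difference families over cyclic/dihedral groups, starter/adder constructions, Latin-square based constructions, and Kirkman triple systems for the $\lambda=1$, $h=1$ slice), covering enough small admissible parameter triples to bootstrap the recursion. Second, establish a suite of composition lemmas: $\lambda$-additivity ($(3,\lambda_1)\-\RGDD(h^u) \oplus (3,\lambda_2)\-\RGDD(h^u)$ yields a $(3,\lambda_1{+}\lambda_2)\-\RGDD(h^u)$, reducing general $\lambda$ to small building blocks), inflation of $h$ by a resolvable transversal design or a $3\-\RGDD$, and block-spreading across pairwise balanced designs or frames. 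Third, show that the admissible parameter region is covered by iterating these reductions, tracking how each recursion transforms the exception set and checking that no fresh exception is created outside the listed ones.

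The hard part is precisely this last bookkeeping step. Individually the constructions are by now standard, but verifying that \emph{every} admissible $(\lambda,h,u)$ outside the listed exceptions admits \emph{some} sequence of reductions terminating at a direct base design is delicate; near-exceptional parameters (e.g.\ $h=2$ with small $u$, or small $\lambda$) often require several ad hoc constructions because generic recursions fail. In particular, one must check that the exceptions propagate correctly through the $\lambda$-additivity step so that, for instance, $(2j{+}1,2,3)$ non-existence is not spuriously implied by the non-existence at $\lambda=1$, and similarly for the $(4j{+}2,1,6)$ family where parity modulo $4$ is essential. A complete proof, as carried out by Rees, therefore proceeds by isolating a short list of residual small cases after the recursive reductions and handling each one by an explicit construction or a tailored non-existence argument.
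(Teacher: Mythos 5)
This statement is not proved in the paper at all: it is quoted verbatim from Rees \cite{R} as a known ingredient, so there is no internal argument to compare yours against. Judged on its own terms, your proposal establishes only the easy direction. The necessity of $u\geq 3$, $3\mid hu$, and the parity of $\lambda h(u-1)$ via the replication number $r=\lambda h(u-1)/2$ is fine, but everything else is a description of a proof rather than a proof. For the exceptional families you assert, without argument, that the ``small pool of admissible transverse triples cannot realise the required replication parity''; for $(2j+1,2,3)$ and $(4j+2,1,6)$ this is exactly the point that needs a concrete counting or exhaustive argument (e.g.\ for $h=2,u=3$ one has $8$ transverse triples, $4\lambda$ blocks and $2\lambda$ parallel classes, and one must actually show no such resolution exists when $\lambda$ is odd), and the cases $(1,2,6)$ and $(1,6,3)$ rest on the non-existence of an NKTS$(12)$ and of two MOLS of order $6$, which you invoke but do not prove or cite.

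The sufficiency direction, which is the entire content of the theorem, is left as a programme: you list the standard toolkit ($\lambda$-additivity, inflation by resolvable transversal designs, frames, a library of base designs) and then explicitly concede that the decisive step --- verifying that every admissible $(\lambda,h,u)$ outside the exception set is actually reached, and that no new exceptions are created --- is ``delicate'' and not carried out. That bookkeeping, together with the explicit small base designs, is where all the work lives; without it the proposal does not establish the statement. Since the paper treats this as an external black box, the appropriate move here is to cite \cite{R} (and the literature it builds on) rather than to re-derive it; if you do want to supply a proof, you would need to exhibit the base designs and the covering argument explicitly rather than gesture at them.
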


In \cite{AKKPO} the authors used resolvable group divisible designs together with Theorem \ref{3rgdd} to decompose complete graphs into $C_3$-factors and $C_{3x}$-factors.
\begin{lemma}{\normalfont\cite{AKKPO}}
\label{main}
Let $x \geq 3$, $y \geq 3$ and $m$ be positive integers such that both $x$ and $y$ divide $3m$. Suppose the following conditions are satisfied:
\begin{packed_item}
\item There exists a 3-$\RGDD(h^{u})$,
\item there exists a decomposition of $K_{(m:3)}$ into $s_{p}$ $C_{x}$-factors and $r_{p}$ $C_{y}$-factors, for\\
$p \in \{1,2, \ldots, \frac{h(u-1)}{2}\}$,
\item there exists a decomposition of $K_{hm}$ into $s_\beta$ $C_{x}$-factors and $r_\beta$ $C_{y}$-factors.
\end{packed_item}
Let
\[s_{\alpha}=\sum_{p=1}^{\frac{h(u-1)}{2}}s_{p} \mbox{ and } r_{\alpha}=\sum_{p=1}^{\frac{h(u-1)}{2}}r_{p}.\]
Then there exists a decomposition of $K_{hum}$ into $s_\alpha+s_\beta$ $C_{x}$-factors and $r_\alpha+r_\beta$ $C_{y}$-factors.
\end{lemma}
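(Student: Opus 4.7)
The plan is to use the $3$-\RGDD$(h^u)$ to organize the vertices of $K_{hum}$ as a blow-up by a factor of $m$, and then to combine the hypothesized decompositions of $K_{hm}$ and of $K_{(m:3)}$ on the appropriate pieces. Let $(\mathcal{V},\mathcal{G},\mathcal{B})$ be a $3$-\RGDD$(h^u)$ with groups $G_1,\ldots,G_u$ and let $P_1,\ldots,P_{h(u-1)/2}$ be its parallel classes. Replace each $v\in\mathcal{V}$ by a set $V_v$ of $m$ new vertices, and put $\hat{G}_i=\bigcup_{v\in G_i}V_v$, so $|\hat{G}_i|=hm$ and $\bigcup_i \hat{G}_i$ has $hum$ vertices. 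The edge set of $K_{hum}$ splits as $A\oplus B$, where $A=\bigoplus_{i=1}^u K[\hat{G}_i]$ is the union of $u$ vertex-disjoint copies of $K_{hm}$, and $B$ is the complete multipartite graph $K_{(hm:u)}$ on the $u$ parts $\hat{G}_1,\ldots,\hat{G}_u$.

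For the between-group part $B$ I will use the parallel classes. Each class $P_p$ partitions $\mathcal{V}$ into $hu/3$ triangles whose three vertices lie in three distinct groups of $\mathcal{G}$. After the blow-up, each such triangle becomes a copy of $K_{(m:3)}$, and the $hu/3$ blown-up triangles in $P_p$ are vertex-disjoint and span all $hum$ vertices. Moreover, as $p$ varies, the union of all blown-up triangles equals exactly $B$, since every transverse pair of $\mathcal{V}$ lies in precisely one block of $\mathcal{B}$. Applying the hypothesized decomposition of $K_{(m:3)}$ into $s_p$ $C_x$-factors and $r_p$ $C_y$-factors to each blown-up triangle of $P_p$, and then taking, for each factor, the union over the $hu/3$ triangles of $P_p$, produces $s_p$ $C_x$-factors and $r_p$ $C_y$-factors of $K_{hum}$ supported on the edges contributed by $P_p$. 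Summing over $p$ yields $s_\alpha$ $C_x$-factors and $r_\alpha$ $C_y$-factors covering $B$ exactly once.

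For the within-group part $A$ I decompose each of the $u$ copies of $K_{hm}$ on $\hat{G}_i$ into $s_\beta$ $C_x$-factors and $r_\beta$ $C_y$-factors (which is possible by hypothesis). Since each $\hat{G}_i$ contributes the same number $s_\beta$ of $C_x$-factors, I pair them up across $i=1,\ldots,u$ and take their union to form $s_\beta$ spanning $C_x$-factors of $K_{hum}$ that use only $A$-edges; analogously I get $r_\beta$ $C_y$-factors. Adjoining these to the factors from the previous paragraph gives a total of $s_\alpha+s_\beta$ $C_x$-factors and $r_\alpha+r_\beta$ $C_y$-factors of $K_{hum}$, which is the desired decomposition; the total count $(hm-1)/2+mh(u-1)/2=(hum-1)/2$ matches the number of $2$-factors in $K_{hum}$ as a sanity check.

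The proof is essentially a bookkeeping argument once the blow-up is set up, and there is no serious obstacle: the divisibility hypotheses $x,y\mid 3m$ guarantee that the $C_x$- and $C_y$-factorizations of $K_{(m:3)}$ invoked in the hypothesis can exist, and the vertex-disjointness at each assembly step guarantees that the assembled objects are genuine spanning $2$-factors with the prescribed cycle lengths. The one place to be slightly careful is that when pairing factors across the $u$ groups in part $A$, the pairing is arbitrary — any matching of a $C_x$-factor of $K_{hm}$ on $\hat{G}_i$ with a $C_x$-factor of $K_{hm}$ on $\hat{G}_j$ ($j\neq i$) yields a valid $C_x$-factor of $K_{hum}$ on the union, since the two pieces are vertex-disjoint.
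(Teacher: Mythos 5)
Your construction is correct, and it is the standard argument for this lemma: the paper itself cites the result from \cite{AKKPO} without reproducing a proof, but your splitting of $K_{hum}$ into the $u$ within-group copies of $K_{hm}$ plus the between-group graph $K_{(hm:u)}$, with the latter partitioned by the blown-up parallel classes into spanning unions of vertex-disjoint copies of $K_{(m:3)}$, is exactly the strategy underlying the paper's own Lemmas \ref{buildcompletegraph} and \ref{cvntokvm}. The key facts you use --- that every block meets three distinct groups, that every transverse pair lies in exactly one block (so the blown-up blocks partition the between-group edges), and that vertex-disjoint pieces can be assembled into genuine spanning $C_x$- and $C_y$-factors --- are all verified or at least explicitly noted, and the factor count $\frac{hm-1}{2}+\frac{mh(u-1)}{2}=\frac{hum-1}{2}$ confirms the bookkeeping.
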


We can now apply our decompositions to extend the result from \cite{AKKPO}. We will be concerned with prime numbers whose greatest power that divides $x$ is the same as their greatest power that divides $y$. Thus we give the following definition:
\begin{definition}
Let $x$ and $y$ be natural numbers, with $x=p_1^{a_1}\ldots p_n^{a_n}$ and $y=p_1^{b_1}\ldots p_n^{b_n}$ their prime factorization. Then we define the special greatest common divisor of $x$ and $y$, $\sgcd{x,y}$ as the smallest number such that $p_i^{a_i}$ divides $\sgcd{x,y}$ if and only if $a_i=b_i$.
\end{definition}
\begin{example}
For example if $x=2^33^25^27^2$ and $y=3^25^37^211^4$, then $\sgcd{x,y}=3^27^2$.
\end{example}

\begin{corollary}
Let $x,y,n$ be integers such that
\begin{itemize}
\item $\frac{xy}{\sgcd{x,y}}$ divides $n$,
\item $x,y\not\equiv 0 \pmod 4$, and $4$ divides $n$ if $2$ divides $xy$;
\item $3n=\frac{huxyw}{\sgcd{x,y}}$, with $h\equiv 0 \pmod 3$, $u\geq 3$, $h(u-1)$ even, and $(h,u)\not\in\{(2,6),(6,3)\}$.
\end{itemize}

Then there exists a decomposition of $K_{3n}$ into $s$ $C_{3x}$-factors and $r$ $C_{3y}$-factors for every pair $(s,r)$ such that $s+r=\lfloor \frac{3n-1}{2}\rfloor$, $s,r\neq 1$.
\end{corollary}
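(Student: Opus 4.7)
The plan is to apply Lemma~\ref{main}, combining a $3\-\RGDD(h^u)$ with decompositions of $K_{(m:3)}$ and $K_{hm}$, where I set $m=xyw/\sgcd{x,y}$ so that $hum=3n$ and both $x$ and $y$ divide $m$ (since $\sgcd{x,y}$ divides each of them). The first ingredient is delivered directly by Theorem~\ref{3rgdd}: from $h\equiv 0\pmod 3$ one gets $hu\equiv 0\pmod 3$, while $u\geq 3$, $h(u-1)$ even, and the exclusions $(h,u)\notin\{(2,6),(6,3)\}$ are all part of the hypotheses.

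The decomposition of $K_{(m:3)}$ into $C_{3x}$-factors and $C_{3y}$-factors comes from Theorem~\ref{maintheorem}, invoked with its $n=m=3$ (so that its $v$ equals the corollary's $m$) and with $x_i=x/\sgcd{x,y}$, $y_i=y/\sgcd{x,y}$, $z_i=\sgcd{x,y}$ uniformly in $i$. By the definition of $\sgcd$, the primes occurring in $x_i$ or $y_i$ are disjoint from those in $z_i$, so $\gcd(x_i,z_i)=\gcd(y_i,z_i)=1$. When $xy$ is odd, all three parameters are odd and I set $w_i=w$; when exactly one of $x,y$ is $\equiv 2\pmod 4$, precisely one of $x_i,y_i$ is $\equiv 2\pmod 4$ and $z_i$ is odd, so I set $w_i=w/2$, where $w$ is forced to be even by the implication $4\mid n\Rightarrow 12\mid 3n=huxyw/\sgcd{x,y}$ once the single factor of $2$ contributed by $xy/\sgcd{x,y}$ is accounted for. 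Since every cycle length produced is $x_iz_in=3x$ or $y_iz_in=3y$, Theorem~\ref{maintheorem} gives, for any $s_p,r_p\neq 1$ with $s_p+r_p=m$, a decomposition of $K_{(m:3)}$ into $s_p$ $C_{3x}$-factors and $r_p$ $C_{3y}$-factors. The third ingredient is a uniform decomposition of $K_{hm}$ into $C_{3x}$-factors (or $C_{3y}$-factors), supplied by Theorem~\ref{OP}: since $3\mid h$ and $x,y\mid m$, both $3x$ and $3y$ divide $hm$, and the exceptional pairs $(6,3),(12,3)$ of Theorem~\ref{OP} are not reached for $x,y\geq 3$.

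Feeding these three ingredients into Lemma~\ref{main} yields the claimed decomposition of $K_{3n}$, with total factor count $\tfrac{mh(u-1)}{2}+\lfloor\tfrac{hm-1}{2}\rfloor=\lfloor\tfrac{3n-1}{2}\rfloor$. To realize a prescribed pair $(s,r)$ with $s,r\neq 1$, I would assign the uniform $K_{hm}$ piece to whichever type of factor ($C_{3x}$ or $C_{3y}$) makes the remainder easier to distribute, then split the remaining factors across the $h(u-1)/2$ copies of $K_{(m:3)}$ by choosing each $s_p\in\{0,2,3,\dots,m-2,m\}$ with $r_p=m-s_p$. The main obstacle is the bookkeeping of this last split in boundary cases where a naive choice forces some $s_p$ or $r_p$ to equal $1$; such cases are resolved by transferring a single unit between adjacent copies, which is always possible because $h(u-1)/2\geq 1$ and the overall counts $s,r$ are each at least $2$ whenever nonzero. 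A minor technical check is Theorem~\ref{maintheorem}'s side constraint $w_i\notin\{2,6\}$, which follows from the freedom in choosing $w$ permitted by the corollary's factorization hypothesis on $3n$.
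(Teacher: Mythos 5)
Your proposal follows essentially the same route as the paper's proof: obtain the $3\text{-}\RGDD(h^u)$ from Theorem \ref{3rgdd}, decompose $K_{(xyw/\sgcd{x,y}\,:\,3)}$ via Theorem \ref{maintheorem} with $z=\sgcd{x,y}$, $x_1=x/z$, $y_1=y/z$, decompose $K_{hxyw/z}$ uniformly via Theorem \ref{OP}, and combine through Lemma \ref{main}. Your treatment is, if anything, slightly more explicit than the paper's about the case where one of $x,y$ is even and about the count bookkeeping, but the argument is the same.
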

\begin{proof}
Let $(s,r)$ be such that $s+r=\lfloor \frac{3n-1}{2}\rfloor$ and $s,r\neq 1$.
If $s\geq r$ let $s_0=\lfloor\frac{3n-1}{2}\rfloor$ and $r_0=0$. Otherwise let $s_0=0$ and $r_0=\lfloor\frac{3n-1}{2}\rfloor$.
Let $s_1,\ldots,s_{\frac{h(u-1)}{2}}$ and $r_1,\ldots,r_{\frac{h(u-1)}{2}}$ be such that $s_i+r_i=\frac{xyw}{\sgcd{x,y}}$, $s_i,r_i\neq 1$ for all $i$ and 
\[r_{\alpha}=\sum_{i=0}^{\frac{h(u-1)}{2}}r_{i} \mbox{ and } s_{\alpha}=\sum_{i=0}^{\frac{h(u-1)}{2}}s_{i}.\]

From Theorem \ref{3rgdd} we know that there is a 3-$\RGDD(h^{u})$. Let $z=\sgcd{x,y}$, $x_1=\frac{x}{z}$ and $y_1=\frac{y}{z}$. Then 
\[
\frac{xyw}{z}=\frac{x_1zy_1zw}{z}=x_1y_1zw.
\]
So we may apply Theorem \ref{maintheorem} to obtain a decomposition of $K_{\left(\frac{xyw}{z}:3\right)}$ into $s_i$ $C_{3x_1z}$-factors and $r_i$ $C_{3y_1z}$-factors for each $i$.

Because $hxyw=hx_1zyw=hxy_1zw$ we have that $3x_1z|(hxyw)$ and $3y_1z|(hxyw)$. From Theorem \ref{OP} there is a decomposition of $K_{\frac{hxyw}{z}}$ into $C_{3x_1z}$-factors, and there is also a decomposition of $K_{\frac{hxyw}{z}}$ into $C_{3y_1z}$-factors.

Thus we may apply Lemma \ref{main} to obtain a decomposition of $K_{\frac{huxyw}{z}}=K_{3n}$ into $s$ $C_{3x}$-factors and $r$ $C_{3y}$-factors.
\end{proof}

We can also use Lemma \ref{buildcompletegraph} to obtain decompositions of complete graphs into $C_x$-factors and $C_y$-factors:

\begin{corollary}
Let $m,x$, and $y$ be integers such that:
\begin{itemize}
\item $z=\sgcd{x,y}$, $w=\frac{\gcd(x,y)}{z}\geq 2$,
\item $\frac{xy}{z}$ divides $m$,
\item $4$ does not divide $x$ nor $y$.
\item Neither $x$ nor $y$ is $3$ if $\frac{m}{w}\in\{6,12\}$.
\end{itemize}

Then there exists a decomposition of $K_m$ into $s$ $C_{x}$-factors and $r$ $C_y$-factors for every $s,r\neq 1$.
\end{corollary}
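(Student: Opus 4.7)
My plan is to decompose $K_m$ by applying Lemma~\ref{buildcompletegraph} to the partition of the vertex set into $w$ parts of size $m/w$, yielding the splitting $K_m = wK_{m/w} \oplus K_{(m/w:w)}$, and then to handle each summand separately. A preliminary observation: a decomposition of $K_m$ into $2$-factors requires $m$ to be odd, so together with $xy/z \mid m$ and $4 \nmid x,y$ a short check of $2$-adic valuations shows $x$ and $y$ must themselves be odd, and hence so are $z$, $w$, and $m/w$ (in particular $w \geq 3$). Both $x$ and $y$ divide $m/w$ because $w \cdot \lcm(x,y) = xy/z$ divides $m$.

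For the $wK_{m/w}$ summand I apply Theorem~\ref{OP} to decompose each copy of $K_{m/w}$ entirely into $C_x$-factors or entirely into $C_y$-factors; the hypothesis on $m/w \in \{6,12\}$ eliminates the two Theorem~\ref{OP} exceptions. Since Lemma~\ref{buildcompletegraph} forces the same choice in every copy of $K_{m/w}$, this summand contributes either $s_\alpha = (m/w - 1)/2,\ r_\alpha = 0$, or the reverse. For the $K_{(m/w:w)}$ summand I invoke Lemma~\ref{cvntokvm} using the Hamilton-cycle decomposition of $K_w$ (which exists by Theorem~\ref{OP} as $w \geq 3$ is odd), combined with a decomposition of each resulting $C_{(m/w:w)}$ given by Theorem~\ref{cvresult}(a). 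I take $X = x/(zw)$, $Y = y/(zw)$, $Z = z$, $N = w$, and $W = (m/(xy/z)) \cdot w$; direct verification shows $X, Y, Z, N$ are odd, $\gcd(X,Z) = \gcd(Y,Z) = 1$, $XYZW = m/w$, $XZN = x$, $YZN = y$, and $W \geq w \geq 3$ is odd so $W \notin \{2,6\}$. Theorem~\ref{cvresult}(a) thus provides $s_t$ $C_x$-factors and $r_t$ $C_y$-factors of each $C_{(m/w:w)}$ for any $s_t, r_t \neq 1$ with $s_t + r_t = m/w$, and Lemma~\ref{cvntokvm} assembles them into a decomposition of $K_{(m/w:w)}$ into $s_\beta$ $C_x$-factors and $r_\beta$ $C_y$-factors with $s_\beta + r_\beta = (m - m/w)/2$.

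The remaining step, which I expect to be the main bookkeeping obstacle, is verifying that every admissible $(s, r)$ is reached by some $s = s_\alpha + s_\beta$. Writing $A = (m/w - 1)/2$ and $B = (m - m/w)/2$, so that $A + B = (m-1)/2$, one checks that the set of achievable $s_\beta$ is exactly $\{0, 2, 3, \ldots, B - 2, B\}$: for $w = 3$ this is precisely what Theorem~\ref{cvresult}(a) provides (since $B = m/w$ in that case), and for $w \geq 5$ the same set is realized by summing $(w-1)/2$ admissible choices of $s_t$, noting that in every valid instance $m/w \geq \lcm(x,y) \geq 9$ so that combinations like $s_t \in \{2, m/w - 2\}$ avoid the forbidden value $1$. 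Hence the achievable values of $s$ form $\{0, 2, 3, \ldots, B-2, B\} \cup \{A, A+2, A+3, \ldots, A+B-2, A+B\}$, and since $A \leq B + 1$ (an inequality equivalent to $2m/w \leq m + 3$, always true for $w \geq 3$), this union is exactly $\{0, 2, 3, \ldots, (m-1)/2 - 2, (m-1)/2\}$. The only omitted values are $s = 1$ and $s = (m-1)/2 - 1$, which correspond respectively to $s = 1$ and $r = 1$ and are precisely those excluded by the corollary's $s, r \neq 1$ hypothesis, completing the proof.
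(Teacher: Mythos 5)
Your proposal is correct and follows essentially the same route as the paper: split $K_m$ into $wK_{m/w}\oplus K_{(m/w:w)}$ via Lemma~\ref{buildcompletegraph}, handle the $w$ complete-graph copies with Theorem~\ref{OP} (all one cycle length, contributing $\{s_\alpha,r_\alpha\}=\{0,(m/w-1)/2\}$), and handle the equipartite part with the multipartite machinery --- the paper cites Theorem~\ref{maintheorem} here, which you have simply unfolded into its ingredients, Lemma~\ref{cvntokvm} applied to a Hamilton decomposition of $K_w$ together with Theorem~\ref{cvresult}(a). Your explicit bookkeeping showing that every admissible $s$ splits as $s_\alpha+s_\beta$ (using $m/w\geq\lcm(x,y)\geq 9$ so that $B-A\geq 5$) is a verification the paper asserts without proof, but it supports the same argument rather than replacing it.
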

\begin{proof}
Let $s,r$ be such that $s+r=\lfloor\frac{m-1}{2}\rfloor$ and $s,r\neq 1$.

Let 
\[
k=\frac{mz}{xy}\quad
x'=\frac{x}{zw}\quad
y'=\frac{y}{zw}\quad
m'=\frac{m}{w}=\frac{xyk}{zw}=x'y'zwk
\]
Let $s_\alpha,s_\beta,r_\alpha,r_\beta$ be such that $s_\beta,r_\beta\neq 1$, $\{s_\alpha,r_\alpha\}=\{0,\
\lfloor\frac{m'-1}{2}\rfloor\}$, $s=s_\alpha+s_\beta$ and $r=r_\alpha+r_\beta$.
By Theorem \ref{maintheorem} there is a decomposition of $K_{(m':w)}$ into $s_\beta$ $C_{x'zw}$-factors 
and $r_\beta$ $C_{y'zw}$-factors. This is a decomposition of $K_{(m':w)}$ into $s_\beta$ $C_x$-factors and 
$r_\beta$ $C_y$-factors.
Because $\frac{xy}{z}$ divides $m$, it follows that both $x$ and $y$ divide $m'=\frac{m}{w}=m\frac{z}
{\gcd(x,y)}$. Thus by Theorem \ref{OP} there is decompositon of $K_{m'}$ into $s_\alpha$ $C_x$-factors and $r_\alpha$ $C_y$-factors (keep in mind 
that one of $s_\alpha$ and $r_\alpha$ is $0$).
Then by Lemma \ref{buildcompletegraph} there is a decomposition of $K_{m'w}$ into $s$ $C_{x}$-factors and $r$ $C_{y}$-factors.
Therefore there is a decompostion of $K_{m}$ into $s$ $C_{x}$-factors and $r$ $C_{y}$-factors.
\end{proof}
Notice that by asking $\frac{xy}{z}$ to divide $m$ we cover some of the cases left open in \cite{BDT} for the odd order case.
\begin{example}
Let $m=3^35^3$, $x=3^15^2$ and $y=3^25^2$. We have:
\begin{align*}
z=\sgcd{x,y}&=5^2,&w&=3^1,&k&=5^1
\end{align*}

And $\frac{xy}{z}=3^35^2$ divides $m$. So we can decompose $K_{m}$ into $s$ $C_x$-factors and $r$ $C_y
$-factors for any $s,r\neq 1$. Note that $l=\lcm(x,y)=3^25^2$. The number of vertices, $m$, is a multiple of $l$, however $xy\!\!\not | \,m$. Thus, Theorem \ref{theoremBDT} cannot be applied here.
\end{example}
\begin{example}
Let $m=4^13^45^37^1$, $x=2^13^15^27^1$ and $y=3^35^2$. We have:
\begin{align*}
z=\sgcd{x,y}&=5^2,&w&=3^1,&k&=2^15^1
\end{align*}

And $\frac{xy}{z}=2^13^45^27^1$ divides $m$. So we can decompose $K_{m}$ into $s$ $C_x$-factors and $r$ $C_y
$-factors for any $s,r\neq 1$. Note that because $x$ is even Theorem \ref{theoremBDT} cannot be applied here.
\end{example}

We can also use our Lemma \ref{buildcompletegraphnonuniform} to obtain non-uniform decompositions of 
complete graphs:

\begin{corollary}
Let $v,m,n,x_1,\ldots,x_k,y_1,\ldots,y_k$ be integers such that:
\begin{itemize}
\item $m\geq 3$ is odd,
\item $n$ divides $m$, $x_i$, and $y_i$ for every $i$,
\item $k=\frac{m}{n}$,
\item $z_i=\sgcd{x_i,y_i}$,
\item $\frac{x_iy_i}{z_in}$ divides $v$ for each $i$,
\item $x_i$ divides $v$ for each $i$,
\item $4$ does not divide $x_i$ nor $y_i$ for any $i$,
\item $3\not\in\{x_1,\ldots,x_k,y_1,\ldots,y_k\}$ if $k\in\{6,12\}$.
\end{itemize}

Then there exists a decomposition of $K_{vm}$ into $s$ $[x_1^{vn/x_1},\ldots,x_k^{vn/x_k}]
$-factors and $r$ $[y_1^{vn/y_1},\ldots,y_k^{vn/y_k}]$-factors for every $s,r\neq 1$.
\end{corollary}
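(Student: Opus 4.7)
The plan is to use Lemma \ref{buildcompletegraphnonuniform} to split $K_{vm}$ as the edge-disjoint union of $mK_v$ and $K_{(v:m)}$, decompose each piece into the required factors separately, and paste them together.

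For the $K_{(v:m)}$ piece I would apply Theorem \ref{maintheorem}, so the main work is translating the corollary's data into admissible inputs for the theorem. For each $i$ set $c_i=\sgcd{x_i/n,\,y_i/n}$, and write $x_i/n=a_ic_i$, $y_i/n=b_ic_i$; the defining property of $\sgcd{\cdot,\cdot}$ forces $\gcd(a_i,c_i)=\gcd(b_i,c_i)=1$, and the hypotheses $n$ odd and $4\nmid x_i,y_i$ together guarantee that at most one of $a_i,b_i,c_i$ is even. A prime-by-prime comparison of valuations then yields $z_i\mid nc_i$ for $z_i=\sgcd{x_i,y_i}$, so the corollary's hypothesis $\frac{x_iy_i}{z_in}\mid v$ upgrades to $a_ib_ic_i\mid v$; set $w_i=v/(a_ib_ic_i)$, or $v/(2a_ib_ic_i)$ when exactly one of $a_i,b_i,c_i$ is even. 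With these parameters the cycle lengths in Theorem \ref{maintheorem} match: $a_ic_in=x_i$ and $b_ic_in=y_i$. The theorem then produces, for any $s_\beta,r_\beta\ne 1$ with $s_\beta+r_\beta=v(m-1)/2$, a decomposition of $K_{(v:m)}$ into $s_\beta$ copies of $F_1$ and $r_\beta$ copies of $F_2$.

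For the $mK_v$ piece, since $n\mid m$, partition the $m$ copies of $K_v$ into $k$ groups $G_1,\dots,G_k$ of $n$ copies each. Within each $G_i$, Theorem \ref{OP} decomposes every copy of $K_v$ uniformly into $(v-1)/2$ $C_{x_i}$-factors (or alternatively into $C_{y_i}$-factors); the exceptional pairs $(v,3)\in\{(6,3),(12,3)\}$ are ruled out by the corollary's last hypothesis. Combining one such factor from each of the $n$ copies in $G_i$ yields a $[x_i^{vn/x_i}]$-factor (resp.\ a $[y_i^{vn/y_i}]$-factor) of $G_i$, so making the same uniform choice across every $i$ simultaneously produces either $(v-1)/2$ copies of $F_1$ or $(v-1)/2$ copies of $F_2$ from $mK_v$. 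Given $s,r\ne 1$ with $s+r=(vm-1)/2$, pick $(s_\alpha,r_\alpha)\in\{((v-1)/2,0),(0,(v-1)/2)\}$ so that $s_\beta:=s-s_\alpha$ and $r_\beta:=r-r_\alpha$ are nonnegative and each $\ne 1$; a short case analysis using $s+r\ge v-1$ shows this is always possible apart from the tiny degenerate case $v=m=3$, where $F_1=F_2$ and the conclusion reduces directly to Theorem \ref{OP}. Lemma \ref{buildcompletegraphnonuniform} then delivers the required decomposition of $K_{vm}$.

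The main obstacle is the parameter bookkeeping of the first step: checking $a_ib_ic_i\mid v$ via the valuation identity $z_i\mid nc_i$, and, more delicately, arranging $w_i\notin\{2,6\}$ — which is not automatic from the stated hypotheses, so one may need the flexibility of replacing the maximal choice $c_i=\sgcd{x_i/n,y_i/n}$ by a smaller unitary common divisor of $x_i/n$ and $y_i/n$ in order to push $w_i$ out of the forbidden values.
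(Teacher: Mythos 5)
Your proposal follows essentially the same route as the paper: split $K_{vm}$ into $mK_v$ and $K_{(v:m)}$ via Lemma \ref{buildcompletegraphnonuniform}, decompose $K_{(v:m)}$ by feeding suitable parameters into Theorem \ref{maintheorem}, and decompose $mK_v$ uniformly into $C_{x_i}$- or $C_{y_i}$-factors via Theorem \ref{OP} with one of $s_\alpha,r_\alpha$ equal to $0$. Your bookkeeping via $c_i=\sgcd{x_i/n,y_i/n}$ is a minor (and in fact cleaner) variant of the paper's choice $x_i'=x_i/(z_in)$, which need not even be an integer; and the $w_i\notin\{2,6\}$ issue you flag is a genuine loose end that the paper's own proof also leaves unaddressed.
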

\begin{proof}
Let $s,r$ be such that $s+r=\lfloor\frac{vm-1}{2}\rfloor$ and $s,r\neq 1$.

Let 
\[
x_i'=\frac{x_i}{z_in}\quad
y_i'=\frac{y_i}{z_in}\quad k_i=\frac{vz_in}{x_iy_i}
\]

Then
\[
v=\frac{x_iy_ik_i}{z_in}=\frac{x_i'z_iny_i'z_ink_i}{z_in}=x_i'y_i'z_ink_i
\]

Let $s_\alpha,s_\beta,r_\alpha,r_\beta$ be such that $s_\beta,r_\beta\neq 1$, $\{s_\alpha,r_\alpha\}=\{0,
\lfloor\frac{v-1}{2}\rfloor\}$, $s=s_\alpha+s_\beta$ and $r=r_\alpha+r_\beta$.
By Theorem \ref{maintheorem} there is a decomposition of $K_{(v:m)}$ into $s_\beta$ $[x_1^{vn/x_1},\ldots,x_k^{vn/x_k}]$-factors and $r_\beta$ $[y_1^{vn/y_1},\ldots,y_k^{vn/y_k}]$-factors.
We partition $mK_{v}$ into $k$ copies of $nK_{v}$, labeled $\kappa_1,\ldots\kappa_k$. Because $\frac{x_iy_i}{z_in}$ and $x_i$ divide $v$, we get that both $x_i$ and $y_i$ divide $v$, by Theorem \ref{OP} 
there is a decompostion of $K_v$ into $s_{\alpha}$ $C_{x_i}$-factors and $r_\alpha$ $C_{y_i}$-factors (keep in mind that one of $s_\alpha$ and $r_\alpha$ is $0$). This means that $\kappa_i$ can be decomposed into $s_\alpha$ $[x_i^{vn/x_i}]$-factors 
and $r_\alpha$ $[y_i^{vn/y_i}]$-factors. Combining these decompositions we get a decomposition of
$mK_{v}$ into $s_\alpha$ $[x_1^{vn/x_1},\ldots,x_k^{vn/x_k}]$-factors and $r_\alpha$ 
$[y_1^{vn/y_1},\ldots,y_k^{vn/y_k}]$-factors

Then by Lemma \ref{buildcompletegraphnonuniform} there is a decomposition $K_{vm}$ into $s$ $
[x_1^{vn/x_1},\ldots,x_k^{vn/x_k}]$-factors and $r$ $[y_1^{vn/y_1},
\ldots,y_k^{vn/y_k}]$-factors for every $s,r\neq 1$.
\end{proof}
\begin{example}
Let $v=5^37^211^313^4$, $m=3^15^1$, $n=5$, $k=3$, $x_1=5^27^2$, $y_1=5^17^211^113^1$, $x_2=5^111^113^1$, $y_2=5^17^213^3$, $x_3=5^27^111^113^4$, and $y_3=5^1$. We have:
\begin{align*}
z_1=\sgcd{x_1,y_1}&=7^2, &\frac{x_1y_1}{z_1n}=5^27^2\\
z_2=\sgcd{x_2,y_2}&=5^1, &\frac{x_2y_2}{z_2n}=7^211^113^4\\
z_3=\sgcd{x_3,y_3}&=1, &\frac{x_3y_3}{z_3n}=5^27^111^113^4\\
\end{align*}

We can see that $\frac{x_iy_i}{z_in}$ and $x_i$ divide $v$ for each $i$. 

Let $F_1$ be the $2$-factor consisting of:
\begin{itemize}
\item $\frac{vn}{x_1}=5^211^313^4$ cycles of size $x_1=5^27^2$,
\item $\frac{vn}{x_2}=5^37^211^213^3$ cycles of size $x_2=5^111^113^1$,
\item and $\frac{vn}{x_3}=5^26^111^2$ cycles of size $x_3=5^27^111^113^4$.
\end{itemize}

Let $F_2$ be the $2$-factor consisting of:
\begin{itemize}
\item $\frac{vn}{y_1}=5^311^213^3$ cycles of size $y_1=5^17^211^113^1$,
\item $\frac{vn}{y_2}=5^311^313^1$ cycles of size $y_2=5^17^213^3$,
\item and $\frac{vn}{y_3}=5^37^211^313^4$ cycles of size $y_3=5^1$.
\end{itemize}

Then we can decompose $K_{vm}$ into $s$ copies of $F_1$ and $r$ copies of $F_2$ for any $s,r\neq 1$.
\end{example}

\section{Bibliography}

\newpage
\section{Appendix}
\begin{lemma}
There is a decomposition of $C_{(12:3)}$ into $7$ $C_{12}$-factors and $5$ $C_{6}$-factors.
\end{lemma}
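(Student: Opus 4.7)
The target graph $C_{(12:3)}$ is isomorphic to $K_{(12:3)} = K_{12,12,12}$, which has $36$ vertices and is $24$-regular, so it admits exactly $12$ spanning $2$-factors. Since $7 \cdot 3 + 5 \cdot 6 = 36$ cycles contain the right total number of edges, the target $(7,5)$ split is numerically consistent, and the task is to exhibit it.

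My approach would be product-based. Using $K_{(12:3)} = K_{(3:3)} \otimes K_{(4:3)}$, I would first decompose $K_{(3:3)} = T_1 \oplus T_2 \oplus T_3$ into three triangle factors (a classical decomposition of $K_{3,3,3}$), and then decompose $K_{(4:3)} = H_1 \oplus H_2 \oplus K_1 \oplus K_2$ into two Hamilton $C_{12}$-cycles and two $C_6$-factors, which exists by standard constructions (and can be exhibited explicitly on a $12$-vertex graph). The distributive property (Theorem \ref{distributive}) then gives
\[
K_{(12:3)} \;=\; \bigoplus_{i=1}^{3} \bigoplus_{j=1}^{4} T_i \otimes F_j ,
\]
and Lemma \ref{productofbalanced} tells us that each $T_i \otimes H_j$ is a $C_{12}$-factor (since $\mathrm{lcm}(3,12)=12$) while each $T_i \otimes K_j$ is a $C_6$-factor (since $\mathrm{lcm}(3,6)=6$). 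This yields $6$ $C_{12}$-factors and $6$ $C_6$-factors, which is one swap away from our target of $(7,5)$.

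To close the gap I would perform a local reconfiguration. I would choose $H_1$ and $K_1$ in $K_{(4:3)}$ so that the $4$-regular subgraph $H_1 \oplus K_1$ has an alternative $2$-factorization $H^{*} \oplus H^{**}$ where both $H^*$ and $H^{**}$ are Hamilton $12$-cycles of $K_{(4:3)}$. Then by distributivity,
\[
(T_1 \otimes H_1) \oplus (T_1 \otimes K_1) \;=\; T_1 \otimes (H_1 \oplus K_1) \;=\; (T_1 \otimes H^{*}) \oplus (T_1 \otimes H^{**}),
\]
and the right-hand side consists of two $C_{12}$-factors. Swapping the left side for the right in the global decomposition converts exactly one $C_6$-factor into a $C_{12}$-factor, producing the desired $7$ $C_{12}$-factors and $5$ $C_6$-factors.

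The main obstacle is producing the pair $(H_1, K_1) \subset K_{(4:3)}$ with the two-Hamilton-reconfiguration property; this is a combinatorial condition on a $4$-regular subgraph of a $12$-vertex graph. I would handle it by an explicit labelling $V(K_{(4:3)}) = \mathbb{Z}_4 \times \{0,1,2\}$, writing down candidate $H_1$ and $K_1$ from difference constructions (for example an ``index-rotating'' Hamilton cycle together with a hexagon-pair based on a translate of difference $+2$), checking edge-disjointness, and verifying that $H_1 \oplus K_1$ contains the two needed alternative Hamilton cycles. Because the underlying graph is small, this is a finite check. As a fallback, if locating a convenient $(H_1,K_1)$ pair proves awkward, one can instead construct the decomposition of $K_{(12:3)}$ directly by a difference method: label $V = \mathbb{Z}_{12} \times \{0,1,2\}$, pick $7$ base $12$-cycles developed under $\langle 4 \rangle \le \mathbb{Z}_{12}$ and $5$ base $6$-cycles developed under $\langle 2 \rangle \le \mathbb{Z}_{12}$, and verify that for each ordered pair of parts the collection of base-block differences (times their development multiplicities) is exactly the multiset $\mathbb{Z}_{12}$ with multiplicity $12$.
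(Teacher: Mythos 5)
Your proposal never actually exhibits the objects that would constitute the proof: both the pair $(H_1,K_1)$ with the re-switching property and the base blocks of the fallback are deferred to ``finite checks'' that are not performed, and for a lemma whose entire content is an explicit construction, those checks \emph{are} the proof. More seriously, your main route cannot be repaired, because the reconfiguration step is impossible. For Lemma \ref{productofbalanced} to apply, $H_1$, $K_1$, $H^{*}$, $H^{**}$ must all be $2$-factors of $K_{(4:3)}$ whose cycles traverse the three parts cyclically; such a factor is encoded by three bijections $\tau_0,\tau_1,\tau_2$ of $\mathbb{Z}_4$ (the part-to-part matchings), and it is a Hamilton $12$-cycle exactly when $\tau_2\tau_1\tau_0$ is a $4$-cycle (an odd permutation), while a $C_6$-factor corresponds to a product of two disjoint transpositions (even). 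Between each pair of parts, $H_1\oplus K_1$ is a disjoint union of even cycles, and the two alternating matchings of a $2a$-cycle differ by an $a$-cycle, of sign $(-1)^{a-1}$ independently of which matching is assigned to which factor; hence for \emph{any} re-decomposition $H_1\oplus K_1=A\oplus B$ into part-cyclic $2$-factors, the product $\mathrm{sgn}(\tau_2^{A}\tau_1^{A}\tau_0^{A})\cdot\mathrm{sgn}(\tau_2^{B}\tau_1^{B}\tau_0^{B})$ is determined by the union alone and equals the corresponding product for $(H_1,K_1)$, namely $(-1)(+1)=-1$. Two Hamilton cycles would give $+1$, so no admissible pair $(H_1,K_1)$ exists and your count stays at $(6,6)$.

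The same parity kills the simplest form of your fallback: a difference triple $(d_1,d_2,d_3)$ in $\mathbb{Z}_{12}$ produces cycles of length $3\cdot 12/\gcd(d_1+d_2+d_3,12)$, so a $C_{12}$-factor forces the sum to be $3$ or $9$ (odd) and a $C_6$-factor forces sum $6$ (even); seven odd sums plus five even sums is odd, while the three columns must each exhaust $\mathbb{Z}_{12}$ and hence total $3\cdot 66\equiv 0\pmod 2$. This matters for the comparison with the paper: the paper's proof is exactly this difference method, and it avoids the obstruction only by taking the seven ``large'' factors to have sums $\equiv 4,8\pmod{12}$ --- elements of order $3$ in $\mathbb{Z}_{12}$, which yield $C_9$-factors, not $C_{12}$-factors. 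Since this lemma exists to supply the case $n=3$ of Lemma \ref{theissuewith12} (seven $C_{3n}$-factors and five $C_{2n}$-factors of $C_{(12:n)}$), the ``$C_{12}$'' in the statement is evidently a misprint for ``$C_9$''. For that corrected statement your fallback coincides with the paper's argument and succeeds once the difference tables are written down; for the statement as literally written, neither of your routes can work as described.
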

\begin{proof}
Let $G_0$, $G_1$, and $G_2$ be the partite sets. We will construct each factors by taking a difference between each pair of partite sets. Notice that if the sum of the three differences is congruent to $6$ modulo $12$, this gives a $C_{6}$-factor. Likewise, if the sum of the differences is congruent to $4$ or $8$ modulo $12$ we get a $C_{12}$-factor.

The $C_{6}$ factors are obtained by taking differences:
\[
\begin{array}{|c|c|c|c|}
\hline
\text{$G_0$ to $G_1$}&\text{$G_1$ to $G_2$}&\text{$G_2$ to $G_3$}&\text{total mod 12}\\
\hline
0 & 3 & 3 & 6 \\
\hline
1 & 1 & 4& 6 \\
\hline
2 & 2 & 2 & 6\\
\hline
3 & 9 & 6 & 6 \\
\hline
4 & 4 & 10 &  6\\
\hline
\end{array}
\]

The $C_{12}$-factors are obtained by taking differences:
\[
\begin{array}{|c|c|c|c|}
\hline
\text{$G_0$ to $G_1$}&\text{$G_1$ to $G_2$}&\text{$G_2$ to $G_3$}&\text{total mod 12}\\
\hline
5 & 10 & 5 &  8\\
\hline
6 & 6 & 8 & 8\\
\hline
7 & 0 & 9 &  4\\
\hline
8 & 5 & 7 &  8\\
\hline
9 & 7 & 0 &  4\\
\hline
10 & 11 & 11 & 8\\
\hline
11 & 8 & 1 &  8\\
\hline
\end{array}
\]

Because all possible difference between each pair of partite sets has been taken once, this provides the desired decomposition.
\end{proof}
\begin{lemma}\label{theissuewith12}
There is a decomposition of $C_{(12:n)}$ into $7$ $C_{3n}$-factors and $5$ $C_{2n}$-factors, for any $n\geq 5$.
\end{lemma}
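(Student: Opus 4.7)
I would construct the decomposition explicitly by extending the $12\times 3$ difference table from the preceding lemma. For each factor I specify a sequence $(d_0,d_1,\ldots,d_{n-1})\in\mathbb{Z}_{12}^n$; the factor is the union of edges $\{(g,t)(g+d_t,t+1):g\in\mathbb{Z}_{12},\ 0\le t\le n-1\}$ (indices mod $n$), and if $\sigma\equiv\sum_t d_t\pmod{12}$ then the factor consists of $\gcd(12,\sigma)$ cycles of length $n\cdot\mathrm{ord}_{\mathbb{Z}_{12}}(\sigma)$. Thus I want seven rows with $\sigma\in\{4,8\}\pmod{12}$ (giving $C_{3n}$-factors) and five rows with $\sigma\equiv 6\pmod{12}$ (giving $C_{2n}$-factors); every column of the resulting $12\times n$ table must be a permutation of $\mathbb{Z}_{12}$ so that every edge of $C_{(12:n)}$ is used exactly once.

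The first three columns will be taken from the preceding lemma, which already supplies seven rows with sums $\equiv 4$ or $8\pmod{12}$, five rows with sums $\equiv 6\pmod{12}$, and columns that are permutations of $\mathbb{Z}_{12}$. To reach length $n$ I append $n-3$ further columns, subject to the two constraints that each new column is a permutation of $\mathbb{Z}_{12}$ and each row's extra contribution sums to $0\pmod{12}$. When $n-3$ is even (i.e., $n$ odd) I pair the extra columns: in each pair row $f$ uses $(e_f,-e_f)$ for an arbitrary permutation $e$ of $\mathbb{Z}_{12}$; both columns of a pair are automatically permutations and the row contributions are $0\pmod{12}$. This settles all odd $n\ge 5$. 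For even $n$ a parity obstruction rules out any pure-translation extension: the target row residues force the table total to be $\equiv 2-4a\pmod{12}$ for some integer $a$, whereas the column-permutation condition forces it to be $66n\equiv 0\pmod{12}$, and these cannot be reconciled.

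To handle even $n$ I would construct the $n=6$ case explicitly by using at least one non-translation permutation among the three extra columns for a small number of rows, designing it so that (i) the permutations assigned at that transition by the twelve rows still form a sharply transitive family on $\mathbb{Z}_{12}$, and (ii) the composed permutation $\pi_{n-1}\circ\cdots\circ\pi_0$ of each modified row still has cycle type $3^4$ (for the $C_{3n}$-rows) or $2^6$ (for the $C_{2n}$-rows). Larger even $n\ge 8$ then follow by appending $(e_f,-e_f)$-pairs as above. The main obstacle is precisely this $n=6$ base case: the modular obstruction forces us out of the translation-only framework, and the non-translation permutations must be crafted to maintain both the column-tiling condition and the cycle-type condition simultaneously; I would accomplish this by perturbing only a few rows at one transition and verifying both conditions by direct inspection of the resulting table.
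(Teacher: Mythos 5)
Your construction for odd $n$ is complete and correct, and it is essentially the paper's construction with the ``zero-sum'' columns realized more explicitly: the three columns of the preceding lemma fix the residues, and each appended pair $(e_f,-e_f)$ keeps every column a permutation of $\mathbb{Z}_{12}$ while contributing $0$ to every row sum, so the cycle types are unchanged. Your parity obstruction for even $n$ is also genuine: with $a$ rows summing to $4$, the row sums total $2-4a\equiv 2\pmod 4$, while the column sums total $66n\equiv 6n\pmod{12}$, and these agree only for $n$ odd. Note that the same count applies to the paper's own proof, which asks for twelve rows of $n-3$ differences each summing to $0$ while each of the $n-3$ columns is a permutation of $\mathbb{Z}_{12}$; summing that sub-table two ways forces $6(n-3)\equiv 0\pmod{12}$, which fails for $n$ even. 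So translations alone genuinely cannot close the even case.

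The gap is that for even $n$ your argument stops at a plan: the $n=6$ table with non-translation permutations is never exhibited, and without it the statement is unproved for half the values of $n$. The missing ingredient is already in the paper. Theorem \ref{gregarious} decomposes $\overrightarrow{C}_{(12:3)}$ into twelve $\overrightarrow{C}_3$-factors using two orthogonal quasigroups of order $12$; each factor consists of $12$ disjoint triangles, so its three matchings compose to the identity on $\mathbb{Z}_{12}$, and over the twelve factors the matchings at each transition partition all $144$ edges between consecutive parts. Inserting these three columns (and then, for even $n\ge 8$, the remaining $n-6$ columns as $(e_f,-e_f)$ pairs) leaves each row's composed permutation equal to a translation by its original three-column sum, so the cycle types survive and the lemma follows for all $n\ge 5$. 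This is what the paper's proof intends when it borrows the matchings of a $C_{n-3}$-factorization of $C_{(12:n-3)}$, but, as your obstruction shows, describing those matchings as ``differences'' is only possible when $n$ is odd.
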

\begin{proof}
We have to pick differences between $n$ pairs of partite sets. For the first $n-3$ differences sets choose differences that add up to $0$. This can be achieved by taking the difference one would take to decompose $C_{(12:n-3)}$ into $C_{n-3}$-factors. For the last three differences, choose in the same way as we did in the previous lemma.
\end{proof}
\end{document}